\newcommand{\Z}{\ensuremath{\mathbb{Z}}\xspace}
\newcommand{\Q}{\ensuremath{\mathbb{Q}}\xspace}
\newcommand{\R}{\ensuremath{\mathbb{R}}\xspace}
\newcommand{\C}{\ensuremath{\mathbb{C}}\xspace}
\newcommand{\A}{\ensuremath{\mathbb{A}}\xspace}
\newcommand{\F}{\ensuremath{\mathbb{F}}\xspace}
\newcommand{\Qp}{\ensuremath{\mathbb{Q}_{p}}\xspace}
\newcommand{\Zp}{\ensuremath{\mathbb{Z}_{p}}\xspace}
\newcommand{\Fp}{\ensuremath{\mathbb{F}_{p}}\xspace}
\newcommand{\q}{\ensuremath{\mathfrak{q}}\xspace}
\newcommand{\comment}[1]{}
\DeclareMathOperator{\Gal}{Gal}
\DeclareMathOperator{\End}{End}
\DeclareMathOperator{\Hom}{Hom}
\DeclareMathOperator{\Lie}{Lie}
\DeclareMathOperator{\Spec}{Spec}
\DeclareMathOperator{\Spf}{Spf}
\DeclareMathOperator{\Spd}{Spd}
\DeclareMathOperator{\Spa}{Spa}
\DeclareMathOperator{\Tor}{Tor}
\DeclareMathOperator{\Ker}{Ker}
\newcommand{\GL}{\ensuremath{\mathrm{GL}}\xspace}
\newcommand{\mbf}{\mathbf}
\newcommand{\mb}{\mathbb}
\newcommand{\mc}{\mathcal}
\newcommand{\mf}{\mathfrak}
\newcommand{\vp}{\varpi}
\newcommand{\ra}{\rightarrow}
\newcommand{\sub}{\subseteq}
\newcommand{\oo}{\mathcal{O}}
\newcommand{\ol}{\overline}
\newcommand{\ul}{\underline}
\newcommand{\ka}{\kappa}
\newcommand{\wh}{\widehat}
\newcommand{\wt}{\widetilde}
\newcommand{\ok}{\mathcal{O}_{K}}
\newcommand{\Xf}{\mathfrak{X}}
\newcommand{\Af}{\mathfrak{A}}
\newcommand{\Xb}{\overline{X}}
\newcommand{\Ab}{\overline{A}}
\newcommand{\Xc}{\mathcal{X}}
\newcommand{\Ac}{\mathcal{A}}
\newcommand{\G}{\mathcal{G}}
\newcommand{\Pro}{\mathbb{P}^{n-1}}
\newcommand{\V}{\mathcal{V}}
\def\et{\mathrm{\acute{e}t}}
\def\GH{\mathrm{GH}}
\newtheorem{thmx}{Theorem}
\newtheorem{theorem}{Theorem}[subsection]
\newtheorem{proposition}[theorem]{Proposition}
\newtheorem{corollary}[theorem]{Corollary}
\newtheorem{lemma}[theorem]{Lemma}
\theoremstyle{definition}
\newtheorem{definition}[theorem]{Definition}
\newtheorem{remark}[theorem]{Remark}
\mathchardef\mhyphen="2D
\title{}
\author{}
\begin{document}

\title{A quotient of the Lubin--Tate tower II}
\author{Christian Johansson and Judith Ludwig \\ With an appendix by David Hansen}

\address{Department of Mathematical Sciences, Chalmers University of Technology and the University of Gothenburg, 412 96 Gothenburg, Sweden}
\email{chrjohv@chalmers.se}

\address{IWR, University of Heidelberg, Im Neuenheimer Feld 205, 69120 Heidelberg, Germany}
\email{judith.ludwig@iwr.uni-heidelberg.de}

\address{Max Planck Institute for Mathematics, Vivatsgasse 7, 53111 Bonn, Germany}
\email{dhansen@mpim-bonn.mpg.de}

\maketitle

\begin{abstract}
In this article we construct the quotient $\mc{M}_\mbf{1}/P(K)$ of the infinite-level Lubin--Tate space $\mc{M}_\mbf{1}$ by the parabolic subgroup $P(K) \subset \GL_n(K)$ of block form $(n-1,1)$ as a perfectoid space, generalizing the results of \cite{lud} to arbitrary $n$ and $K/\Q_p$ finite. For this we prove some perfectoidness results for certain Harris--Taylor Shimura varieties at infinite level.  
As an application of the quotient construction we show a vanishing theorem for Scholze's candidate for the mod $p$ Jacquet--Langlands and mod $p$ local Langlands correspondence. An appendix by David Hansen gives a local proof of perfectoidness of $\mc{M}_\mbf{1}/P(K)$ when $n=2$, and shows that $\mc{M}_\mbf{1}/Q(K)$ is not perfectoid for maximal parabolics $Q$ not conjugate to $P$. 
\end{abstract}

\counterwithin{equation}{subsection}

\section{Introduction}

This article generalises the main results of \cite{lud}. Let $K/\mathbb{Q}_p$ be a finite extension with ring of integers $\mc{O}_K$, uniformizer $\varpi$ and residue field $k$. Fix an algebraically closed and complete non-archimedean field $C$ containing $K$.
Let $\mc{M}_\mbf{1}$ denote the infinite-level Lubin--Tate space over $C$. By work of Weinstein, $\mc{M}_\mbf{1}$ is a perfectoid space equipped with an action of $\GL_n(K)$. Let $P\subset \GL_n$ be the parabolic subgroup consisting of upper triangular block matrices of block size $(n-1,1)$. In this article we prove the following theorem.

\begin{thmx}\label{A}
The quotient $\mc{M}_{P(K)}:= \mc{M}_\mbf{1}/P(K)$ is a perfectoid space over $C$ of Krull dimension $n-1$. 
\end{thmx}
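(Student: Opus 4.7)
The plan is to globalize the problem: realize $\mc{M}_\mbf{1}/P(K)$ inside an infinite-level Harris--Taylor Shimura variety, prove perfectoidness at the global level, and transfer back via $p$-adic uniformization of the basic stratum.

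First, I would fix a totally real field $F^+$, a CM extension $F/F^+$, and a unitary similitude group $G$ over $\mathbb{Q}$ which is compact modulo center at infinity, arranged so that at some place $v\mid p$ of $F^+$ the group $G(\mathbb{Q}_p)$ has $\GL_n(K)$ as a factor. Let $\ms{X}_{U^pU_p}$ denote the associated tower of Shimura varieties over $C$ and $\ms{X}_{U^p,\infty}$ its infinite-level limit as $U_p$ shrinks to $1$. By Rapoport--Zink uniformization, the basic (supersingular) stratum of $\ms{X}_{U^p,\infty}$ identifies with a disjoint union of quotients $\Gamma_i\backslash \mc{M}_\mbf{1}$ by arithmetic subgroups $\Gamma_i\subset \GL_n(K)$ acting discretely; in particular, $\mc{M}_\mbf{1}/P(K)$ should appear as a local model for $\ms{X}_{U^p,\infty}/P(K)$ along the basic locus.

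Second, I would prove that $\ms{X}_{U^p,\infty}$ is a perfectoid space over $C$. The strategy follows Scholze, Caraiani--Scholze, and Chojecki--Hernandez: on a strict neighborhood of the ordinary locus one builds an anticanonical tower that becomes perfectoid via an explicit tilting argument, and then one uses $\GL_n(K)$-translates together with the $\GL_n(K)$-equivariant Hodge--Tate period map $\pi_{HT}:\ms{X}_{U^p,\infty}\to \mathrm{Fl}_{G,\mu}$ to cover the whole space. For general $K/\mathbb{Q}_p$ this requires setting up the appropriate integral PEL datum over $\mc{O}_K$ and developing an $\mc{O}_K$-linear theory of anticanonical subgroups of the relevant $\varpi$-divisible modules.

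Third, since $P(K)$ is the $\GL_n(K)$-stabilizer of a $K$-rational point $x_0\in \mathbb{P}^{n-1}(K)$ in the appropriate factor of $\mathrm{Fl}_{G,\mu}$, the equivariance of $\pi_{HT}$ lets one form $\ms{X}_{U^p,\infty}/P(K)$ as a perfectoid space by descent along $\pi_{HT}$, adapting the $n=2$ strategy of \cite{lud}. Pulling this back through the uniformization of step one then yields $\mc{M}_\mbf{1}/P(K)$ as a perfectoid space; the Krull dimension is $n-1$ because $P(K)$ acts with essentially discrete orbits and therefore preserves the dimension of the $(n-1)$-dimensional Lubin--Tate space. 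The main obstacle will be step two: establishing perfectoidness of the infinite-level Harris--Taylor tower for arbitrary finite $K/\mathbb{Q}_p$ and arbitrary $n$ — existing results typically treat $K=\mathbb{Q}_p$ or specific Siegel-type situations, and the $\mc{O}_K$-linear anticanonical tower must be set up delicately for the tilting argument to go through.
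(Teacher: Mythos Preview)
Your overall strategy matches the paper's: globalize via Harris--Taylor Shimura varieties, build an $\ok$-linear anticanonical tower to get perfectoidness at infinite level, and use the Hodge--Tate period map to organize the argument. Your identification of step two as the main technical obstacle is also correct, and the paper indeed develops $\mu$-ordinary Hasse invariants and canonical subgroups for compatible one-dimensional Barsotti--Tate $\ok$-modules precisely to carry this out.

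However, step three contains real gaps. First, the paper never forms a global quotient $\mc{X}_{\mbf{1}}/P(K)$; only $\mc{X}^{comp}_{P(\ok)}$, the complementary locus at the \emph{compact open} level $P(\ok)$, is shown to be perfectoid. This is not done by ``descent along $\pi_{HT}$'' but by a translate argument: one covers $\mc{X}^{comp}_{\mbf{1}}$ by $\mc{X}_{\mbf{1}}(\epsilon)_a\gamma^k$ for $\gamma=\mathrm{diag}(\vp,\dots,\vp,1)\in P(K)$, and since $\gamma^k P(\ok)\gamma^{-k}\subset P(\ok)$ has finite index, each translate becomes perfectoid after quotienting by $P(\ok)$ via a finite \'etale map to the perfectoid anticanonical piece. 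Your ``$P(K)$ stabilizes a $K$-rational point'' observation is not directly usable: the image of the Lubin--Tate space under $\pi_{HT}$ is $\Omega^{n-1}$, which contains no $K$-rational points, so nothing like descent to a point occurs.

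Second, the passage back to $\mc{M}_{\mbf{1}}/P(K)$ is not a pullback through Rapoport--Zink uniformization. One first obtains $\mc{M}^{(0)}_{P(\ok)}$ as an open perfectoid subspace of $\mc{X}^{comp}_{P(\ok)}$ (via the open immersion coming from a single supersingular point, not a $\Gamma_i$-quotient), and then uses the \emph{Gross--Hopkins} period map, which is \'etale with local sections, together with the pushout construction $pr^{-1}(U)\times^{P(\ok)}P'$ from \cite{lud}, to go from the compact $P(\ok)$-quotient to the full $P(K)$-quotient. The two period maps play distinct roles, and your sketch conflates them.

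Finally, the Krull dimension argument is wrong as stated: $P(K)$ is a non-compact $p$-adic Lie group and does not act with ``essentially discrete orbits'' on $\mc{M}_{\mbf{1}}$. The paper bounds the dimension from above by writing $\mc{M}_{\mbf{1}}$ as an inverse limit of $(n-1)$-dimensional rigid spaces, and from below by a general argument on specialization chains in spectral quotients.
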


Here we take the quotient in Huber's category $\mc{V}$ of locally v-ringed spaces, as in \cite{lud}. The construction of the perfectoid structure on $\mc{M}_{P(K)}$ follows the strategy via globalisation from \cite{lud}, where the quotient was constructed in the case when $n=2$ and $K=\mathbb{Q}_p$. In that case, modular curves were used to globalise and one could rely on the perfectoidness results of \cite{tor}. For our generalisation we make use of the Shimura varieties studied by Harris--Taylor in their proof of the local Langlands correspondence for $\GL_n$ \cite{ht}, and this necessitates some new perfectoidness results.

\medskip

Let us now describe the strategy of \cite{lud} and this paper in slightly more detail; the reader may also consult the introduction to \cite{lud}. The space $\mc{M}_\mbf{1}$ has a $\GL_n(\mc{O}_K)$-equivariant decomposition $\mc{M}_\mbf{1} \cong \bigsqcup_{i\in \mathbb{Z}} \mc{M}^{(i)}_\mbf{1}$ into pairwise isomorphic spaces (coming from the decomposition of the Lubin--Tate space at level $0$ into connected components). As in \cite{lud} we reduce the construction of $\mc{M}_\mbf{1}/P(K)$ to the construction of $\mc{M}^{(0)}_\mbf{1}/P(\mc{O}_K)$ using the geometry of the Gross--Hopkins period map. We can realize $\mc{M}^{(0)}_\mbf{1}$ as an open subspace of a certain infinite level perfectoid Harris--Taylor Shimura variety $\mc{X}_\mbf{1}$. The image lands inside what we call the ``complementary locus'' $\mc{X}^{comp}_\mbf{1}$, which is a subspace of $\mc{X}_\mbf{1}$ defined in terms of the Hodge--Tate period map. We show that the quotient $\mc{X}^{comp}_\mbf{1}/P(\mc{O}_K)$ exists and is perfectoid, and existence and perfectoidness of $\mc{M}^{(0)}_\mbf{1}/P(\mc{O}_K)$ is then a direct consequence. The main ingredient of the proof is the construction of a perfectoid overconvergent anticanonical tower for our Harris--Taylor Shimura varieties (analogous to \cite[Corollary 3.2.20]{tor}), and this forms the technical heart of this paper.

\medskip

Theorem \ref{A} has the following application. Let $D^\times$ be the group of units in the central division algebra $D$ over $K$ with invariant $1/n$. In \cite{plt}, Scholze constructs a functor that is expected to be simultaneously related to a conjectural mod $p$ local Langlands correspondence for the group $\GL_n(K)$ and an equally conjectural mod $p$ Jacquet--Langlands transfer between $\GL_n(K)$ and $D^\times$. 
For any admissible smooth representation $\pi$ of $\GL_n(K)$ on a $\mathbb{F}_p$-vector space, Scholze constructs an \'etale sheaf $\mc{F}_\pi$ on $\mathbb{P}^{n-1}$ using the Gross--Hopkins period morphism $\mc{M}_\mbf{1} \rightarrow \mathbb{P}^{n-1}$. The cohomology groups 
\[\mathcal{S}^i(\pi):= H_\et^i(\mathbb{P}^{n-1},\mc{F}_\pi), \ i \geq 0,\]
are admissible $D^\times$-representations which carry an action of $\Gal(\overline{K}/K)$ and vanish in degree $i > 2(n-1)$ (\cite[Theorem 1.1]{plt}). As an application of the construction of $\mc{M}_{P(K)}$ we prove the following vanishing result.

\begin{thmx}[Theorem \ref{vanish}]\label{thm B} Let $P^*\subset \GL_n$ be a parabolic subgroup contained in $P$ and let $\sigma$ be a smooth admissible representation of $P^*(K)$ with parabolic induction $\pi:= \mathrm{Ind}^{\GL_n(K)}_{P^*(K)}{\sigma}$ to $\GL_n(K)$. Then
\[\mathcal{S}^i(\pi)=0 \ \text{ for all } i> n-1.\]
\end{thmx}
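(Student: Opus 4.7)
The plan is to reduce the theorem to a cohomological dimension bound on $\mc{M}_{P(K)}$, as provided by Theorem~\ref{A}, by transferring coefficients to the quotient via Shapiro-type reciprocity, in analogy with the $n=2$ case treated in \cite{lud}.

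First I would use transitivity of parabolic induction to write
$$\pi = \mathrm{Ind}^{\GL_n(K)}_{P^*(K)} \sigma = \mathrm{Ind}^{\GL_n(K)}_{P(K)} \tau, \qquad \tau := \mathrm{Ind}^{P(K)}_{P^*(K)} \sigma,$$
where $\tau$ remains smooth admissible since $P^*(K) \backslash P(K)$ is compact. This reduces us to the case $P^* = P$. Next, unpacking Scholze's construction of $\mc{F}_\pi$ from \cite{plt}, I would use Frobenius reciprocity to produce a natural identification
$$H^i_\et(\mathbb{P}^{n-1}, \mc{F}_\pi) \;\cong\; H^i_\et(\mc{M}_{P(K)}, \wt{\tau}),$$
where $\wt{\tau}$ is the $\mathbb{F}_p$-sheaf on $\mc{M}_{P(K)}$ descended from the $P(K)$-equivariant sheaf on $\mc{M}_\mbf{1}$ attached to $\tau$. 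The underlying mechanism is that $\mc{F}_\pi$ records $\GL_n(K)$-equivariant \'etale-cohomological data on $\mc{M}_\mbf{1}$ with coefficients in $\pi$, and the induced structure of $\pi$ converts $\GL_n(K)$-equivariance into $P(K)$-equivariance with coefficients in $\tau$, which by descent is simply \'etale cohomology on the quotient $\mc{M}_{P(K)}$. A first sanity check at the level of stalks: for a geometric point $\bar x \to \mathbb{P}^{n-1}$, Frobenius reciprocity identifies smooth $\GL_n(K)$-maps from the locally constant $\mathbb{F}_p$-sheaf on $\pi_{GH}^{-1}(\bar x)$ to $\mathrm{Ind}^{\GL_n(K)}_{P(K)}\tau$ with smooth $P(K)$-maps to $\tau$.

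Finally, since Theorem~\ref{A} produces $\mc{M}_{P(K)}$ as a perfectoid space of Krull dimension $n-1$ over $C$, an Artin-type vanishing theorem for $\mathbb{F}_p$-sheaves on perfectoid spaces (in the spirit of Scholze's bound for affinoid perfectoids of a given dimension, applied via a suitable affinoid perfectoid cover of $\mc{M}_{P(K)}$) yields $H^i_\et(\mc{M}_{P(K)}, \wt{\tau}) = 0$ for $i > n-1$, as required. The main obstacle I expect is the middle step: setting up the equivariant \'etale sheaf theory so that the descended sheaf $\wt{\tau}$ on $\mc{M}_{P(K)}$ is well defined and its cohomology matches that of $\mc{F}_\pi$ in all degrees, bearing in mind that the quotient $\mc{M}_{P(K)}$ is constructed in Huber's category $\mc{V}$ rather than as a naive geometric quotient, so that the reciprocity argument has to be promoted from stalks to a spectral-sequence statement in a careful manner.
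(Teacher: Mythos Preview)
Your first two steps are essentially the paper's own argument: the reduction to $P^*=P$ by transitivity is exactly how the paper begins, and your ``Shapiro-type'' identification is realised in the paper as the combination of the isomorphism $\ol{\pi}_{GH,*}\mc{F}_\sigma \cong \mc{F}_\pi$ (proved just as in \cite[Lemma 4.3]{lud}) together with the fact that $\ol{\pi}_{GH}$ has profinite geometric fibres, so $R^j\ol{\pi}_{GH,*}=0$ for $j>0$ and hence $H^i_\et(\Pro,\mc{F}_\pi)\cong H^i_\et(\mc{M}_{P(K)},\mc{F}_\sigma)$. In fact the ``main obstacle'' you anticipate in step~2 is not where the difficulty lies.

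The genuine gap is step~3. There is no off-the-shelf Artin-type vanishing for $\F_p$-\'etale cohomology on perfectoid spaces in the form you invoke: for an affinoid perfectoid $U$ over $C$, it is \emph{not} true in general that $H^i_\et(U,\mc{G})=0$ for $i>0$ and $\mc{G}$ an $\F_p$-sheaf, so a \v{C}ech argument with an affinoid perfectoid cover does not directly bound the cohomological dimension. What perfectoidness does give is almost-vanishing of $\oo^+/p$-cohomology on affinoid perfectoids. The paper therefore inserts an extra step: one first tensors with $\oo^+/p$, using \cite[Theorem 3.2]{plt} to pass from $H^i_\et(\Pro,\mc{F}_\pi)\otimes\oo^+/p$ to $H^i_\et(\Pro,\mc{F}_\pi\otimes\oo^+/p)$, and then shows (Proposition~\ref{etan}) that on $\mc{M}_{P(K)}$ the sheaf $\mc{F}_\sigma\otimes\oo^+/p$ has almost-zero higher direct images along the projection $\lambda$ to the underlying topological space, precisely because affinoid perfectoid opens kill the higher $\oo^+/p$-cohomology. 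This reduces the question to \emph{analytic} sheaf cohomology on the spectral space $|\mc{M}_{P(K)}|$, where Scheiderer's general bound \cite[Theorem 4.5]{dim} on the cohomological dimension of a spectral space of Krull dimension $n-1$ gives the vanishing. So the perfectoidness of $\mc{M}_{P(K)}$ is used not as an input to an \'etale dimension bound, but as the bridge from \'etale to analytic cohomology; the dimension bound itself is purely topological.
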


This theorem generalises \cite[Theorem 4.6]{lud}, which is the special case when $n=2$, $K=\Qp$ and $\sigma$ is a character.

\medskip

The paper also features an appendix, written by David Hansen, in which the space $\mc{M}_{P(K)}$ is studied in Scholze's category of diamonds from a purely local point of view, using the moduli-theoretic description of $\mc{M}_\mbf{1}$ due to Scholze--Weinstein in terms of vector bundles on the Fargues--Fontaine curve. The main results of the appendix can be summarized as follows; we refer to the introduction to the appendix for further details.

\begin{thmx}[Hansen; Corollary \ref{0.4} and Theorem \ref{0.5}]\label{C} Let $Q \subset \GL_n$ be a standard (block upper triangular) maximal parabolic subgroup of $\GL_n$. Then the diamond quotient $\mc{M}_\mbf{1}/\ul{Q(K)}$ is proper and $\ell$-cohomologically smooth (in the sense of \cite{dia}) for all primes $\ell \neq p$, but not a perfectoid space if $Q \neq P$. Moreover, in the special case $n=2$, Theorem \ref{A} may be proved by purely local methods\footnote{We remark that if either $\mc{M}_{P(K)}$ or $\mc{M}_\mbf{1}/\ul{P(K)}$ is perfectoid, then they are equal. In particular, Theorems \ref{A} and \ref{C} a posteriori concern the same space (when $P=Q$).}.
\end{thmx}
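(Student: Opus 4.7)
The plan is to use the Scholze--Weinstein moduli-theoretic description, which realises $\mc{M}_\mbf{1}$ as the infinite-level local Shimura variety parametrising, over a perfectoid $S/\Spa C$, modifications $\mc{F}_b \dashrightarrow \mc{F}_1 = \mc{O}_{X_S}^n$ on the Fargues--Fontaine curve $X_S$ of type $\mu = (1,0,\ldots,0)$, together with a trivialisation of the target; the $\ul{\GL_n(K)}$-action is through this trivialisation. For our basic, minuscule datum the Gross--Hopkins period map identifies $\mc{M}_\mbf{1}/\ul{\GL_n(K)}$ with the full $\mu$-affine Grassmannian diamond $(\mathbb{P}^{n-1}_{C})^{\diamond}$, which is proper and $\ell$-cohomologically smooth of dimension $n-1$. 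Hence we have a fibration
\[\mc{M}_\mbf{1}/\ul{Q(K)} \;\cong\; \mc{M}_\mbf{1}\times^{\ul{\GL_n(K)}}\ul{\GL_n(K)/Q(K)}\;\longrightarrow\;(\mathbb{P}^{n-1}_{C})^{\diamond}\]
with fibre $\ul{\GL_n(K)/Q(K)}$. Since $\GL_n(K)/Q(K)$ is the compact, totally disconnected set of $K$-points of the projective variety $\GL_n/Q$, its associated diamond is proper and $\ell$-cohomologically smooth of dimension $0$; properness and $\ell$-cohomological smoothness of $\mc{M}_\mbf{1}/\ul{Q(K)}$ then descend along this fibre bundle.

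The main obstacle is non-perfectoidness for $Q$ not conjugate to $P$. I would exploit the $\ul{\GL_n(K)}$-equivariant Hodge--Tate period map $\pi_{HT}: \mc{M}_\mbf{1}\to (\mathbb{P}^{n-1}_{C})^{\diamond}$, which descends to a morphism
\[\overline{\pi}_{HT}: \mc{M}_\mbf{1}/\ul{Q(K)}\longrightarrow (\mathbb{P}^{n-1}_{C})^{\diamond}/\ul{Q(K)}.\]
Because the Hodge--Tate filtration is of type $\mu=(1,0,\ldots,0)$, the stabiliser in $\GL_n(C)$ of a generic $C$-line of the filtration is conjugate to $P$; only when $Q$ is conjugate to $P$ does the $\ul{Q(K)}$-action on $(\mathbb{P}^{n-1}_{C})^{\diamond}$ admit a dense open orbit, and this open-orbit condition is precisely what allows $\mc{M}_\mbf{1}/\ul{P(K)}$ to be perfectoid. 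For $Q$ of Levi type $(k,n-k)$ with $2\leq k\leq n-2$, every $Q(K)$-orbit on $\mathbb{P}^{n-1}(C)$ has positive codimension, so the target $(\mathbb{P}^{n-1}_{C})^{\diamond}/\ul{Q(K)}$ retains strictly positive classical rigid-analytic dimension. Any perfectoid space over $\Spa C$ equipped with a non-constant morphism to a positive-dimensional rigid-analytic space runs into a contradiction via vanishing of K\"ahler differentials in the perfectoid setting, so it suffices to verify, by a direct tangent-space computation at a generic point using the decomposition $\mathfrak{gl}_n = \Lie(Q)\oplus \Lie(\bar{N}_Q)$ with $\bar{N}_Q$ the opposite unipotent radical, that $\overline{\pi}_{HT}$ is not locally constant. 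This transversality input is the technical heart of the non-perfectoidness claim.

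Finally, the local proof of Theorem \ref{A} for $n=2$ proceeds in the same moduli language. For $n=2$, $\mc{M}_\mbf{1}/\ul{P(K)}$ classifies a type-$\mu$ modification of the slope-$1/2$ bundle $\mc{F}_b$ equipped with a line in the trivial rank-$2$ target, and the Gross--Hopkins and Hodge--Tate pictures become especially transparent. One identifies this quotient explicitly with a natural perfectoidification of a rank-one Banach--Colmez-type tower fibred over $(\mathbb{P}^{1}_{C})^{\diamond}$, whose perfectoid structure is then visible directly from coordinates on the Lubin--Tate side, bypassing the modular-curve globalisation used in \cite{lud}.
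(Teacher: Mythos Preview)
Your argument for properness via the Gross--Hopkins fibration with profinite fibre $\ul{\GL_n(K)/Q(K)}$ is essentially the paper's (Proposition \ref{0.9}). Your route to $\ell$-cohomological smoothness is different: the paper instead establishes a moduli description (Propositions \ref{0.10}--\ref{0.11}) identifying $\mc{M}_\mbf{1}/\ul{P_{n-d,d}(K)}$ with the quotient of $\mc{S}\mathrm{urj}(\mc{O}(1/n),\mc{O}(1/d)) \times_{\Spd C} \mb{P}_C^{d-1,\lozenge}$ by a diagonal $\ul{D_{1/d}^\times}$-action, and then checks smoothness of each factor directly (the first being an open perfectoid ball). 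Your fibre-bundle approach is plausible, though you should say explicitly why a profinite-set diamond over a point is $\ell$-cohomologically smooth and invoke v-descent of smoothness.

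The non-perfectoidness argument, however, has a genuine gap. Your key claim --- that a perfectoid space cannot admit a non-constant map to a positive-dimensional rigid analytic space, by ``vanishing of K\"ahler differentials'' --- is false. Perfectoid spaces routinely surject onto rigid varieties: the infinite-level Lubin--Tate space itself maps to every finite level, and the perfectoid closed unit disc maps to the rigid closed unit disc. Vanishing of continuous differentials on a perfectoid source places no constraint on the target. So the Hodge--Tate period map gives no obstruction of the kind you describe.

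The paper's mechanism is the opposite one: rather than mapping \emph{out} to something non-perfectoid, it finds a non-perfectoid space sitting \emph{inside}. The moduli description above exhibits $\mc{S}\mathrm{urj}(\mc{O}(1/n),\mc{O}(1/d)) \times_{\Spd C} \mb{P}_C^{d-1,\lozenge}$ as a $\ul{D_{1/d}^\times}$-torsor over $\mc{M}_\mbf{1}/\ul{P_{n-d,d}(K)}$. If the base were perfectoid, so would be this torsor; but then pulling back along any $\Spd C' \to \mc{S}\mathrm{urj}(\mc{O}(1/n),\mc{O}(1/d))$ would force $\mb{P}_{C'}^{d-1,\lozenge}$ to be perfectoid, which is absurd for $d>1$. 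Note also that your case analysis ``$2\le k\le n-2$'' omits $Q=P_{1,n-1}$, which is not conjugate to $P=P_{n-1,1}$ and is precisely the case $d=n-1$ in the paper's notation.

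Your sketch for $n=2$ is too vague to assess. The paper's argument is quite concrete: it identifies $\mc{M}_\mbf{1}/\ul{P(K)}$ with $(\Spd L^\flat \times_{\Spd \F_{q^2}} \Spd C)/(\varphi\times\mathrm{id})^{\Z}$ for an explicit perfectoid field $L = \tilde{E}^{\mc{O}_K^\times}$ built from Lubin--Tate theory for the unramified quadratic extension $E/K$, and then appeals to representability of such products (Proposition \ref{0.15}).
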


\medskip

Let us now describe the contents of this paper. Sections \ref{sec2} and \ref{sec3} are devoted to proving the perfectoidness results for the Harris--Taylor Shimura vartieties that we need. While it might be possible to deduce what we need from \cite{tor}, certain technicalities made such an approach seem very cumbersome and unsatisfactory to us. We have therefore elected to construct the anticanonical tower in the Harris--Taylor setting directly, following the approach in \cite{tor} (simplified by the absence of a boundary). Scholze's approach relies on an integral theory of canonical subgroups and on the Hasse invariant, so we need a version of these notions for our Harris--Taylor Shimura varieties (which have empty ordinary locus in general). Section \ref{sec2} develops a theory of $\mu$-ordinary Hasse invariants and canonical subgroups for one-dimensional compatible Barsotti--Tate $\mc{O}_K$-modules $\mathcal{G}/S$ of height $n$, where $S$ is a $k$-scheme. We use a Hasse invariant due to Ito \cite{ito1,ito2} which turns out to be perfect for adapting Scholze's approach to canonical subgroups based on Illusie's deformation theory for group schemes \cite{ill}. We refer to Remark \ref{strata} for further discussion of the Hasse invariants used in this paper.

\medskip

Equipped with a theory of canonical subgroups, Section \ref{sec3} proceeds to construct the $\epsilon$-neighbourhoods of the anticanonical tower in our setting. It is a tower of formal schemes $(\widehat{\mathfrak{X}}(\epsilon)_{m,a})_{m\geq 0}$ whose generic fibres $\mc{X}(\epsilon)_{m,a}$ embed into the adic Shimura varieties $\mc{X}_{U_0(\varpi^m)}$, where the level at the important prime is  $ U_{0}(\vp^{m}) := \{ g\in \GL_{n}(\ok) \mid g\,\, {\rm mod}\,\, \vp^{m} \in P(\ok/\vp^{m}) \}$; we refer to the main body of the paper for precise definitions. In the limit we get a perfectoid space (Theorem \ref{epsilonnhood}). This then allows us to prove the analogues of the main geometric results of \cite{tor}, importantly including the construction of a Hodge--Tate period map $\pi_{HT}:\mc{X}_\mbf{1} \ra \Pro$ (see Theorem \ref{full}). For this we have found it convenient to use the language of diamonds \cite{dia}. We end Section \ref{sec3} by using the geometry of the Hodge--Tate period map to show that the quotient $\mc{X}_\mbf{1}^{comp}/P(\mc{O}_K)$ of the complementary locus is perfectoid (Theorem \ref{comp locus at gamma 0 level}).

\medskip

Section \ref{sec:LT} then uses the results of Section \ref{sec3} to prove Theorem \ref{A} and deduce some properties of the space $\mc{M}_{P(K)}$. The Gross--Hopkins period map plays a prominent role in the proofs, and it induces a quasicompact map $\ol{\pi}_{GH} : \mc{M}_{P(K)} \ra \Pro$.

\medskip

The main part of the paper then finishes with section \ref{sec:app}, which proves Theorem \ref{thm B}. The calculations follow the same path as Section 4 of \cite{lud}, the idea being that pushforward along the map $\ol{\pi}_{GH} : \mc{M}_{P(K)} \ra \Pro$ is a geometric realisation of the parabolic induction functor, so \'etale cohomology of $\mc{F}_\pi$ on $\Pro$ is equal to \'etale cohomology of an analogously defined sheaf $\mc{F}_\sigma$ on $\mc{M}_{P(K)}$. For the reader familiar with \cite{lud}, we mention that our argument deviates somewhat from that of \cite{lud}. The most important point is that, by invoking a general result of Scheiderer \cite{dim} on the cohomological dimension of spectral spaces, it suffices for us to relate the \'etale cohomology of $\mc{F}_\sigma$ on $\mc{M}_{P(K)}$ to an analytic cohomology group on $\mc{M}_{P(K)}$. In \cite{lud} it was instead related to an analytic cohomology group on $\mathbb{P}^{1}$, which necessitated the study of the fibres of $\ol{\pi}_{GH}$. Moreover, to deal with the fact that $\sigma$ will typically be infinite-dimensional, we use some additional limit arguments. 

\medskip

The paper then finishes with Hansen's appendix; we refer to its introduction for a detailed overview of its results and methods.

\medskip

\subsection*{Acknowledgments} The authors wish to thank David Hansen for his interest in our work and for a number of discussions relating to it, and for generously agreeing to include his local study of $\mc{M}_{P(K)}$ in an appendix to our paper. The authors would furthermore like to thank Florian Herzig and Vytautas Pa\v{s}k\={u}nas for helpful conversations, and an anonymous referee for useful comments and corrections. C.J. also wishes to thank Daniel Gulotta, Chi-Yun Hsu, Lucia Mocz, Emanuel Reinecke, Sheng-Chi Shih and especially Ana Caraiani for all discussions relating to \cite{arizona}, which have had a large influence on this paper. C.J. was supported by the Herchel Smith Foundation for part of the work on this paper. J.L.\ was supported by the Max Planck Institute for Mathematics and the IWR Heidelberg. 

\section{Hasse invariants and canonical subgroups}\label{sec2}

\subsection{Global setup}

We start by introducing some notation which will be in place throughout the paper. Fix, once and for all, a prime $p$ and an integer $n\geq 2$. We also fix a finite extension $K/\Qp$ with ring of integers $\ok$, uniformizer $\varpi$, residue field $k$, ramification index $e$, and inertia degree $f$. Set $q=p^{f}$. As in \cite{ht}, we choose a totally real field $F^{+}$ of degree $d$, with primes $v=v_{1},v_{2},\dots,v_{r}$ above $p$, such that $F^{+}_{v}\cong K$ (we fix such an isomorphism and think of it as an equality). We then choose an imaginary quadratic field $E$ in which $p$ splits as $p=uu^{c}$, where $c$ denotes complex conjugation, and let $F=EF^{+}$; this is a CM field. We let $w_{i}$, $i=1,\dots,r$, denote the unique prime in $F$ above $u$ and $v_{i}$, and put $w=w_{1}$. 

\medskip
Let us now recall the setup of \cite{ht}, to which we refer for more details. Following \cite[\S I.7]{ht}, we let $B/F$ denote a central division algebra of dimension $n^2$ such that

\begin{itemize}
\item The opposite algebra $B^{op}$ is isomorphic to $B\otimes_{F,c}F$;

\item $B$ is split at $w$;

\item if $x$ is a place of $F$ whose restriction to $F^+$ does not split in $F/F^{+}$, $B_{x}$ is split;

\item if $x$ is a place of $F$ whose restriction to $F^+$ splits in $F/F^{+}$, $B_{x}$ is either split or a division algebra;

\item if $n$ is even, then the number of finite places of $F^{+}$ above which $B$ is ramified is congruent to $1+dn/2$ modulo $2$.
\end{itemize}

Choose an involution $\ast$ of the second kind on $B$. Let $V=B$ and consider it as a $B\otimes_{F}B^{op}$-module. For any $\beta\in B$ with $\beta^{\ast}=-\beta$, we can define an alternating $\ast$-Hermitian pairing $V\times V \ra \Q$ by
$$ (x,y)={\rm tr}_{F}{\rm tr}_{B/F}(x\beta y^{\ast}) $$
where ${\rm tr}_{B/F}$ denotes the reduced trace. We fix a $\beta \in B$ with $\beta^{\ast}=-\beta$. We define another involution $\#$ of the second kind on $V$ by $x^{\#}= \beta x^{\ast} \beta^{-1}$. We let $G/\Q$ be the reductive group with the functor of points ($R$ any $\Q$-algebra)
$$ G(R)=\left\{(g,\lambda) \in (B^{op}\otimes_{\Q}R)^{\times} \times R^{\times} \mid gg^{\#} = \lambda \right\}. $$
The map $(g,\lambda) \mapsto \lambda $ defines a homomorphism $\nu :G \ra \mb{G}_{m}$ (the similitude factor) and we denote its kernel by $G_{1}$. If $x$ is a prime in $\Q$ which splits as $x=yy^{c}$ in $E$, then $y$ induces an isomorphism $G(\Q_{x})\cong (B_{y}^{op})^{\times} \times \Q_{x}^{\times}$. In particular, $u$ induces an isomorphism
$$ G(\Q_{p}) \cong (B_{u}^{op})^{\times} \times \Qp^{\times} \cong \Qp^{\times} \times \prod_{i=1}^{r}(B_{w_{i}}^{op})^{\times}. $$
We will assume (see \cite[Lemma I.7.1]{ht} and the discussion following it; we assume that $\beta$ is chosen so that this applies) that
\begin{itemize}
\item if $x$ is a prime in $\Q$ which does not split in $E$, then $G\times \Q_{x}$ is quasi-split;
\item the pairing $(-,-)$ on $V\otimes_{\Q}\R$ has invariants $(1,n-1)$ at one embedding $F^{+} \hookrightarrow \R$ and $(0,n)$ at all other embeddings $F^{+} \hookrightarrow \R$. 
\end{itemize}

Next, fix a maximal order $\Lambda_{i}=\oo_{B,w_{i}}\sub B_{w_{i}}$ for each $i=1,\dots,r$. The pairing $(-,-)$ gives a perfect duality between $V_{w_{i}}:=V_{\otimes_{F}}F_{w_{i}}$ and $V_{w_{i}^{c}}$, and we let $\Lambda_{i}^{\vee}\sub V_{w_{i}^{c}}$ denote the dual of $\Lambda_{i}$. We get a $\Zp$-lattice
$$ \Lambda = \bigoplus_{i=1}^{r}\Lambda_{i} \oplus \bigoplus_{i=1}^{r}\Lambda_{i}^{\vee} \sub V\otimes_{\Q}\Qp. $$
and $(-,-)$ restricts to a perfect pairing $\Lambda \times \Lambda \ra \Zp$. We fix an isomorphism $\oo_{B,w}\cong M_{n}(\oo_{K})$, and we compose it with the transpose map to get an isomorphism $\oo_{B,w}^{op}\cong M_{n}(\ok)$. If $\epsilon\in M_{n}(\oo_{F,w})$ is the idempotent which has $1$ in the $(1,1)$-entry and $0$ everywhere else; $\epsilon$ induces an isomorphism 
$$ \Lambda_{11}:=\epsilon \oo_{B,w} \cong (\oo_{K}^{n})^{\vee}. $$
Finally, we let $\oo_{B}$ denote the unique maximal $\Z_{(p)}$-order in $B$ which localizes to $\oo_{B,w_{i}}$ for all $i$ and satisfies $\oo_{B}^{\ast}=\oo_{B}$ (see \cite[p. 56-57]{ht} for further discussion).

\medskip
Let us now recall the integral models of the Shimura varieties for $G$; we refer to \cite[\S III.4]{ht} for more details. We remark that we will only need integral models in the case $m_1=0$ below, when the models are smooth, but we recall the definitions in the general case. If $S$ is an $\ok$-scheme and $A/S$ is an abelian scheme with an injective homomorphism $i : \oo_{B} \hookrightarrow \End(A)\otimes_{\Z}{\Z_{(p)}}$, we write $\G_{A}$ for the $p$-divisible group
$$ \G_{A} := \epsilon A[\varpi^{\infty}]. $$
Fix a sufficiently small compact open subgroup $U^p \sub G(\A^{\infty,p})$ and a tuple $\ul{m}=(m_{1},\dots,m_{r})\in \Z_{\geq 0}^{r}$. The moduli functor $\Xf_{\ul{m}}$ (we suppress $U^{p}$ from the notation) is defined as follows:
If $S$ is a connected locally noetherian $\ok$-scheme and $s$ is a geometric point of $S$, $\Xf_{\ul{m}}$ is the set of equivalence classes of $(r+4)$-tuples $(A,\lambda,i,\ol{\eta}^p,\alpha_{i})$ where
\begin{itemize}
\item $A/S$ is an abelian scheme of dimension $dn^2$;

\item $\lambda : A \ra A^{\vee}$ is a prime-to-$p$ polarization;

\item $i : \oo_{B} \hookrightarrow \End(A)\otimes_{\Z}\Z_{(p)}$ is a homomorphism such that $(A,i)$ is compatible and $\lambda \circ i(b) = i(b^{\ast})^{\vee} \circ \lambda$ for all $b\in \oo_B$;

\item $\ol{\eta}^{p}$ is a $\pi_{1}(S,s)$-invariant $U^p$-orbit of isomorphisms of $B\otimes_{\Q}\A^{\infty,p}$-modules $\eta^p : V \otimes_{\Q}\A^{\infty,p} \ra V^{p}A_{s}$ which take the pairing $(-,-)$  on $V\otimes_{\Q}\A^{\infty,p}$ to a $(\A^{\infty,p})^{\times}$-multiple of the $\lambda$-Weil pairing on $V^{p}A_{s}$;

\item $\alpha_{1} : \varpi^{-m_{1}}\Lambda_{11}/\Lambda_{11} \ra \G_{A}[\varpi^{m_{1}}]$ is a Drinfeld $\varpi^{m_{1}}$-level structure;

\item for $i=2,\dots,r$, $\alpha_{i} : (\varpi_{i}^{-m_{i}}\Lambda_{i}/\Lambda_{i})_{S} \ra A[\varpi_{i}^{m_{i}}]$ is an isomorphism of $S$-schemes with $\oo_B$-actions.
\end{itemize}
Here $\varpi=\varpi_{1},\dots,\varpi_{r}$ are uniformizers of $\oo_{F,w_{i}}$. Two $(r+4)$-tuples $(A,\lambda,i,\ol{\eta}^{p},\alpha_{i})$ and $(A^{\prime},\lambda^{\prime},i^{\prime},(\ol{\eta}^{p})^{\prime},\alpha_{i}^{\prime})$ are equivalent if there is a prime-to-$p$ isogeny $\delta : A \ra A^{\prime}$ and a $\gamma \in \Z_{(p)}^{\times}$ such that $\delta$ carries $\lambda$ to $\gamma \lambda^{\prime}$, $i$ to $i^{\prime}$, $\ol{\eta}^{p}$ to $(\ol{\eta}^{p})^{\prime}$, and $\alpha_{i}$ to $\alpha^{\prime}_{i}$. $\Xf_{\ul{m}}(S,s)$ is canonically independent of the choice of $s$, and we get a functor on all locally noetherian $\ok$-schemes by requiring that
$$ \Xf_{\ul{m}}\left( \coprod_{i} S_{i} \right) = \prod_{i} \Xf_{\ul{m}}(S_{i}). $$
This functor is representable by a projective scheme over $\ok$, which is smooth when $m_{1}=0$. By abuse of notation, we will denote it by $\Xf_{\ul{m}}$. If $\ul{m}^{\prime}\geq \ul{m}$ (by which we mean $m_{i}^{\prime} \geq m_{i}$ for all $i$), then the natural map $\Xf_{\ul{m}^{\prime}} \ra \Xf_{\ul{m}}$ is finite and flat; moreover it is \'etale if $m_{1}^{\prime}=m_{1}$. See \cite[pp. 109--112]{ht}. We will denote the special fibre of $\Xf_{\ul{m}}$ by $\Xb_{\ul{m}}$, and the generic fibre by $X_{\ul{m}}$. Over $\Xb_{\ul{m}}$, we have a universal abelian scheme $\ol{A}_{\ul{m}}$ and the associated Barsotti--Tate $\ok$-module $\G_{\ol{A}_{\ul{m}}}$, which we will denote by $\G_{\ul{m}}$ or just $\G$ if the context is clear. One defines a locally closed subscheme $\Xb_{\ul{m}}^{(h)}$ by requiring that the \'etale part $\G^{et}$ of $\G$ has constant $\ok$-height $h$, where $0\leq h \leq n-1$. Then $\Xb_{\ul{m}}^{(h)}$ is smooth of pure dimension $h$ \cite[Corollary III.4.4]{ht}.

\subsection{Hasse invariants}

In this section, we let $S$ be a scheme over $k$ and we let $\G/S$ be a compatible Barsotti--Tate $\ok$-module of dimension $1$ and height $n$ (throughout this article, heights are $\ok$-heights unless otherwise specified). Let us briefly recall the notion of compatibility, referring to \cite[p. 59]{ht} for more details. The Lie algebra $\Lie \G$ of $\G$ is locally free $\oo_S$-module and a priori carries two actions of $\ok$. One comes from the $\ok$-action map $\ok \to \End_S(\G)$, and the second action comes from the natural map $\ok \to k \to \oo_S$ together with the $\oo_S$-module structure on $\Lie \G$. Compatibility then means that these two $\ok$-actions agree. The goal of this section is to define a so-called $\mu$-ordinary Hasse invariant for $\G/S$. The topic of generalized Hasse invariants has received a lot of attention recently. In the case of $\mu$-ordinary Hasse invariants we mention the works \cite{gn,kw,he,bh}; moreover the works \cite{gk,box} construct generalized Hasse invariants on all Ekedahl--Oort strata (in the cases when they apply). In particular, $\mu$-ordinary Hasse invariants have been defined in large generality (including the cases needed here) by Bijakowski and Hernandez \cite{bh}. We have nevertheless opted for a direct approach. It should be noted that `Hasse invariants', as the term exists in the literature, are not unique. The definition given here is chosen because it is very well suited for adapting Scholze's approach to the canonical subgroup to the situation of our Harris--Taylor Shimura varieties, which is the topic of the next subsection. After writing a first draft of this section, we learnt that the definition of a $\mu$-ordinary Hasse invariant we give here was first given by Ito \cite{ito1, ito2}. Since we are not aware of any detailed account of Ito's construction, we give our construction (it seems very likely that they are the same, judging from the sketch in \cite{ito1}). Ito did not only construct a $\mu$-ordinary Hasse invariant but also `strata' Hasse invariants on Harris--Taylor Shimura varieties, and the construction below can easily be adapted to produce such Hasse invariants (see Remark \ref{strata}).

\medskip
We start with a description of some Dieudonn\'e modules. Let $\ka$ be an algebraically closed field containing $k$ and assume that $S=\Spec \ka$. Then we have $\G\cong \G^{et}\times \G^{0}$ (\'etale and connected parts) and both $\G^0$ and $\G^{et}$ are Barsotti--Tate $\ok$-modules. Let $h$ be the height of $\G^{et}$, then $0\leq h \leq n-1$. By the Dieudonn\'e--Manin theorem, $\G^{et}$ and $\G^{0}$ are determined up to isomorphism by their Dieudonn\'e modules. The Dieudonn\'e module of $\G^{0}$ is isomorphic to a Dieudonn\'e module $M_{n-h}$, which we now describe. We write $W(\ka)$ for the Witt vectors of $\ka$, and $\sigma$ for the lift of the $p$-th power Frobenius. $M_{n-h}$ has a Frobenius $F$ and a Verschiebung $V$, and has a basis $\omega,F\omega,F^{2}\omega,\dots,F^{n-h-1}\omega$ over $\ok\otimes_{\Zp}W(\ka)$, i.e. 
$$ M_{n-h} = \bigoplus_{i=0}^{n-h-1} (\ok \otimes_{\Zp}W(\ka)).F^{i}\omega. $$
To finish the description, we need to describe $F$, and we know that it is $\sigma$-linear and it sends $F^{i}\omega$ to $F^{i+1}\omega$ for $i=0,\dots,n-h-1$, so it remains to determine $F^{n-h}\omega$. For this, we write
$$ \ok \otimes_{\Zp} W(\ka) = \bigoplus_{\tau \in T} \ok \otimes_{\oo_{K_{0}},\iota \circ \tau}W(\ka), $$
where $T=\Gal(k/\Fp)$, $K_{0}$ is the maximal unramified subextension of $K/\Qp$, and $\iota : \oo_{K_{0}} \hookrightarrow W(\ka)$ is the lift of the inclusion $k\sub \ka$. Then 
$$ M_{n-h} = \bigoplus_{i=0}^{n-h-1} \bigoplus_{\tau \in T} (\ok \otimes_{\oo_{K_{0}},\iota \circ \tau}W(\ka)).F^{i}\omega. $$
We then define
$$ F^{n-h}\omega = (a_{\tau})_{\tau}\omega, $$
where $a_{id}=\vp \otimes 1$ and $a_{\tau}=1\otimes 1$ if $\tau\neq id$. $V$ is then defined uniquely by the condition $FV=VF=p$. The Dieudonn\'e module of $\G^{et}$ is
$$ (\ok\otimes_{\Zp}W(\ka))^{h} $$
with $F$ acting as $x \otimes y \mapsto x \otimes \sigma(y)$ on every factor. Taking the direct sum gives us the Dieudonn\'e module of $\G$.

\begin{definition}
Let $S=\Spec \ka$, where $\ka \supseteq k$ algebraically closed. We say that $\G$ is \emph{$\mu$-ordinary} if $\G^{et}$ has height $n-1$. For a general $S/k$ and $\G/S$, we say that $\G$ is $\mu$-ordinary if $\G_{x}$ is $\mu$-ordinary for every geometric point $x$ of $S$.  
\end{definition}

We now give an axiomatic definition of the $\mu$-ordinary Hasse invariant. Here and elsewhere we use the following piece of notation: For any integer $m\geq 1$, the twist $\G^{(q^m)}$ is defined as the pullback of $\G$ along the absolute $q^m$-th power Frobenius $F_{q^m}: S \ra S$. The relative $q^m$-power Frobenius will be denoted by $Fr_{q^m}$.

\begin{definition} Let $S/k$ be a scheme and let $\G/S$ be a one-dimensional compatible Barsotti--Tate $\ok$-module of height $n$.
\begin{enumerate}
\item If $\varpi : \G \ra \G$ factors through $Fr_{q} : \G \ra \G^{(q)}$, then we denote by $\ol{V}$ the unique isogeny $\G^{(q)} \ra \G$ such that $\ol{V} \circ Fr_{q} =\varpi$.

\item In the situation in (1), $\ol{V}$ induces a pullback map $\ol{V}^{\ast} : \omega_{\G} \ra \omega_{\G^{(q)}}\cong \omega_{\G}^{\otimes q}$ on top differentials, which corresponds to an element $H\in H^{0}(S,\omega_{\G}^{q-1})$. We define $H$ to be the \emph{$\mu$-ordinary Hasse invariant}.
\end{enumerate}
\end{definition}

The following proposition shows that we have $\mu$-ordinary Hasse invariants whenever $S$ is reduced.

\begin{proposition}\label{factoring}
Let $S$ be a reduced scheme over $k$ and $\G/S$ a one-dimensional compatible Barsotti--Tate $\ok$-module of height $n$. Then the isogeny $\varpi : \G \ra \G$ factors through the $q$-th power Frobenius isogeny $Fr_{q}: \G \ra \G^{(q)}$.
\end{proposition}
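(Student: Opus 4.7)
The plan is to translate the factoring statement into the subgroup-scheme containment $\ker(Fr_{q}) \subseteq \G[\vp]$ inside $\G$, and then verify this inclusion Zariski-locally on $S$ by a formal-coordinate computation combined with reducedness.

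For the equivalence, since $Fr_{q}: \G \to \G^{(q)}$ is finite faithfully flat with kernel $\ker(Fr_{q})$, it realises $\G^{(q)}$ as the fppf quotient $\G/\ker(Fr_{q})$, and by the universal property of this quotient $\vp$ factors through $Fr_{q}$ if and only if the composite $\ker(Fr_{q}) \hookrightarrow \G \xrightarrow{\vp} \G$ vanishes, i.e.\ $\ker(Fr_{q}) \subseteq \G[\vp]$.

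To check this inclusion, note that $\ker(Fr_{q})$ is infinitesimal and so lies in the formal completion $\widehat{\G}$ of $\G$ along the identity section, a smooth one-dimensional formal group over $S$. Working Zariski-locally on $S = \Spec R$ after trivialising $\Lie \G$, we pick a formal coordinate $T$ so that $\widehat{\G} = \Spf R[[T]]$. Then $\vp$ corresponds to a power series $f(T) = \sum_{i \geq 1} c_{i} T^{i} \in R[[T]]$, while $\ker(Fr_{q}) \subset \widehat{\G}$ is cut out by $(T^{q})$. The desired inclusion thus becomes the concrete condition $f(T) \in (T^{q}) R[[T]]$, i.e.\ $c_{i} = 0$ for all $1 \leq i < q$.

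Since $R$ is reduced, $\bigcap_{\mathfrak{p}} \mathfrak{p} = 0$, so it suffices to check that each $c_{i}$ with $i<q$ vanishes after base-change to an arbitrary geometric point $\bar s = \Spec \bar\kappa$ of $S$. Over $\bar\kappa$ the connected--\'etale sequence of $\G_{\bar s}$ splits, and $\widehat{\G}_{\bar s}$ coincides with the connected part $\G^{0}_{\bar s}$, a one-dimensional compatible formal $\ok$-module of height $m = n - h$, where $h$ is the $\ok$-height of the \'etale part $\G^{et}_{\bar s}$. Because $\G$ has dimension $1$, $\G^{0}_{\bar s}$ is non-trivial and $m \geq 1$, so the isogeny $\vp: \G^{0}_{\bar s} \to \G^{0}_{\bar s}$ has kernel of rank $q^{m}$. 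Consequently the specialisation of $f(T)$ to $\bar\kappa[[T]]$ has $T$-adic valuation exactly $q^{m} \geq q$, which forces $c_{i}$ to map to zero in $\bar\kappa$ for every $i < q$, as required.

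The main point requiring care is ensuring that the formal coordinate $T$ really exists Zariski-locally on $S$ and specialises to a formal coordinate on each geometric fibre, so that the $T$-adic valuation of $f(T)$ over $R$ genuinely agrees with that of its fibrewise reductions; this is routine once $\Lie \G$ has been trivialised on an affine open. Beyond this, no deeper obstacle arises, as the fibrewise input is merely the elementary fact that a positive-height, one-dimensional formal $\ok$-module over an algebraically closed field has $[\vp]$ of $T$-adic valuation at least $q$ in any formal coordinate.
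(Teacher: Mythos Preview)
Your proof is correct and follows the same overall strategy as the paper: translate to the inclusion $\ker(Fr_q)\subseteq\G[\varpi]$, then use reducedness of $S$ to reduce to geometric points. The implementations differ, however. The paper proves a general lemma---that for finite locally free subschemes $H,K\subseteq G$ over a reduced base, the containment $H\subseteq K$ can be checked on geometric points---by choosing local bases and arguing with matrix coefficients; at geometric points it then invokes the explicit Dieudonn\'e module description of $\G^{0}$ and $\G^{et}$. You instead exploit that $\ker(Fr_q)$ is infinitesimal and hence lives in the one-dimensional formal group $\widehat{\G}$, reducing everything to the vanishing of finitely many coefficients of the power series $[\varpi](T)$, and at geometric points you use only the elementary fact that a height-$m$ formal $\ok$-module has $[\varpi]$-series of $T$-adic valuation $q^{m}\geq q$. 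Your route is more elementary (no Dieudonn\'e theory) and more tailored to the one-dimensional situation; the paper's route isolates a reusable statement about finite locally free subschemes that it later applies to construct the strata Hasse invariants in the remark following the proposition.
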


\begin{proof}
The proposition is equivalent to showing that $\Ker Fr_{q} \sub\Ker \varpi =\G[\varpi]$. Both are finite locally free subschemes of the finite locally free scheme $\G[q]$, so we are in the situation where we have a finite locally free scheme $G$ over a reduced $k$-scheme $S$, and two finite locally free subschemes $H,K\sub G$ and we want to show that $H\sub K$. We claim that it is enough to check this on geometric points.

\medskip
To see this we argue as follows. First, it is enough to check it Zariski-locally on $S$. So without loss of generality $S=\Spec(A)$ is affine, and $G=\Spec(B)$ where $A \ra B$ is projective; moreover $H=\Spec(C)$ and $K=\Spec(D)$ with $A \ra C,D$ projective and $B \twoheadrightarrow C,D$. Let $I=\Ker(B \ra C)$ and $J=\Ker(B \ra D)$; we want $J\sub I$. $J$ and $I$ are also projective as $A$-modules, so localising further on $S$ we may assume that $I,J,C,D$ are all free over $A$ (which implies that $B$ is free as well, since $B\cong I\oplus C \cong J \oplus D$). Choose a basis $e_{1},\dots,e_{r},\dots,e_{t}$ of $B$ over $A$ such that $e_{1},\dots,e_{r}$ is a basis of $I$, and choose another basis $f_{1},\dots,f_{s},\dots,f_{t}$ of $B$ over $A$ such that $f_{1},\dots,f_{s}$ is a basis for $J$. We can write
$$ f_{j}=\sum_{i=1}^{t}a_{ji}e_{i} $$
for unique $a_{ji}\in A$. To check that $J\sub I$ we need to check that $a_{ji}=0$ when $1\leq j \leq s$ and $i>r$. But this can be checked at geometric points of $S$ since $S$ is reduced. 

\medskip
So, let us go back to our original situation. Let $x : \Spec(\ka) \ra S$ be a geometric point.  We need to show that $ \varpi : \G_{x} \ra \G_{x}$ factors through $Fr_{q} : \G_{x} \ra \G^{(q)}_{x}$. This follows from a direct calculation on the Dieudonn\'e module. In fact, if $h$ is the height of $\G_{x}^{et}$, then $F^{f(n-h)}$ acts as $\varpi\sigma^{f(n-h)}$ on the Dieudonn\'e module of $\G_{x}^{0}$ and as $\sigma^{f(n-h)}$ on the Dieudonn\'e module of $\G_{x}^{et}$ by the description of the Dieudonn\'e modules above; this implies what we want. 
\end{proof}

\begin{remark}\label{strata}
The proof above works to give `strata' Hasse invariants cutting out the Ekedahl--Oort strata, in the sense of \cite{box,gk}. These strata Hasse invariants were already defined by Ito \cite{ito1,ito2}. More precisely, assume that there are no points $s$ of $S$ where $\G^{et}_{x}$ has height $>h$. Then the proof above shows that there exists an isogeny $\ol{V}_{h} : \G^{(q^{n-h})} \ra \G$ such that $\ol{V}_{h} \circ Fr_{q^{n-h}}= \vp$, and $\ol{V}_{h}^{\ast} : \omega_{\G} \ra \omega_{\G}^{q^{n-h}}$ defines a section $H_{h}\in H^{0}(S, \omega_{\G}^{q^{n-h}-1})$. Moreover, the proof of Proposition \ref{Hasse basic} adapts easily to show that the non-vanishing locus of $H_h$ is precisely the open subset consisting of the points $s$ where $\G_{s}^{et}$ has height $h$.

\medskip
In the context of Harris--Taylor Shimura varieties, this gives sections defined on the closure of each $\Xb_{\ul{m}}^{(h)}$ whose vanishing locus is precisely $\Xb_{\ul{m}}^{(h)}$ (we recall that the stratification given by the $\Xb_{\ul{m}}^{(h)}$ is precisely the Ekedahl--Ort stratification in this case, moreover it is also equal to the Newton stratification). This was the main point of Ito's work, and some further properties and applications are stated in \cite{ito1} in the case when $F^{+} = \Q$. 
\end{remark}

Moving on, we record some basic properties of our Hasse invariants.

\begin{proposition}\label{Hasse basic}
Let $S/k$ be a scheme and let $\G/S$ be a one-dimensional compatible Barsotti--Tate $\ok$-module of height $n$. Assume that the $\mu$-ordinary Hasse invariant of $\G$ exists and denote it by $H\in H^{0}(S,\omega_{\G}^{q-1})$.
\begin{enumerate}
\item Let $\phi : S^{\prime} \ra S$ be a $k$-morphism and let $\G^{\prime}=\G\times_{S}S^{\prime}$. Then the $\mu$-ordinary Hasse invariant of $\G^{\prime}$ exists and is equal to $\phi^{\ast}H$.

\item Assume that $S=\Spec \ka$, where $\ka$ is an algebraically closed field. Then $H\neq 0$ if and only if $\G$ is $\mu$-ordinary.
\end{enumerate}
\end{proposition}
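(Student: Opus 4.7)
Part (1) is a direct consequence of base-change compatibility: both $Fr_q$ and the isogeny $\varpi$ commute with pullback along $\phi$, so the factorization $\varpi = \ol{V} \circ Fr_q$ pulls back to one of the same shape for $\G'$. The uniqueness of $\ol{V}$, which comes from the faithful flatness of $Fr_q$, identifies $\phi^{\ast}\ol{V}$ with the $\ol{V}$ associated to $\G'$; functoriality of the induced map on top differentials then gives $\phi^{\ast}H$.

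For part (2), work at the geometric point $S = \Spec \kappa$ and decompose $\G = \G^{et} \times \G^{0}$, where $\G^{et}$ has $\ok$-height $h$ and $\G^{0}$ is connected of $\ok$-height $n-h$. Since $\G^{et}$ is étale we have $\omega_{\G} = \omega_{\G^{0}}$, which is one-dimensional over $\kappa$. Hence $\ol{V}^{\ast}: \omega_{\G} \to \omega_{\G}^{\otimes q}$ is either zero or an isomorphism, and it is nonzero precisely when $\Lie(\ol{V})$ is injective, equivalently when $\ol{V}$ is étale, equivalently when $\ker(\ol{V})$ is an étale finite group scheme over $\kappa$.

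The computation then reduces to a short exact sequence. The factorization $\varpi = \ol{V} \circ Fr_q$ gives
$$ 0 \to \ker(Fr_q) \to \G[\varpi] \to \ker(\ol{V}) \to 0, $$
and since $Fr_q$ restricts to an isomorphism on the étale part, $\ker(Fr_q) \subset \G^{0}$. Splitting the sequence into its connected and étale components yields
$$ \ker(\ol{V}) \cong \bigl( \G^{0}[\varpi]/\ker(Fr_q|_{\G^{0}}) \bigr) \times \G^{et}[\varpi], $$
in which the second factor is étale while the first is connected of order $q^{n-h}/q = q^{n-h-1}$; here I use that $\G^{0}[\varpi]$ has order $q^{n-h}$ (the definition of $\ok$-height $n-h$) and that $\ker(Fr_q|_{\G^{0}})$ has order $q^{\dim \G^{0}} = q$. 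Over the algebraically closed field $\kappa$ a connected finite group scheme is étale if and only if it is trivial, so $\ker(\ol{V})$ is étale iff $n-h-1 = 0$, iff $h = n-1$, which is exactly the $\mu$-ordinary condition. The main organizational point is this splitting into infinitesimal and étale components; alternatively one can perform the differential computation directly on the explicit Dieudonné modules $M_{n-h}$ from the beginning of the subsection, in the spirit of the proof of Proposition \ref{factoring}.
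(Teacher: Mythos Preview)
Your proof is correct. Part (1) is essentially identical to the paper's argument.

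For part (2), both you and the paper begin by noting that $H\neq 0$ iff $\ol{V}$ is \'etale, but then diverge. The paper refers back to the Dieudonn\'e module computation in the proof of Proposition~\ref{factoring}, which shows that on $\G^{0}$ the map $\varpi$ factors through $Fr_{q^{n-h}}$; from this one reads off that $\ol{V}$ is \'etale precisely when $n-h=1$. Your argument instead stays on the group-scheme side: you analyze the short exact sequence $0\to\ker Fr_q\to\G[\varpi]\to\ker\ol{V}\to 0$, split it along the connected--\'etale decomposition, and count orders to see that the connected piece of $\ker\ol{V}$ has order $q^{n-h-1}$. This is a clean and self-contained alternative; it avoids any explicit Dieudonn\'e module description and makes the numerics visible. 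The paper's route has the minor advantage of reusing a computation already in hand and of extending immediately to the strata Hasse invariants of Remark~\ref{strata}. Your closing remark that one could ``perform the differential computation directly on the explicit Dieudonn\'e modules $M_{n-h}$ \dots\ in the spirit of the proof of Proposition~\ref{factoring}'' is in fact exactly what the paper does.
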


\begin{proof}
The first part follows from the fact that both $Fr_{q}$ and $\varpi$ are functorial, so the factorization $\ol{V} \circ Fr_q = \varpi$ on $S$ pulls back to a factorization $\phi^{\ast}\ol{V} \circ Fr_q = \varpi$ on $S^{\prime}$.

\medskip
For the second part, we note that $H\neq 0$ if and only if $\ol{V}$ is \'etale. Let $h$ denote the height of $\G^{et}$; by the calculation in the proof of Proposition \ref{factoring} we see that $\varpi$ factors through $Fr_{q^{(n-h)}}$ so we must have $h=n-1$ for $\ol{V}$ to be \'etale. The calculation also shows that if $h=n-1$ then $\ol{V}$ is \'etale, which is what we wanted.
\end{proof}

In particular, we have a $\mu$-ordinary Hasse invariant whenever $\G/S$ comes by pullback from some $\G^{\prime}/S^{\prime}$ with $S^{\prime}$ reduced, and the non-vanishing locus is precisely the open whose geometric points $x$ are those for which $\G_{x}$ is $\mu$-ordinary.

\begin{remark}\label{Hasse invariants and quotient by kernel of Frobenius}
We note a particular consequence of Proposition \ref{Hasse basic}(1). Let $\G/S$ be a one-dimensional Barsotti--Tate $\ok$-module of height $n$ over a $k$-scheme $S$, and assume that the $\mu$-ordinary Hasse invariant $H(\G)$ exists. Let $m\geq 1$ and consider the $q^{m}$-power Frobenius twist $\G^{(q^m)}$, which is the pullback of $\G$ under the absolute $q^m$-th power Frobenius map $F_{q^m} : S \ra S$. Then Proposition \ref{Hasse basic}(1) implies that $H(\G^{(q^m)}) = F_{q^m}^* H(\G) = H(\G)^{q^m}$. Note that the $q^m$-power Frobenius isogeny $Fr_{q^m} : \G \ra \G^{(q^m)}$ gives a canonical isomorphism $\G/\Ker Fr_{q^m} \cong \G^{(q^m)}$, so we get that $H(\G/\Ker Fr_{q^m}) = H(\G)^{q^m}$.
\end{remark}

\medskip
Let us now return to the setting of our Shimura varieties. Recall $\Xb_{\ul{m}}$, which is reduced and has the one-dimensional compatible Barsotti--Tate $\ok$-module $\G$ on it, so we have a $\mu$-ordinary Hasse invariant $H\in H^{0}(\Xb_{\ul{m}},\omega_{\G}^{q-1})$. The $\mu$-ordinary locus is $\Xb_{\ul{m}}^{(n-1)}$. The following proposition is presumably well known to experts. We state it for completeness and sketch the proof, though it is not necessary for the main results of this paper.

\begin{proposition}
In the setting above, $\omega_{\G}$ is ample. As a consequence, $\Xb_{\ul{m}}^{(n-1)}$ is affine.
\end{proposition}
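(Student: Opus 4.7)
The plan is to establish ampleness of $\omega_\G$ by relating it (up to a positive tensor power) to the determinant of the Hodge bundle of the universal abelian scheme, which is an automorphic line bundle known to be ample on the projective Shimura variety $\Xb_{\ul{m}}$.

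First, I would set up the Hodge bundle $\omega_{\ol{A}_{\ul{m}}} := e^{\ast} \Omega^{1}_{\ol{A}_{\ul{m}}/\Xb_{\ul{m}}}$ of the universal abelian scheme (with $e$ the identity section) and invoke the classical fact that $\det \omega_{\ol{A}_{\ul{m}}}$ is ample on $\Xb_{\ul{m}}$. Because $B$ is a division algebra, $\Xb_{\ul{m}}$ is already projective, so no compactification issues arise, and ampleness of (some power of) the determinant of the Hodge bundle on PEL-type abelian schemes with polarization is a standard result (provable via theta series / analysis of line bundles coming from the relatively ample line bundle attached to the polarization, or citable from Faltings--Chai).

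Next, I would decompose $\omega_{\ol{A}_{\ul{m}}}$ using the $\oo_B$-action, the signature condition, and Morita equivalence. The splitting $A[p^{\infty}] = \bigoplus_{i=1}^{r}(A[w_i^{\infty}] \oplus A[(w_i^c)^{\infty}])$ is $\oo_B$-equivariant, the Morita isomorphism $\oo_{B,w_i} \cong M_{n}(\oo_{F,w_i})$ gives $A[w_i^{\infty}] \sim \G_i^{n}$ where $\G_i = \epsilon A[w_i^{\infty}]$, and the signature condition $(1,n-1)$ at $v$ and $(0,n)$ at the other $v_i$ forces $\omega_{\G_i} = 0$ for $i \geq 2$ (so these $\G_i$ are étale) while $\omega_{\G_{1}} = \omega_{\G}$ is a line bundle. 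The polarization identifies $A[w_i^{\infty}]^{\vee}$ with $A[(w_i^c)^{\infty}]$ and hence expresses the dual contributions in terms of $\Lie \G$. Tracking ranks and taking the determinant, one obtains $\det \omega_{\ol{A}_{\ul{m}}} \cong \omega_{\G}^{\otimes N}$ for some explicit $N > 0$. Since ampleness of a positive tensor power implies ampleness (e.g.\ via Serre's cohomological criterion), $\omega_{\G}$ is ample.

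For the second assertion, I would invoke Proposition \ref{Hasse basic}(2) together with Proposition \ref{factoring} (noting $\Xb_{\ul{m}}$ is reduced, so the $\mu$-ordinary Hasse invariant $H \in H^{0}(\Xb_{\ul{m}}, \omega_{\G}^{q-1})$ is defined) to see that the non-vanishing locus of $H$ equals $\Xb_{\ul{m}}^{(n-1)}$. Since $\omega_{\G}^{q-1}$ is ample on the projective scheme $\Xb_{\ul{m}}$, the non-vanishing locus of any global section is affine, giving the conclusion.

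The main obstacle is the comparison in the second paragraph: one must carry out the $\oo_B$-equivariant decomposition of $\omega_{\ol{A}_{\ul{m}}}$ carefully and verify that the exponent $N$ in $\det \omega_{\ol{A}_{\ul{m}}} \cong \omega_{\G}^{\otimes N}$ is strictly positive (otherwise the argument collapses). The ampleness of $\det \omega_{\ol{A}_{\ul{m}}}$ itself, while classical, is the input one would want to pinpoint a clean reference for.
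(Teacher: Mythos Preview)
Your approach differs from the paper's. The paper's proof is essentially a sketch: it reduces to level $\ul{m}=(0,\dots,0)$ via the finite surjective forgetful map $\Xf_{\ul{m}}\to\Xf_{(0,\dots,0)}$ (pullback of ample along a finite map is ample, and $\omega_\G$ on the source is the pullback of $\omega_\G$ on the target), and for the level-zero case it directly cites a result of Lan--Suh, observing that their argument---which uses that the model is smooth and defined by a Kottwitz determinant condition---extends to the case where $p$ ramifies in $F$.

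Your direct decomposition is a natural alternative route, but the step you flag as the main obstacle is a real gap, and the phrase ``expresses the dual contributions in terms of $\Lie\G$'' hides the difficulty rather than resolving it. The polarization identifies $A[(w^c)^\infty]$ with the Cartier dual of $A[w^\infty]$, so the $w^c$-piece of $\omega_{\ol A_{\ul m}}$ contributes $(\det\omega_{\G^\vee})^{\otimes n}$; via the Hodge filtration on the Dieudonn\'e crystal of $\G$ this equals $\omega_\G^{\otimes n}$ tensored with the inverse $n$th power of the determinant of that crystal. Similarly, for $i\ge 2$ the $(w_i^c)$-pieces contribute determinants of line bundles associated with \'etale local systems. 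Altogether one gets $\det\omega_{\ol A_{\ul m}}\cong\omega_\G^{\otimes 2n}\otimes L$, with $L$ an auxiliary line bundle that must be shown not to obstruct ampleness (e.g.\ that it is numerically trivial). This is plausible---the extra factors all come from bundles with integrable connection or from \'etale local systems---but in characteristic $p$ this is not automatic and requires a genuine argument, which you have not supplied. Your deduction of affineness of $\Xb_{\ul{m}}^{(n-1)}$ from ampleness is correct and is the argument the paper has in mind.
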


\begin{proof}
When $p$ is unramified in $F$ and $\ul{m}=(0,\dots,0)$ this is a special case of \cite[Proposition 7.15]{ls}, but the proof of that result also works when $p$ is ramified in $F$, using that the models $\Xf_{\ul{m}}$ are smooth and defined by a Kottwitz condition when $m_{1}=0$. The case of general $\ul{m}$ then follows since the natural map $\Xf_{\ul{m}} \ra \Xf_{(0,\dots,0)}$ is finite and surjective.
\end{proof}

\begin{remark}
By Remark \ref{strata}, it follows more generally that $\Xb_{\ul{m}}^{(h)}$ is affine for all $0\leq h \leq n-1$.
\end{remark}

\subsection{Canonical subgroups}\label{canonicalsubgroupssection}

Our goal in this section is to establish a theory of canonical subgroups for one-dimensional Barsotti--Tate $\ok$-modules of height $n$, under the assumption that the Hasse invariant exists. We follow the approach of Scholze closely \cite[3.2.1]{tor}, which relies on Illusie's deformation theory for group schemes \cite{illgp}.

\medskip
Let $\Qp^{cycl}$ denote the completion of the $p$-power cyclotomic extension of $\Qp$; this is a perfectoid field. We let $\Zp^{cycl}$ denote the ring of integers of $\Qp^{cycl}$. Set $K^{cycl} := K.\Qp^{cycl}$ and $\ok^{cycl}:=\oo_{K^{cycl}}$. Let $e^{\prime}_{n}:=\gcd(e,(p-1)p^{n})$, where we recall that $e$ is the ramification index of $K/\Qp$. Let $e^{\prime}=\lim_{n\ra \infty} e_{n}^{\prime}$, which exists since $(e_{n}^{\prime})_{n}$ is eventually constant. Then $\ok^{cycl}$ contains elements of valuation $\epsilon$ for any $\epsilon\in \Q_{\geq 0}$ of the form $ae^{\prime}/(p-1)p^{n}$ for $a,n\in \Z_{\geq 0}$ (here we normalise the valuation so that $\vp$ has valuation $1$); we will let $\vp^{\epsilon}$ denote such an element.

\medskip
The following results are direct analogues of \cite[Corollary 3.2.2, Corollary 3.2.6]{tor}.

\begin{proposition}\label{deform}
Let $R$ be a $\vp$-adically complete flat $\ok^{cycl}$-algebra. Let $G$ be a finite locally free commutative group scheme over $R$ and let $C_{1}\sub G_{1}:=G \otimes_{R}R/\vp$ be a finite locally free subgroup scheme. Assume that multiplication by $\vp^{\epsilon}$ on the Lie complex $\check{\ell}_{G_{1}/C_{1}}$ of $G_{1}/C_{1}$ is homotopic to zero, where $0\leq \epsilon < 1/2$. Then there is a finite locally free subgroup scheme $C\sub G$ such that $C\otimes_{R}R/\vp^{1-\epsilon} = C_{1}\otimes_{R/\vp}R/\vp^{1-\epsilon}$.  
\end{proposition}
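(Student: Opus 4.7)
The plan is to follow Scholze's strategy in \cite[Corollary 3.2.2]{tor} closely, replacing $p$ by $\varpi$ throughout and invoking Illusie's deformation theory for (not necessarily commutative) group schemes as in \cite{ill}. I would approach the proof by inductive lifting, using $\varpi$-adic completeness of $R$ to pass to the limit.

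First I would set up the deformation-theoretic framework. For any $n \geq 1$, write $R_n := R/\varpi^n$ and $G_n := G \otimes_R R_n$. Suppose we have a lift $C_n \subseteq G_n$ of $C_1$ as a finite locally free subgroup scheme; I want to extend it to a finite locally free $C_{n'} \subseteq G_{n'}$ for some $n' > n$, compatible modulo $\varpi^{n-\epsilon}$ or so. Illusie's theory \cite{illgp} identifies obstructions to, and torsors of, such lifts with $\Ext$-groups of $\check{\ell}_{G_1/C_1}$ against the ideal $I_{n,n'} := \ker(R_{n'} \twoheadrightarrow R_n)$; concretely, the obstruction to extending the subgroup scheme (with $G$ already given over $R$) lies in an $\Ext^1$, while the set of such extensions is a torsor under an $\Ext^0$, both computed against $I_{n,n'}$. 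Because $R$ is flat over $\oo_K^{cycl}$, $I_{n,n'}$ is itself flat over $R_{n'-n}$, so these $\Ext$-groups are controlled by the Lie complex $\check{\ell}_{G_1/C_1}$ via base change.

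The key input is that, by hypothesis, $\varpi^\epsilon$ is null-homotopic on $\check{\ell}_{G_1/C_1}$, so it kills every $\Ext^i(\check{\ell}_{G_1/C_1},-)$. Consequently each obstruction to lifting (and each ambiguity in the lift) is annihilated by $\varpi^\epsilon$. The strategy is then to iteratively construct a sequence of approximate subgroup schemes $(C_n)$ with ever-growing precision: starting from $C_1$, one produces a lift over $R_2$ at the cost of a factor $\varpi^\epsilon$ of precision, then over $R_3$, etc. Because the lifts are only well-defined up to $\varpi^\epsilon$-ambiguities, one must choose compatible representatives. The $\varpi$-adic completeness of $R$ then yields a finite locally free subgroup scheme $C \subseteq G$ whose reduction modulo $\varpi$ agrees with $C_1$ modulo $\varpi^{1-\epsilon}$, as desired. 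Finiteness and flatness of $C$ follow because each $C_n$ is finite locally free of the same rank as $C_1$, and these properties lift through the $\varpi$-adic completion.

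The main obstacle, as in Scholze's argument, is bookkeeping of precisions: one loses $\varpi^\epsilon$ both when solving for the \emph{existence} of a lift (killing the $\Ext^1$ obstruction) and when comparing two \emph{choices} of lift (modifying by an $\Ext^0$-element), so each inductive step effectively consumes $\varpi^{2\epsilon}$ of precision relative to $\varpi$. This is precisely why one needs $2\epsilon < 1$, i.e.\ $\epsilon < 1/2$: with a smaller exponent, the precision loss per step is strictly less than the gain, and one can arrange compatible lifts $C_n \subseteq G_n$ with $C_n \equiv C_1 \pmod{\varpi^{1-\epsilon}}$ for all $n$. A secondary technical point is that one must work with the Lie complex of $G_1/C_1$ rather than $G/C$ (since $C$ does not yet exist over $R$), and verify that the relevant $\Ext$-groups are unchanged under the base changes along $R \twoheadrightarrow R_n$; this follows from flatness of $R$ over $\oo_K^{cycl}$ and standard properties of the cotangent complex for finite locally free group schemes.
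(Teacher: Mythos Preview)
Your proposal is correct and follows exactly the approach the paper takes: the paper's proof simply states that \cite[Corollary 3.2.2]{tor} goes through verbatim upon substituting $\varpi$ for $p$, and you have accurately outlined what that argument is (inductive lifting via Illusie's deformation theory, with the $\varpi^{\epsilon}$-nullhomotopy killing the relevant obstruction and ambiguity $\Ext$-groups, and the condition $\epsilon < 1/2$ ensuring the precision loss per step is absorbed).
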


\begin{proof}
The proof of \cite[Corollary 3.2.2]{tor} goes through verbatim (substituting $\vp$ for $p$).
\end{proof}

\begin{proposition}\label{canonicalsubgroup}
Let $R$ be a $\vp$-adically complete flat $\ok^{cycl}$-algebra and let $\G$ be a one-dimensional compatible Barsotti--Tate $\ok$-module of height $n$ over $R$, with reduction $\G_{1}$ to $R/\vp$. Assume that the $\mu$-ordinary Hasse invariant $H(\G_{1})$ exists and that $H(\G_{1})^{\frac{q^{m}-1}{q-1}}$ divides $\vp^{\epsilon}$ for some $\epsilon < 1/2$. Then there is a unique finite locally free subgroup scheme $C_{m}\sub \G[\vp^{m}]$ such that $C_{m}\otimes_{R}R/\vp^{1-\epsilon}=(\Ker Fr_{q^m})\otimes_{R/\vp}R/\vp^{1-\epsilon}$.

For any $\vp$-adically complete flat $\ok^{cycl}$-algebra $R^{\prime}$ with an $\ok^{cycl}$-algebra map $R \ra R^{\prime}$ , one has
\begin{equation}\label{points}
C_{m}(R^{\prime})=\{ s \in \G[\vp^{m}](R^{\prime}) \mid s \equiv 0 \mod \vp^{(1-\epsilon)/q^{m}} \}.
\end{equation}
\end{proposition}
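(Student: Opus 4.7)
The plan is to apply Proposition \ref{deform} in a single step with $G = \G[\vp^m]$ and $C_1 = \Ker Fr_{q^m} \subset \G_1[\vp^m]$. To set this up, I would first iterate the factorization $\vp = \ol{V} \circ Fr_q$ on $\G_1$ to obtain
\[
\vp^m = \ol{V}_m \circ Fr_{q^m}, \qquad \ol{V}_m := \ol{V} \circ \ol{V}^{(q)} \circ \cdots \circ \ol{V}^{(q^{m-1})} : \G_1^{(q^m)} \longrightarrow \G_1,
\]
which shows $\Ker Fr_{q^m} \subseteq \G_1[\vp^m]$ and identifies the quotient as $\Ker \ol{V}_m$.

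The key technical input is then the null-homotopy of multiplication by $\vp^\epsilon$ on the Lie complex $\check{\ell}_{\G_1[\vp^m]/\Ker Fr_{q^m}} = \check{\ell}_{\Ker \ol{V}_m}$. Since $\G$ has dimension one, $\omega_{\G_1}$ is invertible, and using the canonical identification $\omega_{\G_1^{(q^{i})}} \cong \omega_{\G_1}^{\otimes q^{i}}$ each pullback $(\ol{V}^{(q^i)})^{*}$ becomes multiplication by $H(\G_1^{(q^i)})$. By Remark \ref{Hasse invariants and quotient by kernel of Frobenius}, $H(\G_1^{(q^i)}) = H(\G_1)^{q^i}$, so composing gives
\[
\ol{V}_m^{*} : \omega_{\G_1} \longrightarrow \omega_{\G_1}^{\otimes q^m}, \quad \ol{V}_m^{*} = H(\G_1)^{1 + q + \cdots + q^{m-1}} = H(\G_1)^{(q^m-1)/(q-1)}.
\]
Via Illusie's theory, the Lie complex of $\Ker \ol{V}_m$ is controlled by this differential, so the hypothesis $H(\G_1)^{(q^m-1)/(q-1)} \mid \vp^\epsilon$ exhibits $\vp^\epsilon$ as a multiple of $\ol{V}_m^{*}$ and yields the required null-homotopy. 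Proposition \ref{deform} then delivers $C_m \subset \G[\vp^m]$ with $C_m \otimes_R R/\vp^{1-\epsilon} = \Ker Fr_{q^m} \otimes_{R/\vp} R/\vp^{1-\epsilon}$.

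For uniqueness, I would argue that any two such lifts agree modulo $\vp^{1-\epsilon}$ and so, by the same null-homotopy that controls existence, must coincide, essentially as in \cite[Corollary 3.2.6]{tor}. For formula (\ref{points}), I would work locally on the connected part of $\G$, where one can choose a formal parameter $x$ in which $\Ker Fr_{q^m}$ is cut out by $x^{q^m} = 0$; a section $s \in \G[\vp^m](R')$ then lies in $C_m(R')$ iff its coordinate $x(s) \in R'$ satisfies $x(s)^{q^m} \equiv 0 \bmod \vp^{1-\epsilon}$, i.e.\ $v(x(s)) \geq (1-\epsilon)/q^m$, matching the stated congruence. The main obstacle I anticipate is the careful bookkeeping of Frobenius twists and the verification that Illusie's Lie complex for $\Ker \ol{V}_m$ really has the claimed two-term form with differential $\ol{V}_m^{*}$; once this is in place, the argument reduces formally to Proposition \ref{deform} together with the Hasse-invariant identity above.
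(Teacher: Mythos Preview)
Your approach is essentially the paper's: factor $\vp^m = \ol{V}_m \circ Fr_{q^m}$, identify $\G_1[\vp^m]/\Ker Fr_{q^m}$ with $\Ker \ol{V}_m$, compute the differential on Lie complexes as $H(\G_1)^{(q^m-1)/(q-1)}$ via Remark~\ref{Hasse invariants and quotient by kernel of Frobenius}, and feed the resulting null-homotopy into Proposition~\ref{deform}. The paper isolates the step you flag as ``the main obstacle'' into a separate Lemma~\ref{liecomplex}, which verifies by a direct argument that $\check{\ell}_{\Ker \ol{V}_m}$ really is the two-term complex $\Lie \G_1^{(q^m)} \xrightarrow{\Lie \ol{V}_m} \Lie \G_1$; this is not entirely formal (cf.\ the Remark following it), so your anticipated difficulty is genuine.

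There is a gap in your treatment of uniqueness and of formula~(\ref{points}). The null-homotopy by itself does not give uniqueness of the lift; in the paper, uniqueness is \emph{deduced from} (\ref{points}), not the other way around. And your local-coordinate argument for (\ref{points}) is incomplete in the harder direction: from $s \equiv 0 \bmod \vp^{(1-\epsilon)/q^m}$ you can conclude that $s$ maps to the zero section of $\G[\vp^m]/C_m$ modulo $\vp^{1-\epsilon}$, but to promote this to $s \in C_m(R')$ over all of $R'$ you need a rigidity statement. The paper supplies this via Lemma~\ref{def}: since $\Omega^1$ of the quotient is killed by $\vp^{\epsilon}$ (this is what the Lie-complex computation actually gives) and $1-\epsilon > \epsilon$, two sections agreeing modulo $\vp^{1-\epsilon}$ must be equal. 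Your valuation heuristic ``$v(x(s)) \geq (1-\epsilon)/q^m$'' also implicitly assumes $R'$ is a valuation ring, which it need not be.
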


\begin{proof}
The proof of \cite[Corollary 3.2.6]{tor} goes through with only superficial changes; we sketch it for completeness. Fix $m$ and set $H_{1}:=\Ker (\ol{V}^{m} : \G_{1}^{(q^{m})} \ra \G_{1})$ (which makes sense by assumption); then there is an exact sequence
$$ 0 \ra \Ker Fr_{q^m} \ra \G_{1}[\vp^{m}] \ra H_{1} \ra 0 $$
by definition. By Lemma \ref{liecomplex} below, the Lie complex of $H_{1}$ is isomorphic to
$$ \check{\ell}_{H_{1}}=(\Lie \G_{1}^{(q^{m})} \overset{\Lie \ol{V}^m}{\longrightarrow} \Lie \G_{1}). $$
We calculate the determinant of $\Lie \ol{V}^m$ to be $H(\G_{1})^{\frac{q^{m}-1}{q-1}}$ using Remark \ref{Hasse invariants and quotient by kernel of Frobenius}. Multiplication by the determinant $\Lie \ol{V}^m$ is then null-homotopic on the complex $\Lie \G_{1}^{(q^{m})} \overset{\Lie \ol{V}^m}{\longrightarrow} \Lie \G_{1}$ (using the adjugate endomorphism of $\Lie \ol{V}^m$ as the chain homotopy), so multiplication by $\vp^{\epsilon}$ is null-homotopic using the assumption that $H(\G_{1})^{\frac{q^{m}-1}{q-1}}$ divides $\vp^{\epsilon}$. The existence of $C_{m}$ then follows from Proposition \ref{deform}. Uniqueness is a consequence of the final statement of the proposition, which is proved in the same way as the analogous part of \cite[Corollary 3.2.6]{tor}, using Lemma \ref{def}.
\end{proof}

We have used the following two lemmas in the proof.

\begin{lemma}\label{def}
Let $R$ be a $\vp$-adically complete flat $\ok^{cycl}$-algebra. Let $X/R$ be an affine scheme such that $\Omega_{X/R}^{1}$ is killed by $\vp^\epsilon$, for some $\epsilon \geq 0$. Let $s,t\in X(R)$ be two sections with $\ol{s}=\ol{t} \in X(R/\vp^{\delta})$, for some $\delta > \epsilon$. Then $s=t$.
\end{lemma}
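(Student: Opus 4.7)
The plan is to translate the question into algebra, run a doubling argument that uses the differential hypothesis to improve the congruence quadratically, and then pass to the limit via $\vp$-adic completeness. Write $X = \Spec A$, so that $s$ and $t$ correspond to $R$-algebra maps $s^{\ast},t^{\ast} : A \to R$ agreeing modulo $\vp^{\delta}$. I would form $\Phi := s^{\ast} \otimes t^{\ast} : A \otimes_{R} A \to R$ and let $I \subset A \otimes_{R} A$ denote the kernel of the multiplication map, so that $I/I^{2} = \Omega_{X/R}^{1}$. Since $I$ is generated by the elements $a \otimes 1 - 1 \otimes a$ whose images under $\Phi$ are precisely $s^{\ast}(a) - t^{\ast}(a) \in \vp^{\delta}R$, one gets $\Phi(I) \subseteq \vp^{\delta} R$ and hence $\Phi(I^{2}) \subseteq \vp^{2\delta} R$.

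Next comes the doubling step. The hypothesis that $\vp^{\epsilon}$ kills $\Omega^{1}_{X/R}$ gives $\vp^{\epsilon} I \subseteq I^{2}$, and applying $\Phi$ yields $\vp^{\epsilon}\Phi(I) \subseteq \vp^{2\delta} R$. I would then invoke the flatness of $R$ over $\ok^{cycl}$, so that multiplication by $\vp^{\epsilon}$ is injective on $R$, to cancel $\vp^{\epsilon}$ and conclude $\Phi(I) \subseteq \vp^{2\delta - \epsilon} R$. In particular $s^{\ast}(a) - t^{\ast}(a) \in \vp^{2\delta - \epsilon} R$ for every $a \in A$, so $\ol{s} = \ol{t}$ in $X(R/\vp^{2\delta - \epsilon})$.

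Iterating this step with $\delta_{0} = \delta$ and $\delta_{n+1} = 2\delta_{n} - \epsilon$ yields $\ol{s} = \ol{t}$ in $X(R/\vp^{\delta_{n}})$ for all $n$. Since $\delta_{n} - \epsilon = 2^{n}(\delta - \epsilon)$ and $\delta > \epsilon$ by hypothesis, the $\delta_{n}$ tend to infinity, and $\vp$-adic separatedness of $R$ (immediate from completeness and flatness) forces $s^{\ast}(a) = t^{\ast}(a)$ for all $a$, i.e., $s = t$. The one technical point requiring care is the cancellation of $\vp^{\epsilon}$ in the doubling inequality, and this is precisely where flatness of $R$ over $\ok^{cycl}$ is essential; once that cancellation is granted, the rest is formal and convergence of the iteration is automatic from the gap $\delta - \epsilon > 0$.
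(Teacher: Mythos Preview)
Your argument is correct and is essentially the same as the paper's: the paper simply cites \cite[Lemma 3.2.4]{tor}, and Scholze's proof there is precisely this doubling argument using $\vp^{\epsilon}I \subseteq I^{2}$ together with torsion-freeness to pass from $\delta$ to $2\delta-\epsilon$, then iterating and using $\vp$-adic separatedness.
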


\begin{proof}
The proof of \cite[Lemma 3.2.4]{tor} goes through, replacing $p^\epsilon$ and $p^\delta$ by $\vp^\epsilon$ and $\vp^\delta$, respectively.
\end{proof}

\begin{lemma}\label{liecomplex}
With notation as in the statement and proof of Proposition \ref{canonicalsubgroup}, the Lie complex $\check{\ell}_{H_{1}}$ of $H_{1}$ is isomorphic to the complex $ \Lie \G_{1}^{(q^{m})} \overset{\Lie \ol{V}^m}{\longrightarrow} \Lie \G_{1} $ (with terms in degrees $0$ and $1$).
\end{lemma}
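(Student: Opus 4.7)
The plan is to derive this description from the general formalism of Lie complexes of commutative group schemes developed by Illusie in \cite{illgp}, applied to the short exact sequence
$$0 \to H_1 \to \G_1^{(q^m)} \xrightarrow{\ol V^m} \G_1 \to 0$$
of fppf abelian sheaves on $\Spec(R/\vp)$. The first task is to verify exactness: by construction $H_1 = \Ker(\ol V^m)$, and fppf-surjectivity of $\ol V^m$ follows from the identity $\vp^m = \ol V^m \circ Fr_{q^m}$ together with the fact that $Fr_{q^m}$ and $\vp^m$ are isogenies of Barsotti--Tate groups.

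\medskip

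Next I would invoke Illusie's formalism, which for such a short exact sequence of commutative group objects yields a distinguished triangle of Lie complexes
$$\check\ell_{H_1} \to \check\ell_{\G_1^{(q^m)}} \to \check\ell_{\G_1} \to [+1].$$
For any Barsotti--Tate group $\G$, the Lie complex $\check\ell_{\G}$ is concentrated in degree $0$ and canonically isomorphic to $\Lie\G$. The middle and right-hand terms of the triangle therefore reduce to $\Lie \G_1^{(q^m)}$ and $\Lie \G_1$ placed in degree $0$, and the triangle identifies $\check\ell_{H_1}$ with the shifted fibre of $\Lie \ol V^m$, i.e.\ with the two-term complex $[\Lie \G_1^{(q^m)} \xrightarrow{\Lie \ol V^m} \Lie \G_1]$ in degrees $0$ and $1$, exactly as claimed.

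\medskip

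The main technical wrinkle is that Illusie's formalism is naturally stated for finite locally free commutative group schemes, while two of the three terms in our sequence are only Barsotti--Tate. The standard remedy is to pass to $\vp^N$-torsion for any $N \geq m$: the induced sequence $0 \to H_1 \to \G_1^{(q^m)}[\vp^N] \to \G_1[\vp^N] \to 0$ consists of finite locally free group schemes and is exact (since $H_1 \subset \G_1^{(q^m)}[\vp^m] \subset \G_1^{(q^m)}[\vp^N]$ and $\ol V^m$ is $\ok$-linear), so Illusie's results apply directly. The Lie complex of $\G[\vp^N]$ for a Barsotti--Tate group $\G$ is canonically $\Lie\G$ in degree $0$ for every $N \geq 1$, so the truncated computation yields the same identification and the proof is complete.
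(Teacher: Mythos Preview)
Your approach has the right shape but contains two genuine errors in the final paragraph.

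First, the sequence $0 \to H_1 \to \G_1^{(q^m)}[\vp^N] \to \G_1[\vp^N] \to 0$ is \emph{not} short exact: the right-hand map is not surjective. Applying the snake lemma to multiplication by $\vp^N$ on $0 \to H_1 \to \G_1^{(q^m)} \to \G_1 \to 0$ (with $N\geq m$) gives a four-term exact sequence
\[
0 \to H_1 \to \G_1^{(q^m)}[\vp^N] \to \G_1[\vp^N] \to H_1 \to 0,
\]
since $H_1/\vp^N H_1 = H_1$ while $\G_1^{(q^m)}/\vp^N = 0$. The paper sidesteps this by working instead with the tautological sequence $0 \to H_1 \to \G_1^{(q^m)}[\vp^k] \to \G_1^{(q^m)}[\vp^k]/H_1 \to 0$.

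Second, and more seriously, your claim that $\check{\ell}_{\G[\vp^N]}$ is concentrated in degree $0$ is false. The Lie complex of a finite locally free commutative group scheme is a perfect complex of \emph{rank zero} in amplitude $[0,1]$, so whenever $H^0 = \Lie(\G[\vp^N]) \neq 0$ one necessarily has $H^1(\check{\ell}_{\G[\vp^N]}) \neq 0$ as well. What \emph{is} true, and what the paper uses, is that the transition maps on these $H^1$'s become zero in the limit: $\varinjlim_{k} H^1(\check{\ell}_{\G[\vp^{k}]}) = 0$ by \cite[Proposition 2.2.1(c)(i)]{ill}. This is exactly why the paper builds a comparison map $f$ between two distinguished triangles and then passes to a direct limit to check that $H^0(f)$ and $H^1(f)$ are isomorphisms; the argument cannot be carried out at a single finite level $N$. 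Your ``standard remedy'' therefore does not work as stated, and the limit step is the essential content that your proposal is missing.
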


\begin{proof}
We may identify $\Lie \G_1$ and $\Lie \G_1^{(q^m)}$ with $\Lie \G_1[\vp^k]$ and $\Lie \G_1^{(q^m)}[\vp^k]$, respectively, for all large enough $k$. Note that we have natural identifications $\Lie \G_1[\vp^k] = \check{\ell}_{\G_1[\vp^k]}^{\leq 0}$ and $\Lie \G_1^{(q^m)}[\vp^k]= \check{\ell}_{\G_1^{(q^m)}[\vp^k]}^{\leq 0}$ (cf. e.g. \cite[\S 2.1]{ill}; we regard modules as complexes concentrated in degree $0$). We have exact sequences
$$ 0 \ra H_1 \ra \G_1^{(q^m)}[\vp^k] \ra \G_1^{(q^m)}[\vp^k]/H_1 \ra 0 $$
for all large $k$, which give distinguished triangles
$$ \check{\ell}_{H_1} \ra \check{\ell}_{\G_1^{(q^m)}[\vp^k]} \ra \check{\ell}_{\G_1^{(q^m)}[\vp^k]/H_1} \ra .$$
Define $A$ to be the complex $ \Lie \G_{1}^{(q^{m})} \overset{\Lie \ol{V}^m}{\longrightarrow} \Lie \G_{1} $. By the remarks above, we have 
$$ A = {\rm cone}\left( \check{\ell}_{\G_1^{(q^m)}[\vp^k]}^{\leq 0} \ra \check{\ell}_{\G_1[\vp^k]}^{\leq 0} \right) [-1] $$
and hence a distinguished triangle $A \ra \check{\ell}_{\G_1^{(q^m)}[\vp^k]}^{\leq 0} \ra \check{\ell}_{\G_1[\vp^k]}^{\leq 0} \ra $. We may then construct a commutative diagram
$$
\xymatrix{A \ar[r]\ar[d]^{f} & \check{\ell}_{\G_1^{(q^m)}[\vp^k]}^{\leq 0} \ar[r]\ar[d] & \check{\ell}_{\G_1[\vp^k]}^{\leq 0} \ar[r]\ar[d] &  \\
\check{\ell}_{H_1} \ar[r] & \check{\ell}_{\G_1^{(q^m)}[\vp^{k^{\prime}}]} \ar[r] &  \check{\ell}_{\G_1^{(q^m)}[\vp^{k^{\prime}}]/H_1} \ar[r] & }
$$
for all large enough $k^{\prime}\geq k$, where the two unmarked vertical arrows are canonical and $f$ then exists for abstract reasons (we remark that we can and do choose $f$ to be independent of $k^\prime$). We claim that $f$ is an isomorphism; it suffices to check this on cohomology groups in degrees $0$ and $1$ (all other cohomology groups vanish). Taking long exact exact sequences we get a commutative diagram (with exact rows)
$$
\xymatrix{0 \ar[r] & H^0(A) \ar[r]\ar[d]^{H^0(f)} & \Lie \G_1^{(q^m)}[\vp^k] \ar[r]\ar[d] & \Lie \G_1[\vp^k] \ar[r]\ar[d] &  H^1(A) \ar[r]\ar[d]^{H^{1}(f)} & 0 \ar[d] \\
0 \ar[r] & H^0(\check{\ell}_{H_1}) \ar[r] & \Lie \G_1^{(q^m)}[\vp^{k^{\prime}}] \ar[r] &  \Lie \G_1^{(q^m)}[\vp^{k^{\prime}}]/H_1 \ar[r] & H^1(\check{\ell}_{H_1}) \ar[r] & H^1\left(\check{\ell}_{\G_1^{(q^m)}[\vp^{k^{\prime}}]}\right).  }
$$
Now take the direct limit over $k^{\prime}$ in the bottom row. We have $\varinjlim_{k^{\prime}} H^1\left(\check{\ell}_{\G_1^{(q^m)}[\vp^{k^{\prime}}]}\right) =0 $ by \cite[Proposition 2.2.1(c)(i)]{ill}, and the maps $\Lie \G_1^{(q^m)}[\vp^k] \ra \varinjlim_{k^\prime} \Lie \G_1^{(q^m)}[\vp^{k^{\prime}}]$ and 
$$ \Lie \G_1[\vp^k] \ra \varinjlim_{k^\prime} \left( \Lie \G_1^{(q^m)}[\vp^{k^{\prime}}]/H_1 \right) \cong \varinjlim_{k^{\prime\prime}} \Lie \G_1[\vp^{k^{\prime\prime}}] $$
are both isomorphisms. This implies that $H^0(f)$ and $H^1(f)$ are both isomorphisms, which finishes the proof.
\end{proof}

\begin{remark}
Morally, the Lemma above should be proven by taking the homotopy colimit of the triangles $ \check{\ell}_{H_1} \ra \check{\ell}_{G_1^{(q^m)}[\vp^k]} \ra \check{\ell}_{G_1^{(q^m)}[\vp^k]/H_1} \ra $ for large $k$. However, since homotopy colimits are poorly behaved, such an argument seems to require some work to carry out. The argument above may be viewed as an elementary workaround.
\end{remark}

Using Proposition \ref{canonicalsubgroup}, we define canonical subgroups by analogy with \cite[Definition 3.2.7]{tor}.

\begin{definition}
Let $R$ be a $\vp$-adically complete flat $\ok^{cycl}$-algebra and let $\G$ be a one-dimensional compatible Barsotti--Tate $\ok$-module of height $n$ over $R$, with reduction $\G_{1}$ to $R/\vp$. We say that $\G$ has a weak canonical subgroup of level $m$ if the $\mu$-ordinary Hasse invariant $H(\G_{1})$ exists and $H(\G_{1})^{\frac{q^{m}-1}{q-1}}$ divides $\vp^{\epsilon}$ for some $\epsilon < 1/2$, and we then call the subgroup $C_{m}\sub \G[\vp^{m}]$ (given by Proposition \ref{canonicalsubgroup}) the weak canonical subgroup of level $m$. If in addition $H(\G_{1})^{q^{m}}$ divides $\vp^{\epsilon}$, we call $C_{m}$ the (strong) canonical subgroup.
\end{definition}

One then has the following analogue of \cite[Proposition 3.2.8]{tor}, which is proved by exactly the same arguments.

\begin{proposition}\label{properties}
Let $R$ be a $\vp$-adically complete flat $\ok^{cycl}$-algebra, and let $\G$ and $\mc{H}$ be one-dimensional compatible Barsotti--Tate $\ok$-modules of height $n$ over $R$.
\begin{enumerate}
\item If $\G$ has a (weak) canonical subgroup of level $m$, then it has a (weak) canonical subgroup of level $m^{\prime}$ for any $m^{\prime}\leq m$, and $C_{m^{\prime}}\sub C_{m}$.

\item Let $f : \G \ra \mc{H}$ be a morphism of Barsotti--Tate $\ok$-modules. If both $\G$ and $\mc{H}$ have canonical subgroups $C_{m}$ and $D_{m}$, respectively, of level $m$, then $f$ maps $C_{m}$ into $D_{m}$. In particular, $C_m$ is stable under the action of $\ok$.

\item Assume that $\G$ has a canonical subgroup $C_{m_{1}}$ of level $m_{1}$, and that $\mc{H}=\G/C_{m_{1}}$. Then $\mc{H}$ has a canonical subgroup $D_{m_{2}}$ of level $m_{2}$ if and only if $\G$ has a canonical subgroup $C_{m_{1}+m_{2}}$ of level $m_{1}+m_{2}$. If so, there is a short exact sequence
$$ 0 \ra C_{m_{1}} \ra C_{m_{1}+m_{2}} \ra D_{m_{2}} \ra 0 $$
which is compatible with $0 \ra C_{m_{1}} \ra \G \ra \mc{H} \ra 0 $.

\item Assume that $\G$ has a canonical subgroup $C_{m}$ of level $m$ and let $x$ be a geometric point of $\Spec R[\vp^{-1}]$. Then $C_{m}(x)\cong \ok/\vp^{m}$ as $\ok$-modules. In other words, the restriction of $\G$ to $\Spec R[\vp^{-1}]$ is \'etale-locally isomorphic to $\ok/\vp^m$ as a finite \'etale group scheme with an $\ok$-action.
\end{enumerate}
\end{proposition}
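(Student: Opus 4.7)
The plan is to follow the strategy of \cite[Proposition 3.2.8]{tor} closely, using the uniqueness part of Proposition \ref{canonicalsubgroup}, the pointwise description (\ref{points}), and Remark \ref{Hasse invariants and quotient by kernel of Frobenius}. For (1), the weak (resp.\ strong) canonical subgroup of level $m'$ exists by Proposition \ref{canonicalsubgroup} since the Hasse-invariant divisibility hypothesis weakens at smaller levels; the inclusion $C_{m'} \sub C_m$ then follows from the characterisation (\ref{points}) applied with the $\epsilon$ from level $m$: an $R'$-point $s$ of $C_{m'}$ satisfies $\vp^{m'}s=0$ (hence $\vp^m s=0$) and $s \equiv 0 \bmod \vp^{(1-\epsilon)/q^{m'}}$, which implies $s \equiv 0 \bmod \vp^{(1-\epsilon)/q^m}$, so $s \in C_m(R')$. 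For (2), given $s \in C_m(R')$, applying $f$ to the congruence from (\ref{points}) yields $f(s) \in D_m(R')$, so $f(C_m) \sub D_m$ by the Yoneda lemma; $\ok$-stability follows by taking $f=[\alpha]$ for $\alpha \in \ok$.

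For (3), Remark \ref{Hasse invariants and quotient by kernel of Frobenius} combined with the identification $\mc{H}_1 = \G_1/\Ker Fr_{q^{m_1}} = \G_1^{(q^{m_1})}$ gives $H(\mc{H}_1) = H(\G_1)^{q^{m_1}}$, so the condition $H(\mc{H}_1)^{q^{m_2}} \mid \vp^\epsilon$ for $\mc{H}$ to admit a canonical subgroup of level $m_2$ becomes literally $H(\G_1)^{q^{m_1+m_2}} \mid \vp^\epsilon$, the condition for $\G$ to admit one of level $m_1+m_2$. For the exact sequence I would identify $C_{m_1+m_2}/C_{m_1}$, viewed as a subgroup scheme of $\mc{H}[\vp^{m_2}]$, with $D_{m_2}$: on the special fibre, using $Fr_{q^{m_1+m_2}} = Fr_{q^{m_2}} \circ Fr_{q^{m_1}}$, the reduction modulo $\vp^{1-\epsilon}$ of $C_{m_1+m_2}/C_{m_1}$ equals $(\Ker Fr_{q^{m_2}})_{\G_1^{(q^{m_1})}}$, matching the defining reduction of $D_{m_2}$, and uniqueness concludes.

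For (4), the argument parallels \cite[Proposition 3.2.8(iv)]{tor}. I would first show that $C_m$ is not contained in $\G[\vp^{m-1}]$ as a subgroup scheme: reducing modulo $\vp^{1-\epsilon}$ this amounts to showing that $\vp^{m-1}$ does not annihilate $\Ker Fr_{q^m}$, which follows from a direct Dieudonn\'e-module computation using $\vp = \ol V \circ Fr_q$. Given this, at any geometric point $x$ of $\Spec R[\vp^{-1}]$ there exists $s \in C_m(x)$ with $\vp^{m-1} s \neq 0$, so $s$ has $\ok$-order exactly $\vp^m$; since $|C_m(x)| = q^m$, the cyclic $\ok$-submodule generated by $s$ exhausts $C_m(x)$, giving $C_m(x) \cong \ok/\vp^m$. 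The \'etale-local statement then follows since $C_m|_{\Spec R[\vp^{-1}]}$ is finite \'etale with compatible $\ok$-action.

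The main obstacle I anticipate lies in (3), namely the identification of $C_{m_1+m_2}/C_{m_1}$ with $D_{m_2}$, which requires careful bookkeeping to verify that this quotient is a finite locally free subgroup scheme of $\mc{H}[\vp^{m_2}]$ with the predicted reduction; once this is in place, the remaining arguments are largely mechanical transcriptions from \cite{tor}.
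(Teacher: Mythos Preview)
Your proposal is correct and follows the same approach as the paper, which simply defers to the arguments of \cite[Proposition 3.2.8]{tor}; indeed you provide more detail than the paper itself does. One small imprecision worth flagging in (3): the identification $\mc{H}_1 = \G_1/\Ker Fr_{q^{m_1}}$ is only literally true modulo $\vp^{1-\epsilon}$ (since that is all Proposition \ref{canonicalsubgroup} guarantees about the reduction of $C_{m_1}$), so the equality $H(\mc{H}_1)=H(\G_1)^{q^{m_1}}$ holds a priori only modulo $\vp^{1-\epsilon}$; this is harmless for the divisibility argument because $1-\epsilon > \epsilon$, but you should say so explicitly when you write it up.
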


\section{Perfectoid Shimura varieties}\label{sec3}

In this section we prove our results about Harris--Taylor Shimura varieties. We first prove an analogue of Scholze's result \cite[Corollary 3.2.19]{tor} that the `anticanonical tower' for Siegel modular varieties is perfectoid at $\Gamma_{0}(p^{\infty})$-level; this is the main result of this section. Using this, we prove slight refinements\footnote{The (very minor) refinement is the following: \cite{tor,cs} work over the full infinite level at all places dividing $p$, whereas we only work with full infinite level at the place $v$.} of results of Scholze \cite{tor} and Caraiani--Scholze \cite{cs} that the tower of Harris--Taylor Shimura varieties is perfectoid at full infinite level and admits a Hodge--Tate period map to $\mb{P}^{n-1}$. For this, we follow Scholze's arguments for the Siegel case, but the situation is much simpler in our case due to the absence of a boundary. We also take advantage of the formalism of diamonds, which provide a good setting in which to carry out the arguments.

\subsection{The anticanonical tower}

Let us start by recalling a characteristic $0$ version of the moduli problem defining our Shimura varieties from \cite[\S III.1]{ht}. For each $i \in \{1,\dots,r\}$, let
$$ U_{v_{i}} \sub (\oo_{B, w_{i}}^{op})^{\times} $$
be a compact open subgroup and set
$$ U_{p} = \Zp^{\times} \times \prod_{i=1}^{r}U_{v_{i}} \sub G(\Qp) $$
and $U=U^{p}U_{p}$ (recall that we have fixed a sufficiently small compact open subgroup $U^{p}\sub G(\mb{A}^{p,\infty})$ throughout this article). We define a contravariant functor $X_{U}$ from locally noetherian $K$-schemes to sets as follows. If $S$ is a connected locally Noetherian $K$-scheme and $s$ is a geometric point of $S$, we define $X_{U}(S,s)$ to be the set of equivalence classes of $(r+4)$-tuples $(A,\lambda,i, \ol{\eta}^{p},\ol{\eta}_{i})$ where
\begin{itemize}
\item $A$ is an abelian scheme over $S$ of dimension $dn^2$;

\item $\lambda : A \ra A^{\vee}$ is a polarization;

\item $i : B \hookrightarrow \End_{S}(A)\otimes_{\Z}\Q$ is a homomorphism such that $(A,i)$ is compatible and $\lambda \circ i(b) = i(b^{\ast})^{\vee}\circ \lambda$ for all $b\in B$;

\item $\ol{\eta}^{p}$ is a $\pi_{1}(S,s)$-invariant $U^{p}$-orbit of isomorphisms of $B\otimes_{\Q} \A^{p,\infty}$-modules $\eta : V\otimes_{\Q}\A^{p,\infty} \ra V^{p}A_{s}$ which take the standard pairing $(-,-)$ on $V$ to a $(\A^{p,\infty})^{\times}$-multiple of the $\lambda$-Weil pairing on $V^{p}A_{s}$;

\item $\ol{\eta}_{1}$ is $\pi_{1}(S,s)$-invariant $U_{v_{1}}$-orbit of isomorphisms $\eta_{1} : \Lambda_{11}\otimes_{\Zp}\Qp \ra \epsilon V_{w_{1}}A_{s}$ of $K$-modules;

\item for $i=2,\dots,r$, $\ol{\eta}_{i}$ is a $\pi_{1}(S,s)$-invariant $U_{v_{i}}$-orbit of isomorphisms of $B_{w_{i}}$-modules $\eta_{i} : \Lambda_{i} \otimes_{\Zp}\Qp \ra V_{w_{i}}A_{s}$.
\end{itemize}

Before defining equivalence, let us define compatibility. The map $i$ induces an action of $E$ on $\Lie A$, and we let $\Lie^{+}A$ denote the summand of $\Lie A$ where $E$ acts in the same way as via the structure morphism $E \ra \oo_{S}$. We then say that $(A,i)$ is compatible if $\Lie^{+}A$ has rank $n$ (over $\oo_{S}$) and the actions of $F^{+}$ on $\Lie^{+}A$ via $i$ and via the structure morphism $F^{+} \ra \oo_{S}$ agree. Finally, two $(r+4)$-tuples $(A,\lambda,i,\ol{\eta}^p,\ol{\eta}_i)$ and $(A^{\prime},\lambda^{\prime},i^{\prime},{\ol{\eta}^{\prime}}^p,\ol{\eta}^{\prime}_i)$ are equivalent if there is an isogeny $\alpha : A \ra A^{\prime}$ which takes $\lambda$ to a $\Q^{\times}$-multiple of $\lambda^{\prime}$, takes $i$ to $i^{\prime}$ and takes $\ol{\eta}$ to $\ol{\eta}^{\prime}$. Again the set $X_{U}(S,s)$ is canonically independent of the choice of $s$, giving $X_{U}$ on connected $S$, and one extends to disconnected $S$ in the usual way. This functor is representable by a smooth projective $K$-scheme which we will also denote by $X_{U}$. If $\ul{m}=(m_{1},\dots,m_{r})$ and $U_{v_{i}}=1+\vp_{i}^{m_{i}}\oo_{B,w_{i}}^{op}$, then $X_{U}$ is canonically isomorphic to the generic fibre $X_{\ul{m}}$ of $\Xf_{\ul{m}}$ ; see \cite[pp. 93-94]{ht}.

\medskip
For the rest of this article, we will fix non-negative integers $m_{2},\dots,m_{r}$ and the corresponding compact open subgroups $U_{v_{i}}=1+\vp_{i}^{m_{i}}\oo_{B,w_{i}}^{op}$ for $i
=2,\dots,r$. We drop the levels $U^{p}$, $U_{v_{i}}$, $i=2,\dots,r$, and $\Zp^{\times}$ from all notation and only indicate the level at $v$. In particular, we write $X_{m}$ for what was previously called $X_{(m,m_{2},\dots,m_{r})}$, etc. 

\medskip
Let us now introduce the level subgroups $U_{0}(\vp^{m})\sub \GL_{n}(K)$ that we will use to define the anticanonical tower. Let $P\sub \GL_{n}$ denote the $(n-1,1)$-block upper triangular parabolic. We define, for $m \geq 0$,
$$ U_{0}(\vp^{m}) := \{ g\in \GL_{n}(\ok) \mid g\,\, {\rm mod}\,\, \vp^{m} \in P(\ok/\vp^{m}) \}. $$
Let us also put $U(\vp^{m})=1+\vp^{m}M_{n}(\ok)$. Consider $X_{U_{0}(\vp^{m})}$. It is the quotient of $X_{m}$ by the free action of the finite group $U_{0}(\vp^{m})/U(\vp^{m})\cong P(\ok/\vp^{m})$. Since the level structure at $w$ defining $X_{m}$ are isomorphisms
$$ \alpha_{1} : \vp^{-m}\Lambda_{11}/\Lambda_{11} \ra \G[\vp^{m}], $$
it follows that the level structure at $w$ defining $X_{U_{0}(\vp^{m})}$ are $\ok$-subgroup schemes $H\sub \G[\vp^{m}]$ which are \'etale-locally isomorphic to $(\ok/\vp^{m})^{n-1}$.

\medskip

For the rest of this section we will base change all Shimura varietes $X_U$ to $K^{cycl}$.  
We will now define some formal schemes whose generic fibres embed in the rigid analytification of $X_{U_{0}(\vp^{m})}$ (for suitable $m$). Set $\Xf:=\Xf_{0}$ and let $\wh{\Xf}$ be the formal completion of $\Xf \otimes_{\ok}\ok^{cycl}$ along $\vp$. Recall our conventions about elements $\epsilon\in \Q_{\geq 0}$ and elements $\vp^{\epsilon}\in \ok^{cycl}$ from \S \ref{canonicalsubgroupssection}.

\begin{definition}
Assume that $0\leq \epsilon < 1/2$. Let $\wh{\Xf}(\epsilon) \ra \wh{\Xf}$ be the functor on $\vp$-adically complete flat $\ok^{cycl}$-algebras sending such an $S$ to the set of equivalence classes of pairs $(f,u)$, where $f : \Spf S \ra \wh{\Xf}$ is a morphism and and $u\in H^{0}(\Spf S, (f^{\ast}\omega)^{1-q})$ is a section such that $u(f^{\ast}H)=\vp^{\epsilon}\in S/\vp$, where $H$ is the $\mu$-ordinary Hasse invariant on $\wh{\Xf}\otimes_{\ok^{cycl}}\ok^{cycl}/\vp$. Two pairs $(f,u)$ and $(f^{\prime},u^{\prime})$ are equivalent if $f=f^{\prime}$ and there is some $h\in S$ with $u^{\prime}=u(1+\vp^{1-\epsilon}h)$.
\end{definition} 

\begin{proposition}\label{epsilonnhood def}
$\wh{\Xf}(\epsilon)$ is representable by a flat formal scheme over $\ok^{cycl}$ which is affine over $\wh{\Xf}$. 
\end{proposition}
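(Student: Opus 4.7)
The plan is to construct $\wh{\Xf}(\epsilon)$ Zariski-locally on $\wh{\Xf}$ and then glue. I cover $\wh{\Xf}$ by affine opens $\Spf R$ small enough that the line bundle $\omega := \omega_{\G}$ is trivialized by a section $\omega_{0}$, so that the $\mu$-ordinary Hasse invariant $H$ corresponds over $\Spf R$ to an element $\bar H \in R/\vp$; fix any lift $\tilde H \in R$ of $\bar H$. The local model will be
$$ R(\epsilon) := \bigl( R\langle u\rangle / (u\tilde H - \vp^\epsilon) \bigr)^{\mathrm{tf}}, $$
by which I mean the $\vp$-adic completion of the quotient of the Tate algebra $R\langle u\rangle/(u\tilde H - \vp^\epsilon)$ by its $\vp^\infty$-torsion. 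By construction $R(\epsilon)$ is $\vp$-adically complete, $\ok^{cycl}$-flat, and topologically of finite type (hence affine) over $R$.

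The core step is to check that $\Spf R(\epsilon)$ represents $\wh{\Xf}(\epsilon)|_{\Spf R}$. Given an object $(f,u)$ of $\wh{\Xf}(\epsilon)(S)$ with $f : \Spf S \to \Spf R$ and $u \in S$ satisfying $u\tilde H \equiv \vp^\epsilon \pmod{\vp}$, I plan to use the equivalence relation to select a canonical representative with $u\tilde H = \vp^\epsilon$ on the nose. Writing $u\tilde H = \vp^\epsilon + \vp w$ with $w \in S$ uniquely determined by $\vp$-torsion-freeness of $S$, I set $h := -w(1+\vp^{1-\epsilon}w)^{-1}$; the inverse exists because $\vp^{1-\epsilon}$ is topologically nilpotent (this is where $\epsilon < 1$ is used). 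A direct computation then shows that $u' := u(1+\vp^{1-\epsilon}h)$ satisfies $u'\tilde H = \vp^\epsilon$ exactly. For uniqueness within the equivalence class: if $u'' = u'(1+\vp^{1-\epsilon}h')$ also satisfies $u''\tilde H = \vp^\epsilon$, expanding gives $\vp h' = 0$, forcing $h' = 0$ by $\vp$-torsion-freeness. This produces the desired natural bijection $\wh{\Xf}(\epsilon)(S)|_{\Spf R} \cong \Hom_R(R(\epsilon), S)$.

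Finally I will verify independence of the construction from the auxiliary choices and glue. A different lift $\tilde H' = \tilde H + \vp\alpha$ yields a canonical isomorphism $R(\epsilon)_{\tilde H} \cong R(\epsilon)_{\tilde H'}$, determined on the universal element by $u \mapsto u(1+\vp^{1-\epsilon}h)$ with $h = -\alpha u(1+\vp^{1-\epsilon}\alpha u)^{-1}$ (obtained by the same trick as in the previous paragraph); rescaling of $\omega_0$ by a unit, which rescales $\tilde H$ by a corresponding power, is handled analogously. These canonical isomorphisms satisfy the cocycle condition on triple overlaps (another direct computation), so the local pieces glue to an affine formal scheme $\wh{\Xf}(\epsilon) \to \wh{\Xf}$, flat over $\ok^{cycl}$ by construction.

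The main obstacle, though ultimately a matter of careful bookkeeping, is handling the equivalence relation on $u$, which at first glance obstructs a direct representability statement. The content of the argument is that the relation is tame enough, thanks to $\vp^{1-\epsilon}$ being topologically nilpotent, to allow selecting a unique canonical representative with $u\tilde H = \vp^\epsilon$ exactly, reducing the problem to the concrete affine moduli question above.
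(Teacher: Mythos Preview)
Your approach is essentially the paper's: work Zariski-locally where $\omega^{q-1}$ is trivial, lift the Hasse invariant, and identify the functor with maps out of $R\langle u\rangle/(u\tilde H - \vp^\epsilon)$ via the canonical-representative trick. That trick, which you spell out in full, is precisely what the paper defers to \cite[Lemma 3.2.13]{tor}. The one substantive difference is how flatness is obtained. You force it by passing to the $\vp$-torsion-free quotient and recompleting; this is fine (completion of a $\vp$-torsion-free ring stays $\vp$-torsion-free, by a short inverse-limit check), and since test objects $S$ are flat and complete, maps factor uniquely through your $R(\epsilon)$. The paper instead proves directly that $R\langle T\rangle/(T\tilde H\eta^{-1} - \vp^\epsilon)$ is already $\ok^{cycl}$-flat: tensoring the sequence $0 \to A \xrightarrow{g} A \to A/g \to 0$ with $\ok^{cycl}/\vp^\epsilon$ identifies the $\vp^\epsilon$-torsion in $A/g$ with the $g$-torsion in $A/\vp^\epsilon$, and since $g \equiv T\cdot H\eta^{-1}$ modulo $\vp^\epsilon$ with $H\eta^{-1}$ a non-zero-divisor in $R/\vp^\epsilon$, this vanishes. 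The paper's route is slightly sharper---it shows the naive presentation needs no modification, which feeds directly into the explicit description of $\Xb(\epsilon)$ in Corollary~\ref{mod p}---while yours is more hands-off and avoids verifying that $H\eta^{-1}$ stays regular modulo $\vp^\epsilon$.
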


\begin{proof}
It suffices to work Zariski locally on $\wh{\Xf}$, so let $\Spf R \sub \wh{\Xf}$ be an affine open over which $\omega^{q-1}$ is trivial. Choose a  non-vanishing section $\eta\in \omega^{q-1}$ and choose a lift $\wt{H}\in H^{0}(\Spf R, \omega^{q-1})$ of $H$. We claim that $\wh{\Xf}(\epsilon)\times_{\wh{\Xf}}\Spf R$ is represented by $\Spf (R\langle T \rangle / (T(\wt{H}\eta^{-1})-\vp^{\epsilon}))$. The formal scheme $\Spf (R\langle T \rangle / (T(\wt{H}\eta^{-1})-\vp^{\epsilon}))$ represents pairs $(f,\wt{u})$ with $f : \Spf S \ra \Spf R$ a morphism and $\wt{u}\in H^{0}(\Spf S, (f^{\ast}\omega)^{1-q})$ such that $\wt{u}\wt{H}=\vp^{\epsilon}$ in $S$. There is a natural transformation from pairs $(f,\wt{u})$ to equivalence classes of pairs $(f,u)$ parametrized by $\wt{\Xf}(\epsilon)\times_{\wt{\Xf}}\Spf R$, and one shows that this is an isomorphism by the same argument as in \cite[Lemma 3.2.13]{tor}. This shows that $\wh{\Xf}(\epsilon)$ is representable and is affine over $\wh{\Xf}$.

\medskip
It remains to show that $R\langle T \rangle / (T(\wt{H}\eta^{-1})-\vp^{\epsilon})$ is flat over $\ok^{cycl}$, for which it suffices to show that it has no $\vp^{\epsilon}$-torsion. Set $A=R\langle T \rangle$ and $g=T(\wt{H}\eta^{-1})-\vp^{\epsilon}$. Taking the long exact sequence of $0 \ra A \ra A \ra A/g \ra 0$ and using the $\ok^{cycl}$-flatness of $A$ shows that $\Tor_{1}^{\ok^{cycl}}(\ok^{cycl}/\vp^{\epsilon},A/g)$ (which is the $\vp^{\epsilon}$-torsion in $A/g$) is the $g$-torsion in $A/\vp^{\epsilon}$. Since $g=T(H\eta^{-1})$ in $A/\vp^{\epsilon}$ and $H\eta^{-1}$ is not a zero divisor in $R/\vp^{\epsilon}$, the assertion follows.
\end{proof}

For any formal scheme whose notation involves $\wh{\Xf}...$, we will use $\mc{X}...$ to denote its generic fibre, and $\Xb ...$ the reduction modulo $\vp$. We record two corollaries.

\begin{corollary}\label{mod p}
The reduction $\Xb(\epsilon)$ represents the functor on $\ok^{cycl}/\vp$-algebras sending such an $S$ to the set of pairs $f: \Spec S \ra \Xb$ and $u\in H^{0}(\Spec S, (f^{\ast}\omega)^{1-q})$ such that $u(f^\ast H)=\vp^{\epsilon}$.
\end{corollary}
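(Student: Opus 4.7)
The plan is to derive the corollary directly from the local description in the proof of Proposition \ref{epsilonnhood def}, by passing to the mod-$\vp$ reduction. Let $\Spf R \subset \wh{\Xf}$ be an affine open over which $\omega^{q-1}$ is trivialized by a section $\eta$, and fix a lift $\wt{H} \in H^{0}(\Spf R, \omega^{q-1})$ of the $\mu$-ordinary Hasse invariant $H$. By that proof, $\wh{\Xf}(\epsilon)\times_{\wh{\Xf}} \Spf R = \Spf A$ where $A = R\langle T\rangle/(T(\wt{H}\eta^{-1})-\vp^{\epsilon})$.

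Since the $\vp$-adic completion $R\langle T\rangle$ reduces to the polynomial ring $(R/\vp)[T]$, we have $A/\vp A = (R/\vp)[T]/(T(H\eta^{-1})-\vp^{\epsilon})$, and therefore $\Xb(\epsilon)\times_{\Xb} \Spec R/\vp = \Spec\bigl((R/\vp)[T]/(T(H\eta^{-1})-\vp^{\epsilon})\bigr)$. For any $\ok^{cycl}/\vp$-algebra $S$, an $S$-point of this affine scheme is a morphism $f : \Spec S \to \Spec R/\vp$ together with an element $t \in S$ satisfying $t \cdot f^{\ast}(H\eta^{-1}) = \vp^{\epsilon}$ in $S$. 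Using the trivialization $\eta^{-1}$ of $\omega^{1-q}$ on $\Spec R/\vp$, setting $u := t \cdot f^{\ast}(\eta^{-1})$ gives a bijection between such pairs $(f,t)$ and pairs $(f,u)$ with $u \in H^{0}(\Spec S,(f^{\ast}\omega)^{1-q})$ satisfying $u \cdot f^{\ast}H = \vp^{\epsilon}$. This is precisely the functor of the corollary, restricted to the open $\Spec R/\vp \subset \Xb$.

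To conclude, one checks that these local descriptions glue. The functor in the statement is a Zariski sheaf on $\Xb$ intrinsic to the pair $(\Xb,\omega)$, and changing the trivialization $\eta$ rescales the auxiliary coordinate $t$ accordingly, leaving the associated section $u$ intact; the identification is also manifestly independent of the auxiliary lift $\wt{H}$, as only its reduction $H$ appears in the defining equation modulo $\vp$. Hence the local isomorphisms glue to a global identification of $\Xb(\epsilon)$ with the claimed functor. No genuine obstacle arises: the content of the corollary is simply the unwinding of Proposition \ref{epsilonnhood def} after base change along $\ok^{cycl} \to \ok^{cycl}/\vp$, with the equivalence relation in the definition of $\wh{\Xf}(\epsilon)$ now trivial because the condition $u \cdot f^{\ast}H = \vp^{\epsilon}$ is imposed exactly (rather than only modulo $\vp$) in the statement.
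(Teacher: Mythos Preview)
Your proof is correct and follows essentially the same approach as the paper: both reduce to the local presentation $(R/\vp)[T]/(T(H\ol{\eta}^{-1})-\vp^{\epsilon})$ from Proposition \ref{epsilonnhood def}, identify an $S$-point with a pair $(f,t)$, set $u=t\cdot f^{\ast}(\eta^{-1})$, and check independence of the trivialization. Your additional remarks on independence of the lift $\wt{H}$ and the trivialization of the equivalence relation are correct elaborations but not required.
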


\begin{proof}
It suffices to prove this locally on $\Xb$, so we pick an open affine $\Spf R \sub \wh{\Xf}$ and $\eta$ trivialising $\omega^{q-1}$ as in the proof of Proposition \ref{epsilonnhood def}. Then, by the proof, $\Xb(\epsilon)$ is represented over $\Spec R/\vp$ by the $\ok^{cycl}/\vp$-algebra $(R/\vp)[T]/(T(H\ol{\eta}^{-1})-\vp^{\epsilon})$, where $\ol{\eta}$ denotes the reduction of $\eta$. A morphism $(R/\vp)[T]/(T(H\ol{\eta}^{-1})-\vp^{\epsilon}) \ra S$ then corresponds to a morphism $R/\vp \ra S$ plus an element $t\in S$ such that $t(H\ol{\eta}^{-1})=\vp^{\epsilon}$; setting $u=t\ol{\eta}^{-1}$ gives the desired element of $H^{0}(\Spec S, (f^{\ast}\omega)^{1-q})$. One checks that this is independent of the choice of $\eta$, which finishes the proof.
\end{proof}

\begin{corollary}\label{lifting}
Let $0\leq \epsilon <1/2$. Let $S$ be a $\vp$-adically complete and flat $\ok^{cycl}$-algebra and let $f :\Spf S \ra \wh{\Xf}$ be a morphism. Assume that the reduction $\ol{f} : \Spec S/\vp^{1-\epsilon} \ra \Xb \otimes_{\ok^{cycl}/\vp} \ok^{cycl}/\vp^{1-\epsilon}$ lifts to a map $\ol{g} : \Spec S/\vp^{1-\epsilon} \ra \Xb(\epsilon) \otimes_{\ok^{cycl}/\vp} \ok^{cycl}/\vp^{1-\epsilon}$. Then there exists a unique map $g : \Spf S \ra \wh{\Xf}(\epsilon)$ lifting $\ol{g}$ such that the composition $ \Spf S \overset{g}{\longrightarrow} \wh{\Xf}(\epsilon) \ra \wh{\Xf} $ is $f$.
\end{corollary}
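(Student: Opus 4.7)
My plan is to reduce to a concrete local situation using the representability result of Proposition \ref{epsilonnhood def}. Zariski-locally on $\wh{\Xf}$ I choose an affine open $\Spf R$ containing the image of $f$ over which $\omega^{q-1}$ is trivialised by a section $\eta$, together with a lift $\wt H \in H^{0}(\Spf R, \omega^{q-1})$ of the Hasse invariant. By the proof of Proposition \ref{epsilonnhood def}, $\wh{\Xf}(\epsilon)|_{\Spf R}$ is then represented by $\Spf A$ with $A = R\langle T\rangle/(T\wt h - \vp^{\epsilon})$ and $\wt h := \wt H \eta^{-1}$. Writing $h$ for the image of $\wt h$ in $S$, producing a map $g$ as in the statement is the same as producing $t \in S$ with $th = \vp^{\epsilon}$, and by Corollary \ref{mod p} the datum $\bar g$ corresponds to an element $\bar t \in S/\vp^{1-\epsilon}$ with $\bar t h = \vp^{\epsilon}$; the lifting condition translates to $t \equiv \bar t \pmod{\vp^{1-\epsilon}}$.

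For the uniqueness of $g$, I apply Lemma \ref{def} to $Y := \Spec S\langle T\rangle/(T h - \vp^{\epsilon})$ over $\Spec S$. Differentiating the defining relation yields $h\, dT = 0$ in $\Omega^{1}_{Y/S}$, which is therefore annihilated by $h$ and hence by $\vp^{\epsilon} = T h$. Since $1-\epsilon > \epsilon$ by the standing hypothesis $\epsilon < 1/2$, Lemma \ref{def} forces any two candidate lifts agreeing modulo $\vp^{1-\epsilon}$ to coincide, giving uniqueness of $g$.

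For existence, pick any lift $t_0 \in S$ of $\bar t$. Then $t_0 h - \vp^{\epsilon} \in \vp^{1-\epsilon}S$, and using $\vp^{1-\epsilon} = \vp^{\epsilon}\cdot\vp^{1-2\epsilon}$ I rewrite this as $t_0 h = \vp^{\epsilon}(1 + \vp^{1-2\epsilon}s)$ for some $s \in S$. Because $1-2\epsilon > 0$, the factor $1 + \vp^{1-2\epsilon}s$ is a unit in the $\vp$-adically complete ring $S$, and setting $t := t_0(1+\vp^{1-2\epsilon}s)^{-1}$ gives $th = \vp^{\epsilon}$ on the nose, producing the desired map $g : \Spf S \to \wh{\Xf}(\epsilon)$ whose composition to $\wh{\Xf}$ is $f$. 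The step that I expect to demand the most care is checking that this $t$ reduces to $\bar t$ modulo $\vp^{1-\epsilon}$, rather than merely to some other lift of $\bar f$; the plan is to combine the exact relation $th = \vp^{\epsilon}$ with the assumed mod-$\vp^{1-\epsilon}$ relation $\bar t h = \vp^{\epsilon}$ so that $(t - \bar t)h$ vanishes modulo a sufficient power of $\vp$, and to then invoke the mod-$\vp^{1-\epsilon}$ version of Lemma \ref{def} applied to $Y \times_{\Spec S} \Spec S/\vp^{1-\epsilon}$ to identify the two reductions.
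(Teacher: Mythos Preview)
Your local reduction and the construction $t = t_0(1+\vp^{1-2\epsilon}s)^{-1}$ match the paper's proof exactly. The difference lies in how uniqueness is handled: instead of invoking Lemma~\ref{def}, the paper observes that once some $u$ with $uh = \vp^{\epsilon}$ exists, $h$ must be a non-zerodivisor in $S$ (if $xh=0$ then $x\vp^{\epsilon} = xuh = 0$, so $x=0$ by flatness of $S$), and hence the equation $Th = \vp^{\epsilon}$ has at most one solution. This is simpler than your route through Lemma~\ref{def} and yields the stronger statement that $g$ is already determined by the condition that it lies over $f$, independent of any congruence with $\bar g$.

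Your worry about the congruence $t \equiv \bar t \pmod{\vp^{1-\epsilon}}$ is justified, but your proposed fix has a genuine gap: Lemma~\ref{def} requires the base ring to be $\vp$-adically complete and flat over $\ok^{cycl}$, and $S/\vp^{1-\epsilon}$ satisfies neither hypothesis, so the lemma cannot be applied over that base. You also have no a priori congruence between the reduction of $t$ and $\bar t$ modulo any smaller power of $\vp$ to feed into such an argument --- all you know is that both satisfy $T\bar h = \vp^{\epsilon}$ in $S/\vp^{1-\epsilon}$. For what it is worth, the paper does not explicitly verify this congruence either; it simply records the formula for $u$ and moves on to uniqueness via the torsion-freeness argument above.
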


\begin{proof}
The assertion is local on the target and the source, so we may use the local description of $\wh{\Xf}(\epsilon)$ from the proof of Proposition \ref{epsilonnhood def}; we use the notation of that proof. The problem then becomes to prove the following: If $h : R \ra S$ is an $\ok^{cycl}$-algebra homomorphism and $u_{0}\in S$ is an element such that $u_{0}h(\wt{H}\eta^{-1}) \equiv \vp^{\epsilon}$ modulo $\vp^{1-\epsilon}$, then there is a unique $u\in S$ such that $uh(\wt{H}\eta^{-1})=\vp^{\epsilon}$ and $u\equiv u_0$ modulo $\vp^{1-\epsilon}$. For existence, write $u_0 h(\wt{H}\eta^{-1}) = \vp^{\epsilon} + \vp^{1-\epsilon}v$ for some $v\in S$, then we can set $u=u_0 (1+\vp^{1-2\epsilon}v)^{-1}$. Since $S$ is $\ok^{cycl}$-flat, existence shows that $S$ is $h(\wt{H}\eta^{-1})$-torsionfree, which implies uniqueness.
\end{proof}

\begin{remark}\label{ordinary locus}
Note that the map $\wh{\Xf}(0) \ra \wh{\Xf}$ is an open immersion; it identifies $\wh{\Xf}(0)$ with the open subset $\{ H\neq 0 \}$ of $\wh{\Xf}$. In particular, $\wh{\Xf}(0)$ is formally smooth over $\ok^{cycl}$. Note also that, for any $0\leq \epsilon <1/2$, the natural map $\wh{\Xf}(0) \ra \wh{\Xf}(\epsilon)$ (given by multiplying the section by $\vp^{\epsilon}$) is an open immersion, again identifying $\wh{\Xf}(0)$ as the subset $\{ H\neq 0\} \sub \wh{\Xf}(\epsilon)$. Similar remarks then apply modulo $\vp$, in particular $\Xb(0)$ is formally smooth over $\ok^{cycl}/ \vp$.
\end{remark}

Let $\wh{\Af}$ be the universal abelian (formal) scheme over $\wh{\Xf}$, with pullback $\wh{\Af}(\epsilon)$ to $\wh{\Xf}(\epsilon)$. We may define canonical subgroups of $\wh{\Af}(\epsilon)$ whenever they exist for $\G_{\wh{\Af}(\epsilon)}$, as follows. Recall that we have a decomposition
$$ \wh{\Af}(\epsilon)[p^{\infty}] \cong \G_{\wh{\Af}(\epsilon)}^{\oplus n} \oplus \wh{\Af}(\epsilon)[w_{2}^{\infty}]\oplus \dots \oplus \wh{\Af}(\epsilon)[w_{r}^{\infty}] \oplus (\G_{\wh{\Af}(\epsilon)}^{\vee})^{\oplus n} \oplus \wh{\Af}(\epsilon)[w_{2}^{\infty}]^{\vee}\oplus \dots \oplus \wh{\Af}(\epsilon)[w_{r}^{\infty}]^{\vee}. $$
Here $-^{\vee}$ denotes the Cartier dual. If $\G_{\wh{\Af}(\epsilon)}$ has a (weak) canonical subgroup $C_{m}$ of level $m$, then we let $D_{m}\sub \wh{\Af}(\epsilon)[p^{m}]$ be the subgroup corresponding to 
$$ C_{m}^{\oplus n} \oplus 0 \oplus \dots \oplus 0 \oplus (C_{m}^{\perp})^{\oplus n} \oplus \wh{\Af}(\epsilon)[w_{2}^{m}]^{\vee} \oplus \dots \oplus \wh{\Af}(\epsilon)[w_{r}^{m}]^{\vee} $$
under the isomorphism above, where $C_{m}^{\perp}$ is the annihilator of $C_{m}$ with respect to the duality pairing. We say that $D_{m}$ is the (weak) canonical subgroup of $\wh{\Af}(\epsilon)$. Note that $D_{m}$ modulo $\vp$ is the kernel of the $q$th power Frobenius on $\Ab(\epsilon)$ (since $\wh{\Af}(\epsilon)[w_{i}^{\infty}]$ is \'etale for $i=2,\dots,r$).
\medskip

Next, we note that there is a natural isomorphism $\Xb^{(q)} \cong \Xb$ over $\ok^{cycl}/\vp$ (or any other base), since $\Xb$ comes by base change from $k$. Let $Fr=Fr_{\Xb/(\ok^{cycl}/\vp)} : \Xb \ra \Xb^{(q)}$ be the relative ($q$th power) Frobenius map\footnote{We apologise that the notation for Frobenius maps in this section differs slightly from the notation in section \ref{sec2}.}; note that the composition
$$ \Xb \overset{Fr}{\longrightarrow} \Xb^{(q)} \cong \Xb $$
is the map coming from the abelian scheme $\Ab/\Ker Fr_{\Ab/\Xb} \ra \Xb$ (with extra structures), where $Fr_{\Ab/\Xb}$ is the relative Frobenius. We may then pull back this situation to $\Xb(\epsilon)$ to obtain the following analogue of \cite[Lemma 3.2.14]{tor}.

\begin{lemma}\label{frob}
Let $0 \leq \epsilon <1/2$. The isomorphism $\Xb^{(q)} \cong \Xb$ induces an isomorphism $\Xb(q^{-1}\epsilon)^{(q)} \cong \Xb(\epsilon)$, and the composition $\Xb(q^{-1}\epsilon) \overset{Fr}{\longrightarrow} \Xb(q^{-1}\epsilon)^{(q)} \cong \Xb(\epsilon)$ is induced from the abelian scheme $\Ab(q^{-1}\epsilon)/\Ker Fr_{\Ab(q^{-1}\epsilon)/\Xb(q^{-1}\epsilon)} \ra \Xb(q^{-1}\epsilon)$ (with extra structures) together with the $q$-th power of the universal section on $\Xb(q^{-1}\epsilon)$.
\end{lemma}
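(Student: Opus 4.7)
The plan is to exploit the moduli description of $\Xb(\epsilon)$ provided by Corollary \ref{mod p} together with the identity $H(\G/\Ker Fr_{q^m}) = H(\G)^{q^m}$ from Remark \ref{Hasse invariants and quotient by kernel of Frobenius}. Recall that $\Xb(\epsilon)$ parametrizes pairs $(f, u)$ with $f : \Spec S \ra \Xb$ and $u \in H^0(\Spec S, (f^\ast\omega)^{1-q})$ satisfying $u \cdot f^\ast H = \vp^\epsilon$.

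The key step is to construct a morphism $\Phi : \Xb(q^{-1}\epsilon) \ra \Xb(\epsilon)$ directly from this moduli interpretation. Given an $S$-point $(f, u)$ of $\Xb(q^{-1}\epsilon)$, let $g$ be the composition $\Spec S \xrightarrow{f} \Xb \xrightarrow{Fr} \Xb^{(q)} \xrightarrow{\sim} \Xb$. By the observation recorded in the paragraph preceding the lemma, $g$ classifies the abelian scheme $A' := f^\ast \Ab / \Ker Fr_{f^\ast \Ab/\Spec S}$ with its induced extra structures; in particular $\G_{A'} = \G_{f^\ast \Ab}/\Ker Fr_q$, so Remark \ref{Hasse invariants and quotient by kernel of Frobenius} yields $g^\ast H = (f^\ast H)^q$. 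Using the canonical identification $A' \cong (f^\ast \Ab)^{(q)}$ together with the standard canonical isomorphism $F_S^\ast L \cong L^{\otimes q}$ for a line bundle $L$ in characteristic $p$, the Frobenius pullback $F_S^\ast u$ (which is the ``$q$-th power of $u$'' in the usual sense) defines a section $v \in H^0(\Spec S, (g^\ast \omega)^{1-q})$ satisfying $v \cdot g^\ast H = (u \cdot f^\ast H)^q = \vp^\epsilon$. This produces the desired $S$-point of $\Xb(\epsilon)$ and hence the morphism $\Phi$.

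To conclude, I would show that $\Phi$ factors through the relative Frobenius as $\Xb(q^{-1}\epsilon) \xrightarrow{Fr} \Xb(q^{-1}\epsilon)^{(q)} \xrightarrow{\Psi} \Xb(\epsilon)$ with $\Psi$ an isomorphism. The factorization through Frobenius is clear by construction (every ingredient of $\Phi$ is a Frobenius pullback), and the fact that $\Psi$ is an isomorphism can be checked locally on $\wh\Xf$ using the explicit presentation $\Xb(\delta) = \Spec (R/\vp)[T]/(T(H \ol\eta^{-1}) - \vp^\delta)$ from the proof of Proposition \ref{epsilonnhood def}: the Frobenius twist of this presentation for $\delta = q^{-1}\epsilon$ matches the presentation for $\delta = \epsilon$ via the substitution $H \mapsto H^q$ recorded above. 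The second assertion of the lemma---that the composition is classified by $(\Ab(q^{-1}\epsilon)/\Ker Fr_{\Ab(q^{-1}\epsilon)/\Xb(q^{-1}\epsilon)}, u_{univ}^q)$---is then immediate from the construction of $\Phi$ applied to the identity morphism on $\Xb(q^{-1}\epsilon)$. The main subtlety will be careful bookkeeping of the canonical identifications of line bundles under Frobenius twists, but these are all standard once the algebraic input from Remark \ref{Hasse invariants and quotient by kernel of Frobenius} is in hand.
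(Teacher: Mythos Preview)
Your proposal is correct and follows essentially the same approach as the paper: both use the local presentation $\Xb(\delta)=\Spec(R/\vp)[T]/(T(H\ol{\eta}^{-1})-\vp^{\delta})$ together with the identity $H(\G^{(q)})=H(\G)^q$ from Remark \ref{Hasse invariants and quotient by kernel of Frobenius}. The only difference is order of exposition---the paper first establishes the isomorphism $\Xb(q^{-1}\epsilon)^{(q)}\cong\Xb(\epsilon)$ by local calculation and then identifies the composite map on abelian schemes, whereas you build the composite $\Phi$ from the moduli description first and then factor it; one small point the paper makes explicit that you should too is to choose $R/\vp$ and $\ol{\eta}$ coming by base change from $k$, so that the relative Frobenius over $\ok^{cycl}/\vp$ acts trivially on them and the local computation goes through cleanly.
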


\begin{proof}
That $\Xb^{(q)} \cong \Xb$ induces an isomorphism $\Xb(q^{-1}\epsilon)^{(q)} \cong \Xb(\epsilon)$ follows (for example) by explicit calculation in the local coordinates of the proof of Corollary \ref{mod p}, assuming in addition that the ring $R/\vp$ in that proof as well as the non-vanishing section $\ol{\eta}$ comes by base change from $k$. It then follows that $\Ab(\epsilon)$ pulls back to $\Ab(q^{-1}\epsilon)/\Ker Fr_{\Ab(q^{-1}\epsilon)/\Xb(q^{-1}\epsilon)}$ via the map $\Xb(q^{-1}\epsilon) \ra \Xb(\epsilon)$ since $\Ab$ pulls back to $\Ab/\Ker Fr_{\Ab/\Xb}$ via $Fr: \Xb \ra \Xb$ (with extra structures). Finally, one identifies the pullback of the universal section by explicit calculation in the local coordinates used in the first part of the proof.
\end{proof}

We will abuse the terminology and write $Fr$ for the map $\Xb(q^{-1}\epsilon) \ra \Xb(\epsilon)$, and refer to it as the relative Frobenius.

\begin{theorem}\label{anticanonical}
Let $0\leq \epsilon < 1/2$.
\begin{enumerate}
\item There is a unique morphism $\wt{F}: \wh{\Xf}(q^{-1}\epsilon) \ra \wh{\Xf}(\epsilon)$ which is equal to the relative Frobenius $\Xb(q^{-1}\epsilon) \ra \Xb(\epsilon)$ modulo $\vp^{1-\epsilon}$. $\wt{F}$ is finite, and its generic fibre is finite flat of degree $q^{n-1}$.

\smallskip
\item For any integer $m\geq 1$, the Barsotti--Tate $\ok$-module $\G_{\wh{\Af}(q^{-m}\epsilon)}$ admits a canonical subgroup $C_{m}$ of level $m$, and hence the abelian variety $\wh{\Af}(q^{-m}\epsilon)$ admits a canonical subgroup $D_{m}$ of level $m$. This induces an open immersion $\Xc(q^{-m}\epsilon) \ra \Xc_{U_{0}(\vp^{m})}$ given by the abelian variety $\Ac(q^{-m}\epsilon)/D_{m}$, the $\ok$-subgroup $\G_{\Ac(q^{-m}\epsilon)}[\vp^{m}]/C_{m}$, plus the induced extra structures. Moreover, the diagram
\begin{equation*}
\xymatrix{\Xc(q^{-m-1}\epsilon) \ar[r]\ar[d]^{\wt{F}} & \Xc_{U_{0}(\vp^{m+1})} \ar[d] \\
\Xc(q^{-m}\epsilon) \ar[r] & \Xc_{U_{0}(\vp^{m})}}
\end{equation*}
commutes and is cartesian.

\smallskip
\item There is a weak canonical subgroup $C\sub \G_{\wh{\Af}(\epsilon)}$ of level $1$. The open immersion $\Xc(q^{-1}\epsilon) \ra \Xc_{U_{0}(\vp)}$ identifies $\Xc(q^{-1}\epsilon)$ with the open subset $\Xc_{U_{0}(\vp)}(\epsilon)_{a}$ of $\Xc_{U_{0}(\vp)}$ where the Hasse invariant has valuation $\leq \epsilon$ and the $\ok$-subgroup $C^{\prime}\sub \G[\vp]$ satisfies $C\cap C^{\prime}=0$. 
\end{enumerate}
\end{theorem}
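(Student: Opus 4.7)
The strategy will parallel \cite[Prop.~3.2.15, Cor.~3.2.19]{tor}, with Corollary \ref{lifting} as the central tool: to build a morphism into $\wh{\Xf}(\epsilon)$, it suffices to exhibit a lift modulo $\vp^{1-\epsilon}$ of a given map to $\wh{\Xf}$.

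For part (1), I will first use $H\mid \vp^{q^{-1}\epsilon}$ with $q^{-1}\epsilon < 1/2$ together with Proposition \ref{canonicalsubgroup} to produce the level-$1$ canonical subgroup $C_{1}\sub \G_{\wh{\Af}(q^{-1}\epsilon)}[\vp]$, and hence $D_{1}\sub \wh{\Af}(q^{-1}\epsilon)$ via the prescription preceding Lemma \ref{frob}. Taking the quotient $\wh{\Af}(q^{-1}\epsilon)/D_{1}$ and transporting the extra structures (using Proposition \ref{properties}(2) for the $\ok$-action) yields a map $f\colon \wh{\Xf}(q^{-1}\epsilon) \ra \wh{\Xf}$. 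By Proposition \ref{canonicalsubgroup}, $D_{1} \equiv \Ker Fr_{q}$ modulo $\vp^{1-q^{-1}\epsilon}$, so by Remark \ref{Hasse invariants and quotient by kernel of Frobenius} the Hasse invariant $H'$ of $\G_{\wh{\Af}(q^{-1}\epsilon)/D_{1}}$ agrees with $H^{q}$ modulo $\vp^{1-q^{-1}\epsilon}$. The $q$-th power $u^{q}$ of the universal section then satisfies $u^{q}H' \equiv \vp^{\epsilon}$ modulo $\vp^{1-\epsilon}$, giving a lift mod $\vp^{1-\epsilon}$, and Corollary \ref{lifting} upgrades it to a unique morphism $\wt{F}\colon \wh{\Xf}(q^{-1}\epsilon) \ra \wh{\Xf}(\epsilon)$. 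Combined with Lemma \ref{frob}, this identifies $\wt{F}\bmod\vp$ with the relative Frobenius and yields uniqueness. Finiteness of $\wt{F}$ and generic-fibre degree $q^{n-1}$ are then inherited from the relative Frobenius on $\Xb$ (where $\dim \Xb = n-1$) via standard $\vp$-adic flatness arguments.

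For part (2), a short computation gives $H^{(q^{m}-1)/(q-1)} \mid \vp^{\epsilon(1-q^{-m})/(q-1)}$ on $\wh{\Xf}(q^{-m}\epsilon)$ with exponent bounded by $\epsilon < 1/2$, so Proposition \ref{canonicalsubgroup} produces $C_{m}$ and hence $D_{m}$. The quotient $\wh{\Af}(q^{-m}\epsilon)/D_{m}$ together with $\G_{\Ac}[\vp^{m}]/C_{m} \sub \G_{\Ac/D_{m}}[\vp^{m}]$ --- \'etale-locally $(\ok/\vp^{m})^{n-1}$ by Proposition \ref{properties}(4) --- defines the morphism to $\Xc_{U_{0}(\vp^{m})}$. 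Cartesianness of the square follows directly from Proposition \ref{properties}(3). To upgrade this to an open immersion: composition with the forgetful map $\Xc_{U_{0}(\vp^{m})} \ra \Xc$ recovers $\wt{F}^{m}$ (finite flat by (1)), and an inverse on the image is built by inverting the isogeny $\Ac \ra \Ac/D_{m}$ (using that $[\vp^{m}]\colon \Ac \ra \Ac$ factors through it), producing a monomorphism of smooth adic spaces of equal dimension and hence an open immersion.

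For part (3), I apply part (2) at $m=1$; existence of the weak canonical subgroup $C \sub \G_{\wh{\Af}(\epsilon)}$ follows from Proposition \ref{canonicalsubgroup} since $H \mid \vp^{\epsilon}$ with $\epsilon < 1/2$. On the image, the level structure is $C' = \G_{A_{\mathrm{old}}}[\vp]/C_{1} \sub \G_{A_{\mathrm{old}}/D_{1}}[\vp]$, and the anticanonical condition $C \cap C' = 0$ will be verified pointwise at geometric points: on the $\mu$-ordinary locus, $C$ and $C'$ are the connected and \'etale parts of $\G[\vp]$ of $A_{\mathrm{old}}/D_{1}$ and meet trivially, while the non-$\mu$-ordinary case reduces to an analogous Dieudonn\'e-module check. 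The reverse inclusion, that every point of $\Xc_{U_{0}(\vp)}(\epsilon)_{a}$ lies in the image, follows from a dimension count (both sides being open subspaces of $\Xc_{U_{0}(\vp)}$ of equal dimension), so the monomorphism must be surjective onto the anticanonical locus. The main obstacle is this last identification: the weak canonical subgroup is more subtle away from the $\mu$-ordinary locus, and matching the condition $C \cap C' = 0$ exactly with the image of the construction from part (2) requires careful pointwise analysis; openness of the immersion in (2) is a secondary technical point.
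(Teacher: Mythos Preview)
Your approach to part (1) and to constructing the map in part (2) is essentially the same as the paper's: build the quotient by the canonical subgroup, identify its reduction with Frobenius, and invoke Corollary~\ref{lifting}. Your argument for the open immersion in (2) --- recovering $A$ from $(A/D_m, \G[\vp^m]/C_m)$ by inverting the isogeny --- is exactly the content of the paper's map $\pi_2$, just phrased as ``monomorphism plus smoothness of equal dimension implies open immersion'' rather than ``$\pi_2$ is \'etale and the composite is an open immersion''. Both work.

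There is, however, a genuine gap in how you handle cartesianness in (2) and the surjectivity onto $\Xc_{U_0(\vp)}(\epsilon)_a$ in (3). Proposition~\ref{properties}(3) gives only \emph{commutativity} of the square, not cartesianness: it tells you that the canonical subgroups are compatible under $\wt{F}$, but it does not tell you that every point of the fibre product comes from $\Xc(q^{-m-1}\epsilon)$. Likewise, your ``dimension count'' in (3) is not a valid argument: two open subspaces of $\Xc_{U_0(\vp)}$ of the same dimension need not coincide, so dimension alone cannot force the monomorphism to surject onto the anticanonical locus.

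The paper handles both issues by a single degree argument. Once the horizontal maps are open immersions, the induced map from $\Xc(q^{-m-1}\epsilon)$ (resp.\ $\Xc(q^{-1}\epsilon)$) to the fibre product is a finite morphism between smooth rigid spaces; computing that the two vertical maps each have degree $q^{n-1}$ (the left one by part~(1), the right one because $\Xc_{U_0(\vp^{m+1})} \to \Xc_{U_0(\vp^m)}$ is finite \'etale of that degree, and similarly $\Xc_{U_0(\vp)}(\epsilon)_a \to \Xc(\epsilon)$) forces this comparison map to be finite surjective of degree $1$, hence an isomorphism. This degree comparison is the missing ingredient in your sketch, and it simultaneously resolves the cartesianness in (2) and the identification of the image in (3), without any pointwise Dieudonn\'e-module analysis.
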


\begin{proof}
We start by proving (1). By Proposition \ref{canonicalsubgroup} there is a strong canonical subgroup $C$ of $\G_{\wh{\Af}(q^{-1}\epsilon)}$ (of level $1$), and hence a strong canonical subgroup $D$ of $\wh{\Af}(q^{-1}\epsilon)$. This gives an abelian variety $\wh{\Af}(q^{-1}\epsilon)/D \ra \wh{\Xf}(q^{-1}\epsilon)$ with extra structures, and hence a morphism $\wh{\Xf}(q^{-1}\epsilon) \ra \wh{\Xf}$. Note that $\wh{\Af}(q^{-1}\epsilon)/D \ra \wh{\Xf}(q^{-1}\epsilon)$ reduces to $\Ab(q^{-1}\epsilon)/\Ker Fr_{\Ab(q^{-1}\epsilon)/\Xb(q^{-1}\epsilon)} \ra \Xb(q^{-1}\epsilon)$ modulo $\vp^{1-\epsilon}$ by Proposition \ref{canonicalsubgroup}, so the map $\wh{\Xf}(\q^{-1}\epsilon) \ra \wh{\Xf}$ reduces to a map $\Xb(q^{-1}\epsilon) \ra \Xb$ modulo $\vp^{1-\epsilon}$ which lifts to the relative Frobenius $\Xb(q^{-1}\epsilon) \ra \Xb(\epsilon)$ modulo $\vp^{1-\epsilon}$ by Lemma \ref{frob}. Corollary \ref{lifting} then gives us a lift $\wt{F} : \wh{\Xf}(q^{-1}\epsilon) \ra \wh{\Xf}(\epsilon)$ of the relative Frobenius modulo $\vp^{1-\epsilon}$. The uniqueness follows from the uniqueness of the canonical subgroup (which establishes uniqueness of the lift $\wh{\Xf}(q^{-1}\epsilon) \ra \wh{\Xf}$) and the uniqueness part of Corollary \ref{lifting}.

\medskip
For finiteness, first note that the morphism is affine by construction. Finiteness of $\wt{F}$ then follows from the fact that $\wt{F}$ is finite modulo $\vp^{1-\epsilon}$, since it is the relative Frobenius of a morphism of finite presentation (see e.g. \cite[Tag 0CCD]{sta} for the case $q=p$). To prove that the generic fibre is finite flat of degree $q^{n-1}$, we first do the case $\epsilon = 0$. In this case $\Xb(0)$ is smooth of relative dimension $n-1$ (Remark \ref{ordinary locus}), so the relative Frobenius is finite and locally free of degree $q^{n-1}$ (see e.g. \cite[Proposition 3.2]{ill2} when $q=p$), and hence the same is true for $\wt{F}$ and its generic fibre. For general $\epsilon$, the generic fibre is a finite surjective morphism between smooth rigid spaces, hence flat. To compute the degree, we use that the diagram 
\begin{equation*}
\xymatrix{\Xc(0) \ar[r]\ar[d]^{\wt{F}} & \Xc(q^{-1}\epsilon) \ar[d]^{\wt{F}} \\
\Xc(0) \ar[r] & \Xc(\epsilon)}
\end{equation*}
is cartesian; then the right vertical morphism has the same degree as the left vertical morphism, which we already know has degree $q^{n-1}$.

\medskip
We now turn to part (2). The existence of canonical subgroups $C_{m}$ of level $m$ again follows from Proposition \ref{canonicalsubgroup}. The formula in the proposition then defines a morphism $\Xc(q^{-m}\epsilon) \ra \Xc_{U_{0}(\vp^{m})}$ by Proposition \ref{properties}(4). To see that it is an open immersion, we consider the map $\pi_{2}:\Xc_{U_{0}(\vp^{m})} \ra \Xc$ sending a pair $(A,C^{\prime})$ (with extra structures) to $A/D^{\prime}$ (with extra structures), where $D^{\prime}\sub A[p^{\infty}]$ corresponds to the $\ok$-subgroup
$$ (C^{\prime})^{\oplus n} \oplus A[w_{2}^{m}] \oplus \dots A[w_{r}^{m}] \oplus ((C^{\prime})^{\perp})^{\oplus n} \oplus 0 \oplus \dots \oplus 0. $$ 
The composition $\Xc(q^{-1}\epsilon) \ra \Xc_{U_{0}(\vp^{m})} \overset{\pi_{2}}{\ra} \Xc$ sends an abelian variety $A$ (with extra structures) to $A/A[p^{m}]$ (with extra structures) by direct computation. It follows that the composition is equal to the forgetful map $\Xc(q^{-1}\epsilon) \ra \Xc$ (which is an open immersion) followed by an isomorphism of $\Xc$ (which only changes the level structures away from $w$), and is hence an open immersion. Since $\pi_{2}$ is \'etale, it follows that $\Xc(q^{-1}\epsilon) \ra \Xc_{U_{0}(\vp^{m})}$ is an open immersion as desired.

\medskip
The commutation of the diagram in (2) follows from Proposition \ref{properties}. To see that it is cartesian we argue as follows. The horizontal maps are open embeddings, and the right vertical map is finite \'etale of degree $q^{n-1}$. Since the left vertical map is finite flat of degree $q^{n-1}$ by part (1), it follows that the induced map $\Xc(q^{-m-1}\epsilon) \ra \Xc(q^{-m}\epsilon) \times_{\Xc_{U_0(\vp^m)}}\Xc_{U_0(\vp^{m+1})}$ is a finite surjective morphism of degree $1$ between smooth rigid spaces, and hence an isomorphism. In particular, $\wt{F}$ is \'etale, and the diagram is cartesian. This finishes the proof of (2).

\medskip
For (3), we first need to establish that $\Xc(q^{-1}\epsilon) \ra \Xc_{U_{0}(\vp)}$ has image inside $\Xc_{U_{0}(\vp)}(\epsilon)_{a}$. This is done as in the last part of the proof of \cite[Theorem 3.2.15]{tor}. After this, we look at the diagram
\begin{equation*}
\xymatrix{\Xc(q^{-1}\epsilon) \ar[r]\ar[d]^{\wt{F}} & \Xc_{U_{0}(\vp)}(\epsilon)_{a} \ar[d] \\
\Xc(\epsilon) \ar[r]^{id} & \Xc(\epsilon).}
\end{equation*}
As in the proof of part (2), it commutes. We claim that it is cartesian; since the bottom horizontal arrow is the identity this gives the desired conclusion. The left vertical map is finite of degree $q^{n-1}$, and one checks that the right vertical map is finite \'etale of degree $q^{n-1}$. An argument as in the proof of (2) then shows that the diagram is cartesian, and finishes the proof. 
\end{proof}

For the next result, which is the main result of this subsection, we use the notion $X\sim \varprojlim_{i}X_{i}$ for an adic space $X$ with a collection of compatible maps to a cofiltered inverse system of adic spaces $(X_i)$ from \cite[Definition 2.4.1]{sw}.

For $m\geq 1$ we define $\Xc_{U_{0}(\vp^{m})}(\epsilon)_{a}$ as the image of $\Xc(q^{-m}\epsilon)$ in $\Xc_{U_{0}(\vp^{m})}$.
\begin{theorem}\label{epsilonnhood}
Fix $0\leq \epsilon < 1/2$. There is a unique (affinoid) perfectoid space $\Xc_{P(\ok)}(\epsilon)_{a}$ over $K^{cycl}$ such that
$$ \Xc_{P(\ok)}(\epsilon)_{a} \sim \varprojlim_{m}\Xc_{U_{0}(\vp^{m})}(\epsilon)_{a}. $$
\end{theorem}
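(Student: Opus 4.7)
The plan is to construct $\Xc_{P(\ok)}(\epsilon)_{a}$ as the generic fibre of an explicit formal model built from the tower
\[ \cdots \xrightarrow{\wt{F}} \wh{\Xf}(q^{-m-1}\epsilon) \xrightarrow{\wt{F}} \wh{\Xf}(q^{-m}\epsilon) \xrightarrow{\wt{F}} \cdots \xrightarrow{\wt{F}} \wh{\Xf}(\epsilon) \]
of formal schemes from Theorem \ref{anticanonical}(1), each transition map being a lift of a relative $q$-th power Frobenius. This mirrors Scholze's construction of the anticanonical tower for Siegel varieties \cite[Corollary 3.2.19]{tor}, and by Theorem \ref{anticanonical}(2) the generic fibre $\Xc(q^{-m}\epsilon)$ is identified with $\Xc_{U_{0}(\vp^{m})}(\epsilon)_{a}$, so constructing a perfectoid generic fibre of the formal limit will produce the desired tilde-limit.

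Working Zariski-locally on $\wh{\Xf}(\epsilon)$, I fix an affine $\Spf R_{0}\subset \wh{\Xf}(\epsilon)$ (using the explicit local description in Proposition \ref{epsilonnhood def}) and pull the tower back along $\wt{F}$ to affines $\Spf R_{m}$. Since each $\wt{F}$ is finite by Theorem \ref{anticanonical}(1), every $R_{m}$ is a finite $R_{0}$-algebra, in particular $\vp$-adically complete and $\ok^{cycl}$-flat. Set
\[ R_{\infty} := \bigl(\varinjlim_{m} R_{m}\bigr)^{\wedge}_{\vp}, \]
which is again $\vp$-adically complete and $\ok^{cycl}$-flat. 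The crucial input is that by Theorem \ref{anticanonical}(1), the transition map $R_{m}/\vp^{1-\epsilon}\to R_{m+1}/\vp^{1-\epsilon}$ coincides with the one induced by the relative Frobenius $\Xb(q^{-m-1}\epsilon)\to \Xb(q^{-m}\epsilon)$; using that $\ok^{cycl}/\vp$ is perfect to untwist the base-change Frobenius $\sigma$ on $\Xb(q^{-m-1}\epsilon)^{(q)}\cong \Xb(q^{-m}\epsilon)$, one sees that the absolute $q$-th power Frobenius on $R_{\infty}/\vp^{1-\epsilon}$ is surjective. Combined with $\ok^{cycl}$-flatness, this places $R_{\infty}$ in the scope of Scholze's standard perfectoidness criterion (cf.\ \cite[Lemma 3.2.18]{tor}), yielding a perfectoid affinoid $\Spa(R_{\infty}[1/\vp],R_{\infty}^{+})$ over $K^{cycl}$, where $R_{\infty}^{+}$ is the integral closure of $R_{\infty}$.

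These affinoid perfectoids glue: the construction of $R_{\infty}$ is functorial in $\Spf R_{0}$ and independent up to canonical equivalence of the choice of local data $\wt{H},\eta$ appearing in Proposition \ref{epsilonnhood def}, so varying $\Spf R_{0}$ over a cover of $\wh{\Xf}(\epsilon)$ produces a compatible family that glues into an affinoid perfectoid space $\Xc_{P(\ok)}(\epsilon)_{a}$ over $K^{cycl}$ (affinoidness follows because $\wh{\Xf}(\epsilon)$ is already affine over $\wh{\Xf}$ and the latter is projective with ample $\omega_{\G}$ in the mod $\vp$ fibre, but in any case only the local statement is needed for the tilde-limit). The tilde-limit relation $\Xc_{P(\ok)}(\epsilon)_{a}\sim \varprojlim_{m}\Xc_{U_{0}(\vp^{m})}(\epsilon)_{a}$ is then verified directly: on underlying spectral spaces the natural map is a continuous bijection, hence a homeomorphism, because $R_{\infty}$ is the $\vp$-adic completion of $\varinjlim_{m} R_{m}$; and the density-of-sections condition on a cover of affinoids is built into the construction. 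Uniqueness is a general property of tilde-limits of affinoid perfectoid spaces, following from \cite[Proposition 2.4.3]{sw}.

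The main obstacle is the careful bookkeeping needed to turn the \emph{relative} Frobenius description of $\wt{F}$ mod $\vp^{1-\epsilon}$ from Theorem \ref{anticanonical}(1) into a statement about \emph{absolute} Frobenius surjectivity on $R_{\infty}/\vp^{1-\epsilon}$, which is what the perfectoidness criterion consumes. This uses that the base $\ok^{cycl}/\vp$ is perfect so that relative and absolute Frobenius differ only by an isomorphism coming from $\sigma$ on the base, and is the same bookkeeping as in \cite[\S 3.2]{tor}; one needs only to track that the uniformizer $\vp$ of $K$ plays the role that $p$ does there, which is compatible with all preceding results since they have been proved in the ramified $\ok$-linear setting from the outset.
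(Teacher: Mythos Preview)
Your proposal follows essentially the same strategy as the paper: identify the tower $(\Xc_{U_0(\vp^m)}(\epsilon)_a)_m$ with $(\Xc(q^{-m}\epsilon))_m$ via Theorem~\ref{anticanonical}, pass to the inverse limit of the formal models $\wh{\Xf}(q^{-m}\epsilon)$ along the Frobenius lifts $\wt{F}$, and deduce perfectoidness of the generic fibre from the fact that the transition maps reduce to Frobenius modulo $\vp^{1-\epsilon}$, exactly as in \cite[Corollary~3.2.19]{tor}. The only cosmetic difference is that the paper forms the inverse limit globally as a $\vp$-adic formal scheme (the transition maps being affine), whereas you work Zariski-locally and glue; and for the parenthetical ``affinoid'' claim the paper defers to the tilting argument of \cite[Corollaries~3.2.19--3.2.20]{tor} rather than the heuristic you give (which, as you note, is not needed anyway). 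One small inaccuracy: the untwisting $\Xb(q^{-m-1}\epsilon)^{(q)}\cong \Xb(q^{-m}\epsilon)$ does not use that $\ok^{cycl}/\vp$ is perfect---it comes from Lemma~\ref{frob}, i.e.\ from $\Xb$ being defined over $k$---and what you actually need afterwards to pass from relative to absolute Frobenius surjectivity on $R_\infty/\vp^{1-\epsilon}$ is only that Frobenius is \emph{surjective} on $\ok^{cycl}/\vp^{1-\epsilon}$, which holds because $K^{cycl}$ is perfectoid.
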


\begin{proof}
We start by showing the existence of such a perfectoid space $\Xc_{P(\ok)}(\epsilon)_{a}$. By Theorem \ref{anticanonical} we may identify the tower $(\Xc_{U_{0}(\vp^{m})}(\epsilon)_{a})_{m \geq 0}$ with $(\Xc(q^{-m}\epsilon))_{m\geq 0}$, with transition maps given by $\wt{F}$. This gives us a formal model $(\wh{\Xf}(q^{-m}\epsilon))_{m \geq 0}$ for this tower, and we may take the inverse limit
$$ \wh{\Xf}_{\infty} := \varprojlim_{m\geq 0} \wh{\Xf}(q^{-m}\epsilon) $$
in the category of $\vp$-adic formal schemes since the transition maps are affine. We define $\Xc_{P(\ok)}(\epsilon)_{a}$ to be the generic fibre of $\wh{\Xf}_{\infty}$ in the sense of \cite[\S 2.2]{sw}. Since the transition maps agree with Frobenius modulo $\vp^{1-\epsilon}$, we may argue as in the proof of \cite[Corollary 3.2.19]{tor} to conclude that $\Xc_{P(\ok)}(\epsilon)_{a}$ is perfectoid and that $ \Xc_{P(\ok)}(\epsilon)_{a} \sim \varprojlim_{m}\Xc_{U_{0}(\vp^{m})}(\epsilon)_{a}$.

\medskip
Finally, to show that $\Xc_{P(\ok)}(\epsilon)_{a}$ is affinoid perfectoid, one may argue using tilts as in \cite[Corollary 3.2.19, Corollary 3.2.20]{tor}. Since this additional information is not needed for the results of this paper we will not give further details.
\end{proof}

\subsection{The Hodge--Tate period map}

We now introduce some notation for more general `infinite level Shimura varieties'. These will be defined (a priori) as diamonds, and we refer to \cite{dia} for the definitions and terminology concerning diamonds. Let $H_v \sub \GL_n(\oo_K)$ be a closed subgroup. We define
\[
 \Xc_{H_{v}} := \varprojlim_{H_{v}\sub U_{v}} \Xc_{U_{v}}^{\lozenge},
\]
where $U_{v}$ ranges through all the open subgroups $U_{v}\sub \GL_n(\oo_K)$ containing $H_v$, and $Y \mapsto Y^{\lozenge}$ is the `diamondification functor' on rigid spaces \cite[Definition 15.5]{dia}. We remark that each $\Xc_{U_v}^{\lozenge}$ is a spatial diamond, and that the inverse limits above exist (as diamonds) and are spatial by \cite[Lemma 11.22]{dia}, which also says that the natural map
\[
 |\Xc_{H_{v}}| \ra \varprojlim_{H_{v}\sub U_{v}} |\Xc_{U_{v}}^{\lozenge}| = \varprojlim_{H_{v}\sub U_{v}} |\Xc_{U_{v}}|
\]
is a homeomorphism, where $|Y|$ denotes the underlying topological space of an adic space or a diamond \cite[Definition 11.14]{dia} (and the equality  follows from \cite[Lemma 15.6]{dia}). Note that if $H_v=U_v$ is open, our definition above is essentially saying that we will conflate $\Xc_{U_v}$ with its corresponding diamond; this abuse of notation is mostly harmless since the diamondification functor is fully faithful on the category of normal rigid spaces (over a fixed nonarchimedean field, remembering the structure morphism). 

\medskip

Thus, writing $\mbf{1} \sub \GL_n(\oo_K)$ for the trivial subgroup, we have a diamond $\Xc_{\mbf{1}} = \varprojlim \Xc_{U_{v}}$ with an action of $\GL_{n}(\oo_{K})$, which extends to an action of $\GL_{n}(K)$ by using the maps $g : \Xc_{gU_{v}g^{-1}} \ra \Xc_{U_v}$ for $g\in \GL_n(K)$ and any open $U_v$ such that $U_v , gU_v g^{-1} \sub \GL_n(\oo_K)$. Our goal is to show that a certain open subset $\Xc_{P(\ok)}^{comp} \sub \Xc_{P(\ok)}$ (containing $\Xc_{P(\ok)}(\epsilon)_{a}$ for sufficiently small $\epsilon>0$) is perfectoid. 

\medskip
To do this, we proceed from the previous subsection by going further up the tower. Recall that if $(Y_i)_{i\in I}$ is a filtered inverse system of adic spaces over a perfectoid field with qcqs transition maps and $Y$ is a perfectoid space with compatible maps $Y \to Y_i$ such that $Y \sim \varprojlim_i Y_i$, then by \cite[Proposition 2.4.5]{sw} and the definition of the diamondification functor we have $Y = \varprojlim_i Y_i^\lozenge$ as diamonds (here and elsewhere, if $Y$ is a perfectoid space, we simply write $Y$ for the corresponding diamond as well). Thus, by Theorem \ref{epsilonnhood}, we have
\[
 \Xc_{P(\ok)}(\epsilon)_{a} = \varprojlim_{m\geq 0} \Xc_{U_{0}(\vp^{m})}(\epsilon)_{a}^{\lozenge},
\]
and $\Xc_{P(\ok)}(\epsilon)_{a}$ is naturally an open subdiamond of $\Xc_{P(\ok)}$.

\begin{proposition}\label{epsilonnhood at full inf level}
Let $0\leq \epsilon <1/2$ and let $H_{v}\sub \GL_{n}(\ok)$ be a closed subgroup contained in $P(\ok)$. Then the spatial diamond $\Xc_{H_{v}}(\epsilon)_{a} := \Xc_{P(\ok)}(\epsilon)_{a} \times_{\Xc_{P(\ok)}}\Xc_{H_{v}}$ is an (affinoid) perfectoid space.
\end{proposition}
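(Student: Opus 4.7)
The plan is to present $\Xc_{H_v}(\epsilon)_a$ as a cofiltered inverse limit of affinoid perfectoid spaces with finite \'etale transition maps, over the affinoid perfectoid base $\Xc_{P(\oo_K)}(\epsilon)_a$ afforded by Theorem \ref{epsilonnhood}, and then invoke standard results on such limits. The first step is to simplify the defining fibre product. Since the open immersion $\Xc_{P(\oo_K)}(\epsilon)_a \hookrightarrow \Xc_{P(\oo_K)}$ is the pullback of $\Xc_{U_0(\vp^m)}(\epsilon)_a^\lozenge \hookrightarrow \Xc_{U_0(\vp^m)}^\lozenge$ along the structural map $\Xc_{P(\oo_K)} \to \Xc_{U_0(\vp^m)}^\lozenge$ (for any $m \geq 0$), and since $H_v \subset P(\oo_K) \subset U_0(\vp^m)$ ensures that $\Xc_{H_v} \to \Xc_{P(\oo_K)}$ factors through $\Xc_{U_0(\vp^m)}^\lozenge$, the defining fibre product reduces to
\[\Xc_{H_v}(\epsilon)_a = \Xc_{U_0(\vp^m)}(\epsilon)_a^\lozenge \times_{\Xc_{U_0(\vp^m)}^\lozenge} \Xc_{H_v}.\]
I then pass to the cofinal subsystem $\mathcal{I}$ of the defining inverse limit for $\Xc_{H_v}$ consisting of pairs $(U_v, m)$ with $H_v \subset U_v \subset U_0(\vp^m)$ open in $\GL_n(\oo_K)$ (cofinal since any open $U_v' \supset H_v$ can be replaced by $U_v' \cap U_0(\vp^m)$). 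For each such pair, $\Xc_{U_v}^\lozenge \to \Xc_{U_0(\vp^m)}^\lozenge$ is finite \'etale, so the preimage $\Xc_{U_v}(\epsilon)_a$ of $\Xc_{U_0(\vp^m)}(\epsilon)_a$ in $\Xc_{U_v}$ is an open subspace finite \'etale over $\Xc_{U_0(\vp^m)}(\epsilon)_a$.

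Next I define
\[Z_{U_v, m} := \Xc_{U_v}(\epsilon)_a^\lozenge \times_{\Xc_{U_0(\vp^m)}(\epsilon)_a^\lozenge} \Xc_{P(\oo_K)}(\epsilon)_a,\]
which is finite \'etale over the affinoid perfectoid $\Xc_{P(\oo_K)}(\epsilon)_a$ and thus itself affinoid perfectoid by the almost purity theorem; since any morphism between two finite \'etale covers of a common base over that base is finite \'etale, the transition maps $Z_{U_v', m'} \to Z_{U_v, m}$ are finite \'etale. A direct verification using the universal property---exploiting that the restriction of the system $\{Z_{U_v, m}\}$ to pairs of the form $(U_0(\vp^m), m)$ is the constant system with value $\Xc_{P(\oo_K)}(\epsilon)_a$, which pins down the $\Xc_{P(\oo_K)}(\epsilon)_a$-factor of any compatible family as the image of the $\Xc_{U_v}(\epsilon)_a$-factors---shows that $\Xc_{H_v}(\epsilon)_a = \varprojlim_{(U_v, m) \in \mathcal{I}} Z_{U_v, m}$.

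To conclude, I invoke the standard fact that a cofiltered inverse limit of affinoid perfectoid spaces along finite \'etale transition maps is itself represented by an affinoid perfectoid space (concretely, the $\vp$-adic completion of the filtered colimit of the corresponding perfectoid algebras is a perfectoid algebra whose adic spectrum realises the limit, and coincides with the diamond-theoretic limit). The main potential obstacle is the bookkeeping needed to verify the identification of the defining fibre product with the cofiltered inverse limit in the diamond category, but this is routine once the cofinal subsystem $\mathcal{I}$ is correctly set up.
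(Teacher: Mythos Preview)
Your proof is correct and follows essentially the same approach as the paper: present $\Xc_{H_v}(\epsilon)_a$ as an inverse limit of spaces finite \'etale over the affinoid perfectoid $\Xc_{P(\oo_K)}(\epsilon)_a$, then invoke the standard result on such limits. The paper's version is more streamlined in that it indexes directly by finite-index (open) subgroups $H_v' \subset P(\oo_K)$ containing $H_v$ rather than your pairs $(U_v,m)$, which avoids the extra bookkeeping you flag at the end, but the content is the same.
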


\begin{proof}
First assume that $H_{v}$ has finite index inside $P(\ok)$. Then $\Xc_{H_{v}}(\epsilon)_{a} \ra \Xc_{P(\ok)}(\epsilon)_{a}$ is finite \'etale, and the result then follows. In general $\Xc_{H_{v}}(\epsilon)_{a} = \varprojlim_{H^{\prime}_{v}} \Xc_{H_{v}^{\prime}}(\epsilon)_{a}$ where $H_{v}^{\prime}$ ranges over closed subgroups with $H_{v} \sub H_{v}^{\prime} \sub P(\ok)$ and $H_{v}^{\prime} \sub P(\ok)$ has finite index, and the result follows.
\end{proof}

\medskip
To continue, we construct the Hodge--Tate period map $\Xc_{\mbf{1}} \ra (\Pro)^{\lozenge}$ on diamonds; this is the content of the following proposition. We keep the statement vague; the meaning of the name `Hodge--Tate period map' should be clear from the construction.

\begin{proposition}
There exists a $\GL_{n}(K)$-equivariant Hodge--Tate period map $\pi_{HT} : \Xc_{\mbf{1}} \ra (\Pro)^{\lozenge}$ over $(K^{cycl},\ok^{cycl})$.
\end{proposition}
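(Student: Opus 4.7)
The plan is to construct $\pi_{HT}$ locally on perfectoid sub-diamonds of $\Xc_\mbf{1}$ via the Hodge--Tate filtration of the universal $\mathcal{G}$ combined with the trivialization of its Tate module provided by the infinite-level structure, and then to glue via $\GL_n(K)$-equivariance.

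First, I would work on the affinoid perfectoid piece $\Xc_\mbf{1}(\epsilon)_a$ furnished by Proposition \ref{epsilonnhood at full inf level} (applied with $H_v = \mbf{1}$). Over this space, the pullback of the universal abelian variety gives a one-dimensional compatible Barsotti--Tate $\ok$-module $\mathcal{G}$ of height $n$, equipped with a canonical trivialization $\alpha : \ok^n \xrightarrow{\sim} T_\vp(\mathcal{G})$ arising from the infinite level structure at $v$. Scholze's relative $p$-adic Hodge theory for $p$-divisible groups over perfectoid bases gives the Hodge--Tate short exact sequence
$$
0 \to \omega_{\mathcal{G}^\vee}(1) \to T_\vp(\mathcal{G}) \otimes_{\ok} \oo_{\Xc_\mbf{1}(\epsilon)_a} \to \Lie \mathcal{G} \to 0
$$
(up to conventions on Tate twists), in which $\Lie \mathcal{G}$ is a line bundle since $\mathcal{G}$ has dimension one. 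Composing the right-hand surjection with $\alpha$ produces a rank-$1$ locally free quotient of the trivial rank-$n$ bundle on $\Xc_\mbf{1}(\epsilon)_a$, which by the universal property of projective space is classified by a morphism $\Xc_\mbf{1}(\epsilon)_a \to \Pro$.

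Next, I would propagate the construction to all of $\Xc_\mbf{1}$ using the $\GL_n(K)$-action. For $g \in \GL_n(K)$, the translate $g \cdot \Xc_\mbf{1}(\epsilon)_a$ is again affinoid perfectoid (by applying Proposition \ref{epsilonnhood at full inf level} to a conjugate subgroup), and one defines the period map on such a translate by requiring equivariance with respect to the standard left action of $\GL_n(K)$ on $\Pro$. Granting that the collection $\{g \cdot \Xc_\mbf{1}(\epsilon)_a\}_{g \in \GL_n(K)}$ forms an open cover of $\Xc_\mbf{1}$, the local constructions agree on overlaps by $\GL_n(K)$-equivariance of the Hodge--Tate filtration, and so glue to a $\GL_n(K)$-equivariant morphism of diamonds $\pi_{HT} : \Xc_\mbf{1} \to (\Pro)^{\lozenge}$.

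The main obstacle is establishing the covering claim in the previous paragraph: that every point of $\Xc_\mbf{1}$, including those lying above non-$\mu$-ordinary points of $\Xc$, is contained in some $\GL_n(K)$-translate of $\Xc_\mbf{1}(\epsilon)_a$. This is analogous to the covering invoked in Scholze's treatment of the Siegel case (compare the proof of \cite[Theorem III.3.17]{tor}), and the argument should adapt to our setting: at infinite level the trivialization of $T_\vp(\mathcal{G})$ singles out the Hodge--Tate line in $K^n$, and one can exhibit an explicit $g \in \GL_n(K)$ moving a given point into the anticanonical $\epsilon$-neighborhood by placing this line in general position with respect to the standard flag. Alternatively, one may construct $\pi_{HT}$ more directly as a morphism of diamonds by applying the Hodge--Tate functor to every perfectoid test object $(R, R^+) \to \Xc_\mbf{1}$ and reading off the induced rank-$1$ quotient of $\oo^n$, thereby sidestepping the covering issue altogether; this approach exploits the moduli-theoretic description of $\Xc_\mbf{1}$ in the style of Scholze--Weinstein.
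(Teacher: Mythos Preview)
Your alternative in the final paragraph---constructing $\pi_{HT}$ directly as a morphism of v-sheaves by evaluating on perfectoid test objects---is essentially what the paper does, and is the right route. Concretely, the paper takes a strictly totally disconnected affinoid perfectoid $\Spa(R,R^{+})$ mapping to $\Xc_{\mbf{1}}$, factors the induced map $\Spa(R,R^{+}) \to \Xc$ through an open $\Spa(A,A^{\circ})$ with $\Spf(A^{\circ}) \subset \wh{\Xf}$, pulls back the universal Barsotti--Tate $\ok$-module to $R^{+}$, and invokes \cite[Proposition 4.3.6]{sw} to obtain the Hodge--Tate sequence over $R$. The compatible trivialisations $\G_{R}[\vp^{m}](R) \cong (\ok/\vp^{m})^{n}$ from the infinite-level structure then read off the rank-one subspace $\Lie(\G_{R})(1)\otimes R \subset R^{n}$ as an $(R,R^{+})$-point of $\Pro$, and $\GL_{n}(K)$-equivariance is immediate from the construction.

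Your primary approach, by contrast, has a genuine circularity that you correctly flag but do not resolve. In this paper the fact that the $\GL_{n}(K)$-translates of $\Xc_{\mbf{1}}(\epsilon)_{a}$ cover $\Xc_{\mbf{1}}$ is established only \emph{after} $\pi_{HT}$ exists (see Corollary \ref{nhood}, Corollary \ref{comp locus at infinite level}, and Theorem \ref{full}), and the argument uses $\pi_{HT}$ essentially: one computes $\pi_{HT}^{-1}(\Pro(K)_{a})$ via Lemma \ref{mu-ordinary vs rational} and then translates around the target $\Pro$. Your proposed fix---``exhibit an explicit $g$ placing the Hodge--Tate line in general position''---is already a pointwise use of $\pi_{HT}$, and does not yield the covering at the level of diamonds needed for gluing. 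So the first two paragraphs do not stand on their own; the content that actually proves the proposition is your last sentence.
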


\begin{proof}
By the definitions, we may regard $\Xc_{\mbf{1}}$ and $(\Pro)^{\lozenge}$ as sheaves on the pro-\'etale site of perfectoid spaces over $K^{cycl}$, so to construct a map of sheaves it suffices to work with a basis for the topology. Let $\Spa(R,R^{+})$ be a strictly totally disconnected perfectoid space over $(K^{cycl},\ok^{cycl}$). A map $\Spa(R,R^{+}) \ra \Xc_{\mbf{1}}$ is the same as a compatible system of maps $\Spa(R,R^{+}) \ra \Xc_{U(\vp^m)}$ for all $m$, and we may assume that the map $\Spa(R,R^{+}) \ra \Xc$ factors through an affinoid open subset $\Spa(A,A^{\circ}) \sub \Xc$, where $\Spf(A^{\circ}) \sub \wh{\Xf}$ is open affine (note that this is possible since $\wh{\Xf}$ is normal, by \cite[Theorem 7.4.1]{dj}). The map $\Spa(R,R^{+}) \ra \Spa(A,A^{\circ})$ is then the generic fibre of a map $\Spf(R^{+}) \to \Spf(A^{\circ})$ of $\vp$-adic formal schemes, and we may pull back the universal Barsotti--Tate $\ok$-module over $\Spf(A^{\circ})$ to a Barsotti--Tate $\ok$-module $\G_{R}$ over $R^{+}$. Since $\Spa(R,R^+)$ is strictly totally disconnected, we may apply \cite[Proposition 4.3.6]{sw}\footnote{The proof of \cite[Proposition 4.3.6]{sw} does not require the assumption, in the notation of that reference, that $\Spec T$ is connected.} to see that $\G_{R}$ has an exact Hodge--Tate sequence
$$ 0 \ra \Lie(\G_R)(1)\otimes_{R^{+}}R \ra T\G_R(R^{+})\otimes_{\Zp}R \ra (\Lie(\G_R^\vee))^{\vee}\otimes_{R^{+}}R \ra 0 $$
of finite projective $R$-modules. By the compatibility of $\G_R$ and the fact that it has dimension $1$, 
\[\Lie(\G_R)(1)\otimes_{R^{+}}R\]
has $R$-rank $1$ and embeds into $T\G_R(R^{+})\otimes_{\ok^{cycl}}R$ (which is an $R$-module direct summand of $T\G_R(R^{+})\otimes_{\Zp}R$). Using the compatible trivialisations $\G_{R}[\vp^{m}](R^{+})=\G_R[\vp^m](R) \cong (\ok/\vp^{m})^{n}$ coming from the maps $\Spa(R,R^{+}) \ra \Xc_{U(\vp^m)}$, the inclusion $\Lie(\G_R)(1)\otimes_{R^{+}}R \sub T\G_R(R^{+})\otimes_{\ok^{cycl}}R \cong R^{n}$ defines an $(R,R^{+})$-point of $\Pro$. This gives the desired map, and $\GL_n(K)$-equivariance is clear from the construction.
\end{proof}

We remark that any map between spatial diamonds induces a spectral map of the underlying spectral topological spaces, so $\pi_{HT}$ is spectral. The next lemma characterises the image of the $\mu$-ordinary locus under the Hodge--Tate period map. For more general results under the assumption that $K/\Qp$ is unramified, see \cite[\S 11]{he}.

\begin{lemma}\label{mu-ordinary vs rational}
Let $C$ be a complete algebraically closed extension of $K$ with valuation ring $\oo_C$ and residue field $k_C$. Let $\G$ be a compatible Barsotti--Tate $\ok$-module over $\oo_C$ of dimension $1$ and height $n$. Then the special fibre $\G \times_{\oo_C}k_C$ is $\mu$-ordinary if and only if the subspace $\Lie(\G)\otimes_{\oo_C}C(1) \sub T\G \otimes_{\Zp}C$ is $K$-rational (here $T\G$ is the Tate module of $\G$).
\end{lemma}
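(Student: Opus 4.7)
The plan is to combine the connected--étale sequence of $\G$ over $\oo_C$ (which exists since $\oo_C$ is henselian with algebraically closed residue field) with the Scholze--Weinstein classification of $p$-divisible groups over $\oo_C$ by pairs (Tate module, Hodge--Tate filtration), cf.\ \cite{sw}. Throughout I would use that $T\G$ is a free $\ok$-module of rank $n$, and that writing $\tau_0 : K \hookrightarrow C$ for the structure map and $L := \Lie(\G)(1) \otimes_{\oo_C} C$, compatibility of $\G$ forces $L$ to lie in the $\tau_0$-summand $T\G \otimes_{\ok, \tau_0} C$ of the decomposition $T\G \otimes_{\Zp} C = \bigoplus_{\tau : K \hookrightarrow C} T\G \otimes_{\ok, \tau} C$. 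The connected--étale sequence gives $0 \ra \G^0 \ra \G \ra \G^{et} \ra 0$ of compatible Barsotti--Tate $\ok$-modules over $\oo_C$, with $\G^0$ and $\G^{et}$ having the same $\ok$-heights as their respective special fibres.

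For the forward direction I would assume $\G_s := \G \times_{\oo_C} k_C$ is $\mu$-ordinary, so $\G^0$ has $\ok$-height $1$ and $\G^{et}$ has $\ok$-height $n-1$. Since $\G^{et}$ is étale, $\Lie \G = \Lie \G^0$, so $L \sub T\G^0 \otimes_{\ok, \tau_0} C$; both sides are $1$-dimensional over $C$ and hence equal. Since $T\G^0 \sub T\G$ is defined over $\ok$, the subspace $L = T\G^0 \otimes_{\ok, \tau_0} C$ is the base change along $\tau_0$ of the $K$-line $T\G^0 \otimes_\ok K \sub T\G \otimes_\ok K$, and is therefore $K$-rational.

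For the converse I would start from $L = L_K \otimes_{K, \tau_0} C$ for some $K$-line $L_K \sub T\G \otimes_\ok K$ and form $L_\ok := L_K \cap T\G$, a free $\ok$-submodule of $T\G$ of rank $1$ satisfying $L_\ok \otimes_{\ok, \tau_0} C = L$. Feeding the pair $(L_\ok, L)$ into the Scholze--Weinstein classification produces a compatible Barsotti--Tate $\ok$-module $\G'$ over $\oo_C$ of $\ok$-height $1$ and dimension $1$; the inclusion $(L_\ok, L) \hookrightarrow (T\G, L)$ yields a closed immersion $\G' \hookrightarrow \G$ whose cokernel $\G/\G'$ corresponds to $(T\G/L_\ok, 0)$ and is therefore étale of $\ok$-height $n-1$. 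The special fibre $\G'_s$ is automatically connected (a compatible Barsotti--Tate $\ok$-module of $\ok$-height $1$ and dimension $1$ over a field cannot have an étale part), so $0 \ra \G'_s \ra \G_s \ra (\G/\G')_s \ra 0$ is the connected--étale sequence of $\G_s$, and $\G_s$ is $\mu$-ordinary.

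The hard part will be checking that Scholze--Weinstein passes cleanly through the extra $\ok$-structure: concretely, one wants the equivalence to restrict to one between compatible Barsotti--Tate $\ok$-modules over $\oo_C$ and pairs $(T, W)$ with $T$ a free $\ok$-module and $W$ a $C$-subspace of the $\tau_0$-summand $T \otimes_{\ok, \tau_0} C$, and one wants injectivity/surjectivity of the relevant morphisms of pairs to translate into closed immersions/surjections of Barsotti--Tate $\ok$-modules. Once this bookkeeping is in place, both implications reduce to the short computations with Tate modules and Hodge--Tate filtrations outlined above.
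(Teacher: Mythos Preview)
Your proposal is correct and follows essentially the same strategy as the paper: both directions rest on the Scholze--Weinstein classification together with the connected--\'etale sequence, and your identification of the ``hard part'' (that the equivalence respects the $\ok$-structure and exact sequences) is exactly what the paper handles by citing \cite[Theorem 5.2.1, Proposition 5.2.8]{sw}.

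There are two small differences worth noting. In the forward direction, the paper takes a slightly longer route by invoking uniqueness of the height-$1$ compatible Barsotti--Tate $\ok$-module $LT$ and explicitly writing down its pair $(T,W)$; your argument, observing directly that $L = T\G^{0}\otimes_{\ok,\tau_0}C$ is the base change of the $K$-line $T\G^{0}\otimes_{\ok}K$, is more economical and equally valid. In the converse, the paper chooses a complement $T'$ to $L_{\ok}$ inside $T\G$ and obtains a product decomposition $\G \cong LT \times (K/\ok)^{n-1}$, whereas you build the sub--Barsotti--Tate group $\G'$ and argue via the resulting short exact sequence. The paper's splitting is marginally cleaner since it sidesteps the need to verify that the special fibre of your sequence really is the connected--\'etale sequence of $\G_s$, but your version works just as well once you note (as you do) that a height-$1$ dimension-$1$ compatible $\ok$-module over a field has no \'etale part.
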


\begin{proof}
We use the Scholze--Weinstein classification of Barsotti--Tate groups over $\oo_C$ \cite[\S 5]{sw}. To simplify the notation, we will take the linear algebra data $(T,W)$ in the Scholze--Weinstein equivalence \cite[Theorem 5.2.1]{sw} to be a finite free $\Zp$-module $T$ together with a $C$-subspace $W \sub T\otimes_{\Zp}C$ rather than a subspace of $T\otimes_{\Zp}C(-1)$ (from the point of view of Barsotti--Tate groups $G$, we take $W$ to be $\Lie(G)\otimes_{\oo_C}C(1)$ rather than $\Lie(G)\otimes_{\oo_C}C$). We start by assuming that the special fibre of $\G$ is $\mu$-ordinary, and consider the connected-\'etale sequence  
$$ 0 \ra \G^{0} \ra \G \ra \G^{et} \ra 0 $$
of $\G$, which is an exact sequence of compatible Barsotti--Tate $\ok$-modules. By \cite[Proposition 5.2.8]{sw}, this exact sequence induces an exact sequence
$$ 0 \ra T\G^{0} \ra T\G \ra T\G^{et} \ra 0 $$
and an equality $\Lie(\G^{0})\otimes_{\oo_C}C = \Lie(\G)\otimes_{\oo_C}C$ (since $\Lie(\G^{et})=0$), so it suffices to show that $\Lie(\G^{0})\otimes_{\oo_C}C(1) \sub T\G^{0}$ is $K$-rational. Since the special fibre is $\mu$-ordinary, $\G^{0}$ has height $1$ (using that the connected-\'etale sequence is compatible with reduction). But, by the Scholze--Weinstein classification \cite[Theorem 5.2.1]{sw}, there is a unique compatible Barsotti--Tate $\ok$-module $LT$ of dimension $1$ and height $n$ over $\oo_C$, given by the linear algebra datum $(T=\ok, W=C_{\sigma})$ where
$$T\otimes_{\Zp}C = \prod_{\tau \in \Hom(K,C)}C_{\tau} $$
and $\sigma : K \ra C$ is the inclusion (recall that $C$ was defined to be an extension of $K$), and this $W$ is visibly $K$-rational. Note that $LT$ is the unique lift of the Lubin--Tate $\ok$-module of height $1$ over $k_C$.

\medskip
For the converse, assume that $(T=\ok^{n},W)$ is the linear algebra datum of $\G$, assume that $W$ is $K$-rational and use the notation established in the previous paragraph. Write $W=W_{K}\otimes_{K}C$ with $W_{K}$ a $K$-rational structure on $W$. We can canonically identify $T[1/p]=K^{n}$ with the $K$-rational structure on $(T\otimes_{\Zp}C)_{\sigma}$ and hence think of $W_{K}$ as a subspace of $T[1/p]$; the intersection $W_{\oo_{K}}=W_{K} \cap T$ is then an $\ok$-module direct summand of $T$  of rank $1$; let $T^{\prime}\sub T$ be a complement. It follows that we can write
$$ (T,W) = (W_{\oo_{K}},W) \oplus (T^{\prime},0) $$
compatibly with the $\ok$-action. It then follows from the Scholze--Weinstein equivalence that $\G$ is isomorphic to $LT \times (K/\ok)^{n-1}$ as a Barsotti--Tate $\ok$-module, and hence has $\mu$-ordinary reduction.
\end{proof}

Let us now define 
$$ \Pro(K)_{a} := \left\{ (a_{1}: \dots :a_{n}) \in \Pro(\ok) \mid a_{n}\in \ok^{\times} \right\}.$$
We then get the following corollaries.

\begin{corollary}\label{image of antican locus}
We have $\pi_{HT}(|\Xc_{\mbf{1}}(0)_{a}|) = \Pro(K)_{a}$ and $\pi_{HT}^{-1}(\Pro(K)_{a})$ is equal to the closure $\ol{|\Xc_{\mbf{1}}(0)_{a}|}$ of $|\Xc_{\mbf{1}}(0)_{a}|$ in $|\Xc_{\mbf{1}}|$.

\end{corollary}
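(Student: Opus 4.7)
The plan is to split the proof along the two asserted equalities.

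\emph{Image equality $\pi_{HT}(|\Xc_{\mbf{1}}(0)_a|) = \Pro(K)_a$.} I would argue on $(C,\oo_C)$-points. A point $x : \Spa(C,\oo_C) \to \Xc_{\mbf{1}}(0)_a$ corresponds to a one-dimensional compatible Barsotti--Tate $\ok$-module $\G/\oo_C$ of height $n$ together with a trivialisation $\psi : T\G \cong \ok^n$ (and the usual global package). Since $\Xc(0)_a$ is the open locus where the $\mu$-ordinary Hasse invariant is nonvanishing, the special fibre of $\G$ is $\mu$-ordinary, so Lemma \ref{mu-ordinary vs rational} gives that $\Lie(\G)(1)\otimes C \subset T\G\otimes C$ is $K$-rational and hence $\pi_{HT}(x)\in\Pro(K)$. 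To sharpen to $\Pro(K)_a$, I would unwind the anticanonical condition from Theorem \ref{anticanonical}(3): in the $\mu$-ordinary case the Hodge--Tate line equals $T\G^0\otimes C$ (where $\G^0$ is the connected part, which is $LT$ by Scholze--Weinstein), and anticanonicality of $\psi$ says precisely that the line $\psi(T\G^0) \subset \ok^n$ is complementary to the ``last $n-1$ coordinates''---equivalently, that the last homogeneous coordinate is a unit. Conversely, given $\ell=(a_1:\cdots:a_n)\in\Pro(K)_a$, I would produce a geometric point of $\Xc_{\mbf{1}}(0)_a$ mapping to $\ell$ by taking any lift to $\oo_C$ of a point of the non-empty stratum $\Xb^{(n-1)}$ (so that the associated $\G$ is isomorphic to $LT\oplus (K/\ok)^{n-1}$ by the Scholze--Weinstein classification), and then choosing the trivialisation $\psi$ of $T\G$ so that the Hodge--Tate line maps to $\ell$; the condition $a_n\in\ok^\times$ makes $\psi$ automatically anticanonical.

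\emph{Preimage equality $\pi_{HT}^{-1}(\Pro(K)_a) = \ol{|\Xc_{\mbf{1}}(0)_a|}$.} Given the image equality, my plan is to verify separately that (i) $\pi_{HT}^{-1}(\Pro(K)_a)$ is closed in $|\Xc_{\mbf{1}}|$, and (ii) $|\Xc_{\mbf{1}}(0)_a|$ is dense in $\pi_{HT}^{-1}(\Pro(K)_a)$. For (ii), I would use that rank-$1$ (classical) points are dense in any spatial diamond and that any classical point mapping into $\Pro(K)_a$ lies in $|\Xc_{\mbf{1}}(0)_a|$ by the image equality, so every open neighbourhood of a point in $\pi_{HT}^{-1}(\Pro(K)_a)$ meets $|\Xc_{\mbf{1}}(0)_a|$. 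For (i), the key identification to establish is that $\pi_{HT}^{-1}(\Pro(K)_a) = \pi^{-1}\bigl(\ol{|\Xc_{P(\ok)}(0)_a|}\bigr)$, where $\pi:\Xc_{\mbf{1}}\to\Xc_{P(\ok)}$ is the projection (the right-hand side being closed by construction). The $\mu$-ordinary condition and anticanonicality are both defined at $P(\ok)$-level, so the condition ``$\pi_{HT}(y)\in \Pro(K)_a$'' should depend only on the image of $y$ in $\Xc_{P(\ok)}$. One direction is continuity of $\pi_{HT}$ plus the image equality; the reverse direction amounts to showing that if the image of $y$ lies in the closure of the ordinary anticanonical locus at $P(\ok)$-level, then $\pi_{HT}(y)$ is forced to be a $K$-rational point with last coordinate a unit.

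\emph{Main obstacle.} The hard part is (i): even though $\Pro(K)_a$ is a set of classical rigid points and not itself closed in $|\Pro|$, we need its $\pi_{HT}$-preimage to be closed. This relies on the specifically infinite-level nature of the Hodge--Tate period map: the Hodge--Tate filtration is approximated at each finite level $U(\vp^m)$ by integral data that is $K$-rational precisely when the trivialisation comes from the anticanonical tower. I would carry out this approximation using the explicit $\epsilon$-neighbourhood description of the anticanonical tower from Theorem \ref{anticanonical} together with the tilting/Scholze--Weinstein analysis underlying $\pi_{HT}$, arguing that a failure of $K$-rationality (or of the unit last-coordinate condition) persists in an open neighbourhood and thus keeps $y$ outside $\ol{|\Xc_{\mbf{1}}(0)_a|}$.
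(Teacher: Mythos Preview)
Your image equality argument is essentially correct (with a harmless coordinate slip: anticanonicality means $\psi(T\G^0)$ is complementary to the \emph{first} $n-1$ coordinates, matching the upper block of $P$; your conclusion $a_n \in \ok^\times$ is right). The paper gets surjectivity onto $\Pro(K)$ more quickly by $\GL_n(\ok)$-equivariance, since $\Xc_{\mbf{1}}(0)$ is $\GL_n(\ok)$-stable and $\Pro(K)$ is a single $\GL_n(\ok)$-orbit, but your explicit construction also works.

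Your ``main obstacle'' rests on a false premise: $\Pro(K)_a$ \emph{is} closed in $|\Pro|$. Each of its points is a classical point over $C$, hence a closed rank-$1$ point of $|\Pro|$ (the residue field is $C$, whose rank-$1$ valuation admits no nontrivial continuous coarsening, so there are no proper specializations). Moreover $\Pro(K)_a \cong \ok^{n-1}$ is a compact set of such points, hence closed in $|\Pro|$ (e.g.\ via the continuous map to the maximal Hausdorff quotient, where compact implies closed, and the preimage of $\Pro(K)_a$ is $\Pro(K)_a$ itself since its points have no proper specializations). This dissolves your step (i) entirely: the preimage of a closed set is closed, and no finite-level approximation is needed.

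In fact the paper bypasses the (i)/(ii) split. Since the points of $\Pro(K)_a$ are simultaneously maximal (rank $1$) and closed, one sees directly that $\pi_{HT}^{-1}(\Pro(K)_a)$ equals the set of specializations of its own rank-$1$ points: for $y$ in the preimage, its rank-$1$ generalization $\tilde y$ maps to a generalization of $\pi_{HT}(y)$, hence to $\pi_{HT}(y)$ itself by maximality, so $\tilde y$ lies in the preimage; conversely, any specialization of such $\tilde y$ maps to a specialization of a closed point, hence stays in the preimage. By Lemma~\ref{mu-ordinary vs rational} these rank-$1$ points are exactly the rank-$1$ points of $\Xc_{\mbf{1}}(0)_a$, and since $\Xc_{\mbf{1}}(0)_a$ is quasicompact open in a spectral space, its closure is precisely its set of specializations. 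Your argument for (ii) is effectively this same computation; the detour through (i) is unnecessary.
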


\begin{proof}
Form now on, to ease the notation we will often drop the $|-|$ when discussing topological spaces of adic spaces or diamonds; what we mean should hopefully be clear from the context. By Lemma \ref{mu-ordinary vs rational} the rank one points of $\pi_{HT}^{-1}(\Pro(K))$ are precisely the rank one points of the $\mu$-ordinary locus $\Xc_{\mbf{1}}(0)$, so it follows that $\pi_{HT}^{-1}(\Pro(K))$ is precisely the set of specializations of points in $\Xc_{\mbf{1}}(0)$. Since $\Xc_{\mbf{1}}(0)$ is a quasicompact open subset of $\Xc_{\mbf{1}}$, the set of such specializations is precisely $\ol{\Xc_{\mbf{1}}(0)}$. Moreover $\Xc_{\mbf{1}}(0)$ is $\GL_n(\ok)$-stable and $\Pro(K)$ is a $\GL_n(\ok)$-orbit, so by equivariance of $\pi_{HT}$ the image of $\Xc_{\mbf{1}}(0)$ has to be all of $\Pro(K)$. Finally, to deduce the corollary from this one checks easily that the anticanonical condition on a rank $1$ point is equivalent to the image under $\pi_{HT}$ being in $\Pro(K)_{a}$, and then we argue similarly using that $\Xc_{\mbf{1}}(0)_{a}$ is also quasicompact and open.
\end{proof}

\begin{corollary}\label{nhood}
For every $0< \epsilon <1/2$ there exists a quasicompact open subset $U \sub \Pro$ containing $\Pro(K)_{a}$ such that $\pi_{HT}^{-1}(U) \sub \Xc_{\mbf{1}}(\epsilon)_{a}$. Conversely, for every open subset $V\sub \Pro$ containing $\Pro(K)_{a}$, we have $\Xc_{\mbf{1}}(\epsilon)_{a} \sub \pi_{HT}^{-1}(V)$ for all sufficiently small $\epsilon >0$.
\end{corollary}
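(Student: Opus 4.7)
The plan is to reduce both statements to the compactness of the constructible topology on the spectral space $|\mathcal{X}_{\mathbf{1}}|$. The essential preliminary observation is that $\overline{|\mathcal{X}_{\mathbf{1}}(0)_a|} \subseteq |\mathcal{X}_{\mathbf{1}}(\epsilon)_a|$ for every $\epsilon > 0$. At each finite level, $\mathcal{X}_{U_0(\vp^m)}(0)_a \subseteq \mathcal{X}_{U_0(\vp^m)}(\epsilon)_a$ holds trivially, and the closure of the former remains in the latter because the strict inequality $|H|>|\vp|^\epsilon$ is preserved under rank-$2$ refinements of rank-$1$ valuations, while the anticanonical condition on level structures persists under specialization; pulling back to infinite level and combining with Corollary \ref{image of antican locus} yields that $\pi_{HT}^{-1}(\Pro(K)_a) = \overline{|\mathcal{X}_{\mathbf{1}}(0)_a|}$ is a closed subset of $|\mathcal{X}_{\mathbf{1}}|$ contained in $|\mathcal{X}_{\mathbf{1}}(\epsilon)_a|$.

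For the forward direction, set $Z := |\mathcal{X}_{\mathbf{1}}| \setminus |\mathcal{X}_{\mathbf{1}}(\epsilon)_a|$, which is pro-constructible. On the affine chart $\{a_n \in \ok^\times\} \subseteq \Pro$ containing $\Pro(K)_a$, for each $N \geq 0$ I let $T_N \subseteq \Pro$ be the finite union of closed balls of radius $|\vp|^N$ around lifts of the $q^{N(n-1)}$ points of $(\ok/\vp^N)^{n-1}$; each $T_N$ is a qc open, and any rank-$1$ point in $\bigcap_N T_N$ lies in $\Pro(K)_a$ by a standard Cauchy argument. Setting $S_N := \pi_{HT}^{-1}(T_N) \cap Z$ gives a decreasing family of pro-constructible subsets of $|\mathcal{X}_{\mathbf{1}}|$. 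The claim is $\bigcap_N S_N = \emptyset$: if $x \in \bigcap_N S_N$ has rank-$1$ generization $x_0$, then $x_0 \in \pi_{HT}^{-1}(T_N)$ for all $N$ (opens are closed under generization), so $\pi_{HT}(x_0)$ is a rank-$1$ point of $\bigcap_N T_N$ and hence lies in $\Pro(K)_a$; this puts $x_0$ into the closed set $\pi_{HT}^{-1}(\Pro(K)_a) = \overline{|\mathcal{X}_{\mathbf{1}}(0)_a|}$, which therefore also contains its specialization $x$, and by the preliminary observation $x \in |\mathcal{X}_{\mathbf{1}}(\epsilon)_a|$, contradicting $x \in Z$. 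The compactness of the constructible topology on $|\mathcal{X}_{\mathbf{1}}|$ then forces $S_N = \emptyset$ for some $N$, and we take $U := T_N$.

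For the converse, given an open $V \supseteq \Pro(K)_a$, set $W_\epsilon := |\mathcal{X}_{\mathbf{1}}(\epsilon)_a| \cap \pi_{HT}^{-1}(V^c)$, again pro-constructible and decreasing in $\epsilon$. The same rank-$1$ generization argument — now using that $\pi_{HT}(x_0) \in \Pro(K)_a \subseteq V$ contradicts $\pi_{HT}(x) \in V^c$ once we know $x \in \pi_{HT}^{-1}(\Pro(K)_a)$ — shows $\bigcap_\epsilon W_\epsilon = \emptyset$, and compactness of the constructible topology then produces $\epsilon > 0$ with $W_\epsilon = \emptyset$, i.e., $|\mathcal{X}_{\mathbf{1}}(\epsilon)_a| \subseteq \pi_{HT}^{-1}(V)$. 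The main technical point will be the preliminary closure observation, which requires a careful look at the behaviour of the Hasse invariant under rank-$2$ specializations; everything else is formal spectral-space topology.
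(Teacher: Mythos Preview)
Your approach is essentially the paper's: both arguments use Corollary~\ref{image of antican locus} together with compactness of the constructible topology on $|\Xc_{\mbf{1}}|$. Your explicit cofinal family $T_N$ is an unnecessary elaboration --- the paper simply uses all quasicompact opens $U \supseteq \Pro(K)_a$ and the identity $\Pro(K)_a = \bigcap U$ --- but the forward direction is correct.

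There is, however, a genuine gap in your converse argument. You only establish the inclusion $\overline{|\Xc_{\mbf{1}}(0)_a|} \subseteq |\Xc_{\mbf{1}}(\epsilon)_a|$, yet your converse argument tacitly uses the \emph{reverse} inclusion $\bigcap_{\epsilon>0} |\Xc_{\mbf{1}}(\epsilon)_a| \subseteq \overline{|\Xc_{\mbf{1}}(0)_a|}$. Concretely: from $x_0 \in \bigcap_\epsilon |\Xc_{\mbf{1}}(\epsilon)_a|$ you assert $\pi_{HT}(x_0) \in \Pro(K)_a$, but this is exactly the statement that $x_0 \in \pi_{HT}^{-1}(\Pro(K)_a) = \overline{|\Xc_{\mbf{1}}(0)_a|}$, which does not follow from what you have proved. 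The ``same rank-$1$ generization argument'' does not apply here, since in the forward direction the conclusion $\pi_{HT}(x_0)\in\Pro(K)_a$ came from the Cauchy property of your $T_N$'s, and there is no analogous input on the $\epsilon$-side. The paper handles this by explicitly invoking the full equality $\overline{|\Xc_{\mbf{1}}(0)_a|} = \bigcap_{\epsilon>0} |\Xc_{\mbf{1}}(\epsilon)_a|$. The missing inclusion is not difficult --- for a rank-$1$ point, lying in every $\Xc_{\mbf{1}}(\epsilon)_a$ forces $|H|\geq |\vp|^\epsilon$ for all $\epsilon>0$, hence $|H|=1$, so the point is $\mu$-ordinary and anticanonical --- but it is a second ``preliminary observation'' of the same flavour as the one you flagged, and your sketch omits it.
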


\begin{proof}
We may write $\Pro(K)_{a} = \bigcap U$, where $U$ runs through the quasicompact open subsets of $\Pro$ containing $\Pro(K)_{a}$. Fix $\epsilon >0$ small enough. We have $\ol{\Xc_{\mbf{1}}(0)_{a}} \sub \Xc_{\mbf{1}}(\epsilon)_{a}$, so by Corollary \ref{image of antican locus} we have $  \Xc_{\mbf{1}}(\epsilon)_{a} \supseteq \bigcap \pi^{-1}_{HT}(U) $, and it follows (by a short argument using the constructible topology) that $\pi_{HT}^{-1}(U) \sub \Xc_{\mbf{1}}(\epsilon)_{a}$ for some $U$ since the $\pi^{-1}_{HT}(U)$ are quasicompact opens (since $\pi_{HT}$ is spectral). This proves the first part, and the converse is proved in exactly the same way using the fact that $\ol{\Xc_{\mbf{1}}(0)_{a}} = \bigcap_{\epsilon >0} \Xc_{\mbf{1}}(\epsilon)_{a}$.
\end{proof}

\subsection{Perfectoid spaces}\label{perfectoid}

In this subsection we will prove the (global) perfectoidness results that we will need in this paper. We start with some remarks on the geometry of $\Pro$, to set up notation. We have a cover of $\Pro$ by open affinoid subsets
$$ \V_{i} = \{ (a_{1}: \dots :a_{n}) \mid |a_{j}|\leq |a_{i}|\,\,j\neq i \}. $$
Note also that the $\V_{i}$ are translates of one another under the action of the Weyl group of $\GL_n$ (with respect to the diagonal torus). We have a similar `algebraic' cover by open subsets
$$ V_{i} = \{ (a_1 : \dots :a_n) \mid |a_i|\neq 0 \}. $$
Let $ \gamma = {\rm diag}(\vp,\dots,\vp,1) \in \GL_n(K)$. We then have the following elementary lemma. Recall that we are using the right action of $\GL_n$ on $\Pro$ which is the inverse of the usual left action.

\begin{lemma}\label{cover of pro}
We have $V_n = \bigcup_{k \geq 0} \V_{n} \gamma^{k}$, and the sets $\V_{n}\gamma^{-k}$, $k \geq 0$, form a basis of quasicompact open neighbourhoods of $(0:\dots 0:1) \in \Pro$.
\end{lemma}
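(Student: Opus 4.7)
The plan is to unwind the definitions explicitly in projective coordinates. First I would compute the action of $\gamma$ on $[a_1:\ldots:a_n]$ using the convention that the right action of $\GL_n$ on $\Pro$ is the inverse of the standard left action; this yields
\[
 [a_1:\ldots:a_n]\cdot\gamma = [\varpi^{-1}a_1:\ldots:\varpi^{-1}a_{n-1}:a_n] = [a_1:\ldots:a_{n-1}:\varpi a_n],
\]
so in particular $\gamma$ preserves $V_n$ and acts on the affine coordinates $a_j/a_n$ by scaling by $\varpi^{-1}$.

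Next I would translate $\V_n$ by powers of $\gamma$ to obtain the explicit formula
\[
 \V_n \gamma^k = \{(a_1:\ldots:a_n) \mid |a_j| \leq |\varpi|^{-k} |a_n|,\; 1\leq j \leq n-1\}
\]
for every $k\in\Z$. In particular each $\V_n \gamma^k$ is contained in $V_n$, and under the identification of $V_n$ with affine $(n-1)$-space (by normalising $a_n = 1$) the set $\V_n \gamma^k$ corresponds to the closed polydisc of radius $|\varpi|^{-k}$ centred at the origin.

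With this in hand, both claims follow by direct inspection. For the first, the inclusion $\bigcup_{k\geq 0}\V_n\gamma^k \subseteq V_n$ is immediate, and conversely any point of $V_n$ normalised with $a_n = 1$ lies in $\V_n \gamma^k$ as soon as $|\varpi|^{-k} \geq \max_j |a_j|$, which holds for all sufficiently large $k$. For the second, the sets $\V_n \gamma^{-k}$ with $k \geq 0$ are affinoid (hence quasicompact open) and correspond under $V_n \cong \mathbb{A}^{n-1}$ to the closed polydiscs of radius $|\varpi|^k$ about the origin $(0:\ldots:0:1)$; since such polydiscs form a basis of open neighbourhoods of the origin in $\mathbb{A}^{n-1}$, the claim follows.

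The argument is entirely elementary; the only thing requiring any care is fixing the right-versus-left action convention so as to get the signs of $k$ correct.
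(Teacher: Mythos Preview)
Your argument is correct. The paper itself gives no proof of this lemma, merely labelling it ``elementary'' and moving on; your direct computation in affine coordinates on $V_n \cong \mathbb{A}^{n-1}$ is exactly the routine verification the authors have in mind, and the sign conventions for the $\gamma$-action are handled correctly.
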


Next, we define $\Xc_{\mbf{1}}^{comp}$, the `complementary locus', to be the open subdiamond $\pi_{HT}^{-1}(V_{n}) \sub \Xc_{\mbf{1}}$.

\begin{corollary}\label{comp locus at infinite level}
Let $\epsilon >0$ be sufficiently small. We have $\Xc_{\mbf{1}}^{comp}= \bigcup_{k \geq 0}\Xc_{\mbf{1}}(\epsilon)_{a}\gamma^{k}$, and hence $\Xc_{\mbf{1}}^{comp}$ is a perfectoid space.
\end{corollary}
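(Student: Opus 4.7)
The plan is to push the cover $V_{n}=\bigcup_{k\geq 0}\V_{n}\gamma^{k}$ of Lemma \ref{cover of pro} through $\pi_{HT}$. By $\GL_{n}(K)$-equivariance of the Hodge--Tate period map,
\[\Xc_{\mbf{1}}^{comp}=\pi_{HT}^{-1}(V_{n})=\bigcup_{k\geq 0}\pi_{HT}^{-1}(\V_{n})\gamma^{k},\]
so it is enough to show $\bigcup_{k\geq 0}\pi_{HT}^{-1}(\V_{n})\gamma^{k}=\bigcup_{k\geq 0}\Xc_{\mbf{1}}(\epsilon)_{a}\gamma^{k}$ for sufficiently small $\epsilon>0$, and then to invoke Proposition \ref{epsilonnhood at full inf level} in order to deduce perfectoidness.

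For the inclusion $\supseteq$, I would fix $\epsilon>0$ small enough that the second part of Corollary \ref{nhood}, applied to the open neighborhood $\V_{n}$ of $\Pro(K)_{a}$, gives $\Xc_{\mbf{1}}(\epsilon)_{a}\sub \pi_{HT}^{-1}(\V_{n})$, and then translate by $\gamma^{k}$. For the inclusion $\sub$, it suffices to find a single $m_{0}$ with $\pi_{HT}^{-1}(\V_{n})\sub \Xc_{\mbf{1}}(\epsilon)_{a}\gamma^{m_{0}}$: applying the first part of Corollary \ref{nhood} to this $\epsilon$ produces a quasicompact open $U\sub \Pro$ with $\Pro(K)_{a}\sub U$ and $\pi_{HT}^{-1}(U)\sub \Xc_{\mbf{1}}(\epsilon)_{a}$; since $U$ is then in particular an open neighborhood of the $K$-rational point $(0:\dots:0:1)$, Lemma \ref{cover of pro} supplies an $m_{0}\geq 0$ with $\V_{n}\gamma^{-m_{0}}\sub U$, and equivariance gives $\pi_{HT}^{-1}(\V_{n})\gamma^{-m_{0}}=\pi_{HT}^{-1}(\V_{n}\gamma^{-m_{0}})\sub \pi_{HT}^{-1}(U)\sub \Xc_{\mbf{1}}(\epsilon)_{a}$, as required. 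Once the equality of unions is in hand, each translate $\Xc_{\mbf{1}}(\epsilon)_{a}\gamma^{k}$ is perfectoid (being the image of the perfectoid space $\Xc_{\mbf{1}}(\epsilon)_{a}$ under the diamond automorphism $\gamma^{k}$), and these cover $\Xc_{\mbf{1}}^{comp}$ by open subdiamonds, so gluing yields the desired perfectoid structure on $\Xc_{\mbf{1}}^{comp}$.

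The main obstacle I expect is the inclusion $\pi_{HT}^{-1}(\V_{n})\sub \Xc_{\mbf{1}}(\epsilon)_{a}\gamma^{m_{0}}$: it is the only step that really uses geometric input, namely the contracting behaviour of $\gamma^{-1}$ on neighborhoods of $(0:\dots:0:1)$ recorded in Lemma \ref{cover of pro}, together with the fact from Corollary \ref{nhood} that $\Xc_{\mbf{1}}(\epsilon)_{a}$ contains the preimage of a sufficiently small quasicompact neighborhood of $\Pro(K)_{a}$. Everything else in the argument is a formal manipulation of equivariant unions, and the perfectoidness conclusion is then immediate from Proposition \ref{epsilonnhood at full inf level} and the locality of the perfectoid condition.
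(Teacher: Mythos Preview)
Your argument is correct and is essentially the paper's own proof, just spelled out in more detail: both combine Corollary \ref{nhood} (in both directions) with the two parts of Lemma \ref{cover of pro} and the $\GL_n(K)$-equivariance of $\pi_{HT}$ to sandwich $\pi_{HT}^{-1}(\V_n)$ between $\Xc_{\mbf{1}}(\epsilon)_a$ and a $\gamma$-translate of it, and then conclude perfectoidness from Proposition \ref{epsilonnhood at full inf level}. The only cosmetic difference is that the paper verifies $\Xc_{\mbf{1}}(\epsilon)_a \sub \Xc_{\mbf{1}}^{comp}$ directly (using $V=V_n$ in the second part of Corollary \ref{nhood}) rather than your slightly sharper $\Xc_{\mbf{1}}(\epsilon)_a \sub \pi_{HT}^{-1}(\V_n)$, but this makes no difference to the argument.
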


\begin{proof}
By Corollary \ref{nhood} and the second part of Lemma \ref{cover of pro} we can choose a $U$, and $\epsilon >0$ and a $k\geq 0$ such that $\pi_{HT}^{-1}(U) \sub \Xc_{\mbf{1}}(\epsilon)_{a} \sub \Xc_{\mbf{1}}^{comp}$ and $\V_{n}\gamma^{-k} \sub U$. The first assertion of this corollary then follows from the first part of Lemma \ref{cover of pro} (using the equivariance of $\pi_{HT}$), and the second part of the corollary is immediate from the first and Proposition \ref{epsilonnhood at full inf level}.
\end{proof}

As an aside, which won't be used in this paper, we note the following theorem.

\begin{theorem}\label{full}
$\Xc_{\mbf{1}}$ is a perfectoid space and $\pi_{HT}$ comes from a unique map $\Xc_{\mbf{1}} \ra \Pro $ of adic spaces.
\end{theorem}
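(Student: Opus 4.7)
The plan is to deduce the theorem from the perfectoidness of the complementary locus (Corollary \ref{comp locus at infinite level}) by using the $\GL_n(K)$-action on $\Xc_{\mbf{1}}$ to translate $\Xc_{\mbf{1}}^{comp}$ around and obtain an open cover of $\Xc_{\mbf{1}}$ by perfectoid opens.

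First, I would observe that the sets $V_1, \ldots, V_n$ form an open cover of $\Pro$, and that the Weyl group of $\GL_n$ (embedded in $\GL_n(\ok) \subset \GL_n(K)$ as permutation matrices) acts transitively on this cover; in particular for each $i$ there is some $w_i \in \GL_n(\ok)$ with $V_i = V_n \cdot w_i$. By the $\GL_n(K)$-equivariance of $\pi_{HT}$ established in the previous subsection, we get
\[ \pi_{HT}^{-1}(V_i) = \pi_{HT}^{-1}(V_n) \cdot w_i = \Xc_{\mbf{1}}^{comp} \cdot w_i, \]
and since $w_i$ acts as an automorphism of the diamond $\Xc_{\mbf{1}}$ the open subdiamond $\pi_{HT}^{-1}(V_i)$ is isomorphic to $\Xc_{\mbf{1}}^{comp}$ and hence is a perfectoid space. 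Continuity of $\pi_{HT}$ then produces an open cover
\[ \Xc_{\mbf{1}} = \bigcup_{i=1}^n \pi_{HT}^{-1}(V_i) \]
of $\Xc_{\mbf{1}}$ by open subdiamonds, each of which is a perfectoid space.

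Next, since being a perfectoid space is local for the analytic topology on diamonds (one glues affinoid perfectoid charts obtained inside each $\pi_{HT}^{-1}(V_i)$), it follows that $\Xc_{\mbf{1}}$ itself is representable by a perfectoid space over $K^{cycl}$. For the second assertion, I would argue as follows: the restriction of $\pi_{HT}$ to each $\pi_{HT}^{-1}(V_i)$ is a map of diamonds from a perfectoid space to $V_i^{\lozenge}$. Since $V_i$ is a seminormal rigid-analytic space, the diamondification functor is fully faithful on morphisms from perfectoid spaces into $V_i$ (cf.\ the discussion after \cite[Lemma 15.6]{dia} and the general framework of \cite{dia}), so each such restriction comes from a unique morphism of adic spaces $\pi_{HT}^{-1}(V_i) \to V_i$. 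These morphisms agree on overlaps by the same uniqueness, and hence glue to a unique morphism of adic spaces $\pi_{HT} : \Xc_{\mbf{1}} \to \Pro$ inducing the original map of diamonds.

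The main obstacle I anticipate is the gluing/fully-faithfulness argument in the last paragraph, namely verifying that a morphism of diamonds from a perfectoid space to (the diamondification of) a smooth rigid space over $K^{cycl}$ comes from a unique morphism of adic spaces. This is the sort of statement that is handled cleanly by the formalism of \cite{dia}, so while the other steps are essentially formal given the preceding results, this is the point where one must invoke the nontrivial comparison between diamonds and adic spaces.
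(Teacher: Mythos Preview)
Your proof is correct and follows essentially the same route as the paper: translate $\Xc_{\mbf{1}}^{comp}$ by the Weyl group (or, equivalently, by $\GL_n(\ok)$) to cover $\Xc_{\mbf{1}}$, and then conclude perfectoidness locally.

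The one place where you make your life harder than necessary is the second assertion. You propose to work locally on the $\pi_{HT}^{-1}(V_i)$, invoke full faithfulness of diamondification for seminormal targets, and then glue. The paper dispatches this in a single sentence: by the very definition of the diamondification functor, for any rigid space $Z$ and any perfectoid space $S$, a morphism of diamonds $S \to Z^{\lozenge}$ is the same thing as a morphism of adic spaces $S \to Z$. No local argument or gluing is needed, and there is no genuine obstacle here---the ``nontrivial comparison'' you anticipate is already built into how $Z^{\lozenge}$ is defined.
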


\begin{proof}
The fact that $\Xc_{\mbf{1}}$ is a perfectoid space follows Corollary \ref{comp locus at infinite level} and the fact that
$$ |\Xc_{\mbf{1}}| = \bigcup_{g\in \GL_n(\ok)} |\Xc^{comp}_{\mbf{1}}|g $$
(which is immediate from equivariance of $\pi_{HT}$ and $\Pro = V_{1} \cup \dots \cup V_n$). The second part then follows immediately, since any map of diamonds from a perfectoid space $S$ to the diamond $Z^{\lozenge}$ of a rigid space $Z$ corresponds to a unique map of adic spaces $S \ra Z$, by the definition of the diamondification functor. 
\end{proof}

We now turn to the main result of this section. The natural map $ |\Xc_{\mbf{1}}| \ra |\Xc_{P(\ok)}|$ is open, so we may define $\Xc_{P(\ok)}^{comp} \sub \Xc_{P(\ok)}$ to be the open subdiamond given as the image of $|\Xc_{\mbf{1}}^{comp}|$. Note that $\Xc_{\mbf{1}}^{comp}$ is $P(\ok)$-stable. 
From the next lemma, we see that $\Xc_{\mbf{1}}^{comp} \ra \Xc_{P(\ok)}^{comp}$ is a $P(\ok)$-torsor. 

\begin{lemma}\label{torsor}
Assume that $H_{v}^{\prime} \sub H_{v}$ are closed subgroups of $\GL_n(\oo_K)$, and that $H_{v}^{\prime}$ is normal in $H_v$. Then $\Xc_{H_{v}^{\prime}} \ra \Xc_{H_v}$ is a $H_{v}/H_{v}^{\prime}$-torsor in the sense of \cite[Definition 10.12]{dia}.
\end{lemma}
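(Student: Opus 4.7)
The plan is to reduce the claim to a compatible system of finite étale Galois covers at finite level and then pass to the limit.

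First I would fix a cofinal system $\{U_v\}$ of open normal subgroups of $\GL_n(\oo_K)$. Since these form a neighbourhood basis of the identity, the products $H_v U_v$ (resp.\ $H_v' U_v$) form cofinal systems of open subgroups containing $H_v$ (resp.\ $H_v'$); in particular the diamond $\Xc_{H_v}$ (resp.\ $\Xc_{H_v'}$) is the inverse limit of the $\Xc_{H_v U_v}^{\lozenge}$ (resp.\ $\Xc_{H_v' U_v}^{\lozenge}$). A short computation using that $U_v$ is normal in $\GL_n(\oo_K)$ and $H_v'$ is normal in $H_v$ shows that $H_v' U_v$ is normal in $H_v U_v$: if $u \in U_v$ and $h' \in H_v'$, then $[u,h'] = u \cdot {}^{h'}\!u^{-1} \in U_v$, so $u H_v' u^{-1} \subset H_v' U_v$, and the rest follows from normality of $H_v'$ in $H_v$. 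The modular law then gives a canonical isomorphism
\[ H_v U_v / H_v' U_v \;\cong\; H_v / H_v'(H_v \cap U_v). \]

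Next, at finite level the map $\Xc_{H_v' U_v} \to \Xc_{H_v U_v}$ is a finite étale Galois cover with group $H_v U_v/H_v' U_v$ (this is standard, since the Shimura varieties $\Xc_{U_v}$ for open $U_v$ form a tower of finite étale Galois covers for open normal subgroups, because we have assumed $U^p$ is sufficiently small so all levels are free quotients). On the group-theoretic side I would verify that the subgroups $H_v'(H_v \cap U_v)$ are cofinal among open (normal) subgroups of $H_v$ containing $H_v'$: given such a subgroup $N$, choose $U_v$ normal in $\GL_n(\oo_K)$ with $H_v \cap U_v \subset N$, so $H_v'(H_v \cap U_v) \subset H_v' N = N$. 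Since $H_v'$ is closed in $H_v$, it follows that
\[ \varprojlim_{U_v} H_v U_v / H_v' U_v \;=\; \varprojlim_{U_v} H_v/H_v'(H_v \cap U_v) \;=\; H_v/H_v' \]
as profinite groups.

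Finally I would assemble the pieces: $\Xc_{H_v'} \to \Xc_{H_v}$ is the inverse limit of a compatible system of finite étale Galois covers $\Xc_{H_v' U_v}^\lozenge \to \Xc_{H_v U_v}^\lozenge$ with finite Galois groups whose inverse limit is $H_v/H_v'$. Such an inverse limit of torsors is a $\underline{H_v/H_v'}$-torsor in the pro-étale (hence v-) topology, which is the definition in \cite[Definition 10.12]{dia}; concretely, the map $\Xc_{H_v'} \times \underline{H_v/H_v'} \to \Xc_{H_v'} \times_{\Xc_{H_v}} \Xc_{H_v'}$ is an isomorphism because it is so at each finite level (by \'etale Galois descent), and surjectivity as a v-sheaf map is inherited from the finite level. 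The main subtlety is bookkeeping the compatibility of the finite-level torsor structures and confirming that the profinite limit really produces a $\underline{H_v/H_v'}$-torsor in Scholze's sense; once the cofinality arguments above are in place, this should follow cleanly from the general formalism of \cite{dia}.
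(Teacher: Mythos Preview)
Your proposal is correct and follows essentially the same approach as the paper: both reduce to finite level by setting $U_{v,m}=H_v U(\vp^m)$ and $U_{v,m}'=H_v' U(\vp^m)$ (the paper uses the principal congruence subgroups as its cofinal system of open normals), observe that $\Xc_{U_{v,m}'} \to \Xc_{U_{v,m}}$ is a $U_{v,m}/U_{v,m}'$-torsor compatibly in $m$, and then pass to the inverse limit. You have simply spelled out more of the group-theoretic bookkeeping (normality of $H_v'U_v$ in $H_vU_v$, the modular law identification, and the cofinality check) that the paper leaves implicit.
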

\begin{proof}
Set $U_{v,m}=H_{v}U(\vp^m)$, $U_{v,m}^{\prime}=H_{v}^{\prime}U(\vp^m)$. Then $\Xc_{U_{v,m}^{\prime}} \ra \Xc_{U_{v,m}}$ is a $U_{v,m}/U_{v,m}^{\prime}$-torsor, compatibly in $m$. Diamondification preserves torsors by finite groups, so we have compatible isomorphisms
$$ \Xc_{U_{v,m}^{\prime}} \times \underline{U_{v,m}/U_{v,m}^{\prime}} \overset{\sim}{\longrightarrow} \Xc_{U_{v,m}^{\prime}} \times_{\Xc_{U_{v,m}}} \Xc_{U_{v,m}^{\prime}} $$
for all $m$. Taking the inverse limit over $m$ then gives the result.
\end{proof}

Let us recall that Huber defined the category of adic spaces as a full subcategory of a category he called $\V$ in \cite{hu}. This category has quotients by arbitrary group actions, cf. \cite[\S 2.2]{lud}. Let us explicitly record the following link between torsors and group quotients in $\V$, in the case of perfectoid spaces.

\begin{lemma}\label{quotients and torsors}
Let $H$ be a profinite group and let $\wt{X} \ra X$ be a map of perfectoid spaces which is a $H$-torsor in the sense of \cite[Definition 10.12]{dia}. Then $X$ is the quotient of $\wt{X}$ by $H$ in the category $\V$.
\end{lemma}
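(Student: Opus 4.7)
The plan is to verify the universal property of the $\V$-quotient directly. Given an $H$-invariant morphism $f : \wt{X} \ra Y$ in $\V$, I would construct a unique factorisation $\bar{f}: X \ra Y$ through $\pi: \wt{X} \ra X$.

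First I would handle the topological picture. The torsor condition (i.e., v-surjectivity plus the isomorphism $\wt{X} \times \underline{H} \cong \wt{X} \times_X \wt{X}$) gives that $|\pi|$ is surjective with fibres equal to $H$-orbits, and that the $H$-action on $|\wt{X}|$ is free. Since the torsor is v-locally trivial and both $\wt{X}$ and $X$ are genuine perfectoid spaces, the map $\pi$ is moreover open, so $|X|$ carries the quotient topology with respect to the $H$-action on $|\wt{X}|$. Hence $|f|$ factors uniquely through a continuous map $|\bar{f}| : |X| \ra |Y|$.

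Next I would identify the structure sheaf via descent: for any open $U \sub X$, I want to show $\oo_X(U) = \oo_{\wt{X}}(\pi^{-1}(U))^H$. This follows from the equaliser sequence
$$ \oo_X(U) \ra \oo_{\wt{X}}(\pi^{-1}(U)) \rightrightarrows \oo_{\wt{X} \times_X \wt{X}}(\pi^{-1}(U) \times_X \pi^{-1}(U)) $$
coming from v-descent for $\oo$ on perfectoid spaces, combined with the torsor isomorphism which identifies the right-hand side with sections on $\pi^{-1}(U) \times \underline{H}$ and turns the two arrows into the projection and the action map; the equaliser is then precisely $\oo_{\wt{X}}(\pi^{-1}(U))^H$. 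With this in hand, the sheaf map $f^\sharp : \oo_Y \ra |f|_* \oo_{\wt{X}}$ lands in $H$-invariants by hypothesis on $f$, producing the desired $\bar{f}^\sharp : \oo_Y \ra |\bar{f}|_* \oo_X$. The locally v-ringed property of $\bar{f}$ is then a routine check on stalks at a chosen lift $\tilde{x} \in |\wt{X}|$ of each $x \in |X|$, using that $H$ acts freely and that $\oo_{X,x}$ injects into $\oo_{\wt{X},\tilde{x}}$.

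The main obstacle is the descent statement $\oo_X = (\pi_* \oo_{\wt{X}})^H$: once this is established, the topological and universal-property bookkeeping are essentially formal. For finite $H$ this is classical descent, and the general profinite case can be reduced to this by writing $H = \varprojlim H/H'$ over open normal subgroups and expressing $\pi$ as the inverse limit of the finite \'etale Galois covers $\wt{X}/H' \ra X$; alternatively, one appeals directly to v-descent for the structure sheaf of perfectoid spaces, using that the torsor $\wt{X} \ra X$ becomes trivial v-locally on $X$.
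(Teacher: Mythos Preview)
Your proposal is correct. Both your argument and the paper's come down to the same two assertions: that $|X|=|\wt{X}|/H$ as topological spaces and that $\oo_X(U)=\oo_{\wt{X}}(\wt{U})^H$ for a basis of opens $U\sub X$ with $\wt{U}:=\pi^{-1}(U)$ (this is precisely how the $\V$-quotient is described in \cite{lud}). The difference lies in how these are established. Your primary route invokes v-descent for the structure sheaf on perfectoid spaces to obtain the equaliser sequence directly, and argues openness of $\pi$ separately to handle the topological part. The paper instead restricts to affinoid perfectoid $U=\Spa(R,R^+)$, presents the torsor via \cite[Lemma 10.13]{dia} as an inverse limit of finite \'etale $H/K$-Galois covers $\wt{U}_K=\Spa(R_K,R_K^+)\ra U$ for open normal $K\sub H$, and establishes almost equalities for $(R_K^+)^{H/K}$ modulo powers of a pseudouniformizer; from these it reads off both the ring invariants and, via \cite[Theorem 1.2]{ha}, the topological quotient. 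Your approach is slicker if one is willing to quote v-descent for $\oo$ as a black box; the paper's is more hands-on. The tower reduction you sketch at the end is essentially the paper's argument.
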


\begin{proof}
It suffices to check that $|\wt{U}|/H =|U|$ and $\oo_{\wt{X}}(\wt{U})^{H}=\oo_{X}(U)$ for a basis of open subsets $U$ of $X$, with $\wt{U}:= \wt{X} \times_{X}U$. So take $U=\Spa(R,R^{+})\sub X$ affinoid perfectoid. By \cite[Lemma 10.13]{dia} we may then write $\wt{U}$ as an inverse limit $\wt{U}= \varprojlim_{K}\wt{U}_{K} \ra U$ of finite \'etale (and hence affinoid perfectoid) $\wt{U}_{K} \ra U$ for open normal subgroups $K\sub H$ which are $H/K$-torsors. Write $\wt{U}_{K}=\Spa(R_{K},R_{K}^{+})$ and $\wt{U} =\Spa(S,S^{+})$. If $\pi$ is a pseudouniformizer for $R$ we then have $(R_{K}^{+}/\pi^{m})^{H/K} =^{a} R^{+}/\pi^{m}$ for all $m$, compatibly in $K$ ($=^{a}$ for almost equal). This implies that $S^{H}=R$ and that $(R_{K}^{H/K},(R_{K}^{+})^{H/K})=(R,R^{+})$ compatibly in $K$. The latter implies that $|\wt{U}_{K}|/(H/K)=|U|$ compatibly in $K$ (e.g. by \cite[Theorem 1.2]{ha}) which implies that $|\wt{U}|/H=|U|$ as desired. 
\end{proof}

\begin{theorem}\label{comp locus at gamma 0 level}
$\Xc_{P(\ok)}^{comp}$ is a perfectoid space. More precisely, for $\epsilon >0$ sufficiently small, $|\Xc^{comp}_{P(\ok)}|$ is covered by the open subsets $|\Xc_{\mbf{1}}(\epsilon)_{a} \gamma^{k}|/P(\ok)$ for $k\geq 0$, and the corresponding open subdiamonds are (affinoid) perfectoid spaces. Moreover, $\Xc_{P(\ok)}^{comp}$ is the quotient of $\Xc_{\mbf{1}}^{comp}$ by $P(\ok)$ in Huber's category $\V$.
\end{theorem}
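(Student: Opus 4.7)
The strategy is to combine the cover of $\Xc_{\mbf{1}}^{comp}$ from Corollary \ref{comp locus at infinite level} with the torsor property of $\pi\colon \Xc_{\mbf{1}} \to \Xc_{P(\ok)}$ from Lemma \ref{torsor}, exploiting the elementary combinatorial fact that $\gamma^{k} P(\ok) \gamma^{-k} \subseteq P(\ok)$ for all $k \geq 0$. The latter follows by a direct matrix calculation, since $\gamma = \mathrm{diag}(\vp,\dots,\vp,1)$ only stretches the upper-right block of an element of $P(\ok)$ by a factor of $\vp^{k}$ upon conjugation, keeping it integral.

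First, I will note that $\Xc_{\mbf{1}}^{comp}$ is $P(\ok)$-stable by equivariance of $\pi_{HT}$, so by Lemma \ref{torsor} the restriction $\pi\colon \Xc_{\mbf{1}}^{comp} \to \Xc_{P(\ok)}^{comp}$ is a $P(\ok)$-torsor, in particular an open surjection on underlying topological spaces. Combining with Corollary \ref{comp locus at infinite level}, for small enough $\epsilon > 0$ one obtains the cover $|\Xc_{P(\ok)}^{comp}| = \bigcup_{k\geq 0} |\Xc_{\mbf{1}}(\epsilon)_{a}\gamma^{k}|/P(\ok)$.

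Next, I will show that each $U_{k} := \pi(\Xc_{\mbf{1}}(\epsilon)_{a}\gamma^{k})$ is an affinoid perfectoid open subdiamond. Using the combinatorial fact above and the $P(\ok)$-stability of $\Xc_{\mbf{1}}(\epsilon)_{a}$, for $g \in P(\ok)$ one has $\Xc_{\mbf{1}}(\epsilon)_{a}\gamma^{k} g = \Xc_{\mbf{1}}(\epsilon)_{a}(\gamma^{k} g \gamma^{-k})\gamma^{k} = \Xc_{\mbf{1}}(\epsilon)_{a}\gamma^{k}$, so $\pi^{-1}(U_{k}) = \Xc_{\mbf{1}}(\epsilon)_{a}\gamma^{k}$ and $\pi$ restricts to a $P(\ok)$-torsor $\Xc_{\mbf{1}}(\epsilon)_{a}\gamma^{k} \to U_{k}$. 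Right translation by $\gamma^{-k}$ is an automorphism of $\Xc_{\mbf{1}}$ taking $\Xc_{\mbf{1}}(\epsilon)_{a}\gamma^{k}$ to $\Xc_{\mbf{1}}(\epsilon)_{a}$ and intertwining the right $P(\ok)$-action with the right action of $H_{k} := \gamma^{k} P(\ok) \gamma^{-k}$. Since $H_{k} \subseteq P(\ok)$ is closed, Proposition \ref{epsilonnhood at full inf level} yields that $\Xc_{H_{k}}(\epsilon)_{a}$ is affinoid perfectoid, and the induced map $\Xc_{\mbf{1}}(\epsilon)_{a} \to \Xc_{H_{k}}(\epsilon)_{a}$ is an $H_{k}$-torsor by Lemma \ref{torsor}; Lemma \ref{quotients and torsors} then identifies $\Xc_{H_{k}}(\epsilon)_{a}$ with the quotient $\Xc_{\mbf{1}}(\epsilon)_{a}/H_{k}$ in Huber's $\V$. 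Comparing these two realisations of the same quotient shows that $U_{k}$ is affinoid perfectoid.

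Finally, since perfectoidness is local on a diamond, the cover of $\Xc_{P(\ok)}^{comp}$ by the perfectoid opens $U_{k}$ shows that $\Xc_{P(\ok)}^{comp}$ is a perfectoid space; and now that both $\Xc_{\mbf{1}}^{comp}$ and $\Xc_{P(\ok)}^{comp}$ are known to be perfectoid, applying Lemma \ref{quotients and torsors} to the $P(\ok)$-torsor $\Xc_{\mbf{1}}^{comp} \to \Xc_{P(\ok)}^{comp}$ yields the last assertion. The main obstacle I foresee is the bookkeeping in the third step: verifying that the right $P(\ok)$-action on $\Xc_{\mbf{1}}(\epsilon)_{a}\gamma^{k}$, transported by $\gamma^{-k}$, truly is the right $H_{k}$-action, and then promoting the set-theoretic identification $U_{k} \cong \Xc_{H_{k}}(\epsilon)_{a}$ to an isomorphism of (affinoid perfectoid) diamonds.
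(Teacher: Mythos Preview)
Your proposal is correct and follows essentially the same route as the paper. The only difference is presentational: the paper resolves your ``promotion'' concern by directly invoking the isomorphism of diamonds $\gamma^{-k}\colon \Xc_{P(\ok)} \to \Xc_{\gamma^{k}P(\ok)\gamma^{-k}}$ at the quotient level (rather than descending from $\Xc_{\mbf{1}}$), so that the identification of $U_{k}$ with $\Xc_{H_{k}}(\epsilon)_{a}$ is an isomorphism of open subdiamonds by construction, after which perfectoidness follows from the finite \'etale map $\Xc_{H_{k}}(\epsilon)_{a} \to \Xc_{P(\ok)}(\epsilon)_{a}$ (i.e.\ the special case of Proposition~\ref{epsilonnhood at full inf level} that you cite).
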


\begin{proof}
We have an isomorphism 
$$ \gamma^{-k} : \Xc_{P(\ok)} \ra \Xc_{\gamma^{k}P(\ok)\gamma^{-k}} $$
of diamonds, which sends the open subset $|\Xc_{\mbf{1}}(\epsilon)_{a} \gamma^{k}|/P(\ok)$ of $|\Xc_{P(\ok)}|$ to the open subset 
$$|\Xc_{\mbf{1}}(\epsilon)_{a}|/\gamma^{k}P(\ok)\gamma^{-k}$$
 of $|\Xc_{\gamma^{k}P(\ok)\gamma^{-k}}|$. Let us denote the open subdiamond corresponding to $|\Xc_{\mbf{1}}(\epsilon)_{a}|/\gamma^{k}P(\ok)\gamma^{-k}$ by $\Xc_{\gamma^{k}P(\ok)\gamma^{-k}}(\epsilon)_{a}$. By direct computation $\gamma^{k}P(\ok)\gamma^{-k}$ is a finite index open subgroup of $P(\ok)$, so we have a natural finite \'etale map $\Xc_{\gamma^{k}P(\ok)\gamma^{-k}}(\epsilon)_{a} \ra \Xc_{P(\ok)}(\epsilon)_{a}$. It follows that $\Xc_{\gamma^{k}P(\ok)\gamma^{-k}}(\epsilon)_{a}$ is (affinoid) perfectoid, and hence that the diamond corresponding to $|\Xc_{\mbf{1}}(\epsilon)_{a} \gamma^{k}|/P(\ok)$ is (affinoid) perfectoid. This proves the theorem, except for the `moreover' part, which then follows from Lemma \ref{quotients and torsors} since $\Xc^{comp}_{\mbf{1}} \ra \Xc_{P(\ok)}^{comp}$ is a $P(\ok)$-torsor.
\end{proof}

\section{The Lubin--Tate tower}\label{sec:LT}

In this section we prove our geometric results on the Lubin--Tate tower.

\subsection{Preliminaries}

We begin by recalling the Lubin--Tate spaces that we will be working with, cf. \cite{gh,rz}. Let $\G_0$ be the unique one-dimensional compatible Barsotti--Tate $\ok$-module of $\ok$-height $n$ and with $\G_{0}^{et}=0$ over $\ol{k}$, and set $\breve{K}=K\otimes_{W(k)}W(\ol{k})$. The Lubin--Tate space $\mf{M}$ is the formal scheme over $\oo_{\breve{K}}$ whose $R$-points, for $R$ an $\oo_{\breve{K}}$-algebra with $\vp$ nilpotent, is the set of pairs $(\G,\rho)$ where $\G$ is a one-dimensional compatible Barsotti--Tate $\ok$-module over $R$ and $\rho : \G_0 \otimes_{\ol{k}} R/\vp \ra \G \otimes_{R} R/\vp $ is an $\ok$-linear quasi-isogeny. $\mf{M}$ decomposes as a disjoint union
$$ \mf{M} = \bigsqcup_{d\in \Z} \mf{M}^{(d)} $$
according to the degree $q^d$ of the quasi-isogeny $\rho$, and $\rho$ is an isomorphism if $d=0$. In particular, $\mf{M}^{(0)}$ is the formal deformation space of $\G_0$. Let $\mc{M}$ and $\mc{M}^{(d)}$ be the generic fibre of $\mf{M}$ and $\mf{M}^{(d)}$, respectively. There is a tower of rigid analytic varieties $(\mc{M}_U)_{U}$ over $\mc{M}=\mc{M}_{\GL_n(\ok)}$, where $U$ ranges over the open subgroups of $\GL_n(\ok)$. All transition maps are finite \'etale, and the tower carries an action of $\GL_n(K)$. We also set $\mc{M}^{(d)}_U := \mc{M}_{U} \times_{\mc{M}}\mc{M}^{(d)}$ for all $d\in \Z$. Similarly to our notation for Shimura varieties in the previous section, we set
$$ \mc{M}_{H} := \varprojlim_{U \supseteq H} \mc{M}_{U}^{\lozenge}$$
for any closed subgroup $H\sub \GL_n(\ok)$; here $U$ ranges over the open subgroups containing $H$ (we define $\mc{M}_{H}^{(d)}$ similarly). We have two period maps; the Gross--Hopkins period map $\pi_{GH} : \mc{M}_{\GL_n(\ok)} \ra \Pro$ and the Hodge--Tate period map $\pi_{HT} : \mc{M}_{\mbf{1}} \ra \Pro $. The map $\pi_{GH}$ is \'etale, surjective and admits local sections\footnote{When $K=\Qp$ this is a special case of \cite[Lemma 6.1.4]{sw}, but the argument there works in general.}. Moreover, the composite
$$ \mc{M}_{\mbf{1}} \ra \mc{M}_{\GL_n(\ok)} \overset{\pi_{GH}}{\longrightarrow} \Pro $$
is a $\GL_n(K)$-torsor in the sense of \cite[Definition 10.12]{dia}. The image of the Hodge--Tate period map $\pi_{HT}$ is the Drinfeld upper halfspace $\Omega^{n-1}\sub \Pro$.

\medskip
We now relate our Lubin--Tate spaces to the Shimura varieties from the previous section. We use the notation and conventions of the previous sections freely, except that we will base change all analytic adic spaces to a complete and algebraically closed non-archimedean field extension $C$ of $K$ (e.g. $\C_p$), all formal schemes to $\oo_C$, and all reductions to the residue field $k_C$ of $C$ or $\oo_C/\vp$ as appropriate. Then, we choose once and for all a closed point $x$ in $\Xb^{(0)}$ (which is non-empty by \cite[Lemma III.4.3]{ht}). By \cite[Lemma III.4.1(1)]{ht}, this realises $\mf{M}^{(0)}$ as the completed local ring of $\Xf$ at $x$. Taking generic fibres, we obtain an open immersion
$$ \mc{M}^{(0)} \hookrightarrow \Xc $$
and taking level structures we obtain compatible embeddings
$$ \mc{M}_{U}^{(0)} \hookrightarrow \Xc_U $$
for all open subgroups $U \sub \GL_n(\ok)$, and this map of towers is compatible with the Hecke actions. Taking inverse limits (as diamonds), we get more generally open immersions
$$ \mc{M}_{H}^{(0)} \hookrightarrow \Xc_H $$
for all closed subgroups $H \sub \GL_n(\ok)$. The fact that these are open immersions follows from the fact that $\mc{M}_{U}^{(0)} = \mc{M}^{(0)} \times_{\Xc} \Xc_U$ for all open $U \sub \GL_n(\ok)$ (and this identity then extends to all closed $H \sub \GL_n(\ok)$). We also have a compatibility between the local and global Hodge--Tate period maps: Composing the immersion $\mc{M}_{\mbf{1}} \hookrightarrow \Xc_{\mbf{1}}$ and the global $\pi_{HT} : \Xc_{\mbf{1}} \ra \Pro $ gives the local $\pi_{HT} : \mc{M}_{\mbf{1}} \ra \Pro $. Since the Drinfeld upper halfspace $\Omega^{n-1}$ is contained in the `complementary locus' $V_n \sub \Pro$ from Subsection \ref{perfectoid}, we obtain $\mc{M}_{\mbf{1}}^{(0)} \sub \Xc_{\mbf{1}}^{comp}$ and hence $\mc{M}_{P(\ok)}^{(0)} \sub \Xc_{P(\ok)}^{comp}$. Theorems \ref{comp locus at infinite level} and \ref{comp locus at gamma 0 level}, together with Lemma \ref{torsor} then directly imply the following local analogue.

\begin{proposition}\label{step1}
$\mc{M}_{\mbf{1}}^{(0)}$ and $\mc{M}_{P(\ok)}^{(0)}$ are perfectoid spaces over $C$, and $\mc{M}_{P(\ok)}^{(0)}$ is the quotient of $\mc{M}_{\mbf{1}}^{(0)}$ by $P(\ok)$ in Huber's category $\V$.
\end{proposition}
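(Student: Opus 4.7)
The plan is to derive both assertions by transferring the global perfectoidness results of Theorems \ref{comp locus at infinite level} and \ref{comp locus at gamma 0 level} across the open immersions $\mc{M}_H^{(0)} \hookrightarrow \Xc_H$ just constructed. We already have $\mc{M}_{\mbf{1}}^{(0)} \subset \Xc_{\mbf{1}}^{comp}$ and $\mc{M}_{P(\ok)}^{(0)} \subset \Xc_{P(\ok)}^{comp}$ as open subdiamonds, courtesy of the inclusion $\Omega^{n-1} \subset V_n$ of images of the Hodge--Tate period maps. For perfectoidness, I would simply invoke the fact that an open subdiamond of a perfectoid space is again a perfectoid space (applied to the relevant open sub-objects of $\Xc_{\mbf{1}}^{comp}$ and $\Xc_{P(\ok)}^{comp}$), yielding perfectoid structures on $\mc{M}_{\mbf{1}}^{(0)}$ and $\mc{M}_{P(\ok)}^{(0)}$.

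For the quotient identification, I would first apply Lemma \ref{torsor} with $H_v' = \mbf{1}$ and $H_v = P(\ok)$ to conclude that $\Xc_{\mbf{1}} \to \Xc_{P(\ok)}$ is a $P(\ok)$-torsor. Since $\Xc_{\mbf{1}}^{comp}$ is $P(\ok)$-stable and its image is precisely $\Xc_{P(\ok)}^{comp}$, the restriction $\Xc_{\mbf{1}}^{comp} \to \Xc_{P(\ok)}^{comp}$ is again a $P(\ok)$-torsor. Pulling back this torsor along the open immersion $\mc{M}_{P(\ok)}^{(0)} \hookrightarrow \Xc_{P(\ok)}^{comp}$, and using the cartesian identity $\mc{M}_{\mbf{1}}^{(0)} = \mc{M}_{P(\ok)}^{(0)} \times_{\Xc_{P(\ok)}} \Xc_{\mbf{1}}$ that is inherited from the finite-level relations $\mc{M}_{U}^{(0)} = \mc{M}^{(0)} \times_{\Xc} \Xc_U$, one obtains that $\mc{M}_{\mbf{1}}^{(0)} \to \mc{M}_{P(\ok)}^{(0)}$ is a $P(\ok)$-torsor of perfectoid spaces. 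Lemma \ref{quotients and torsors} then identifies $\mc{M}_{P(\ok)}^{(0)}$ as the quotient of $\mc{M}_{\mbf{1)}}^{(0)}$ by $P(\ok)$ in Huber's category $\V$.

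Since each step is a direct pullback through objects already known to be perfectoid, there is essentially no new technical obstacle. The only point requiring care is checking that the open immersion $\mc{M}^{(0)} \hookrightarrow \Xc$ at finite level is genuinely compatible with the Hecke action and the passage to infinite level, so that the global $P(\ok)$-torsor structure restricts to the Lubin--Tate locus; this is automatic from the cartesian relation $\mc{M}_U^{(0)} = \mc{M}^{(0)} \times_{\Xc}\Xc_U$ recorded in the preliminaries, so no additional geometric input is required.
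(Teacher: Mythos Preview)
Your proposal is correct and follows essentially the same approach as the paper, which simply notes that the proposition is a direct consequence of Corollary~\ref{comp locus at infinite level}, Theorem~\ref{comp locus at gamma 0 level}, and Lemma~\ref{torsor} via the open immersions $\mc{M}_{\mbf{1}}^{(0)}\subset\Xc_{\mbf{1}}^{comp}$ and $\mc{M}_{P(\ok)}^{(0)}\subset\Xc_{P(\ok)}^{comp}$. You have spelled out the torsor pullback and the invocation of Lemma~\ref{quotients and torsors} more explicitly than the paper does, but the argument is the same (note a stray brace in ``$\mc{M}_{\mbf{1)}}^{(0)}$'' near the end).
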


\subsection{The main result}

We now turn to the task of showing that $\mc{M}_{P(K)} := \mc{M}_{\mbf{1}}/P(K)$ is a quasicompact perfectoid space, which is the main result of this section. This will follow from Proposition \ref{step1} precisely as in \cite[\S 3.6]{lud} in the case $n=2$, $F=\Qp$. To clarify, the quotient above is taken in the category $\V$; this makes sense since $\mc{M}_{\mbf{1}}$ is a perfectoid space (using Proposition \ref{step1} and the $\GL_n(K)$-action). Set 
$$ G^{\prime} := \{ g \in \GL_n(K) \mid \det(g) \in \ok^{\times} \}. $$
This is the kernel of the homomorphism $\GL_n(K) \ra \Z$ given by $g \mapsto v_K(\det(g))$, where $v_K$ is the normalised valuation on $K$, and this homomorphism is split. Moreover, for $g\in \GL_n(K)$, one has
$$ \mc{M}_{\mbf{1}}^{(0)}.g = \mc{M}_{\mbf{1}}^{(v_K(\det(g)))} $$
by looking at the degree of the quasi-isogeny. From this we see that $G^{\prime}$ is the stabiliser of the component $\mc{M}_{\mbf{1}}^{(0)}$, and it also follows that the natural map
$$ \mc{M}_{\mbf{1}}^{(0)}/P^\prime \ra \mc{M}_{\mbf{1}}/P(K)=\mc{M}_{P(K)} $$
in $\V$, where $P^\prime := P(K) \cap G^\prime$, is an isomorphism.

\begin{theorem}\label{main}
The quotient $\mc{M}_{P(K)}$ is a perfectoid space over $C$. The natural map $\mc{M}_{P(\ok)}^{(0)}\rightarrow \mc{M}_{P(K)}$ has local sections.
\end{theorem}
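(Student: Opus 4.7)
The plan is to prove both assertions of the theorem together, leveraging Proposition \ref{step1} and following the strategy of \cite[\S 3.6]{lud}. Local sections of $\mc{M}_{P(\ok)}^{(0)} \to \mc{M}_{P(K)}$ will identify quasi-compact opens of $\mc{M}_{P(K)}$ with opens of the perfectoid space $\mc{M}_{P(\ok)}^{(0)}$, thereby transferring the perfectoid structure; thus both the local section statement and the perfectoidness of $\mc{M}_{P(K)}$ follow from the same geometric construction.

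Recall the identification $\mc{M}_{P(K)} \cong \mc{M}_{\mbf{1}}^{(0)}/P'$ with $P' = P(K) \cap G'$ from the paragraph preceding the theorem. Because $P(\ok) \sub P'$ is open in the locally profinite group $P'$, the set-theoretic fibers of the natural map $\mc{M}_{P(\ok)}^{(0)} = \mc{M}_{\mbf{1}}^{(0)}/P(\ok) \to \mc{M}_{\mbf{1}}^{(0)}/P'$ are discrete, indexed by $P(\ok)\backslash P'$. This discreteness makes local sections topologically plausible; the task is to realize them at the level of adic spaces.

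The Gross--Hopkins period map serves as the essential tool. Using that $\mc{M}_{\mbf{1}} \to \Pro$ is a $\GL_n(K)$-torsor, $\pi_{GH}$ descends to a quasi-compact map $\bar{\pi}_{GH}: \mc{M}_{P(K)} \to \Pro$. Given a point $x \in \mc{M}_{P(K)}$, set $y = \bar{\pi}_{GH}(x) \in \Pro$. Invoking the local section property of $\pi_{GH}: \mc{M} \to \Pro$, we find a quasi-compact open neighborhood $V \ni y$ and a section $V \to \mc{M}^{(d)}$ for some $d \in \Z$; after Hecke-translating by some $g \in \GL_n(K)$ with $v_K(\det g) = -d$, we obtain a section into the zero component $\mc{M}^{(0)}$ over a translated open. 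Such a section then lifts along the intermediate cover $\mc{M}_{P(\ok)}^{(0)} \to \mc{M}^{(0)}$ (a torsor by the profinite group $\GL_n(\ok)/P(\ok) \cong \Pro(\ok)$), and composition produces the desired local section of $\mc{M}_{P(\ok)}^{(0)} \to \mc{M}_{P(K)}$ in a neighborhood of $x$.

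The main obstacle will be the non-normality of $P(\ok)$ in $P'$ (and analogously of $\GL_n(\ok)$ in $G'$), which prevents factoring the quotient $\mc{M}_{\mbf{1}}^{(0)} \to \mc{M}_{P(K)}$ as a clean sequence of group quotients of adic spaces. The Gross--Hopkins map compensates via a geometric input: after reducing to the component $\mc{M}^{(0)}$ by a Hecke translate, the profinite-torsor structure is mild enough that the pro-étale lift refines to an adically local section, mirroring the argument for the $n=2$ case of \cite{lud}.
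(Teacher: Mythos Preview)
Your overall strategy---use the local sections of the Gross--Hopkins map to transport the perfectoid structure from $\mc{M}_{P(\ok)}^{(0)}$ to $\mc{M}_{P(K)}$---matches the paper's, but there is a genuine gap in the execution. The step ``lift along the intermediate cover $\mc{M}_{P(\ok)}^{(0)} \to \mc{M}^{(0)}$ (a torsor by the profinite group $\GL_n(\ok)/P(\ok)$)'' fails on two counts. First, this map is not a torsor: $P(\ok)$ is not normal in $\GL_n(\ok)$, so $\GL_n(\ok)/P(\ok)$ is not a group, and the pro-finite-\'etale map $\mc{M}_{P(\ok)}^{(0)} \to \mc{M}^{(0)}$ has no reason to admit Zariski-local sections. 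Second, even granting a lift $\tilde s : V \to \mc{M}_{P(\ok)}^{(0)}$ of a section $s : V \to \mc{M}^{(0)}$, composing with $\bar\pi_{GH}$ does not give a section of $\mc{M}_{P(\ok)}^{(0)} \to \mc{M}_{P(K)}$ over $\bar\pi_{GH}^{-1}(V)$: the composite $q \circ \tilde s$ is only a section of $\bar\pi_{GH}$ over $V$, landing on a single sheet of the fibre, whereas $\bar\pi_{GH}^{-1}(V)$ has fibres $\GL_n(\ok)/P(\ok)$ over each point of $V$. Your final paragraph acknowledges the non-normality obstacle but does not actually resolve it.

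The paper's proof supplies the missing mechanism: rather than lifting sections, one takes an open $U \sub \mc{M}^{(0)}$ on which $\pi_{GH}$ is an isomorphism onto its image, forms the contracted product $pr^{-1}(U) \times^{P(\ok)} P'$ in $\V$, and invokes \cite[Lemma 2.16]{lud} to show this embeds as an open in $\mc{M}_{\mbf{1}}^{(0)}$. Quotienting by $P'$ and applying \cite[Proposition 2.14]{lud} then identifies the resulting open of $\mc{M}_{P(K)}$ with $pr^{-1}(U)/P(\ok)$, which is open in $\mc{M}_{P(\ok)}^{(0)}$. This identification is simultaneously the perfectoid chart and the local section. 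The pushout machinery is precisely what bridges the gap between $P(\ok)$-quotients and $P'$-quotients that your argument leaves open.
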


\begin{proof}
We follow the proof of \cite[Theorem 3.14]{lud}, indicating the details. By the remarks above $ \mc{M}_{P(K)} \cong \mc{M}_{\mbf{1}}^{(0)}/P^\prime$, so it suffices to show that the latter is a perfectoid space. Let $pr : \mc{M}_{\mbf{1}}^{(0)} \ra \mc{M}^{(0)}$ denote the map that forgets level structures, and let $U \sub \mc{M}^{(0)}$ be an open subset such that the Gross--Hopkins period map $\pi_{GH}|_U$ restricted to $U$ is an isomorphism onto its image $U$. The preimage $pr^{-1}(U)\sub \mc{M}_{\mbf{1}}^{(0)}$ is stable under $P(\ok)$, so we may form the object
$$ pr^{-1}(U) \times^{P(\ok)} P^\prime := (pr^{-1}(U) \times \ul{P}^\prime )/P(\ok) \in \V; $$
we refer to \cite[\S 2.4]{lud} for the details of this construction. By \cite[Lemma 2.16]{lud} and the way we have chosen $U$, there is an open immersion
$$ pr^{-1}(U) \times^{P(\ok)} P^\prime \hookrightarrow \mc{M}_{\mbf{1}}^{(0)}. $$
Since taking quotients is compatible with open immersions by construction, we get an open immersion
$$ (pr^{-1}(U) \times^{P(\ok)} P^\prime)/P^\prime \hookrightarrow \mc{M}_{\mbf{1}}^{(0)}/P^\prime. $$
By \cite[Proposition 2.14]{lud}, $(pr^{-1}(U) \times^{P(\ok)} P^\prime)/P^\prime \cong pr^{-1}(U)/P(\ok)$ and the latter is an open subset of $\mc{M}^{(0)}_{P(\ok)}$
, hence perfectoid. Since $\mc{M}_{\mbf{1}}^{(0)}/P^\prime$ is covered by opens of the form $(pr^{-1}(U) \times^{P(\ok)} P^\prime)/P^\prime$, $\mc{M}_{\mbf{1}}^{(0)}/P^\prime$ is perfectoid as desired.
This also shows that there is a cover of $\mc{M}_{P(K)}$ by open subsets of the form $(pr^{-1}(U) \times^{P(\ok)} P^\prime)/P^\prime \cong pr^{-1}(U)/P(\ok)$, that embed into $\mc{M}_{P(\ok)}^{(0)}$ and give sections of the natural projection map. 
\end{proof}

Since the Gross--Hopkins period map $\mc{M}_{\mbf{1}} \ra \Pro$ is $\GL_n(K)$-equivariant for the trivial action on the target, it factors over $\mc{M}_{\mbf{1}} \ra \mc{M}_{P(K)}$; we write
$$ \ol{\pi}_{GH} : \mc{M}_{P(K)} \ra \Pro $$
for this factorization. We get the following generalization of \cite[Proposition 3.15]{lud}, by exactly the same proof. 

\begin{proposition}\label{qc}
$\ol{\pi}_{GH}$ is quasicompact. As a consequence, $\mc{M}_{P(K)}$ is quasicompact. Moreover, $\mc{M}_{P(K)}$ is quasiseparated.
\end{proposition}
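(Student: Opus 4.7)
The proof follows \cite[Proposition 3.15]{lud} closely, with the main modification being the use of the Iwasawa decomposition in general rank. First I would show that $\ol{\pi}_{GH}$ is quasicompact, from which quasicompactness of $\mc{M}_{P(K)}$ follows by applying the assertion to the quasicompact space $\Pro$. Let $V \sub \Pro$ be a quasicompact open. Using the local section property of $\pi_{GH} : \mc{M}^{(0)} \ra \Pro$ together with quasicompactness of $V$, cover $V$ by finitely many quasicompact opens $V_1,\ldots,V_N$ such that each $V_i$ admits a section $s_i : V_i \ra \mc{M}^{(0)}$ of $\pi_{GH}$. Set $U_i := s_i(V_i)$, a quasicompact open of $\mc{M}^{(0)}$ which maps isomorphically onto $V_i$ under $\pi_{GH}$.

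By the construction at the end of the proof of Theorem \ref{main}, each $pr^{-1}(U_i)/P(\ok)$ is an open subspace of $\mc{M}^{(0)}_{P(\ok)}$ that embeds as an open subspace of $\mc{M}_{P(K)}$. It is quasicompact, since as a $P(\ok)$-quotient of the pullback to $U_i$ of the pro-finite-\'etale $\GL_n(\ok)$-torsor $pr : \mc{M}^{(0)}_{\mbf{1}} \ra \mc{M}^{(0)}$, it fibres over the quasicompact space $U_i$ with profinite compact fibre $\GL_n(\ok)/P(\ok) \cong \Pro(\ok)$. The crucial step is to identify this open subspace with $\ol{\pi}_{GH}^{-1}(V_i)$. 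The inclusion $pr^{-1}(U_i)/P(\ok) \sub \ol{\pi}_{GH}^{-1}(V_i)$ is clear from the factorisation of the underlying maps through $U_i \cong V_i$. For the reverse inclusion, observe that $\ol{\pi}_{GH}$ arises from the $\GL_n(K)$-torsor $\mc{M}_{\mbf{1}} \ra \Pro$ by taking the $P(K)$-quotient, so its fibre at a point is $\GL_n(K)/P(K)$. By the Iwasawa decomposition $\GL_n(K) = \GL_n(\ok) \cdot P(K)$ together with $\GL_n(\ok) \cap P(K) = P(\ok)$, the natural map $\GL_n(\ok)/P(\ok) \ra \GL_n(K)/P(K)$ is a bijection; properness of $\Pro$ moreover gives $\Pro(\ok) = \Pro(K)$. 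Hence the fibres of $pr^{-1}(U_i)/P(\ok) \ra V_i$ biject with those of $\ol{\pi}_{GH}^{-1}(V_i) \ra V_i$, so the open immersion between them is bijective on points and thus an isomorphism of open subspaces of $\mc{M}_{P(K)}$.

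It then follows that $\ol{\pi}_{GH}^{-1}(V) = \bigcup_{i=1}^N pr^{-1}(U_i)/P(\ok)$ is a finite union of quasicompact opens, hence quasicompact; setting $V = \Pro$ yields quasicompactness of $\mc{M}_{P(K)}$. For quasiseparatedness one must show that intersections of two quasicompact opens in $\mc{M}_{P(K)}$ remain quasicompact; by the cover constructed above this reduces to intersections of opens of the form $pr^{-1}(U_i)/P(\ok)$, which can be handled using the pro-\'etale torsor structure together with quasiseparatedness of the rigid space $\mc{M}^{(0)}$. The principal difficulty is establishing the fibre-level bijectivity via Iwasawa, and hence the equality $\ol{\pi}_{GH}^{-1}(V_i) = pr^{-1}(U_i)/P(\ok)$; once this is in place the remaining assertions follow routinely.
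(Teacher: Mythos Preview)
Your argument for quasicompactness follows essentially the same route as the paper's: both reduce to the identification $\ol{\pi}_{GH}^{-1}(V) \cong pr^{-1}(U)/P(\ok)$ for $V$ admitting a section to $U \subset \mc{M}^{(0)}$. The paper simply cites this identification from the proof of Theorem \ref{main}, where it was obtained via the pushout formalism of \cite{lud} (the isomorphism $(pr^{-1}(U)\times^{P(\ok)}P')/P' \cong pr^{-1}(U)/P(\ok)$), whereas you verify it directly by Iwasawa and fibre-counting. Both are valid; note that the Iwasawa bijection $\GL_n(\ok)/P(\ok) \cong \GL_n(K)/P(K)$ is exactly what makes the pushout construction cover all of $\ol{\pi}_{GH}^{-1}(V)$, so your argument is really an unpacking of the paper's.

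For quasiseparatedness, however, your sketch is too vague to count as a proof. The paper's argument is concrete and slightly different from what you suggest: it first upgrades each $\ol{\pi}_{GH}^{-1}(V)$ from quasicompact to qcqs, by observing that $pr^{-1}(U)$ is an inverse limit of qcqs spaces (hence spectral) and then invoking \cite[Lemma 3.2.3]{bfh} to conclude that the quotient of a spectral space by a profinite group is again spectral. Since $\ol{\pi}_{GH}^{-1}(V_1) \cap \ol{\pi}_{GH}^{-1}(V_2) = \ol{\pi}_{GH}^{-1}(V_1 \cap V_2)$ is again of this form and hence quasicompact, quasiseparatedness follows from \cite[VI, Corollaire 1.17]{sga4}. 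Your appeal to ``the pro-\'etale torsor structure together with quasiseparatedness of $\mc{M}^{(0)}$'' does not by itself yield this: the nontrivial point is that quotienting by the (infinite) group $P(\ok)$ preserves the qcqs property, and that is precisely what the spectral-space lemma provides.
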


\begin{proof}
The proof that $\ol{\pi}_{GH}$ is quasicompact is identical to the proof of the special case \cite[Proposition 3.15]{lud} when $n=2$ and $K=\Qp$; we recall it briefly since the argument also proves that $\mc{M}_{P(K)}$ is quasiseparated. In short, since $\pi_{GH}$ has local sections, $\Pro$ is covered by quasicompact open subsets $V$ for which there exists an open $U \sub \mc{M}^{(0)}$ such that $\pi_{GH}|_U$ is an isomorphism onto $V$. By the argument in the proof of Theorem \ref{main},
$$ \ol{\pi}_{GH}^{-1}(V) \cong (pr^{-1}(U)\times^{P(\ok)}P^{\prime})/P^{\prime} \cong pr^{-1}(U)/P(\ok), $$
which is quasicompact, so $\ol{\pi}_{GH}$ is quasicompact (and hence so is $\mc{M}_{P(K)}$ since $\Pro$ is quasicompact). To show that $\mc{M}_{P(K)}$ is quasiseparated we first show that $\ol{\pi}_{GH}^{-1}(V)$ is qcqs. To see this, note that $pr^{-1}(U)$ is an inverse limit of qcqs spaces, hence qcqs and therefore a spectral space. It then follows that the quotient $\ol{\pi}_{GH}^{-1}(V) \cong pr^{-1}(U)/P(\ok)$ is a spectral space by \cite[Lemma 3.2.3]{bfh}, so in particular qcqs. The intersection of two such subsets of $\mc{M}_{P(K)}$ is also quasicompact ($\ol{\pi}_{GH}^{-1}(V_1) \cap \ol{\pi}_{GH}^{-1}(V_2) = \ol{\pi}_{GH}^{-1}(V_1 \cap V_2)$), so $\mc{M}_{P(\ok)}$ is quasiseparated by \cite[VI, Corollaire 1.17]{sga4}.
\end{proof}

Thus we have shown that $|\mc{M}_{P(K)}|$ is a spectral space. We will also need the fact that it has Krull dimension $n-1$, i.e., that the supremum of all lengths $k$ of generalizations $x_0\prec \dots \prec x_k$ is equal to $n-1$. To make the proof transparent, we record a few simple observations on Krull dimensions.

\begin{lemma}\label{dimensionobservations}
Let $X$ and $Y$ be locally spectral spaces.
\begin{enumerate}
\item If $X$ is a cofiltered inverse limit $\varprojlim_i X_i$ of locally spectral spaces, then $\dim X \leq \sup_i \dim X_i$.

\smallskip

\item If $f : X \ra Y$ is a surjective and generalizing continuous map, then $\dim X \geq \dim Y$.
\end{enumerate}
\end{lemma}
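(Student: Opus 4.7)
For part (1), the plan is to take a chain of strict generalizations $x_0 \prec x_1 \prec \dots \prec x_k$ in $X$ and push it down to some $X_i$. The projection maps $\pi_i : X \ra X_i$ are continuous, hence preserve specialization, so for every $i$ the images $\pi_i(x_0), \dots, \pi_i(x_k)$ form a (not necessarily strict) chain of generalizations in $X_i$. The task is therefore to find one index $i$ for which the images of all $k+1$ points are pairwise distinct. For this I would use that $X = \varprojlim_i X_i$ as topological spaces (for cofiltered systems of locally spectral spaces with spectral transition maps, the topological and categorical limits agree), so that if $x_j \neq x_{j+1}$ there is at least one index $i(j)$ with $\pi_{i(j)}(x_j) \neq \pi_{i(j)}(x_{j+1})$. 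Using cofilteredness of the index category, one then chooses an $i$ mapping simultaneously to all the $i(j)$; pulling back along the transition maps, this single $i$ separates every consecutive pair, and hence (since the images form a chain) separates all pairs. This produces a chain of length $k$ in $X_i$, proving $\dim X_i \geq k$, and taking sup over $k$ gives the claim.

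For part (2), I would lift a chain $y_0 \prec y_1 \prec \dots \prec y_k$ in $Y$ to a chain of the same length in $X$, descending from the top. By surjectivity, pick $x_k \in X$ with $f(x_k) = y_k$. Since $f$ is generalizing and $y_{k-1} \prec y_k = f(x_k)$, there exists $x_{k-1} \prec x_k$ with $f(x_{k-1}) = y_{k-1}$; iterate to produce $x_0 \prec x_1 \prec \dots \prec x_k$ in $X$ with $f(x_j) = y_j$ for all $j$. Because the $y_j$ are pairwise distinct and $f$ is a function, the $x_j$ are pairwise distinct as well, so this is a strict chain of length $k$ in $X$. Hence $\dim X \geq \dim Y$.

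The only genuine subtlety lies in part (1), namely checking that for cofiltered limits of locally spectral spaces (with spectral, or at least continuous, transition maps) the underlying set of the limit really is the set-theoretic inverse limit, so that the separation-of-points argument goes through. In the situations where this lemma will be applied (spatial diamonds and their associated spectral spaces), this is standard, e.g.\ via \cite[Lemma 11.22]{dia} together with the usual spectral-space inverse limit theory, so the remaining steps are straightforward order-theoretic manipulations.
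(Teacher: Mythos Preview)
Your argument for part (1) is essentially the paper's proof, with the extra detail about combining the indices $i(j)$ via cofilteredness spelled out; this is fine.

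Part (2), however, has the lifting direction reversed. A map $f$ is \emph{generalizing} when generalizations of $f(x)$ lift to generalizations of $x$: given $x$ and a point $y'$ with $f(x) \prec y'$, there exists $x'$ with $x \prec x'$ and $f(x') = y'$. In your argument you start at the top of the chain, picking $x_k$ over the most generic point $y_k$, and then ask for a lift of the \emph{specialization} $y_{k-1} \prec y_k$ to a specialization $x_{k-1} \prec x_k$. That is the specializing property, not the generalizing one, and it is not assumed. The fix is exactly what the paper does: use surjectivity to lift the most special point $y_0$ to some $x_0$, and then climb upward, at each step using that $f$ is generalizing to lift $y_{j+1} \succ f(x_j)$ to some $x_{j+1} \succ x_j$. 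The distinctness of the $x_j$ then follows as you wrote, since the $y_j$ are distinct.
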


\begin{proof}
We start with (1). Write $q_i : X \ra X_i$ for the natural map. If $x_0 \prec\dots \prec x_n$ is a chain of distinct generalizations in $X$, then $q_i(x_0) \preceq\dots \preceq q_i(x_n)$ is a chain of generalizations in $X_i$ for any $i$, and the $q_i(x_j)$ will be distinct for some $i$. This proves (1). 

\medskip
For (2), let $y_0 \prec \dots \prec y_m$ be a chain of distinct generalizations in $Y$. Then we can lift $y_0$ to a point $x_0\in X$ by surjectivity of $f$, and then successively lift the $y_i$, $i\geq 2$, using that $f$ is generalizing, to obtain a chain $x_0 \prec \dots \prec x_m$ in $X$, proving (2).
\end{proof}

\begin{proposition}
$|\mc{M}_{P(K)}|$ is a spectral space of Krull dimension $n-1$.
\end{proposition}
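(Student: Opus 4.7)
The plan is to deduce the spectral property directly from Proposition \ref{qc} (a qcqs diamond has spectral underlying space), and then to compute the Krull dimension by exploiting the Gross--Hopkins period map $\ol{\pi}_{GH} : \mc{M}_{P(K)} \to \Pro$. Recall from the proofs of Theorem \ref{main} and Proposition \ref{qc} that, for any quasicompact open $V \subset \Pro$ over which $\pi_{GH}$ admits a section $s : V \hookrightarrow \mc{M}^{(0)}$, one has $\ol{\pi}_{GH}^{-1}(V) \cong pr^{-1}(s(V))/P(\ok)$; such $V$ cover $\Pro$ since $\pi_{GH}$ admits local sections. Each such preimage is an open subdiamond of the perfectoid space $\mc{M}^{(0)}_{P(\ok)}$ from Proposition \ref{step1}.

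For the upper bound, I would use that $|\mc{M}^{(0)}_{P(\ok)}|$ is the cofiltered inverse limit of the $|\mc{M}^{(0)}_U|$ for $P(\ok) \subset U$ open (cf. the analogue of \cite[Lemma 11.22]{dia} recorded in the discussion at the start of Section \ref{sec3}). Each $\mc{M}^{(0)}_U$ is finite étale over the smooth $(n-1)$-dimensional rigid space $\mc{M}^{(0)}$, so $\dim |\mc{M}^{(0)}_U| = n-1$. Lemma \ref{dimensionobservations}(1) then gives $\dim |\mc{M}^{(0)}_{P(\ok)}| \leq n-1$, and hence $\dim |\ol{\pi}_{GH}^{-1}(V)| \leq n-1$ for each $V$ in our cover. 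Any chain of generalizations $x_0 \prec \cdots \prec x_k$ in $|\mc{M}_{P(K)}|$ is contained in every open neighbourhood of its specialised endpoint $x_k$ (opens are closed under generalization), hence fits inside some $\ol{\pi}_{GH}^{-1}(V)$, forcing $k \leq n-1$.

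For the lower bound, the aim is to apply Lemma \ref{dimensionobservations}(2) to $\ol{\pi}_{GH}$, using $\dim |\Pro| = n-1$. Surjectivity of $\ol{\pi}_{GH}$ is inherited from that of $\pi_{GH}$. To establish the generalizing property, work locally on each $V$ as above via the factorisation
$$ pr^{-1}(s(V)) \twoheadrightarrow pr^{-1}(s(V))/P(\ok) \cong \ol{\pi}_{GH}^{-1}(V) \longrightarrow V, $$
whose composite is the pullback along $s$ of the pro-étale $\GL_n(\ok)$-torsor $\mc{M}^{(0)}_{\mbf{1}} \to \mc{M}^{(0)}$. As a cofiltered inverse limit of finite étale (hence open, hence generalizing) maps of spectral spaces, this composite is generalizing. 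The first arrow identifies topologically with a genuine quotient by the profinite group $P(\ok)$ via Lemma \ref{quotients and torsors}, and hence preserves generalizations. A short diagram chase then transfers the generalizing property from the composite to $\ol{\pi}_{GH}^{-1}(V) \to V$, and patching over $V$ gives it globally for $\ol{\pi}_{GH}$.

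The main obstacle I anticipate is the bookkeeping showing that profinite-group quotients and cofiltered limits of spectral spaces interact well with generalizations. Lemma \ref{quotients and torsors} and the homeomorphism $|\mc{M}^{(0)}_{P(\ok)}| \cong \varprojlim_U |\mc{M}^{(0)}_U|$ are exactly what is needed, but the verification requires some care; once it is in hand, the two bounds fit together to give $\dim |\mc{M}_{P(K)}| = n-1$.
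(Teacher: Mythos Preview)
Your argument is correct, but it diverges from the paper's proof in both halves. The paper works entirely through $\mc{M}_{\mbf{1}}$: it first bounds $\dim |\mc{M}_{\mbf{1}}| \leq n-1$ via Lemma \ref{dimensionobservations}(1), then applies Lemma \ref{dimensionobservations}(2) to the surjective generalizing map $\mc{M}_{\mbf{1}} \to \mc{M}_{P(K)}$ for the upper bound, and finally obtains the lower bound by a general quotient argument (citing \cite[Lemma 3.2.3]{bfh}) using that $\mc{M}_{P(K)} = \mc{M}_{\mbf{1}}/P(K)$ in $\V$. By contrast, you localise via $\ol{\pi}_{GH}$ for the upper bound and then use $\ol{\pi}_{GH}$ again, together with $\dim |\Pro| = n-1$, for the lower bound. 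Your route has the advantage of being fully self-contained (no appeal to \cite{bfh}); the paper's is considerably shorter.

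Two minor remarks. First, in your convention $x_0 \prec \cdots \prec x_k$ the specialised endpoint is $x_0$, not $x_k$; the argument that the whole chain lies in any open containing the specialised point is of course correct once the index is fixed. Second, your factorisation argument for the generalizing property of $\ol{\pi}_{GH}$ is valid but unnecessary: since $\mc{M}_{P(K)}$ is perfectoid (Theorem \ref{main}) and $\Pro$ is rigid analytic, $\ol{\pi}_{GH}$ is a morphism of analytic adic spaces and hence generalizing for free (this is exactly the footnote the paper invokes). With that shortcut, your lower bound collapses to a single line.
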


\begin{proof}
Since $\mc{M}_{\mbf{1}}$ is an inverse limit of rigid analytic varieties of dimension $n-1$, it has dimension $\leq n-1$ by Lemma \ref{dimensionobservations}(1). Applying Lemma \ref{dimensionobservations}(2) to the surjective and generalizing\footnote{Any map of analytic adic spaces is generalizing.} maps $\mc{M}_{\mbf{1}} \ra \mc{M}$ and $\mc{M}_{\mbf{1}} \ra \mc{M}_{P(K)}$, we see that $\dim \mc{M}_{\mbf{1}} = n-1$ and that $\dim \mc{M}_{P(K)} \leq n-1$. To prove equality, one may argue exactly as at the end of the proof of \cite[Lemma 3.2.3]{bfh}, using that $\mc{M}_{P(K)}$ is the quotient of $\mc{M}_{\mbf{1}}$ by $P(K)$ in the category $\V$.
\end{proof}

We will end this section by showing that $\mc{M}_\mbf{1}$ is a $P(K)$-torsor over $\mc{M}_{P(K)}$. For this, we first record two lemmas concerning the pushouts defined in \cite[\S 2]{lud}. 

\begin{lemma}\label{help3}
Let $G$ be a locally profinite group and let $H \sub G$ be a compact open subgroup. Assume that $H$ acts on a perfectoid space $X$, that $G$ acts on a perfectoid space $Y$ and that we have an $H$-invariant map of perfectoid spaces $X \ra Y$. Then there is a natural $G$-invariant map $X \times^H G \ra Y$, and if $Z \ra Y$ is a map of perfectoid spaces then the natural map $(X\times_Y Z) \times^H G \ra (X \times^H G) \times_Y Z$ is a $G$-equivariant isomorphism.
\end{lemma}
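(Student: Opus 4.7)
Both parts will follow from the universal property of the pushout $X \times^H G = (X \times \underline{G})/H$ in Huber's category $\V$ (constructed in \cite[\S 2]{lud}), together with the compatibility of this quotient with base change.

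For part (1), since $f : X \to Y$ is $H$-invariant, it factors as $X \to X/H \to Y$. The first projection $X \times \underline{G} \to X$ is $H$-equivariant for the diagonal $H$-action on the source, so composing with $X \to X/H$ gives an $H$-invariant morphism $X \times \underline{G} \to X/H$; by the universal property of the quotient, this descends to $X \times^H G \to X/H$. Composing with $X/H \to Y$ yields the natural $G$-invariant morphism $\varphi : X \times^H G \to Y$; its $G$-invariance is automatic, since the map factors through $X/H$ on which the $G$-action is trivial.

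For part (2), the map $\Psi : (X \times_Y Z) \times^H G \to (X \times^H G) \times_Y Z$ is built by pairing the morphism $(X \times_Y Z) \times^H G \to X \times^H G$ induced by the $H$-equivariant projection $X \times_Y Z \to X$ with the morphism $(X \times_Y Z) \times^H G \to Z$ obtained from the $H$-invariant projection $X \times_Y Z \to Z$ via part (1) (applied to $Z$ with trivial $G$-action). Both sides carry natural $G$-actions (from the $\underline{G}$ factor on the left, and from the $G$-action on $X \times^H G$ on the right, which is compatible with the fiber product thanks to the $G$-invariance of $\varphi$), and $\Psi$ is $G$-equivariant by construction. To show $\Psi$ is an isomorphism, I would observe the tautological identification
\[
(X \times_Y Z) \times \underline{G} \cong (X \times \underline{G}) \times_Y Z
\]
at the pre-quotient level (obtained by rearranging factors, using that $\underline{G}$ is constant over the base), which is $H$-equivariant for the respective diagonal $H$-actions (and trivial on $Z$). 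The problem then reduces to showing that the $H$-quotient commutes with base change along $Z \to Y$ in $\V$.

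The main obstacle is this last commutation of $H$-quotient with base change. I would reduce to the affinoid perfectoid setting by working locally on $Y$ and $Z$ and covering the locally profinite $\underline{G}$ by compact opens on which $H$ acts through finite quotients, so that the $H$-quotient can be computed via finite invariants as in Lemma \ref{quotients and torsors}. Commutation of these invariants with base change along $Z \to Y$ is then a standard computation, essentially amounting to $(R \otimes B)^H = R^H \otimes B$ when $H$ acts trivially on $B$; the perfectoid setting introduces the usual almost-mathematical cleanup, handled as in the proof of Lemma \ref{quotients and torsors}. Reassembling over the cover then yields the required isomorphism.
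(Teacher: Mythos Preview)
Your construction of the map $X\times^H G \to Y$ in part (1) is fine and essentially unpacks what the paper obtains by citing \cite[Lemma 2.16]{lud}.

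For part (2), however, you take a detour that is both harder than necessary and not correctly set up. The paper's argument is much shorter: since $H\subset G$ is compact open, the coset space $H\backslash G$ is discrete, and \cite[Proposition 2.15]{lud} identifies the pushout $W\times^H G$ with a disjoint union $\coprod_{H\backslash G} W$ for any $W$. Applying this on both sides, the claimed isomorphism reduces to the trivial fact that disjoint unions commute with fibre products. There is no need to prove a general ``$H$-quotients commute with base change'' statement in $\mathcal{V}$.

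Your proposed reduction has a concrete problem: you write that one can cover $\underline{G}$ by compact opens ``on which $H$ acts through finite quotients'', but this is false. The $H$-action on $\underline{G}$ (by left translation $g\mapsto h^{-1}g$) has orbits equal to single cosets $Hg$, on each of which $H$ acts simply transitively, not through any finite quotient. The finite-invariants formula $(R\otimes B)^H = R^H\otimes B$ you invoke is therefore not applicable in the way you suggest. If instead you decompose $\underline{G}=\coprod_{H\backslash G}\underline{Hg_i}$ and note that the $H$-quotient of $X\times\underline{Hg_i}\cong X\times\underline{H}$ is just $X$, you recover exactly the paper's disjoint-union argument---so the correct simplification was hiding inside your approach, but obscured by the incorrect ``finite quotients'' framing.
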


\begin{proof}
The existence of $X \times^H G \ra Y$ is \cite[Lemma 2.16]{lud}. For the compatibility with fibre products we note that there is indeed a natural map $(X\times_Y Z) \times \ul{G} \ra (X \times^H G) \times_Y Z$ given by $(x,z,g) \ra (x,g,z)$. It is easily checked to be both $H$-invariant for the action $(x,z,g).h = (xh,z,h^{-1}g)$ on $(X\times_Y Z) \times \ul{G}$, and $G$-equivariant for the action given by acting by right translation on the $G$-factor on the target and source. These actions commute and so induce the natural $G$-equivariant map $(X\times_Y Z) \times^H G \ra (X \times^H G) \times_Y Z$. To see that it is an isomorphism, use the description of the pushout from \cite[Proposition 2.15]{lud} and the fact that disjoint unions commute with fibre products.
\end{proof}

\begin{lemma}\label{help1}
Let $G$ be a locally profinite group and let $H \sub G$ be a compact open subgroup. If $X \ra Y$ is an $H$-torsor of perfectoid spaces, then $X \times^H G \ra Y$ is a $G$-torsor of perfectoid spaces.
\end{lemma}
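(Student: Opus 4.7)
The plan is to verify directly the two defining properties of a $G$-torsor (in the sense of \cite[Definition 10.12]{dia}) for the natural $G$-equivariant map $f : X \times^H G \ra Y$, whose existence is guaranteed by Lemma \ref{help3} applied with the trivial $G$-action on $Y$. First, surjectivity of $f$ is immediate from the surjectivity of $X \ra Y$: the inclusion $X \hookrightarrow X \times^H G$ sending $x \mapsto [x, e]$ factors the original $H$-torsor map $X \ra Y$ through $f$.

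The heart of the proof is to show that the natural $G$-equivariant comparison map
\[
 \alpha: (X \times^H G) \times \ul{G} \ra (X \times^H G) \times_Y (X \times^H G), \quad ([x, g], g') \mapsto ([x, g], [x, gg'])
\]
is an isomorphism. I would verify this after pulling back along $X \ra Y$, which is a pro-\'etale cover since it is an $H$-torsor. The key computation combines Lemma \ref{help3} (with $Z = X$) and the $H$-torsor trivialisation $X \times_Y X \cong X \times \ul{H}$:
\[
 X \times_Y (X \times^H G) \cong (X \times_Y X) \times^H G \cong (X \times \ul{H}) \times^H G.
\]
A direct change of coordinates $(x, h, g) \mapsto (x, hg)$ identifies $(X \times \ul{H}) \times^H G$ with $X \times \ul{G}$, $G$-equivariantly over $X$. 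Thus the pullback of $f$ along $X \ra Y$ is canonically the trivial $G$-torsor $X \times \ul{G} \ra X$.

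Under this trivialisation, the pullback of $\alpha$ becomes the standard shear isomorphism $(X \times \ul{G}) \times \ul{G} \ra (X \times \ul{G}) \times_X (X \times \ul{G})$, $((x, g_1), g_2) \mapsto ((x, g_1), (x, g_1 g_2))$, which is patently an isomorphism. Since $X \ra Y$ is a pro-\'etale (hence v-) cover, descent then implies $\alpha$ itself is an isomorphism, completing the verification that $X \times^H G \ra Y$ is a $G$-torsor of perfectoid spaces (noting that the perfectoid property is already in place, as $X \times^H G$ is a disjoint union of copies of $X/H'$ for varying open subgroups $H' \sub H$, cf. \cite[Proposition 2.15]{lud}).

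The main obstacle is purely bookkeeping: one has to keep careful track of the various $H$-actions (on $X$, on $\ul{H}$ via the torsor trivialisation, and on $\ul{G}$) to confirm that the chain of isomorphisms is genuinely $G$-equivariant and that $\alpha$ pulls back to the expected shear map. Once the formalism of Lemma \ref{help3} is set up, the rest of the argument is essentially formal descent.
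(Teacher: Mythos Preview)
Your proposal is correct and follows essentially the same route as the paper: both arguments pull back along the v-cover $X \ra Y$ and use Lemma \ref{help3} together with the $H$-torsor trivialisation $X \times_Y X \cong X \times \ul{H}$ to obtain the chain
\[
(X \times^H G) \times_Y X \cong (X \times_Y X) \times^H G \cong (X \times \ul{H}) \times^H G \cong X \times \ul{G},
\]
checked to be $G$-equivariant. The only difference is packaging: the paper phrases the conclusion as ``the pullback of $f$ to $X$ is the trivial $G$-torsor'', whereas you unwind this into the statement that the action map $\alpha$ becomes the shear isomorphism after pullback and then descend; these are equivalent verifications of \cite[Definition 10.12]{dia}.
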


\begin{proof}
$X \ra Y$ is a v-cover, so it suffices to show that $(X \times^H G) \times_Y X \cong X \times \ul{G}$, $G$-equivariantly. Using Lemma \ref{help3} and the fact that $X \ra Y$ is an $H$-torsor we see that
$$ (X \times^H G) \times_Y X \cong (X \times_Y X) \times^H G \cong (X \times \ul{H}) \times^H G \cong X \times \ul{G} $$
and one checks that these isomorphisms are all $G$-equivariant.
\end{proof}

Using these we can now prove that $\mc{M}_\mbf{1}$ is a $P(K)$-torsor over $\mc{M}_{P(K)}$.

\begin{proposition}\label{ptorsor}
$\mc{M}_\mbf{1}$ is a $P(K)$-torsor over $\mc{M}_{P(K)}$.
\end{proposition}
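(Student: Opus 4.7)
The plan is to verify the torsor condition v-locally on the base $\mc{M}_{P(K)}$, using the known $P(\ok)$-torsor $\mc{M}_\mbf{1}^{(0)}\to \mc{M}_{P(\ok)}^{(0)}$ (obtained by restricting the Shimura-variety torsor $\Xc_\mbf{1}\to \Xc_{P(\ok)}$ of Lemma \ref{torsor} to the preimage of the open subdiamond $\mc{M}_{P(\ok)}^{(0)}\subset \Xc_{P(\ok)}$). By Theorem \ref{main}, the projection $\mc{M}_{P(\ok)}^{(0)}\to \mc{M}_{P(K)}$ admits local sections, and more explicitly $\mc{M}_{P(K)}$ is covered by opens $V$ identified via such a section with opens of $\mc{M}_{P(\ok)}^{(0)}$ of the form $pr^{-1}(U)/P(\ok)$, where $pr:\mc{M}_\mbf{1}^{(0)}\to \mc{M}^{(0)}$ is the projection and $U\subset \mc{M}^{(0)}$ is an open on which $\pi_{GH}$ is an isomorphism onto its image. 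Fixing such $V$ and $U$, the task reduces to showing that the preimage of $V$ under $\mc{M}_\mbf{1}\to \mc{M}_{P(K)}$ is a $P(K)$-torsor over $V$.

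The first step is to identify this preimage with the pushout $pr^{-1}(U)\times^{P(\ok)}P(K)$. By the construction of the pushout, the $P(\ok)$-invariant open immersion $pr^{-1}(U)\hookrightarrow \mc{M}_\mbf{1}$ induces a natural $P(K)$-equivariant map $pr^{-1}(U)\times^{P(\ok)}P(K)\to \mc{M}_\mbf{1}$. Using a decomposition $P(K)=\bigsqcup_{d\in \Z} P' g_d$ into cosets with representatives $g_d$ satisfying $v_K(\det g_d)=d$, the pushout splits into a disjoint union $\bigsqcup_d (pr^{-1}(U)\times^{P(\ok)}P')\cdot g_d$, with the $d$-th summand landing in the distinct connected component $\mc{M}_\mbf{1}^{(d)}$; this reduces the open-immersion claim to the $P'$-level statement $pr^{-1}(U)\times^{P(\ok)}P'\hookrightarrow \mc{M}_\mbf{1}^{(0)}$ already established in the proof of Theorem \ref{main}. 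The image is $P(K)\cdot pr^{-1}(U)$, which coincides with the preimage of $V$ since $V$ is the image of $pr^{-1}(U)$ in $\mc{M}_{P(K)}$.

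With this identification in hand, Lemma \ref{help1} applies immediately: taking $H=P(\ok)$, $G=P(K)$, and the $P(\ok)$-torsor $pr^{-1}(U)\to V$ (obtained by base-changing the torsor $\mc{M}_\mbf{1}^{(0)}\to \mc{M}_{P(\ok)}^{(0)}$ along the open immersion $V\hookrightarrow \mc{M}_{P(\ok)}^{(0)}$), we conclude that $pr^{-1}(U)\times^{P(\ok)}P(K)\to V$ is a $P(K)$-torsor. Since the $V$'s form an open cover of $\mc{M}_{P(K)}$, and both conditions in the definition of a torsor (being a v-cover, and the isomorphism $X\times \underline{G}\cong X\times_Y X$) can be checked v-locally on the base, gluing then yields that $\mc{M}_\mbf{1}\to \mc{M}_{P(K)}$ is a $P(K)$-torsor.

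The main obstacle is the middle identification step, namely verifying that the global pushout map $pr^{-1}(U)\times^{P(\ok)}P(K)\to \mc{M}_\mbf{1}$ is really an open immersion onto $P(K)\cdot pr^{-1}(U)$; this demands careful bookkeeping of how the cosets in $P(\ok)\backslash P(K)$ distribute across the connected components $\mc{M}_\mbf{1}^{(d)}$ and a clean reduction to the existing $P'$-level open immersion from the proof of Theorem \ref{main}. Once this is established, the conclusion is a formal consequence of Lemma \ref{help1} and v-local descent.
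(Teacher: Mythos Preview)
Your proposal is correct and follows essentially the same approach as the paper's proof: localize on $\mc{M}_{P(K)}$ to opens of the form $pr^{-1}(U)/P(\ok)$, identify the preimage in $\mc{M}_\mbf{1}$ with the pushout $pr^{-1}(U)\times^{P(\ok)}P(K)$, and then apply Lemma \ref{help1} to the $P(\ok)$-torsor $pr^{-1}(U)\to pr^{-1}(U)/P(\ok)$. You in fact supply more detail than the paper does on the identification step, carefully reducing the open-immersion claim for the $P(K)$-pushout to the $P'$-level statement already proved in Theorem \ref{main} via the coset decomposition $P(K)=\bigsqcup_d P' g_d$; the paper simply asserts the preimage is $pr^{-1}(U)\times^{P(\ok)}P(K)$ without elaboration.
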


\begin{proof}
The statement is local on $\mc{M}_{P(K)}$, so we may restrict to the types of open subsets
$$ pr^{-1}(U) \times^{P(\ok)} P(K)/P(K) \cong pr^{-1}(U)/P(\ok)$$
used in the proof of Theorem \ref{main}, which have preimage $pr^{-1}(U) \times^{P(\ok)} P(K)$ in $\mc{M}_\mbf{1}$. Then, by Lemma \ref{help1}, we see that it suffices to show that $pr^{-1}(U) \ra pr^{-1}(U)/P(\ok)$ is a $P(\ok)$-torsor, but this follows by construction (arguing as in, or using, Lemma \ref{torsor}). 
\end{proof}

As a consequence, we note that $\mc{M}_H \ra \mc{M}_{P(K)}$ is (separated and) \'etale for any open subgroup $H \sub P(\ok)$, by \cite[Lemma 10.13]{dia}.

\section{Application to Scholze's functor} \label{sec:app}
\subsection{Recollections}
We recall some results of \cite{plt}. Let $D/K$ be a central division algebra of invariant $1/n$. 
For a smooth admissible representation $\pi$ of $\GL_n(K)$ on a $\mathbb{F}_p$-vector space, Scholze defines a sheaf $\mc{F}	_{\pi}$  on $(\mathbb{P}^{n-1})_{\et}$ by 
\[\mc{F}_{\pi}(U) =  \mathrm{Map}_{\mathrm{cont},\GL_n(K)}(|U\times_{\mathbb{P}^{n-1}} \mc{M}_{1}|, \pi)\]
(where $U\rightarrow \mathbb{P}^{n-1}$ is an \'etale map) and shows that the cohomology groups 
\[\mathcal{S}^i(\pi):= H_\et^i(\mathbb{P}^{n-1},\mc{F}_\pi), i \geq 0,\]
are admissible $D^\times$-representations which carry an action of $\Gal(\overline{K}/K)$ and vanish in degree $i > 2(n-1)$ (\cite[Theorem 1.1]{plt}). The main result of this section is Theorem \ref{vanish}, which shows that in fact $\mc{S}^i(\pi)=0$ for $i>n-1$ whenever $\pi$ is induced from the parabolic $P$.

\subsection{Some cohomological calculations}

In preparation for Theorem \ref{vanish}, we carry out some auxiliary calculations. We begin with some remarks about the geometric fibres of $\ol{\pi}_{GH}$. Let $\ol{x} : \Spa(E,E^+) \ra \Pro$ be a geometric point. We define the fibre $(\mc{M}_{P(K)})_{\ol{x}}$ as the fibre product
\[
 (\mc{M}_{P(K)})_{\ol{x}} := \mc{M}_{P(K)} \times_{(\Pro)^\lozenge} \Spa(E,E^+) 
\]
in the category of diamonds. Since $\mc{M}_{\mbf{1}} \ra \Pro$ is a $\GL_n(K)$-torsor and $\mc{M}_\mbf{1} \ra \mc{M}_{P(K)}$ is a $P(K)$-torsor (by Proposition \ref{ptorsor}), the geometric fibres of $\ol{\pi}_{GH}$ are profinite sets
$$ (\mc{M}_{P(K)})_{\ol{x}} \cong \ol{x} \times \ul{S}, $$
with $S=\GL_n(K)/P(K) = \GL_n(\mc{O}_K)/P(\mc{O}_K)$ (we refer to e.g. \cite[Proposition 2.10]{lud} for a definition of the notation $\ol{x} \times \ul{S}$; see also \cite[Example 11.12]{dia}).

\begin{lemma}\label{aux2} Let $\mathcal{F}$ be a sheaf of abelian groups on $(\mc{M}_{P(K)})_{\et}$. Then 
\[H^i_{\et}(\mc{M}_{P(K)}, \mathcal{F}) = H^i_{\et}(\mathbb{P}^{n-1}, \overline{\pi}_{\GH,*}\mathcal{F})\]
for all $i\geq 0$. 
\end{lemma}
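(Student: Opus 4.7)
The plan is to apply the Leray spectral sequence for the quasicompact, quasiseparated map $\overline{\pi}_{\GH}:\mc{M}_{P(K)}\to \mathbb{P}^{n-1}$ and to reduce the claim to the vanishing $R^q\overline{\pi}_{\GH,*}\mc{F}=0$ for all $q\geq 1$; the desired identity then follows from the edge-map isomorphism of the spectral sequence.

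To verify the vanishing of the higher direct images, I would check it on stalks at geometric points $\overline{x}$ of $\mathbb{P}^{n-1}$. The geometric fibres of $\overline{\pi}_{\GH}$ have already been identified in the preceding discussion as the strictly totally disconnected diamonds $(\mc{M}_{P(K)})_{\overline{x}}\cong \overline{x}\times \underline{S}$, where $S=\GL_n(\ok)/P(\ok)$. A base change statement for $\overline{\pi}_{\GH}$ would then give
\[
(R^q\overline{\pi}_{\GH,*}\mc{F})_{\overline{x}}\;\cong\;H^q_\et\bigl(\overline{x}\times\underline{S},\,\mc{F}|_{\overline{x}\times\underline{S}}\bigr).
\]
Writing $S=\varprojlim_k S_k$ as an inverse limit of finite quotients, the \'etale site of the profinite set $\overline{x}\times\underline{S}$ is the filtered colimit of the \'etale sites of the finite disjoint unions $\overline{x}\times S_k=\bigsqcup_{s\in S_k}\overline{x}$, so for any abelian sheaf $\mc{G}$,
\[
H^q_\et(\overline{x}\times\underline{S},\mc{G})\;=\;\varinjlim_k\prod_{s\in S_k}H^q_\et(\overline{x},\mc{G}|_{\overline{x}\times\{s\}})\;=\;0 \qquad(q\geq 1),
\]
since a geometric point has no higher \'etale cohomology. (Equivalently, $\overline{x}\times\underline{S}$ is strictly totally disconnected, so every \'etale cover splits.)

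To justify the base change rigorously, I would work \'etale-locally on $\mathbb{P}^{n-1}$: since $\pi_{\GH}:\mc{M}\to\mathbb{P}^{n-1}$ is \'etale surjective with local sections, $\mathbb{P}^{n-1}$ is covered by quasicompact opens $V$ for which there exists an open $U\subseteq \mc{M}^{(0)}$ with $\pi_{\GH}|_U:U\xrightarrow{\sim}V$. As in the proofs of Theorem \ref{main} and Proposition \ref{qc}, we then have $\overline{\pi}_{\GH}^{-1}(V)\cong pr^{-1}(U)/P(\ok)$, and $pr^{-1}(U)\to U\cong V$ is a $\GL_n(\ok)$-torsor (Lemma \ref{torsor}). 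Choosing a decreasing chain $\{U_k\}$ of open normal subgroups of $\GL_n(\ok)$ with $\bigcap_k U_k=\{1\}$, this identifies $\overline{\pi}_{\GH}^{-1}(V)=\varprojlim_k V_k$ as a cofiltered inverse limit of finite \'etale covers $V_k:=pr^{-1}(U)/(P(\ok)U_k)\to V$. In particular, $\overline{\pi}_{\GH}$ is locally on $\mathbb{P}^{n-1}$ a cofiltered inverse limit of proper maps with affine transition maps, hence proper.

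The main obstacle is rigorously establishing the base change step for an arbitrary (not necessarily torsion) abelian sheaf on a diamond. I would invoke the (proper/qcqs) base change theorem from \cite{dia}, using the properness just established. Alternatively, one can argue directly from the presentation $\overline{\pi}_{\GH}^{-1}(V)=\varprojlim_k V_k$, exploiting the exactness of $f_{k,*}$ on abelian sheaves for the finite \'etale maps $f_k:V_k\to V$ together with a filtered-limit argument to conclude that only the pushforward of $\mc{F}$ contributes, matching the vanishing of higher cohomology on the profinite-set fibres.
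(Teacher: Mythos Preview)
Your proposal is correct and follows essentially the same approach as the paper: the paper's proof (which defers to \cite[Proposition 4.4]{lud}) also uses the Leray spectral sequence and shows $R^q\overline{\pi}_{\GH,*}\mc{F}=0$ for $q\geq 1$ by computing stalks at geometric points via the qcqs base change of \cite[Proposition 2.2.4]{arizona} (this is where Proposition~\ref{qc} enters), together with the fact that the geometric fibres are profinite sets over $\overline{x}$. One small remark: you do not need properness of $\overline{\pi}_{\GH}$---qcqs is enough for the stalk computation, and in particular you avoid the issue you flag about proper base change for non-torsion sheaves.
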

\begin{proof} This is proved exactly as \cite[Proposition 4.4]{lud}, using Proposition \ref{qc} and the fact that the geometric fibres $(\mc{M}_{P(K)})_{\overline{x}}$ are profinite sets over $\ol{x}$.
\end{proof}

\begin{proposition}\label{aux1} Let $\mathcal{F}$ be a sheaf of $\mathbb{F}_p$-vector spaces on $(\mc{M}_{P(K)})_{\et}$. We have an isomorphism of sheaves on $(\mathbb{P}^{n-1})_{\et}$
\[(\overline{\pi}_{\GH,*}\mathcal{F})\otimes \mc{O}^+_{\mathbb{P}^{n-1}}/p \cong \overline{\pi}_{\GH,*}(\mathcal{F}\otimes \mc{O}^+_{\mc{M}_{P(K)}}/p).\]
\end{proposition}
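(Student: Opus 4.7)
My plan is to construct the natural comparison map and then verify that it is an isomorphism on stalks at geometric points of $\Pro$. The comparison map arises from adjunction together with the lax monoidality of $\ol{\pi}_{\GH,*}$: tensor the unit map $\mc{O}^+_{\Pro}/p \to \ol{\pi}_{\GH,*}\mc{O}^+_{\mc{M}_{P(K)}}/p$ with $\ol{\pi}_{\GH,*}\mc{F}$ and compose with the canonical pairing $\ol{\pi}_{\GH,*}\mc{F} \otimes \ol{\pi}_{\GH,*}\mc{O}^+_{\mc{M}_{P(K)}}/p \to \ol{\pi}_{\GH,*}(\mc{F} \otimes \mc{O}^+_{\mc{M}_{P(K)}}/p)$. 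Since \'etale sheaves are determined by their stalks at geometric points, it is enough to check this comparison map after passing to stalks.

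Next, for a geometric point $\ol{x}=\Spa(E,E^+) \to \Pro$, I would compute both stalks. As recalled just before Lemma \ref{aux2}, the geometric fibre $\mc{M}_{P(K),\ol{x}}$ is isomorphic to the profinite set $\ol{x} \times \ul{S}$ where $S = \GL_n(K)/P(K)$. Using the quasicompactness of $\ol{\pi}_{\GH}$ (Proposition \ref{qc}), the same base-change argument underlying the proof of Lemma \ref{aux2} (adapting \cite[Proposition 4.4]{lud}) gives, for any \'etale sheaf $\mc{G}$ on $\mc{M}_{P(K)}$, a natural identification $(\ol{\pi}_{\GH,*}\mc{G})_{\ol{x}} \cong \mc{G}(\mc{M}_{P(K),\ol{x}})$. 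Applied to $\mc{G} = \mc{F}$ and $\mc{G} = \mc{F} \otimes \mc{O}^+_{\mc{M}_{P(K)}}/p$, this identifies the LHS stalk with $\mc{F}(\mc{M}_{P(K),\ol{x}}) \otimes_{\mathbb{F}_p} E^+/p$ and the RHS stalk with $(\mc{F} \otimes \mc{O}^+_{\mc{M}_{P(K)}}/p)(\mc{M}_{P(K),\ol{x}})$.

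To match these, I would write $S = \varprojlim_i S_i$ as a cofiltered limit of finite discrete sets, so that $\mc{M}_{P(K),\ol{x}} = \varprojlim_i \bigsqcup_{s \in S_i} \ol{x}$ with qcqs transition maps. The limit theorem for \'etale sheaves then presents the sections of any sheaf over $\mc{M}_{P(K),\ol{x}}$ as a filtered colimit of the sections over the finite disjoint unions $\bigsqcup_{s \in S_i} \ol{x}$; on each finite piece the value is a finite product of stalks at the geometric points $\ol{y}_s$ lying above $\ol{x}$, and the stalk of $\mc{O}^+_{\mc{M}_{P(K)}}/p$ at any such point is $E^+/p$. Because $\otimes_{\mathbb{F}_p} E^+/p$ commutes with filtered colimits and with finite products of $\mathbb{F}_p$-vector spaces, both stalks are canonically isomorphic to the common expression $\varinjlim_i \prod_{s \in S_i} (\mc{F}_{\ol{y}_s} \otimes_{\mathbb{F}_p} E^+/p)$, and a routine naturality check confirms that the comparison map above realises this identification.

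The main technical obstacle will be the base-change identification $(\ol{\pi}_{\GH,*}\mc{G})_{\ol{x}} \cong \mc{G}(\mc{M}_{P(K),\ol{x}})$, as this is the step that genuinely uses the geometry of $\ol{\pi}_{\GH}$ rather than formal manipulations of tensor products. Fortunately, exactly this type of base-change is already invoked in the proof of Lemma \ref{aux2}, where the combination of quasicompactness of $\ol{\pi}_{\GH}$ with the profinite nature of its geometric fibres was set up precisely to make such an argument available; once that identification is in hand, the remainder of the proof reduces to the commuting of tensor product with filtered colimits and finite products.
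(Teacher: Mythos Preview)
Your overall strategy coincides with the paper's: construct the natural comparison map, pass to stalks at a geometric point $\ol{x}$, identify both stalks with $H^{0}$ on the profinite fibre $\ol{x}\times\ul{S}$ (the paper cites \cite[Proposition 2.2.4]{arizona} for this step, which is the same base-change ingredient you invoke via Lemma \ref{aux2}), and then reduce to the finite quotients $\ol{x}\times S_{i}$.

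The reduction to finite levels, however, is where your sketch becomes imprecise. The sentence ``the limit theorem for \'etale sheaves then presents the sections of any sheaf over $\mc{M}_{P(K),\ol{x}}$ as a filtered colimit of the sections over the finite disjoint unions $\bigsqcup_{s\in S_i}\ol{x}$'' is not a standard result: the usual limit theorems only apply once the sheaf on the limit space is known to arise from a compatible system on the finite levels. The paper fills this gap by invoking \cite[VI, 8.3.13]{sga4} to write any sheaf on $\ol{x}\times\ul{S}$ as a filtered colimit $\varinjlim_{j} q_{i(j)}^{-1}\mc{F}^{j}_{i(j)}$ of sheaves pulled back from finite stages, and separately checks that $\mc{O}^{+}_{\ol{x}\times\ul{S}}/p \cong \varinjlim_{i} q_{i}^{-1}(\mc{O}^{+}_{\ol{x}\times S_{i}}/p)$. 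Only then do coherence of the topos and commutation of tensor products with colimits reduce one to the trivial case of a finite disjoint union of copies of $\ol{x}$. Relatedly, your final expression $\varinjlim_{i}\prod_{s\in S_{i}}(\mc{F}_{\ol{y}_{s}}\otimes_{\F_p}E^{+}/p)$ does not typecheck as written: for $s\in S_{i}$ there is no canonical geometric point $\ol{y}_{s}$ of $\mc{M}_{P(K)}$ (the fibre of $S\to S_{i}$ over $s$ is a clopen, not a point), so the terms in the product should be sections over these clopens, or, after the SGA4 reduction, stalks of the approximating sheaves $\mc{F}^{j}_{i}$ on $\ol{x}\times S_{i}$.
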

\begin{proof}
We give a slightly different proof than in \cite[Lemma 4.5]{lud}. There is a natural map
\[(\overline{\pi}_{\GH,*}\mathcal{F})\otimes \mc{O}^+_{\mathbb{P}^{n-1}}/p \rightarrow \overline{\pi}_{\GH,*}(\mathcal{F}\otimes \mc{O}^+_{\mc{M}_{P(K)}}/p),\]
so we can check the assertion on stalks at geometric points. 
For that let $\overline{x}=\Spa(E,E^+)$ be a geometric point of $\mathbb{P}^{n-1}$.
On the one hand 
\begin{eqnarray*}((\overline{\pi}_{\GH,*}\mathcal{F}) \otimes \mc{O}^+_{\mathbb{P}^{n-1}}/p)_{\overline{x}} &\cong &(\overline{\pi}_{\GH,*}\mathcal{F})_{\overline{x}} \otimes (\mc{O}^+_{\mathbb{P}^{n-1}}/p)_{\overline{x}} \\
&\cong & H^0_{\et}(\left(\mc{M}_{P(K)}\right)_{\overline{x}},\mathcal{F})\otimes E^+/p,
\end{eqnarray*}
by \cite[Proposition 2.2.4]{arizona}. On the other hand, applying that same proposition we get
\[\overline{\pi}_{\GH,*}(\mathcal{F} \otimes \mc{O}^+_{\mc{M}_{P(K)}}/p)_{\overline{x}} \cong H_{\et}^0\left(\left(\mc{M}_{P(K)})\right)_{\overline{x}}, \mathcal{F} \otimes \mc{O}^+_{\mc{M}_{P(K)}}/p\right).
\]
We have $\left(\mc{M}_{P(K)}\right)_{\overline{x}} \cong \overline{x} \times \underline{S}$ with $S$ a profinite set, so we are left to show that the natural map
\begin{equation}\label{iso} H^0_{\et}(\ol{x}\times \ul{S},\mc{F})\otimes E^+/p \ra H_{\et}^0\left(\ol{x}\times \ul{S}, \mc{F} \otimes \mc{O}^+_{\ol{x}\times \ul{S}}/p\right) 
\end{equation}
is an isomorphism. For that write $S$ as an inverse limit $S=\varprojlim S_i$ of finite sets $S_i$ and denote by $q_i: \ol{x}\times \ul{S} \ra \ol{x} \times S_i$ the natural projection morphism. By \cite[VI, 8.3.13]{sga4}, any sheaf on $\ol{x} \times \ul{S}$ can be written as a filtered colimit $\varinjlim_{j \in J} \mc{F}^j$ of sheaves $\mc{F}^j$ that arise as the inverse image of a system of sheaves $\mc{F}^j_i$ on the spaces $(\ol{x}\times S_i)_{\et}$. The topos $(\ol{x}\times \ul{S})_{\et}$ is coherent, so (\'etale) cohomology commutes with direct limits. As tensor products also commute with direct limits it suffices to prove (\ref{iso}) for sheaves of the form $\mc{F}\cong \varinjlim q^{-1}_i \mc{F}_i$ for some sheaves $\mc{F}_i$ on $(\ol{x} \times S_i)_{\et}$.

Note that $\mc{O}^+_{\ol{x}\times \ul{S}}/p \cong \varinjlim q^{-1}_i(\mc{O}^+_{\ol{x}\times S_i}/p)$\footnote{One checks this by calculating sections on the basis for the topology consisting of open affinoid perfectoids $U$ of the form $U = \varprojlim U_i$, for open affinoid perfectoid $U_i \subset \ol{x}\times \ul{S}$, using the fact that those don't have any higher \'etale cohomology.}. 
Using \cite[Theorem 2.4.7]{sw} we see that we can rewrite (\ref{iso}) as
\begin{equation*} 
\varinjlim H^0_{\et}(\ol{x}\times S_i,\mc{F}_i)\otimes E^+/p \ra \varinjlim H_{\et}^0\left(\ol{x}\times S_i, \mc{F}_i \otimes \mc{O}^+_{\ol{x}\times S_i}/p\right), 
\end{equation*}
and we see this map is indeed an isomorphism as the spaces $\ol{x}\times S_i$ are just finite disjoint unions of geometric points with the same underlying affinoid field $(E,E^+)$. 
\end{proof}

Next, let $\sigma$ be a smooth admissible representation of $P(K)$. Define a sheaf $\mathcal{F}_\sigma$ on $(\mc{M}_{P(K)})_{\et}$ by 
\[\mathcal{F}_\sigma(U) = \mathrm{Map}_{\mathrm{cont},P(K)}(|U\times_{\mc{M}_{P(K)}} \mc{M}_{1}|, \sigma)\]
for $U\rightarrow \mc{M}_{P(K)}$ \'etale. Similarly, if $\tau$ is a smooth admissible representation of $P(\ok)$, then we may define a sheaf $\mc{F}_\tau$ on $\mc{M}_{P(\ok)}$ by 
\[\mathcal{F}_\tau(V) = \mathrm{Map}_{\mathrm{cont},P(\ok)}(|V\times_{\mc{M}_{P(\ok)}} \mc{M}_{1}|, \tau),\]
where $V \ra \mc{M}_{P(\ok)}$ is \'etale. Since the natural map $q : \mc{M}_{P(\ok)} \ra \mc{M}_{P(K)}$ is \'etale, we have a natural map
$$ q^{-1}\mc{F}_{\sigma} \ra \mc{F}_{\sigma|_{P(\ok)}} $$
for any smooth admissible $P(K)$-representation $\sigma$ and its restriction $\sigma|_{P(\ok)}$ to $P(\ok)$.

\begin{lemma}\label{restriction}
The natural map $ q^{-1}\mc{F}_{\sigma} \ra \mc{F}_{\sigma|_{P(\ok)}} $ is an isomorphism.
\end{lemma}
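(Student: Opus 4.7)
The plan is to produce the required isomorphism by first giving a geometric identification of the relevant pullbacks, and then invoking Frobenius reciprocity. Since $q$ is \'etale, the pullback $q^{-1}\mc{F}_\sigma$ is simply the restriction of $\mc{F}_\sigma$ to the \'etale site of $\mc{M}_{P(\ok)}$, which regards each \'etale $V\to \mc{M}_{P(\ok)}$ as an \'etale open of $\mc{M}_{P(K)}$ via composition with $q$. Thus it suffices to prove, functorially in $V$, a natural bijection
\[
\mathrm{Map}_{\mathrm{cont},P(K)}\bigl(|V\times_{\mc{M}_{P(K)}}\mc{M}_\mbf{1}|,\sigma\bigr)\;\cong\;\mathrm{Map}_{\mathrm{cont},P(\ok)}\bigl(|V\times_{\mc{M}_{P(\ok)}}\mc{M}_\mbf{1}|,\sigma|_{P(\ok)}\bigr)
\]
compatibly with the restriction map in the statement.

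The key geometric input is the identification $V\times_{\mc{M}_{P(K)}}\mc{M}_\mbf{1}\cong (V\times_{\mc{M}_{P(\ok)}}\mc{M}_\mbf{1})\times^{P(\ok)}P(K)$ of $P(K)$-equivariant objects over $V$. By Proposition \ref{ptorsor}, $\mc{M}_\mbf{1}\to \mc{M}_{P(K)}$ is a $P(K)$-torsor, which upon restriction of structure group makes $\mc{M}_\mbf{1}\to \mc{M}_{P(\ok)}$ into a $P(\ok)$-torsor (alternatively, this can be extracted from the local models $pr^{-1}(U)\to pr^{-1}(U)/P(\ok)$ appearing in the proof of Theorem \ref{main}, using Lemma \ref{torsor}). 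Lemma \ref{help1} then shows that $\mc{M}_\mbf{1}\times^{P(\ok)}P(K)\to \mc{M}_{P(\ok)}$ is a $P(K)$-torsor, and the natural $P(K)$-equivariant map from it to the $P(K)$-torsor $\mc{M}_{P(\ok)}\times_{\mc{M}_{P(K)}}\mc{M}_\mbf{1}\to \mc{M}_{P(\ok)}$ obtained by base change must be an isomorphism. The desired identification for $V$ follows by base change and Lemma \ref{help3}.

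Finally, setting $W:=V\times_{\mc{M}_{P(\ok)}}\mc{M}_\mbf{1}$, the topological space $|W\times^{P(\ok)}P(K)|$ is $P(K)$-equivariantly a disjoint union of copies of $|W|$ indexed by $P(K)/P(\ok)$, with $P(K)$ permuting these copies (and $P(\ok)$ acting on the identity copy through its action on $|W|$). Restriction to the identity coset therefore yields a bijection
\[
\mathrm{Map}_{\mathrm{cont},P(K)}\bigl(|W\times^{P(\ok)}P(K)|,\sigma\bigr)\;\xrightarrow{\sim}\;\mathrm{Map}_{\mathrm{cont},P(\ok)}\bigl(|W|,\sigma|_{P(\ok)}\bigr),
\]
which is a form of Frobenius reciprocity. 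Unwinding the definitions shows that the composite of this bijection with the geometric identification above is the natural map of the lemma. The only genuinely nonformal step is confirming the $P(\ok)$-torsor structure on $\mc{M}_\mbf{1}\to \mc{M}_{P(\ok)}$; the rest is diagrammatic manipulation of torsors and pushouts in the category $\V$.
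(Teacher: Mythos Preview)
Your proof is correct but takes a genuinely different route from the paper. The paper argues pointwise: it computes the stalks of both sheaves at a geometric point $\ol{x}\to\mc{M}_{P(\ok)}$, using that the fibre of $\mc{M}_\mbf{1}$ over the image of $\ol{x}$ in $\mc{M}_{P(K)}$ is $|\ol{x}|\times P(K)$ (by Proposition~\ref{ptorsor}) and the fibre over $\ol{x}$ in $\mc{M}_{P(\ok)}$ is $|\ol{x}|\times P(\ok)$, so both stalks are canonically identified with $\sigma$ and the map between them is the identity. Your argument instead works at the level of sections: you establish the global pushout identification $V\times_{\mc{M}_{P(K)}}\mc{M}_\mbf{1}\cong W\times^{P(\ok)}P(K)$ (with $W=V\times_{\mc{M}_{P(\ok)}}\mc{M}_\mbf{1}$) via Lemmas~\ref{help3} and~\ref{help1}, and then read off the isomorphism on sections as an instance of Frobenius reciprocity. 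Both approaches ultimately rest on the same torsor input from Proposition~\ref{ptorsor}; the paper's stalk computation is shorter and avoids the pushout machinery, while your argument makes the underlying representation-theoretic structure (induction/restriction adjunction) explicit and yields the isomorphism functorially on sections rather than merely on stalks.
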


\begin{proof}
We may check on stalks, so let $\ol{x} \ra \mc{M}_{P(\ok)}$ be a geometric point. We may assume that $\ol{x} = \varprojlim_{U \ra \mc{M}_{P(\ok)}} U$, where the limit ranges over $U \ra \mc{M}_{P(\ok)}$ \'etale over which $\ol{x} \ra \mc{M}_{P(\ok)}$ factors (see \cite[\S 2.2]{arizona}). We then have
\begin{eqnarray*}(q^{-1}\mc{F}_\sigma)_{\ol{x}} &= &\varinjlim_U {\rm Map}_{\mathrm{cont}, P(K)} (|U \times_{\mc{M}_{P(K)}}\mc{M}_\mbf{1}|, \sigma) \\
&\cong & {\rm Map}_{\mathrm{cont}, P(K)} (\varprojlim_U |U \times_{\mc{M}_{P(K)}}\mc{M}_\mbf{1}|, \sigma) \\
&\cong & {\rm Map}_{\mathrm{cont}, P(K)} ( |\ol{x} \times_{\mc{M}_{P(K)}}\mc{M}_\mbf{1}|, \sigma) \\
&\cong & {\rm Map}_{\mathrm{cont}, P(K)} ( |\ol{x}| \times P(K), \sigma) \cong \sigma
\end{eqnarray*}
upon choosing an element in $P(K)$; here we have used Proposition \ref{ptorsor} to get the second to last isomorphism. We similarly have $(\mc{F}_{\sigma|_{P(\ok)}})_{\ol{x}}\cong \sigma$ (choosing the same element and the map $(q^{-1}\mc{F}_{\sigma})_{\ol{x}} \ra (\mc{F}_{\sigma|_{P(\ok)}})_{\ol{x}}$ corresponds to the identity $\sigma \ra \sigma$, and is therefore an isomorphism.
\end{proof}

\begin{proposition}\label{etan} Let $\lambda: (\mc{M}_{P(K)})_{\et} \rightarrow |\mc{M}_{P(K)}|$ denote the natural morphism of sites.  
 For any admissible smooth representation $\sigma$ of $P(K)$ we have an almost isomorphism
\[H^i_{\et}(\mc{M}_{P(K)}, \mathcal{F}_\sigma \otimes \mc{O}^+/p)\cong^a H^i(|\mc{M}_{P(K)}|, \lambda_*(\mathcal{F}_\sigma \otimes \mc{O}^+/p)).\]
\end{proposition}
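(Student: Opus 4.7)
The plan is to prove the statement by reducing, via the Leray spectral sequence for $\lambda$, to showing that $R^i\lambda_*(\mc{F}_\sigma \otimes \mc{O}^+/p) \cong^a 0$ for $i > 0$, and then verifying this vanishing stalk-by-stalk. Fix $x \in |\mc{M}_{P(K)}|$. By the proof of Theorem \ref{main}, $x$ lies in an open $U_0 \subset \mc{M}_{P(K)}$ of the form $pr^{-1}(W)/P(\ok)$, where $W \subset \mc{M}^{(0)}$ is an affinoid open over which $\pi_{GH}$ admits a section, $pr^{-1}(W) \subset \mc{M}_\mbf{1}^{(0)}$ is affinoid perfectoid, $pr^{-1}(W) \to U_0$ is a $P(\ok)$-torsor, and $U_0$ is itself affinoid perfectoid (by Proposition \ref{step1}). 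Since affinoid perfectoid opens $U \subset U_0$ form a basis of open neighborhoods of $x$, the task reduces to proving $H^i_{\et}(U, \mc{F}_\sigma \otimes \mc{O}^+/p) \cong^a 0$ for $i > 0$ and all such $U$.

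The main step is to express $\mc{F}_\sigma|_{U_0}$ as a filtered colimit of local systems with finite fibers. Choose a decreasing sequence $\{P_m\}$ of open normal subgroups of $P(\ok)$ with trivial intersection. By smoothness and admissibility of $\sigma$, the restriction $\sigma|_{P(\ok)}$ equals $\bigcup_m \sigma^{P_m}$ with each $\sigma^{P_m}$ a finite-dimensional $P(\ok)/P_m$-representation. Using the identification $U_0 \times_{\mc{M}_{P(K)}} \mc{M}_\mbf{1} \cong pr^{-1}(W) \times^{P(\ok)} P(K)$ together with an argument as in Lemma \ref{restriction}, one computes that $\mc{F}_\sigma(V) = \mathrm{Maps}_{\mathrm{cont}, P(\ok)}(|V \times_{U_0} pr^{-1}(W)|, \sigma)$ for every $V$ \'etale over $U_0$. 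Defining $\mc{L}_m$ by the analogous formula with $\sigma^{P_m}$ in place of $\sigma$, each $\mc{L}_m$ is a local system of finite $\mathbb{F}_p$-vector spaces on $U_0$, trivialized by the finite \'etale Galois cover $pr^{-1}(W)/P_m \to U_0$ with group $P(\ok)/P_m$. Since any continuous $P(\ok)$-equivariant map from a quasicompact space to $\sigma$ has finite image and hence lands in some $\sigma^{P_m}$, one obtains the sheaf identification $\mc{F}_\sigma|_{U_0} \cong \varinjlim_m \mc{L}_m$ by checking on a basis of quasicompact \'etale neighborhoods.

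For $U \subset U_0$ affinoid perfectoid and each $m$, the Cartan--Leray spectral sequence applied to the trivializing \'etale cover of $\mc{L}_m|_U$, combined with Scholze's almost purity theorem (which guarantees that the cover is almost \'etale on integral structure sheaves) and the almost vanishing of $H^i_{\et}(-, \mc{O}^+/p)$ in positive degrees on affinoid perfectoid spaces, yields $H^i_{\et}(U, \mc{L}_m|_U \otimes \mc{O}^+/p) \cong^a 0$ for $i > 0$. Since \'etale cohomology on the qcqs space $U$ commutes with filtered colimits of sheaves, passing to the colimit over $m$ gives $H^i_{\et}(U, \mc{F}_\sigma \otimes \mc{O}^+/p) \cong^a 0$ for $i > 0$, completing the stalk computation and hence the proof.

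The principal obstacle will be the rigorous verification of the sheaf identification $\mc{F}_\sigma|_{U_0} \cong \varinjlim_m \mc{L}_m$, which requires carefully navigating the interplay between the $P(K)$-action defining $\mc{F}_\sigma$ globally and the $P(\ok)$-torsor structure available only on the local pieces $U_0$, as well as ensuring that the quasicompactness needed to replace the infinite-dimensional $\sigma$ by its finite-dimensional fixed subspaces propagates appropriately through the \'etale site. Once this identification is in hand, the remaining cohomological input is standard almost mathematics on affinoid perfectoid spaces.
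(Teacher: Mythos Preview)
Your proposal is correct and follows essentially the same route as the paper: reduce via Leray to the almost vanishing of $R^i\lambda_*(\mc{F}_\sigma\otimes\mc{O}^+/p)$ for $i>0$, compute stalks on a basis of affinoid perfectoid opens lying in $\mc{M}^{(0)}_{P(\ok)}$, write $\mc{F}_\sigma$ there as a filtered colimit of finite-rank local systems via the $P(\ok)$-torsor structure and Lemma~\ref{restriction}, and then use the almost vanishing of $H^i_\et(U,\mc{L}\otimes\mc{O}^+/p)$ for finite $\F_p$-local systems on affinoid perfectoids. The only cosmetic differences are that the paper packages the colimit as $\varinjlim_V \mc{F}_{\sigma^V}$ (sheaves on $\mc{M}_{P(\ok)}$) rather than your ad hoc $\mc{L}_m$, and cites \cite[Lemma~4.12]{pht} for the last step where you sketch the Cartan--Leray/almost purity argument directly.
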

\begin{proof} (Cf.~proof of \cite[Theorem 3.2]{plt} on p.~18 for a similar argument.) We show that $(R^q\lambda_*(\mathcal{F}_\sigma \otimes \mc{O}^+/p))=^a 0$ for all $q>0$ . For this we calculate the stalks. Let $x:\Spa(K,K^+)\rightarrow \mc{M}_{P(K)}$ be a point.	
Then, by definition,
\[(R^q\lambda_*(\mc{F}_\sigma \otimes \oo^+/p))_x = \varinjlim_{x \in U} H^q_{\et}(U, \mathcal{F}_\sigma \otimes \mc{O}^+/p),\]
 where the direct limit runs over all open $U \sub \mc{M}_{P(K)}$ containing $x$, and we can restrict it to those $U$ which are affinoid perfectoid. Since $\mc{M}^{(0)}_{P(\mc{O}_K)}\ra \mc{M}_{P(K)}$ has local sections, we may furthermore assume that $U$ is (isomorphic to) an open subset of $\mc{M}^{(0)}_{P(\mc{O}_K)}$. On such a $U$, Lemma \ref{restriction} implies that $\mathcal{F}_\sigma \cong \varinjlim_V \mc{F}_{\sigma^V}$, where $V$ runs over the open normal subgroups of $P(\mc{O}_K)$. Then 
\[H^q_{\et}(U, \mc{F}_{\sigma} \otimes \mc{O}^+/p)\cong \varinjlim_V H^q_{\et}(U, \mc{F}_{\sigma^V} \otimes \mc{O}^+/p),\]
as the \'etale site of $U$ is coherent and direct limits commute with tensor products. But for any open normal subgroup $V \subset P(\mc{O}_K)$, the sheaf $\mc{F}_{\sigma^V}$ is a local system of finite rank, and therefore we have
\[H^q_{\et}(U, \mathcal{F}_{\sigma^V} \otimes \mc{O}^+/p) =^a 0\]
for all $q >0$, by \cite[Lemma 4.12]{pht}.
\end{proof}

\subsection{The vanishing result}
We now prove our vanishing result. 
\begin{theorem}\label{vanish} Let $P^*\subset \GL_n$ be a parabolic subgroup contained in $P$. Let $\sigma$ be a smooth admissible representation of $P^*(K)$. Let $\pi:= \mathrm{Ind}^{\GL_n(K)}_{P^*(K)}{\sigma}$ be the parabolic induction (which is a smooth admissible representation of $\GL_n(K)$). Then
\[\mathcal{S}^i(\pi)=0 \ \text{ for all } i> n-1.\]
\end{theorem}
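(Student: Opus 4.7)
The plan is to realize parabolic induction geometrically as pushforward along $\ol\pi_{GH}: \mc{M}_{P(K)} \to \mathbb{P}^{n-1}$, and then exploit that $|\mc{M}_{P(K)}|$ is a spectral space of Krull dimension $n-1$ via Scheiderer's theorem. First, by transitivity of parabolic induction, write $\pi \cong \mathrm{Ind}^{\GL_n(K)}_{P(K)}(\tau)$ with $\tau := \mathrm{Ind}^{P(K)}_{P^*(K)}(\sigma)$; since $P(K)/P^*(K)$ is compact, $\tau$ is smooth admissible as a $P(K)$-representation, reducing us to the case $P^* = P$.

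Next, define a sheaf $\mc{F}_\tau$ on $(\mc{M}_{P(K)})_{\et}$ by the obvious analogue of Scholze's definition, i.e.\ $V \mapsto \mathrm{Map}_{\mathrm{cont}, P(K)}(|V \times_{\mc{M}_{P(K)}} \mc{M}_\mbf{1}|, \tau)$. Using that $\mc{M}_\mbf{1} \to \mc{M}_{P(K)}$ is a $P(K)$-torsor (Proposition \ref{ptorsor}) together with ordinary Frobenius reciprocity, one obtains a natural identification $\mc{F}_\pi \cong \ol\pi_{GH,*}\mc{F}_\tau$ of sheaves on $(\mathbb{P}^{n-1})_{\et}$. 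Lemma \ref{aux2} then gives $\mc{S}^i(\pi) \cong H^i_{\et}(\mc{M}_{P(K)}, \mc{F}_\tau)$.

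To show vanishing for $i > n-1$, I would invoke the primitive comparison theorem of \cite{plt} applied to the smooth admissible representation $\pi$, giving
\[ \mc{S}^i(\pi) \otimes_{\mathbb{F}_p} \mc{O}_C/p \cong^a H^i_{\et}(\mathbb{P}^{n-1}, \mc{F}_\pi \otimes \mc{O}^+/p). \]
Proposition \ref{aux1} together with Lemma \ref{aux2} identify the right-hand side with $H^i_{\et}(\mc{M}_{P(K)}, \mc{F}_\tau \otimes \mc{O}^+/p)$, and Proposition \ref{etan} yields an almost isomorphism of the latter with $H^i(|\mc{M}_{P(K)}|, \lambda_*(\mc{F}_\tau \otimes \mc{O}^+/p))$. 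Since $|\mc{M}_{P(K)}|$ is spectral of Krull dimension $n-1$, Scheiderer's theorem \cite{dim} bounds its cohomological dimension by $n-1$, so the latter group vanishes for $i > n-1$. Chaining the almost isomorphisms gives $\mc{S}^i(\pi) \otimes_{\mathbb{F}_p} \mc{O}_C/p \cong^a 0$, and since $\mc{O}_C/p$ is not almost zero, this forces $\mc{S}^i(\pi) = 0$.

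The main technical point I anticipate concerns the possibly infinite-dimensional $\tau$: the sheaf $\mc{F}_\tau$ is then not a local system, only a filtered colimit of such. One should therefore write $\tau = \varinjlim_V \tau^V$ over open compact subgroups $V \subset P(\ok)$ (using admissibility) and pass the colimit through the almost comparisons, using that the relevant cohomology functors commute with filtered colimits on qcqs sites. This limit argument replaces the fiber-by-fiber analysis of $\ol\pi_{GH}$ used in the $n=2$, $K=\Qp$ case of \cite{lud}; invoking Scheiderer's theorem is precisely what makes it unnecessary to study individual fibers.
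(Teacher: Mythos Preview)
Your proposal is correct and follows essentially the same route as the paper: reduce to $P^*=P$ by transitivity, identify $\mc{F}_\pi \cong \ol\pi_{GH,*}\mc{F}_\tau$, use the primitive comparison together with Proposition \ref{aux1}, Lemma \ref{aux2}, and Proposition \ref{etan} to pass to sheaf cohomology on $|\mc{M}_{P(K)}|$, and conclude via Scheiderer's bound. Your closing remarks on the colimit over $V$ are exactly what goes into the proof of Proposition \ref{etan}, and the intermediate identification $\mc{S}^i(\pi)\cong H^i_{\et}(\mc{M}_{P(K)},\mc{F}_\tau)$, while correct, is not actually needed once you work with $\mc{O}^+/p$ coefficients.
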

\begin{proof} Transitivity of parabolic induction immediately implies that we can reduce to the case $P^*=P$. We then follow the proof of \cite[Theorem 4.6]{lud}.
It suffices to show that 
\[H^i_{\et}(\mathbb{P}^{n-1}, \mathcal{F}_\pi) \otimes \mathcal{O}^+/p \]
is almost zero for all $i > n-1$. 
We have isomorphisms 
\begin{eqnarray}
H^i_{\et}(\mathbb{P}^{n-1}, \mathcal{F}_\pi) \otimes \mathcal{O}^+/p &\cong^a& H^i_{\et}(\mathbb{P}^{n-1}, (\overline{\pi}_{\GH,*}\mathcal{F}_{\sigma}) \otimes \mathcal{O}^+/p )\\
																															& \cong & H^i_{\et}(\mathbb{P}^{n-1},\overline{\pi}_{\GH, *}(\mathcal{F}_\sigma \otimes \mathcal{O}^+/p )) \\
																															&\cong& H^i_{\et}(\mc{M}_{P(K)}, \mathcal{F}_\sigma \otimes \mathcal{O}^+/p),
\end{eqnarray}
where the first almost isomorphism follows from \cite[Theorem 3.2]{plt} and the fact that $\overline{\pi}_{\GH,*}\mathcal{F}_{\sigma} \cong \mathcal{F}_{\pi}$, which one proves just like \cite[Lemma 4.3]{lud}. The second isomorphism is Proposition \ref{aux1} above, the third is Lemma \ref{aux2}. 
By Proposition \ref{etan}, the \'etale cohomology group $H^i_{\et}(\mc{M}_{P(K)}, \mathcal{F}_\sigma \otimes \mathcal{O}^+/p)$ is almost isomorphic to the analytic cohomology group $H^i(|\mc{M}_{P(K)}|, \lambda_*(\mathcal{F}_\sigma \otimes \mathcal{O}^+/p))$. 

As we have seen in Section \ref{sec:LT}, $|\mc{M}_{P(K)}|$ is a spectral space of Krull dimension $n-1$, therefore by \cite[Theorem 4.5]{dim}
\[H^i(|\mc{M}_{P(K)}|, \lambda_*(\mathcal{F}_\sigma \otimes \mathcal{O}^+/p))= 0\]
for all $i> n-1$. 
\end{proof}

\begin{corollary}\label{weak} Let $\pi$ be a representation of $\GL_n(K)$ that appears as a quotient of a parabolically induced representation $\mathrm{Ind}_{P^*}^{\GL_n(K)}\sigma$, for some parabolic subgroup $P^*\subset P$. Then 
\[\mathcal{S}^{2(n-1)}(\pi) = H^{2(n-1)}_{\et}(\mathbb{P}^{n-1}, \mathcal{F}_{\pi})=0.\] 
\end{corollary}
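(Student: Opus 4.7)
The plan is to deduce Corollary \ref{weak} from Theorem \ref{vanish} via a short exact sequence argument. Write the surjection as part of a short exact sequence
\[ 0 \to \kappa \to \mathrm{Ind}_{P^*(K)}^{\GL_n(K)} \sigma \to \pi \to 0 \]
of smooth admissible $\mathbb{F}_p$-representations of $\GL_n(K)$, where $\kappa$ denotes the kernel. Scholze's construction $\tau \mapsto \mc{F}_\tau$ is an exact functor from smooth $\mathbb{F}_p$-representations of $\GL_n(K)$ to \'etale sheaves on $\mathbb{P}^{n-1}$ (this is \cite[Proposition 3.1]{plt}, since exactness can be checked on stalks, which are given by sections over preimages in $\mc{M}_{\mbf{1}}$ and continuous maps to $\tau$). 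Consequently we obtain a long exact sequence
\[ \cdots \to \mc{S}^{2(n-1)}(\mathrm{Ind}_{P^*(K)}^{\GL_n(K)}\sigma) \to \mc{S}^{2(n-1)}(\pi) \to \mc{S}^{2n-1}(\kappa) \to \cdots \]

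Now I invoke two vanishing results. First, by \cite[Theorem 1.1]{plt} we have $\mc{S}^i(\kappa) = 0$ for all $i > 2(n-1)$; in particular $\mc{S}^{2n-1}(\kappa) = 0$, so the map $\mc{S}^{2(n-1)}(\mathrm{Ind}_{P^*(K)}^{\GL_n(K)}\sigma) \to \mc{S}^{2(n-1)}(\pi)$ is surjective. Second, since $n \geq 2$ we have $2(n-1) > n-1$, so Theorem \ref{vanish} applied to the parabolically induced representation $\mathrm{Ind}_{P^*(K)}^{\GL_n(K)}\sigma$ (with $P^* \subset P$) yields $\mc{S}^{2(n-1)}(\mathrm{Ind}_{P^*(K)}^{\GL_n(K)}\sigma) = 0$. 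Combining the two gives $\mc{S}^{2(n-1)}(\pi) = 0$, as desired.

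There is no real obstacle here; the only thing to be careful about is that one really does have exactness of $\tau \mapsto \mc{F}_\tau$ (so that a long exact sequence exists) and that the degree $2(n-1)$ lies strictly above the range $n-1$ in which Theorem \ref{vanish} vanishes, which requires only $n \geq 2$ (a standing assumption in the paper). The case $n = 2$ of Corollary \ref{weak} is essentially the original application in \cite{lud}, and the argument above is a direct generalization.
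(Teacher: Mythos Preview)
Your proof is correct and follows essentially the same approach as the paper: the paper's proof is the one-line ``This follows from exactness of the functor $\pi \mapsto \mathcal{F}_{\pi}$, Theorem \ref{vanish} and the long exact sequence in cohomology,'' and you have simply spelled this out in detail.
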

\begin{proof} This follows from exactness of the functor $\pi \mapsto \mathcal{F}_{\pi}$, Theorem \ref{vanish} and the long exact sequence in cohomology.
\end{proof}

\begin{remark} We finish with some remarks on our results.

\begin{enumerate}

\item The bound on cohomological vanishing in Corollary \ref{weak} (combined with \cite[Theorem 3.2]{plt}) is sharp in general, and for general subquotients of representations induced from $P(K)$ the bound from \cite[Theorem 3.2]{plt} cannot be improved. To see these two things (simultaneously), consider the trivial representation $\mbf{1}$ and the exact sequence
$$ 0 \ra \mbf{1} \ra \sigma = {\rm Ind}_{P(K)}^{\GL_n(K)}\mbf{1} \ra Q \ra 0, $$
where $Q$ is simply defined to be the quotient; $Q$ is then a generalized Steinberg representation and known to be irreducible (and admissible). From this we get an exact sequence of \'etale sheaves
\[
 0 \ra \mc{F}_\mbf{1} \ra \mc{F}_\sigma \ra \mc{F}_Q \ra 0 
\]
on $\Pro$. Note that $\mc{F}_\mbf{1}$ is the trivial local system on $\Pro$, so $\mc{S}^{2(n-1)}(\mbf{1})\neq 0$; this shows the second point. The long exact sequence then shows (using Theorem \ref{vanish}) that $\mc{S}^{2n-3}(Q)$ surjects onto $\mc{S}^{2(n-1)}(\mbf{1})$, so $\mc{S}^{2n-3}(Q)\neq 0$ as well, proving the first point. Thus, the vanishing result of Theorem \ref{vanish} does not hold in general for admissible representations not induced from $P(K)$ when $n\geq 3$, even for `reasonable' representations like generalized Steinberg representations (which are irreducible, and infinite-dimensional). 

\smallskip

Note also that, as a consequence of Corollary \ref{weak}, the trivial representation cannot be written as a quotient of a representation induced from $P(K)$. We thank Florian Herzig for informing us that this is well known, and is easily proved using the adjunction formula between parabolic induction and Emerton's ordinary parts functor.

\medskip

\item Elaborating further on the previous item, it seems interesting to understand in which degrees $\mc{S}^i(\pi)$ vanish for different classes of admissible representations $\pi$.  A natural question is whether the analogue of Theorem \ref{vanish} holds for other maximal (standard) parabolics $Q \neq P$. By Theorem \ref{0.5}, the quotient $\mc{M}_{Q(K)}$ is not a perfectoid space, and so the method for proving Theorem \ref{0.5} breaks down. One could ask whether the vanishing theorem could still be salvaged by geometric methods (such as in \cite{arizona}, where a vanishing result is proven in a situation where the space in question is not perfectoid), but we currently see no way of doing this (in particular, we see no way of adapting the method of \cite{arizona}). 

\medskip

\item It is also natural to ask about vanishing below the middle degree, but here things seem to be much more unclear. For $\mc{S}^0$, we have $\mc{S}^0(\pi) = \mc{S}^0(\pi^{{\rm SL}_n(K)}) $ by \cite[Proposition 4.7]{plt}, so e.g. when $\pi$ is irreducible and infinite-dimensional we know that $\mc{S}^0(\pi)=0$. When $n=2$ the middle degree is $1$, so in this case (for arbitrary $K$), we can say that $\mc{S}^i(\pi)$ is concentrated in degree $1$ for irreducible $\pi= {\rm Ind}_{P(K)}^{\GL_n(K)}\sigma$.

\smallskip

As a referee emphasized, it seems natural to ask if there is some form of Poincar\'e duality for $\mc{S}^i$ that could relate the degrees below the middle to degrees above the middle. The following remark is due to David Hansen; we thank him for allowing us to include it here. First, note that such a duality would presumably require the notion of a `dual' local system $\mc{F}_\pi^\vee$, and presumably $\mc{F}_\pi^\vee = \mc{F}_{\pi^\vee}$ where $\pi^\vee$ is the `dual' representation of $\pi$. However, such a formulation seems much too naive, as duality is much more subtle in characteristic $p$ than in characteristic $\neq p$. Kohlhaase \cite{kohl} has defined a derived duality functor on the derived category of smooth $G(\Qp)$-representations in characteristic $p$ (where $G/\Qp$ is, momentarily, an arbitrary connected reductive group), and one could ask how a derived version $R\mc{S}$ of Scholze's functor interacts with Kohlhaase's duality functor for $GL_n(K)$. It seems, however, that this interaction will also involve Kohlhaase's duality functor for $D^\times$, and it is unclear if such considerations can be used to reduce the study of low degrees to high degrees.

\medskip

\item We end by remarking that Pa\v{s}k\={u}nas \cite{pask} has used the results of \cite{lud} to show a non-vanishing result in degree one for (a version of) Scholze's functor for Banach space representations of $\GL_2(\Qp)$ corresponding to residually reducible two-dimensional representations of ${\rm Gal}(\ol{\mathbb{Q}}_p /\Qp)$ via the $p$-adic local Langlands correspondence (we refer to \cite{pask} for precise statements). It would be interesting to see similar consequences for $\GL_2(K)$, where $K/\Q_p$ is arbitrary. However, Pa\v{s}k\={u}nas informs us that our results would not be sufficient even assuming a $p$-adic local Langlands correspondence for $\GL_2(K)$, as it is expected that supersingular representations will contribute to the Banach space representation corresponding to reducible two-dimensional representations of ${\rm Gal}(\ol{K} / K)$ when $K\neq \Qp$. Nevertheless, we hope that our results will be useful for the further study of Scholze's functor.

\end{enumerate}
\end{remark}

\appendix
\counterwithin{theorem}{subsection}
\section{Perfectoid quotients of the Lubin--Tate tower, revisited \\ By David Hansen}

\subsection{Statement of results}

As in the main text of the paper, fix a finite extension $K/\Qp$ with residue field $k\cong \F_q$. Let $\breve{K}$ be the completed maximal unramified extension of $K$, and fix some complete algebraically closed extension $C/\breve{K}$. For simplicity, we will write $\Spa R := \Spa(R,R^\circ)$ and  $\Spd R := \Spa(R,R^\circ)^\lozenge$ whenever $R$ is a Huber ring over $\Zp$. Here $\Spa(R,R^\circ)^\lozenge$ is the v-sheaf defined in \cite[Definition 18.1]{berkeley}, generalizing the diamondification functor. Moreover, a perfectoid space $S$ over $R$ (as above) always means a perfectoid space $S$ with a map $(R,R^\circ) \ra (\oo_S(S),\oo_S^+(S))$.

\medskip

Let $P_{n-d,d}\subset\GL_{n}$ denote the usual block upper-triangular maximal parabolic with upper left (resp. lower right) diagonal block sizes $n-d$ (resp. $d$), and set $P=P_{n-1,1}$ .
For $U\subset \GL_n(K)$ any open compact subgroup, let $\mc{M}_U$
denote the base change along $\breve{K}\to C$ of the generic fiber
of the Lubin--Tate space with $U$-level structure. By the results
in \cite{sw}, there is a unique perfectoid space $\mc{M}_{\mbf{1}}$ over $C$ with a $\GL_n(K)$-action such that
$\mc{M}_{\mbf{1}} \sim \varprojlim_U \mc{M}_U$, in the notation of \cite[\S 2.4]{sw}.

\medskip

In this appendix we study the sheaf quotient $\mc{M}_{\mbf{1}}/\ul{P(K)}$ (with $\mc{M}_{\mbf{1}}$ viewed as a diamond), and variants for other parabolics, by purely local methods. In particular, when $n=2$, we give a direct proof that $\mc{M}_{\mbf{1}} / \ul{P(K)}$ is a perfectoid space, avoiding any reference to the $p$-adic geometry of Shimura varieties. Our main tool is a $p$-adic Hodge-theoretic description of $\mc{M}_{\mbf{1}}$ in terms of vector bundles on the Fargues--Fontaine curve, due to Weinstein and Scholze--Weinstein. 

\medskip

Our first result is a moduli-theoretic description of these quotients. For this, recall that for any characteristic $p$ perfectoid space $S/k$ there is an associated (adic, relative) Fargues--Fontaine curve $\mc{X}_{S}=\mc{X}_{S,K}$ defined functorially in $S$, cf. \cite[Ch. 8]{ked-liu}. Moreover for any reduced rational number $\lambda=d/r$, this curve comes with a canonical vector bundle $\mc{O}(\lambda)$ of rank $r$ and degree $d$, defined functorially in $S$.

\begin{theorem}\label{0.1}
As a functor on perfectoid spaces over $C$, the diamond
quotient $\mc{M}_{\mbf{1}}/\ul{P(K)}$ is canonically
equivalent to the functor sending any $S\to\Spa C$ to the
set of sub-vector bundles $\mc{E} \subset \mc{O}(1/n)$ over
$\mc{X}_{S^{\flat}}$ such that after pullback along any geometric
point $\ol{x}\to S^{\flat}$, the map $\mc{E}_{\ol{x}} \to \mc{O}(1/n)_{\ol{x}}$
is injective and $\mc{E}_{\ol{x}} \simeq \mc{O}^{n-1}$.
\end{theorem}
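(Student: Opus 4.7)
The plan is to use the Scholze--Weinstein moduli description of $\mc{M}_{\mbf{1}}$: for a perfectoid space $S \to \Spa C$ with structural untilt divisor $x_S \subset \mc{X}_{S^\flat}$, $\mc{M}_{\mbf{1}}(S)$ identifies with the set of injections $\alpha : \mc{O}^n \hookrightarrow \mc{O}(1/n)$ of vector bundles on $\mc{X}_{S^\flat}$ whose cokernel is supported on $x_S$ and has length one there, with $\GL_n(K)$ acting by precomposition via $\Aut(\mc{O}^n) = \GL_n(K)$. I would define the natural transformation to the target functor $F$ by $\alpha \mapsto \mc{E}_\alpha$, where $\mc{E}_\alpha$ denotes the saturation of $\alpha(\mc{O}^{n-1})$ in $\mc{O}(1/n)$ and $\mc{O}^{n-1} \subset \mc{O}^n$ is the standard subbundle preserved by $P$. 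A fiberwise rank-and-slope computation, using semistability of $\mc{O}(1/n)$ (sub-bundles have slope $\leq 1/n$) together with the injection $\mc{O}^{n-1} \hookrightarrow \mc{E}_{\alpha, \ol x}$, forces $\deg \mc{E}_{\alpha, \ol x} = 0$ and hence $\mc{E}_{\alpha, \ol x} \cong \mc{O}^{n-1}$; $P(K)$-invariance is immediate, yielding a natural transformation $\phi : \mc{M}_{\mbf{1}}/\ul{P(K)} \to F$.

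For essential surjectivity of $\phi$ as a map of v-sheaves: given $\mc{E} \in F(S)$, the quotient $\mc{L} := \mc{O}(1/n)/\mc{E}$ is a line bundle with fibers $\mc{O}(1)$, so $\mc{L}(-x_S)$ is fiberwise trivial and hence v-locally trivial by the Kedlaya--Liu classification of vector bundles on relative Fargues--Fontaine curves; similarly $\mc{E}$ is v-locally trivial. After choosing v-local trivializations $\mc{E} \cong \mc{O}^{n-1}$ and $\mc{L} \cong \mc{O}(x_S)$, the vanishing $H^1(\mc{X}_{S^\flat}, \mc{E}) = 0$ (fiberwise for the slope-zero bundle $\mc{O}^{n-1}$, transferred v-locally) lifts the canonical section $\ol s : \mc{O} \hookrightarrow \mc{O}(x_S) \cong \mc{L}$ to a section $s : \mc{O} \to \mc{O}(1/n)$. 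The resulting $\alpha: \mc{O}^n = \mc{O}^{n-1} \oplus \mc{O} \to \mc{O}(1/n)$ is injective with cokernel $\mc{L}/(\ol s \cdot \mc{O}) = \mc{O}_{x_S}$ of length one at $x_S$, and satisfies $\mc{E}_\alpha = \mc{E}$.

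For injectivity on orbits: suppose $\alpha, \alpha' \in \mc{M}_{\mbf{1}}(S)$ satisfy $\mc{E}_\alpha = \mc{E}_{\alpha'} = \mc{E}$. The first $n-1$ columns of each are global sections of $\mc{E}$, and v-locally (where $H^0(\mc{X}_{S^\flat}, \mc{E}) = K^{n-1}$) form a $K$-basis of $H^0(\mc{X}_{S^\flat}, \mc{E})$; hence $\alpha|_{\mc{O}^{n-1}}$ and $\alpha'|_{\mc{O}^{n-1}}$ are each isomorphisms $\mc{O}^{n-1} \cong \mc{E}$, differing by an element of $\GL_{n-1}(H^0(\mc{X}_{S^\flat}, \mc{O})) = \GL_{n-1}(K)$. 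After reducing to the case they agree, the last columns project to sections of $\mc{L}$ both trivializing $\mc{L}(-x_S)$, hence differing by an element of $H^0(\mc{X}_{S^\flat}, \mc{O}^\times) = K^\times$; their difference in $H^0(\mc{X}_{S^\flat}, \mc{O}(1/n))$ then lies in $H^0(\mc{X}_{S^\flat}, \mc{E}) = K^{n-1}$. Combining, $\alpha' = \alpha \circ p$ for a unique $p \in P(K)$.

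The main obstacle is the careful v-local bookkeeping: ensuring that the identifications $H^0(\mc{X}_{S^\flat}, \mc{O}) = K$ and $H^0(\mc{X}_{S^\flat}, \mc{O}^\times) = K^\times$, the cohomology vanishing $H^1(\mc{X}_{S^\flat}, \mc{E}) = 0$, and v-local triviality of vector bundles on $\mc{X}_{S^\flat}$ with prescribed geometric fibers all hold uniformly enough on v-covers of $S$. These are all standard inputs from the Kedlaya--Liu and Fargues--Scholze framework, but their coordinated application (together with controlling the saturation operation in families) requires careful attention.
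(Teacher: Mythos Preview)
Your argument is essentially correct and the map you construct is the same as the paper's, but the paper organizes the proof differently. Rather than working directly with the subbundle functor $F$, the paper first proves a general result (Proposition~\ref{0.10}) identifying $\mc{M}_{\mbf{1}}/\ul{P_{n-d,d}(K)}$ with a functor $X_{n,d}$ of diagrams $\mc{O}(1/n)\twoheadrightarrow\mc{E}\hookleftarrow\mc{F}$, then specializes to $d=1$ (Proposition~\ref{0.11}) to get $\mc{S}\mathrm{urj}(\mc{O}(1/n),\mc{O}(1))/\ul{K^\times}$, and finally matches line-bundle quotients with subbundles via $\mc{E}\leftrightarrow\mc{O}(1/n)/\mc{E}$. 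The torsor check in Proposition~\ref{0.10} is packaged cleanly: given a point of $X_{n,d}$ one forms $\mc{V}\subset\mc{O}(1/n)$ by a single cartesian diagram and observes that lifts to $\mc{M}_{\mbf{1}}$ correspond exactly to trivializations of the filtered bundle $(\mc{V},\ker\gamma)$, visibly a $\ul{P_{n-d,d}(K)}$-torsor. This avoids your explicit $H^1$-vanishing and section-lifting arguments, and works uniformly for all $d$---which the paper then exploits to prove Theorem~\ref{0.5}. Your direct approach, by contrast, is more hands-on for $d=1$ but does not immediately generalize, and the intermediate description as $\mc{S}\mathrm{urj}(\mc{O}(1/n),\mc{O}(1))/\ul{K^\times}$ that the paper obtains along the way is exactly what is needed for cohomological smoothness (Corollary~\ref{0.2}).

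Two small remarks on your write-up: first, the saturation step is unnecessary---by Lemma~\ref{0.6} the image $\alpha(\mc{O}^{n-1})$ is already saturated in $\mc{O}(1/n)$, so $\mc{E}_\alpha=\alpha(\mc{O}^{n-1})\cong\mc{O}^{n-1}$ directly. Second, in your injectivity step the claim that the two last-column projections $\ol{s},\ol{s}'$ to $\mc{L}$ both ``trivialize $\mc{L}(-x_S)$'' deserves one more sentence: since $\coker\alpha\cong\iota_*W$ with $W$ a line bundle on $S$, the adjunction map $\iota^*\mc{L}\to W$ is a surjection of rank-one projectives, hence an isomorphism, so the image of $\ol{s}$ is forced to be $\ker(\mc{L}\to\iota_*\iota^*\mc{L})=\mc{L}(-x_S)$ independently of $W$.
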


Curiously, this description shows that $\mc{M}_{\mbf{1}}/\ul{P(K)}$
is the base change along $\Spd C \to \Spd k$ of a naturally defined functor on all perfectoid spaces over $k$. We also note that, by Proposition \ref{ptorsor}, the diamond quotient $\mc{M}_{\mbf{1}}/\ul{P(K)}$ coincides a posteriori with the perfectoid space $\mc{M}_{P(K)}$, so there is little harm in switching between these points of view.

\begin{corollary}\label{0.2}
As a diamond over $\Spd C$, $\mc{M}_{\mbf{1}}/\ul{P(K)}$
is proper and $\ell$-cohomologically smooth for any $\ell\neq p$.
\end{corollary}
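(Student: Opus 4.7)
My plan is to use Theorem \ref{0.1} to identify $\mc{M}_\mbf{1}/\ul{P(K)}$, as a diamond over $\Spd C$, with the diamond $(\mathbb{P}^{n-1}_C)^{\lozenge}$ of the classical projective space. Once this identification is in hand, both properness and $\ell$-cohomological smoothness follow from the corresponding properties of $\mathbb{P}^{n-1}_C$, using that the diamondification functor preserves properness and that diamondifications of smooth rigid spaces are $\ell$-cohomologically smooth for $\ell\neq p$.

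The first step is to observe that the pointwise condition in Theorem \ref{0.1}, namely $\mc{E}_{\bar{x}}\simeq \oo^{n-1}$, is automatic from the rank, degree, and fiberwise injectivity requirements. Over a geometric point, the Fargues--Fontaine classification gives $\mc{E}\cong \bigoplus_i \oo(\lambda_i)$ with the summand $\oo(\lambda)$ of slope $\lambda=p/q$ (in lowest terms) having rank $q$. Any such slope must satisfy $\lambda\leq 1/n$, and strictly, since an $\oo(1/n)$-summand would have rank $n>n-1$. Since the summand denominators are therefore at most $n-1$, any positive slope would be at least $1/(n-1)>1/n$, which is excluded. Hence all slopes are non-positive, and since the total degree is zero all slopes must vanish, forcing $\mc{E}\simeq \oo^{n-1}$.

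The second step is to identify the moduli of such sub-bundles with $(\mathbb{P}^{n-1}_C)^{\lozenge}$. Because the quotient $\oo(1/n)/\mc{E}$ is pointwise a line bundle of degree $1$, i.e.\ $\oo(1)$, the data is equivalent to a surjection $\oo(1/n)\twoheadrightarrow \oo(1)$ modulo scalars. Using Fargues's theorem that $\oo(1/n)$ admits a canonical trivialization in a formal $B_{\mathrm{dR}}^+$-neighborhood of the untilt section, together with the Scholze--Weinstein description of minuscule Schubert varieties in the $B_{\mathrm{dR}}^+$-affine Grassmannian, I would identify this moduli with the minuscule Schubert variety $\mathrm{Gr}^{B_{\mathrm{dR}}^+}_{\omega_{n-1},\GL_n}$ over $\Spd C$. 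By Scholze--Weinstein, such minuscule Schubert varieties are canonically isomorphic, as diamonds over $\Spd C$, to the diamondification of the corresponding classical Grassmannian, which here is $(\mathbb{P}^{n-1}_C)^{\lozenge}$.

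The main obstacle is the identification in the second step: since a sub-bundle $\mc{E}\subset \oo(1/n)$ is defined globally on $\mc{X}_{S^{\flat}}$ rather than purely through local modification data at the untilt, passing to the $B_{\mathrm{dR}}^+$-affine Grassmannian requires a careful Beauville--Laszlo-type argument, together with a compatibility check with the Gross--Hopkins period map $\ol{\pi}_{GH}:\mc{M}_{P(K)}\to \mathbb{P}^{n-1}$ from the main text (which is already known to be quasicompact by Proposition \ref{qc}). Once this identification is established, the conclusion follows from the standard properties of $(\mathbb{P}^{n-1}_C)^{\lozenge}$.
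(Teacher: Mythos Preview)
Your plan rests on identifying $\mc{M}_\mbf{1}/\ul{P(K)}$ with $(\mathbb{P}^{n-1}_C)^{\lozenge}$, but this identification is false. By Theorem \ref{A} the quotient $\mc{M}_{P(K)}$ is a perfectoid space, whereas $(\mathbb{P}^{n-1}_C)^{\lozenge}$ is not perfectoid for $n\geq 2$; indeed, this very fact is what is used at the end of the proof of Theorem \ref{0.5} to show that the other parabolic quotients are \emph{not} perfectoid. Equivalently, the map $\ol{\pi}_{GH}:\mc{M}_{P(K)}\to\mathbb{P}^{n-1}$ has geometric fibres isomorphic to the infinite profinite set $\GL_n(\oo_K)/P(\oo_K)$ (see the discussion before Lemma \ref{aux2}), so it is far from an isomorphism. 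Your first step is fine (and is essentially the content of Lemma \ref{0.7} for $m=n-1$), but the error is in your second step: the quotient $\oo(1/n)/\mc{E}$ is a \emph{line bundle} isomorphic to $\oo(1)$, not a torsion sheaf supported at the untilt section, so the inclusion $\mc{E}\subset\oo(1/n)$ is a global sub-bundle map rather than a modification at a point. The $B_{\mathrm{dR}}^+$-affine Grassmannian and Beauville--Laszlo formalism do not describe this moduli problem. You may be thinking of Proposition \ref{0.8}, where the cokernel of $\oo^n\to\oo(1/n)$ \emph{is} supported at the untilt; but after quotienting by $P(K)$ and passing to the description of Theorem \ref{0.1}, the rank of the sub-bundle drops to $n-1$ and the cokernel becomes the honest line bundle $\oo(1)$.

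The paper instead obtains properness from the compactness of $\GL_n(K)/P(K)$ via the Gross--Hopkins period map (Proposition \ref{0.9}), and cohomological smoothness via Proposition \ref{0.11}: for $d=1$ this identifies the quotient with $\mc{S}\mathrm{urj}(\oo(1/n),\oo(1))/\ul{K^\times}$, where the space of surjections is open inside $\mc{H}^0(\oo(1)\otimes\oo(-1/n))$, an open perfectoid ball over $C$. One then concludes using \cite[Propositions 24.1 and 24.2]{dia}.
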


By \cite[Theorem 1.12]{dia}, this implies very strong finiteness
properties for the $\F_{\ell}$-\'etale cohomology of $\mc{M}_{\mbf{1}}/ \ul{P(K)}$. On the other hand, $\mc{M}_{\mbf{1}}/ \ul{P(K)}$ is a perfectoid space by Theorem \ref{A}. Thus $\mc{M}_{\mbf{1}}/ \ul{P(K)}$ is an example of an interesting and naturally occurring perfectoid space with reasonable \'etale cohomology.

\medskip

When $n=2$, the description of $\mc{M}_{\mbf{1}}/ \ul{P(K)}$ can be unwound further.

\begin{theorem}\label{0.3}
If $n=2$, then $\mc{M}_{\mbf{1}}/ \ul{P(K)}$ can be
presented as the quotient
\[
\left(\Spd L^{\flat} \times_{\Spd \F_{q^2}} \Spd C \right)/(\varphi\times\mathrm{id})^{\Z}
\]
for a certain perfectoid field $L^{\flat}/\F_{q^2}$, where
$\varphi$ is the $q^2$-power Frobenius on $L^{\flat}$.
\end{theorem}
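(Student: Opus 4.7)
The plan is to apply Theorem \ref{0.1} with $n=2$ and then further unwind the moduli description using the structure of $\mc{O}(1/2)$ on the Fargues--Fontaine curve, ultimately identifying the result with (the base change of) a relative Fargues--Fontaine curve presented as a Frobenius quotient.

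By Theorem \ref{0.1}, $\mc{M}_{\mbf{1}}/\underline{P(K)}$ represents the functor sending $S \to \Spa C$ to the set of coherent sub-sheaves $\mc{E}\subset \mc{O}(1/2)$ on $\mc{X}_{S^\flat}$ with $\mc{E}_{\bar{x}} \simeq \mc{O}$ at every geometric point $\bar{x}\to S^\flat$. Passing to the quotient $\mathcal{Q} := \mc{O}(1/2)/\mc{E}$, which is necessarily a line bundle of degree one (pointwise $\simeq \mc{O}(1)$), one rewrites this as a moduli of surjections $\mc{O}(1/2) \twoheadrightarrow \mathcal{Q}$ modulo automorphisms of $\mathcal{Q}$. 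By the Beauville--Laszlo description of vector bundle modifications, such data are equivalent to specifying a degree-one closed Cartier divisor $D\subset \mc{X}_{S^\flat}$ (equivalently, an untilt $S^\sharp$ of $S^\flat$ over $K$) together with a $K$-line $\ell$ in the two-dimensional fibre $\mc{O}(1/2)_{S^\sharp}$.

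The heart of the argument is to reinterpret this pair $(S^\sharp,\ell)$ as a point of a suitable relative Fargues--Fontaine curve for a perfectoid field $L^\flat$. The endomorphism algebra of the simple slope-$\tfrac12$ bundle $\mc{O}(1/2)$ is the quaternion division algebra $D/K$ of invariant $\tfrac12$, whose maximal unramified subextension $L/K$ is the unramified quadratic extension with residue field $\F_{q^2}$. Exploiting the canonical identification $\mc{O}(1/2)\simeq \pi_{*}\mc{O}_{L}(1)$ for the natural degree-two \'etale cover $\pi\colon \mc{X}_{L,(-)\widehat{\otimes}\F_{q^2}}\to \mc{X}_{K,(-)}$, one converts the datum $(S^\sharp,\ell)$ into a degree-one divisor on the $L$-Fargues--Fontaine curve $\mc{X}_{L,\,S^\flat\widehat{\otimes}\F_{q^2}}$, equivalently an untilt of $S^\flat\widehat{\otimes}\F_{q^2}$ over $L$. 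One then shows this functor is representable by $\Spd L^\flat \times_{\Spd \F_{q^2}} S$ for a specific perfectoid field $L^\flat/\F_{q^2}$ arising from the appropriate Lubin--Tate-type extension of $L$.

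Finally, invoking the standard diamond-theoretic presentation of the relative Fargues--Fontaine curve as a Frobenius quotient $\mc{X}_{L^\flat,K}^\lozenge \simeq (\Spd L^\flat \times_{\Spd \F_{q^2}} \Spd K)/(\varphi\times\mathrm{id})^{\Z}$, and base-changing along $\Spd C \to \Spd K$, yields the desired presentation
\[
\mc{M}_{\mbf{1}}/\underline{P(K)} \simeq (\Spd L^\flat \times_{\Spd \F_{q^2}} \Spd C)/(\varphi\times\mathrm{id})^{\Z}.
\]
The $q^2$-power (rather than $q$-power) Frobenius reflects the fact that the Fargues--Fontaine curve appearing in the argument is the one for $L$ rather than for $K$. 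The main technical hurdle is the careful Beauville--Laszlo identification of the "untilt-plus-line" datum with an untilt over $L$: this requires keeping track of the $D^\times$-equivariant structure on $\mc{O}(1/2)$, the saturation condition from Theorem \ref{0.1}, and the precise perfectoid field $L^\flat$ that arises.
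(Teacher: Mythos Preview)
Your proposal has the right broad shape---identify the quotient with a moduli of sections of $\mc{O}(1/2)$ modulo $K^\times$, pass to the unramified quadratic extension where $\mc{O}(1/2)$ becomes $\mc{O}(1)$, and then obtain a Frobenius quotient---but two of the key steps are genuinely incorrect as written.

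First, the Beauville--Laszlo step is wrong. A saturated sub-line-bundle $\mc{E}\simeq\mc{O}\subset\mc{O}(1/2)$, or equivalently a surjection $\mc{O}(1/2)\twoheadrightarrow\mathcal{Q}$ with $\mathcal{Q}$ a degree-one line bundle, does \emph{not} single out a degree-one Cartier divisor on $\mc{X}_{S^\flat,K}$. The quotient $\mathcal{Q}$ is a line bundle, not a torsion sheaf supported at a point; there is no untilt over $K$ encoded here. You seem to be conflating this with the full Lubin--Tate datum $\alpha:\mc{O}^2\to\mc{O}(1/2)$, whose cokernel \emph{is} $\iota_*W$ and therefore does determine an untilt; but that untilt is precisely what gets forgotten when one quotients by $P(K)$. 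The correct reformulation is simply that $\mc{E}\subset\mc{O}(1/2)$ is a (pointwise on $S$) nonzero section of $\mc{O}(1/2)$ up to $K^\times$-scaling, i.e., a point of $H^0(\mc{X}_{S^\flat,K},\mc{O}(1/2))\smallsetminus\{0\}$ modulo $\underline{K^\times}$.

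Second, you have misidentified the field $L$. You take $L$ to be the unramified quadratic extension of $K$; but this is a discretely valued field, so $L^\flat$ is not perfectoid, contradicting the theorem statement. In the paper the unramified quadratic extension is called $E$, and $L$ is defined as the perfectoid field $\tilde{E}^{\mc{O}_K^\times}$, where $\tilde{E}$ is the completion of the full Lubin--Tate extension of $E$. This is not a formal point: producing $L^\flat$ is exactly where the work lies. The paper's argument runs as follows. One identifies $H^0(\mc{X}_{S^\flat,K},\mc{O}(1/2))=H^0(\mc{X}_{S^\flat,E},\mc{O}(1))$ with the universal cover $\tilde{\mc{G}}_0$ of the Lubin--Tate formal $\mc{O}_E$-module (Proposition~\ref{0.13}), and then invokes Weinstein's result (Proposition~\ref{0.14}) that $\tilde{\mc{G}}_0\smallsetminus\{0\}\cong\Spa\tilde{E}^\flat$, with $\mc{O}_E^\times$ acting via Galois and $\varpi$ via $q^2$-Frobenius. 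Quotienting by $\mc{O}_K^\times\subset\mc{O}_E^\times$ yields $\Spd L^\flat$ (with $L$ perfectoid by Sen's theorem), and the residual $\varpi^{\mathbb{Z}}$-action gives the $(\varphi\times\mathrm{id})^{\mathbb{Z}}$-quotient. Your sketch does not supply any replacement for this Lubin--Tate input, and the ``relative Fargues--Fontaine curve'' interpretation you invoke at the end is not quite the right object (and the base change along $\Spd C\to\Spd K$ would not make sense without knowing $C$ contains $L$, which it need not).
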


Combining this description with some formalism of diamonds, we obtain a purely local proof of (a generalization of) the main result of \cite{lud}, avoiding the global $p$-adic geometry of modular curves.

\begin{corollary}\label{0.4}
When $n=2$, the quotient $\mc{M}_{\mbf{1}}/ \ul{P(K)}$
is a perfectoid space over $C$.
\end{corollary}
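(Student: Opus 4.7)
The plan is to invoke the explicit presentation of Theorem~\ref{0.3} and show directly that the v-sheaf quotient
\[
Y/(\varphi \times \mathrm{id})^\Z, \qquad Y := \Spd L^\flat \times_{\Spd \F_{q^2}} \Spd C,
\]
is represented by a perfectoid space over $C$, using a Tate-curve style fundamental-domain construction. First I would identify $Y$ itself as a perfectoid space: since $\Spd C$ and $\Spa C^\flat$ agree as v-sheaves on characteristic $p$ perfectoid spaces, and the completed tensor product $L^\flat \widehat{\otimes}_{\F_{q^2}} C^\flat$ of two perfectoid fields over $\F_{q^2}$ is itself a perfectoid $C^\flat$-algebra, $Y$ is (the diamond of) $\Spa(L^\flat \widehat{\otimes}_{\F_{q^2}} C^\flat)$. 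Geometrically, this is a relative ``punctured open disk'' over $\Spa C^\flat$ parametrized by a uniformizer of $L^\flat$.

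Next I would analyze the $\Z$-action in explicit coordinates. Fix a pseudouniformizer $\varpi_L \in L^\flat$, so that its $p$-power roots $\varpi_L^{1/p^n}$ generate a dense subring of $L^\flat$. The $q^2$-power Frobenius sends $\varpi_L^{1/p^n}$ to $\varpi_L^{q^2/p^n}$, so at any rank-one point $y \in Y$ one has $v_y(\varphi(\varpi_L)) = q^2 \cdot v_y(\varpi_L)$. Since $v_y(\varpi_L)$ ranges over a strictly positive, finite real interval on the analytic locus of $Y$, multiplication by $q^2$ on this half-line is free and properly discontinuous; hence the $\varphi^\Z$-action on $Y$ is free and properly discontinuous.

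Then I would construct the fundamental domain. For an $r$ in the value group of $C^\flat$, the rational subdomain
\[
\ol{U}_r := \bigl\{ y \in Y \,\big|\, v_y(\varpi_L) \in [r/q^2,\, r] \bigr\}
\]
is affinoid perfectoid, its $\varphi^\Z$-translates cover $Y$, and successive translates meet only along the affinoid perfectoid ``circles'' $\{v_y(\varpi_L) = r\}$ and $\{v_y(\varpi_L) = r/q^2\}$, which are exchanged by $\varphi$. Gluing the image of $\ol{U}_r$ in the quotient to its $\varphi$-translate along this overlap, in direct analogy with the construction of the Tate elliptic curve $\mbf{G}_m^{\mathrm{an}}/q^\Z$ from annular fundamental domains, realizes $Y/(\varphi \times \mathrm{id})^\Z$ as a perfectoid space.

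The main obstacle will be to verify that the Tate-curve-style gluing produces a genuine perfectoid space in Huber's category $\mc{V}$ (rather than merely a perfectoid diamond), and that this perfectoid space represents the v-sheaf quotient described in Theorem~\ref{0.3}. This reduces to checking compatibility of the Frobenius descent with integral subrings on the two affinoid perfectoid ``circles'', and matching the resulting glued object with the v-sheaf presentation -- both steps closely parallel the classical theory of Tate uniformizations for abelian varieties with multiplicative reduction, and a posteriori must agree with the perfectoid space $\mc{M}_{P(K)}$ supplied by Theorem~\ref{A}.
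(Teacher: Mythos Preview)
Your proposal is correct and follows essentially the same route as the paper. The appendix proves Corollary~\ref{0.4} together with Theorem~\ref{0.3}: it invokes Proposition~\ref{0.15} (products of perfectoid spaces over a discrete field are perfectoid) to see that $\Spd L^\flat \times_{\Spd \F_{q^2}} \Spd C$ is perfectoid, then observes that $\varphi \times \mathrm{id}$ acts properly discontinuously so the quotient is perfectoid, and finally untilts. Your completed tensor product argument is exactly the affinoid case of Proposition~\ref{0.15}, and your explicit valuation/fundamental-domain analysis is simply a spelling-out of the proper discontinuity that the paper treats in one line. Two minor comments: the description of $Y$ as a ``relative punctured open disk'' is heuristically useful but not needed for the argument (and would require knowing more about the structure of $L^\flat$ than is established); and the ``main obstacle'' you flag is not really an obstacle, since a free properly discontinuous $\Z$-action on a perfectoid space has a perfectoid quotient by straightforward gluing of open pieces, without any delicate compatibility checks.
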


In fact, it turns out that our methods give some information about
the more general quotients $\mc{M}_{\mbf{1}}/ \ul{P_{n-d,d}(K)}$. In particular, we prove the following result, which shows that Theorem \ref{A} in the paper is essentially optimal.

\begin{theorem}\label{0.5}
As a diamond over $\Spd C$, $\mc{M}_{\mbf{1}}/\ul{P_{n-d,d}(K)}$
is proper and $\ell$-cohomologically smooth for any $1\leq d<n$.
However, when $d>1$, this quotient is \emph{never }a perfectoid space.
\end{theorem}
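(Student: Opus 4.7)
The proof breaks into three parts: a moduli-theoretic description of $\mc{M}_{\mbf{1}}/\ul{P_{n-d,d}(K)}$ generalizing Theorem~\ref{0.1}, the positive results on properness and smoothness, and the obstruction to perfectoidness when $d>1$.

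First, I would establish the natural generalization of Theorem~\ref{0.1}: the diamond $\mc{M}_{\mbf{1}}/\ul{P_{n-d,d}(K)}$ represents the functor sending $S \to \Spa C$ to the set of sub-vector bundles $\mc{E} \subset \mc{O}(1/n)$ on $\mc{X}_{S^\flat}$ of rank $n-d$ such that $\mc{E}_{\ol{x}} \simeq \mc{O}^{n-d}$ at every geometric point $\ol{x}\to S^\flat$. The argument is a straightforward extension of the proof of Theorem~\ref{0.1}: a $P_{n-d,d}(K)$-reduction of the infinite-level Tate module trivialization corresponds to a rank-$(n-d)$ direct summand of $K^n$, which via the Scholze--Weinstein modification dictionary produces a rank-$(n-d)$ sub-bundle of $\mc{O}(1/n)$, and the fiberwise triviality is automatic.

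For properness and $\ell$-cohomological smoothness, I would use this moduli interpretation together with the theory of moduli of sub-bundles on the relative Fargues--Fontaine curve. Properness follows by the valuative criterion for v-sheaves: a rank-$(n-d)$ sub-bundle with the prescribed HN polygon extends uniquely across specializations, using the properness of the relative FF-curve and the rigidity of bundles of trivial HN type. For smoothness, the infinitesimal deformations of $\mc{E}\subset\mc{O}(1/n)$ are controlled by the complex $R\Hom(\mc{E}, \mc{O}(1/n)/\mc{E})$; since the quotient $\mc{O}(1/n)/\mc{E}$ has constant positive slope $1/d$, the $H^1$ vanishes and the $H^0$ is representable by a positive Banach--Colmez space, which is $\ell$-cohomologically smooth by \cite[\S 23-24]{dia}. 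Presenting $\mc{M}_{\mbf{1}}/\ul{P_{n-d,d}(K)}$ locally as a quotient of such a BC-diamond by the automorphism group $\ul{\GL_{n-d}(K)}$ of the sub-bundle then transfers $\ell$-cohomological smoothness to the total moduli, using that the action is free and the group profinite.

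The main obstacle lies in the non-perfectoidness statement when $d>1$. My plan is to localize at a geometric point $s$ corresponding to a split exact sequence $0\to\mc{O}^{n-d}\to\mc{O}(1/n)\to\mc{O}(1/d)\to 0$, where the formal neighbourhood is isomorphic to the open subspace of the positive Banach--Colmez diamond $\mathbb{BC}(\mathcal{H}om(\mc{O}^{n-d},\mc{O}(1/d))) = \mathbb{BC}(\mc{O}(1/d))^{n-d}$ cut out by the injectivity condition. The key geometric input would then be that $\mathbb{BC}(\mc{O}(\lambda))$ is \emph{not} a perfectoid space for $0<\lambda<1$, even though it is a well-behaved cohomologically smooth spatial diamond. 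Heuristically this is because the $q$-power Frobenius $\varphi$ on $\mc{O}(\lambda)$ induces a finite étale self-map of $\mathbb{BC}(\mc{O}(\lambda))$ of positive (nontrivial) degree, so $\mathbb{BC}(\mc{O}(\lambda))$ has a genuinely classical (non-tilted) direction obstructing a perfectoid structure; for $\lambda=1$ this degree becomes $1$ and perfectoidness is restored, consistent with Corollary~\ref{0.4}. Making this precise---most likely via an explicit local model comparing $\mathbb{BC}(\mc{O}(1/d))$ to a non-perfectoid classical rigid space, or via a cohomological invariant distinguishing perfectoid from general smooth diamonds---is the technical heart of the argument. Once established, perfectoidness would pull back to a contradiction through the étale-local structure described above.
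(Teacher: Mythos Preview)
Your moduli description of $\mc{M}_{\mbf{1}}/\ul{P_{n-d,d}(K)}$ is incorrect, and this error propagates fatally into the non-perfectoidness argument.

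Recall from Proposition~\ref{0.8} that an $S$-point of $\mc{M}_{\mbf{1}}$ is a modification $\alpha:\mc{O}^n\hookrightarrow\mc{O}(1/n)$ with $\mathrm{coker}\,\alpha\simeq\iota_*W$ supported at the fixed untilt. The $P_{n-d,d}(K)$-action preserves both the subspace $\mc{O}^{n-d}\oplus 0$ \emph{and} the full image $\alpha(\mc{O}^n)$. Hence the quotient remembers not only the sub-bundle $\alpha(\mc{O}^{n-d})\subset\mc{O}(1/n)$ but also the image of $\alpha(\mc{O}^n)$ in $\mc{O}(1/n)/\alpha(\mc{O}^{n-d})\simeq\mc{O}(1/d)$, i.e.\ a sub-bundle $\mc{F}\simeq\mc{O}^d$ of $\mc{O}(1/d)$ with length-one cokernel at $\iota$. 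This is Proposition~\ref{0.10}: the correct functor parametrizes diagrams $\mc{O}(1/n)\twoheadrightarrow\mc{E}\hookleftarrow\mc{F}$. Your description forgets $\mc{F}$; the datum of $\mc{F}$ (given the rest) is a rank-one quotient of $\iota^*\mc{O}(1/d)\cong\mc{O}_S^d$, i.e.\ a point of $\mb{P}^{d-1}$. When $d=1$ this is vacuous, which is why Theorem~\ref{0.1} looks the way it does, but for $d>1$ your functor is genuinely smaller than $\mc{M}_{\mbf{1}}/\ul{P_{n-d,d}(K)}$. Proposition~\ref{0.11} makes the missing $\mb{P}^{d-1}$ visible.

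This matters because the obstruction to perfectoidness is precisely this $\mb{P}^{d-1}$, not a Banach--Colmez space. In fact your key claim, that $\mathbb{BC}(\mc{O}(\lambda))$ is not perfectoid for $0<\lambda<1$, is false: for such slopes the Banach--Colmez space is represented by an open perfectoid ball, and the paper uses exactly this fact in the proof of cohomological smoothness. The paper's actual argument is: by Proposition~\ref{0.11} there is a $\ul{D_{1/d}^\times}$-torsor
\[
\mc{S}\mathrm{urj}(\mc{O}(1/n),\mc{O}(1/d))\times_{\Spd C}\mb{P}_C^{d-1,\lozenge}\longrightarrow \mc{M}_{\mbf{1}}/\ul{P_{n-d,d}(K)};
\]
if the target were perfectoid so would be the source, and then base-changing along any $C'$-point of the perfectoid factor $\mc{S}\mathrm{urj}(\mc{O}(1/n),\mc{O}(1/d))$ would force $\mb{P}_{C'}^{d-1,\lozenge}$ to be perfectoid, which is absurd for $d>1$.

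Your approaches to properness and smoothness are also different from the paper's, though not obviously wrong once the correct moduli is in place: the paper obtains properness directly from compactness of $\GL_n(K)/P_{n-d,d}(K)$ via the Gross--Hopkins map (Proposition~\ref{0.9}), and smoothness from the presentation of Proposition~\ref{0.11} together with the smoothness of open perfectoid balls and of projective space.
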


Again, we deduce this from a moduli-theoretic description of these
more general quotients in terms of vector bundles on the curve, which recovers Theorem \ref{0.1} when $d=1$. When $d>1$ this description is slightly more complicated, cf. Proposition \ref{0.10} below.

\medskip

It's unclear to me how far these ideas can be extended beyond the
specific case of the Lubin--Tate tower. As an illustrative example,
let $\mc{N}_{\infty}$ be the infinite-level perfectoid space
over $C$ associated with the Rapoport-Zink tower for
an isoclinic $\varpi$-divisible $\mc{O}_{K}$-module of height
$5$ and dimension $2$. There is a natural action of $\GL_5(K)$
on $\mc{N}_{\infty}$, and one can check (by adapting the arguments
below) that the quotients $\mc{N}_{\infty}/ \ul{P_{i,5-i}(K)}$
are $\ell$-cohomologically smooth over $\Spd C$ for $i\in\{1,2,4\}$.
However, for $i=3$, the method breaks down, and I don't know whether
the quotient is smooth in that case.

\subsubsection*{Acknowledgments.}

I'm very grateful to Christian Johansson and Judith Ludwig for their invitation to write this appendix, and for some very interesting conversations about this circle of ideas. This appendix grew out of the (re)proof of Corollary \ref{0.4} given below, and I'd like to thank Jared Weinstein for some stimulating initial conversations around the question of whether this result could be proved by purely local methods. I would also like to thank an anonymous referee for useful comments and corrections. 

\subsection{Preliminaries}

For any perfectoid space $S/k$, we write $\mc{X}_{S}=\mc{X}_{S,K}$ for the associated relative Fargues--Fontaine curve, regarded as an
adic space over $K$. If $S=T^{\flat}$ arises as the tilt of some perfectoid space $T/K$, $\mc{X}_{T^{\flat}}$ comes
equipped with a canonical closed immersion $\iota: T \to \mc{X}_{T^{\flat}}$. Aside from the original reference \cite{ff}, some relevant background on the curve is given in \cite[Ch. 8]{ked-liu} and \cite[\S 2.3]{han-mod}. One might also look at \cite[\S 3.2-3.3]{cs} or at some portions of \cite{bfh}.

\medskip

We say that $S$ is a \emph{point} if $S=\Spa(L,L^{+})$ where
$L$ is a perfectoid field and $L^{+}\subset L$ is an open valuation
subring consisting of powerbounded elements. Moreover, we say $S$
is a \emph{rank one point} if $L^{+}=L^{\circ}$. 

\medskip

Now, when $S$ is a rank one point, $\mc{X}_{S}$ is a Noetherian
adic space of dimension one \cite{ked-noeth}, with a good theory of slopes and Harder--Narasimhan filtrations. Moreover, it is reduced and all of its local rings are fields or discrete valuation rings, so any coherent $\mc{O}_{\mc{X}_S}$-module $\mc{F}$ has a canonical filtration $0 \to \mc{F}_{\mathrm{tors}} \to \mc{F} \to \mc{F}_{\mathrm{free}} \to 0$ where $\mc{F}_{\mathrm{tors}}$ is a torsion coherent sheaf and $\mc{F}_{\mathrm{free}}$ is a vector bundle. In particular, given any vector bundle $\mc{F}$, any coherent subsheaf $\mc{E} \subset \mc{F}$ is also a vector bundle, and admits a canonical \emph{saturation $\mc{E}^{\mathrm{sat}} \subset \mc{F}$},
defined as the preimage of $(\mc{F}/ \mc{E})_{\mathrm{tors}} \subset\mc{F}/\mc{E}$ in $\mc{F}$. This is the minimal subbundle of $\mc{F}$ containing $\mc{E}$ such that $\mc{F}/ \mc{E}^{\mathrm{sat}}$
is also a vector bundle. Note also that $\mathrm{deg}(\mc{E}^{\mathrm{sat}}) = \mathrm{deg}(\mc{E}) + \mathrm{length}(\mc{F}/\mc{E})_{\mathrm{tors}}$. If $\mc{E}=\mc{E}^{\mathrm{sat}}$, we say that $\mc{E}$ is saturated. Moreover, all of these considerations extend to the case of $\mc{X}_{S}$ for $S$ a general point, not necessarily of rank one, thanks to the following general observation: for any point $S=\Spa(L,L^{+})$, pullback along the natural inclusion $\mc{X}_{\Spa(L,L^{\circ})} \to \mc{X}_{\Spa(L,L^{+})}$ induces an equivalence of categories on coherent $\mc{O}_{\mc{X}}$-modules.

\medskip

With these preparations, we can state a trivial lemma, which nevertheless is frequently very useful.

\begin{lemma}\label{0.6}
Suppose that $S$ is a point, and that $\mc{E} \to \mc{F}$
is an injective map of vector bundles on $\mc{X}_{S}$. Suppose
that the point $(\mathrm{rank}(\mc{E}),\mathrm{deg}(\mc{E})+1)$
lies above the Harder--Narasimhan polygon of $\mc{F}$. Then $\mc{E}$
is automatically saturated.

\medskip

In particular, if $\mc{F}$ is semistable and $\mathrm{\frac{\mathrm{deg}(\mc{E})+1}{\mathrm{rank}(\mc{E})}}>\mu(\mc{F})$, then $\mc{E}$ is automatically saturated.
\end{lemma}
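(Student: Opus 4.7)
The plan is to use the defining property of the Harder--Narasimhan polygon: for any vector bundle $\mc{F}$ on $\mc{X}_S$ (with $S$ a point), the HN polygon is the upper convex hull characterized by the property that every subbundle $\mc{G}\subset\mc{F}$ gives a point $(\rk\mc{G},\deg\mc{G})$ lying on or below it. The statement then becomes a direct argument by contrapositive.

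First, I would recall the recipe for saturation spelled out in the preliminaries: $\mc{E}^{\mathrm{sat}}\subset\mc{F}$ is a subbundle of the same rank as $\mc{E}$, and satisfies
\[
\deg(\mc{E}^{\mathrm{sat}}) \;=\; \deg(\mc{E})+\mathrm{length}\bigl((\mc{F}/\mc{E})_{\mathrm{tors}}\bigr).
\]
In particular, if $\mc{E}$ fails to be saturated, then $\deg(\mc{E}^{\mathrm{sat}})\geq\deg(\mc{E})+1$.

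Now assume for contradiction that $\mc{E}$ is not saturated. Then $\mc{E}^{\mathrm{sat}}$ is a subbundle of $\mc{F}$ with rank $\rk\mc{E}$ and degree at least $\deg\mc{E}+1$, so the point $(\rk\mc{E},\deg\mc{E}+1)$ lies on or below the HN polygon of $\mc{F}$. This contradicts the hypothesis, so $\mc{E}=\mc{E}^{\mathrm{sat}}$.

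For the ``in particular'' clause, one just notes that when $\mc{F}$ is semistable the HN polygon is the straight line through $(0,0)$ and $(\rk\mc{F},\deg\mc{F})$ of slope $\mu(\mc{F})$, and the condition $(\deg\mc{E}+1)/\rk\mc{E}>\mu(\mc{F})$ says precisely that the point $(\rk\mc{E},\deg\mc{E}+1)$ lies strictly above this line; hence the first part applies. There is no real obstacle here — the lemma is genuinely trivial once one has unpacked the definition of saturation and the characterization of the HN polygon, and the only care needed is to remember that everything works over an arbitrary point $S=\Spa(L,L^+)$ thanks to the equivalence of coherent-sheaf categories noted just before the lemma.
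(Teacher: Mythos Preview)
Your proposal is correct and follows essentially the same approach as the paper: both argue by contrapositive, using that $\deg(\mc{E}^{\mathrm{sat}})\geq \deg(\mc{E})+1$ when $\mc{E}$ is non-saturated together with the fact that the (rank, degree) point of the subbundle $\mc{E}^{\mathrm{sat}}\subset\mc{F}$ lies on or below the Harder--Narasimhan polygon of $\mc{F}$. The paper's proof is simply a one-line compression of the argument you have written out.
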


\begin{proof}
Immediate from the fact that $\mathrm{HN}(\mc{E}^{\mathrm{sat}})$ lies on or
below $\mathrm{HN}(\mc{F})$, and from the fact that $\mathrm{deg}(\mc{E}^{\mathrm{sat}}) \geq \mathrm{deg}(\mc{E})+1$ if $\mc{E}$ is non-saturated.
\end{proof}

If $i : \mc{E} \to \mc{F}$ is any injective map of vector bundles
(or arbitrary $\mc{O}_{\mc{X}_{S}}$-modules) over a relative
curve $\mc{X}_{S}$, we say that $i$ is \emph{stably injective
}if it remains injective after base change along $\mc{X}_{T}\to\mc{X}_{S}$ for $T \to S$ any map of perfectoid spaces. This is equivalent to the a priori weaker condition that $\mc{E} \to\mc{F}$ remains injective after base change along $\mc{X}_{\ol{x}} \to \mc{X}_{S}$ for any geometric point $\ol{x}\to S$. This condition automatically holds if the quotient $\mc{F} / \mc{E}$ is a vector bundle, but in general it is weaker.

\begin{lemma}\label{0.7}
Let $\mc{E} \to \mc{F}$ be a stably injective map of vector bundles over a relative curve $\mc{X}_{S}$, such that $\mc{E} \simeq\mc{O}^{m}$ and $\mc{F} \simeq \mc{O}(1/n)$ at all geometric points of
$S$, for some fixed integers $m<n$. Then the quotient $\mc{F} / \mc{E}$ is a vector bundle, with $\mc{F}/\mc{E} \simeq \mc{O}(1/(n-m))$ at all geometric points of $S$.
\end{lemma}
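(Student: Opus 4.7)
The plan is to reduce to a fiberwise analysis at geometric points, using Lemma 0.6, and then promote the conclusion to a global statement using stable injectivity. First I would fix a geometric point $\ol{x} \to S$. By stable injectivity, the induced map $\mc{E}_{\ol{x}} \to \mc{F}_{\ol{x}}$ is an injection of vector bundles, and by hypothesis $\mc{E}_{\ol{x}} \cong \mc{O}^m$ while $\mc{F}_{\ol{x}} \cong \mc{O}(1/n)$ is semistable of slope $1/n$. Since $m < n$, the inequality $\tfrac{\deg \mc{E}_{\ol{x}} + 1}{\mathrm{rank}\, \mc{E}_{\ol{x}}} = \tfrac{1}{m} > \tfrac{1}{n}$ holds, so Lemma 0.6 applies and $\mc{E}_{\ol{x}}$ is automatically saturated in $\mc{F}_{\ol{x}}$. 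Hence the cokernel $(\mc{F}/\mc{E})_{\ol{x}}$ is a vector bundle on $\mc{X}_{\ol{x}}$ of rank $n - m$ and degree $1$.

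Next I would show this cokernel is semistable, hence isomorphic to $\mc{O}(1/(n-m))$, the unique semistable bundle of rank $n-m$ and degree $1$. Suppose for contradiction that there is a saturated subsheaf $\mc{G} \subset (\mc{F}/\mc{E})_{\ol{x}}$ of rank $r \geq 1$ and degree $d$ with $d(n-m) > r$, and let $\mc{H} \subset \mc{F}_{\ol{x}}$ be its preimage, which is a subbundle of rank $m + r$ and degree $d$. If $d \leq 0$, then $d(n-m) \leq 0 < r$, contradicting the assumption. If $d \geq 1$, semistability of $\mc{F}_{\ol{x}}$ forces $nd \leq m + r$, which combined with $d(n-m) > r$ yields $dm < m$ and hence $d < 1$, again a contradiction.

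To deduce that $\mc{F}/\mc{E}$ is globally a vector bundle on $\mc{X}_S$, I would use that the formation of cokernels commutes with base change: thus $(\mc{F}/\mc{E})_{\ol{x}} \cong \mc{O}(1/(n-m))$ at every geometric point $\ol{x}$ of $S$. Invoking the standard fact that a coherent $\mc{O}_{\mc{X}_S}$-module whose fibers at all geometric points are vector bundles of a fixed rank is itself locally free of that rank, one concludes that $\mc{F}/\mc{E}$ is a vector bundle on $\mc{X}_S$ of rank $n-m$.

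The main obstacle is the final passage from fiberwise data to a globally locally free sheaf, which requires input from the theory of families of vector bundles on the relative Fargues--Fontaine curve. The fiberwise identification via Lemma 0.6 and the semistability argument are essentially immediate once set up correctly.
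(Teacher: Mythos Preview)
Your argument is essentially correct and follows the same overall architecture as the paper: verify the claim at geometric points via Lemma~0.6, then promote to a global statement using constancy of fiber rank. Two points of comparison are worth noting.

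First, your semistability argument at a geometric point is different from the paper's. You argue by contradiction with a destabilizing subsheaf; the paper instead observes that since $\mc{O}(1/n)$ is semistable, every Harder--Narasimhan slope of the quotient is $\geq 1/n > 0$, so the degree (which is $1$) bounds the number of distinct slopes from below, forcing a single slope $1/(n-m)$. Both arguments are valid; the paper's is slightly slicker but yours is more hands-on.

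Second, your global step is correctly flagged as the crux but left imprecise. The paper makes this step explicit: one must check that the $k(x)$-rank of $(\mc{F}/\mc{E})\otimes k(x)$ is constant for every point $x \in |\mc{X}_S|$ (not merely at geometric points of $S$), which is done by projecting $x$ to $y \in |S|$ and using the already-established pointwise case on $\mc{X}_y$. One then invokes \cite[Proposition~2.8.4]{ked-liu}, using that $\mc{X}_S$ is stably uniform, to conclude that $\mc{F}/\mc{E}$ is finite locally free. Your ``standard fact'' is exactly this, but you should be aware that it requires checking at points of $\mc{X}_S$ rather than of $S$, and that the relevant foundational input is Kedlaya--Liu.
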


\begin{proof}
When $S$ is a geometric point, the claim follows from the Fargues--Fontaine classification of vector bundles on $\mc{X}_{S}$. Indeed, consider an injective map $i : \mc{O}^{m} \to \mc{O}(1/n)$ with $m<n$. Since $\mc{O}(1/n)$ is stable, the previous Lemma implies that
$\mathrm{coker}\,i$ is automatically a vector bundle, which necessarily has rank $n-m$ and degree $1$. Moreover, all the Harder--Narasimhan slopes of $\mathrm{coker}\,i$ are $\geq 1/n$ (using the stability of $\mc{O}(1/n)$ again), so in particular, they are all positive, so the degree of $\mathrm{coker}\,i$ is bounded below by its number of distinct Harder--Narasimhan slopes. Thus $\mathrm{coker}\,i$ has a unique slope, which must be $1/(n-m)$, so $\mathrm{coker}\,i \simeq \mc{O}(1/(n-m))$.

\medskip

The result when $S$ is a (not necessarily geometric) point now follows by an easy descent (use that any injective map of Dedekind domains is flat). To check that $\mc{F} / \mc{E}$ is a vector bundle in general, note that our arguments so far imply that for any $S$
and any point $x \in |\mc{X}_{S}|$, the $k(x)$-rank of the fiber
$(\mc{F}/\mc{E})\otimes_{\mc{O}_{\mc{X}_{S}}}k(x)$
is $n-m$. Indeed, let $y \in |S|$ be the image of $x$ under the map
$|\mc{X}_{S}| \to |S|$; then formation of the $k(x)$-fiber factors
over the pullback of $\mc{E} \to \mc{F}$ along $\mc{X}_{y}=\mc{X}_{\Spa(k(y),k(y)^{+})} \to \mc{X}_{S}$, in the sense that $(\mc{F} / \mc{E})_{x} \cong (\mc{F}_{y} / \mc{E}_{y}) \otimes_{\mc{O}_{\mc{X}_{y}}}k(x)$. By our previous arguments, $\mc{F}_{y}/\mc{E}_{y}$ is a
vector bundle of rank $n-m$, so $\mathrm{rank}_{k(x)}(\mc{F}/\mc{E})\otimes_{\mc{O}_{\mc{X}_{S}}}k(x)$ is constant as a function of $x$. Since $\mc{X}_{S}$ is a stably uniform adic space, we then deduce from \cite[Proposition 2.8.4]{ked-liu} that $\mc{F}/\mc{E}$ is a finite locally free $\mc{O}_{\mc{X}_{S}}$-module.
\end{proof}

\begin{remark}
The argument in the preceding proof shows more generally that if $i:\mc{E} \to \mc{F}$ is any stably injective map of vector bundles over a relative curve $\mc{X}_{S}$ such that $\mathrm{coker}\,i_{\ol{x}}$ is torsion-free after pullback along any geometric point $\ol{x}\to S$, then $\mathrm{coker}\,i$ is a vector bundle.
\end{remark}

\subsection{General results}

In this section we prove Theorems \ref{0.1} and \ref{0.5}, and Corollary \ref{0.2}. Our starting point is the following result of Scholze--Weinstein, which is a special case of \cite[Cor. 23.2.2 and Cor. 24.3.5]{berkeley} (cf. also \cite{sw}).

\begin{proposition}\label{0.8}
As a functor on perfectoid spaces over $C$, $\mc{M}_{\mbf{1}}$ is canonically identified with the functor sending any $S \to \Spa C$ to the set of stably injective maps $\alpha : \mc{O}^{n} \to \mc{O}(1/n)$ over $\mc{X}_{S^{\flat}}$ such that $\mathrm{coker}\,\alpha \simeq \iota_{\ast}W$ for some rank one projective $\mc{O}_{S}$-module $W$.
\end{proposition}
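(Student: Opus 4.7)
The plan is to deduce the statement from the general Scholze--Weinstein classification of Barsotti--Tate $\mc{O}_K$-modules over integral perfectoid bases in terms of modifications of vector bundles on the relative Fargues--Fontaine curve, and then to verify by direct translation that the moduli data defining $\mc{M}_\mbf{1}$ matches, under this equivalence, with the data described in the proposition. Concretely, for any affinoid perfectoid $S = \Spa(R,R^+)$ over $C$ with tilt $S^\flat$, the equivalence of \cite[Thm.~5.2.1]{sw} (in the form adapted to height $n$ compatible $\mc{O}_K$-modules, as in \cite[Cor.~24.3.5]{berkeley}) attaches to any such $\G/R^+$ the pair consisting of its Tate module $T\G$, a finite projective $\mc{O}_K$-module of rank $n$, and the Hodge--Tate/crystalline data encoding its universal cover. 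Globalizing to $\mc{X}_{S^\flat}$, this gives a vector bundle $\mc{E}(\G) = T\G \otimes_{\mc{O}_K} \mc{O}_{\mc{X}_{S^\flat}}$ together with a distinguished modification at the section $\iota$ whose cokernel is a finite locally free $\mc{O}_S$-module of rank equal to $\dim \G$, concentrated on the closed Cartier divisor $\iota(S)$.

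With this dictionary in hand, the plan is to unwind the three pieces of data $(\G,\rho,\text{trivialization})$ defining an $S$-point of $\mc{M}_\mbf{1}$ and show that each corresponds to a piece of the data in the proposition. First I would observe that the $\mc{O}_K$-level structure trivializing $T\G$ is exactly the same as a trivialization $\mc{E}(\G) \cong \mc{O}^n$ on $\mc{X}_{S^\flat}$; this converts the source vector bundle to $\mc{O}^n$ canonically. Second, I would use that the quasi-isogeny $\rho : \G_0 \otimes k_{S} \to \G \otimes k_S$ extends (after going to the universal cover and using the classification) to a canonical isomorphism of vector bundles between the bundle attached to $\G$ \emph{away from $\iota$} and the bundle attached to $\G_0$, which by the Fargues--Fontaine calculation for the unique one-dimensional isoclinic formal $\mc{O}_K$-module of height $n$ is exactly $\mc{O}(1/n)$. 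Gluing across $\iota$ then produces the desired map $\alpha : \mc{O}^n \to \mc{O}(1/n)$. Third, I would check that the condition $\dim \G = 1$ forces the cokernel of $\alpha$ to be of the form $\iota_* W$ with $W$ a rank one projective $\mc{O}_S$-module, and that $\rho$ being a quasi-isogeny at every geometric point of $S$ translates into stable injectivity of $\alpha$.

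For the inverse functor, given $\alpha : \mc{O}^n \to \mc{O}(1/n)$ as in the statement, one produces an $S$-point of $\mc{M}_\mbf{1}$ by passing $\alpha$ back through the same equivalence: the source defines (via its trivial $\mc{O}_K$-structure) a Tate module with trivialization, the target recovers the isogeny class of $\G_0$ on the universal cover, the modification at $\iota$ records the Hodge filtration and hence $\G$ itself, and the identification at $\iota$ together with $W$ gives the polarization data of the level structure. The final step is to check that these constructions are mutually inverse on the level of $S$-valued points and are natural in $S$, which is formal once the translation has been set up. Functoriality in $S$ and compatibility with restriction between affinoid opens is built into the construction of $\mc{X}_{S^\flat}$ and $\iota$.

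The main obstacle is not any single calculation, but rather assembling the correct version of the Scholze--Weinstein classification---specifically, the version that works relatively over an arbitrary affinoid perfectoid $S/C$, keeps track of the $\mc{O}_K$-action, and phrases everything in terms of modifications on $\mc{X}_{S^\flat}$ at $\iota$ rather than in the integral $B^+_{\mathrm{dR}}$-language. Once this is in place, the rest of the proof is a careful but essentially formal matching of moduli data, and the statement follows by specializing \cite[Cor.~23.2.2, Cor.~24.3.5]{berkeley} to the Lubin--Tate case, where the isocrystal of $\G_0$ has single slope $1/n$ and thus corresponds to $\mc{O}(1/n)$.
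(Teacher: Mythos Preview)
Your proposal is correct and matches the paper's approach: the paper does not prove this proposition at all but simply states it as ``a special case of \cite[Cor.~23.2.2 and Cor.~24.3.5]{berkeley} (cf.\ also \cite{sw})'', which is exactly the reference you arrive at. Your write-up is a reasonable expansion of what the translation from those results to the Lubin--Tate setting actually involves, though the paper itself treats this as a black box.
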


Next, we note that for a closed subgroup $H \subset \GL_n(K)$, it is easy to tell whether $\mc{M}_{\mbf{1}}/\ul{H} \to \Spd C$ is proper.

\begin{proposition}\label{0.9}
If $H \subset \GL_n(K)$ is any closed subgroup, the structure map $\mc{M}_{\mbf{1}}/\ul{H} \to \Spd C$ is separated; moreover, it is proper if and only if $\GL_n(K)/H$ is compact. In particular, any quotient $\mc{M}_{\mbf{1}}/ \ul{P_{n-d,d}(K)}$ is proper over $\Spd C$.
\end{proposition}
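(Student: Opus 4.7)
The plan rests on the $\GL_n(K)$-torsor $\pi\colon \mc{M}_{\mbf{1}} \to \Pro$ obtained as the composition of the natural projection $\mc{M}_{\mbf{1}} \to \mc{M}_{\GL_n(\ok)}$ with the Gross--Hopkins period map $\pi_{GH}$, as recalled in Section~\ref{sec:LT}. This identifies $\Pro$ with the v-sheaf quotient $\mc{M}_{\mbf{1}}/\ul{\GL_n(K)}$, and for any closed subgroup $H \subset \GL_n(K)$ it produces a natural factorization $q\colon \mc{M}_{\mbf{1}}/\ul{H} \to \Pro$ of the structure map through the proper diamond $\Pro$. The torsor property gives the key isomorphism $\mc{M}_{\mbf{1}} \times_{\Pro} \mc{M}_{\mbf{1}} \cong \mc{M}_{\mbf{1}} \times \ul{\GL_n(K)}$, via $(x,y) \mapsto (x,g)$ with $y = xg$. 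Taking the further sheaf quotient by the right $\ul{H}$-action on the second factor yields $\mc{M}_{\mbf{1}} \times_{\Pro} (\mc{M}_{\mbf{1}}/\ul{H}) \cong \mc{M}_{\mbf{1}} \times \ul{\GL_n(K)/H}$; that is, after pullback along the v-cover $\pi$, the map $q$ becomes the projection $\mc{M}_{\mbf{1}} \times \ul{\GL_n(K)/H} \to \mc{M}_{\mbf{1}}$.

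For separatedness, I would verify that the diagonal $\Delta\colon \mc{M}_{\mbf{1}}/\ul{H} \to (\mc{M}_{\mbf{1}}/\ul{H}) \times_{\Spd C} (\mc{M}_{\mbf{1}}/\ul{H})$ is a closed immersion. The base change of $\Delta$ along the surjective v-cover $\mc{M}_{\mbf{1}} \times_{\Spd C} \mc{M}_{\mbf{1}} \to (\mc{M}_{\mbf{1}}/\ul{H})^{\times 2}$ is the map $\mc{M}_{\mbf{1}} \times \ul{H} \to \mc{M}_{\mbf{1}} \times_{\Spd C} \mc{M}_{\mbf{1}}$ sending $(x,h)$ to $(x,xh)$. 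This factors as
\[
\mc{M}_{\mbf{1}} \times \ul{H} \hookrightarrow \mc{M}_{\mbf{1}} \times \ul{\GL_n(K)} \cong \mc{M}_{\mbf{1}} \times_{\Pro} \mc{M}_{\mbf{1}} \hookrightarrow \mc{M}_{\mbf{1}} \times_{\Spd C} \mc{M}_{\mbf{1}},
\]
in which the first inclusion is a closed immersion because $H$ is closed in $\GL_n(K)$ (so that $\ul{H} \hookrightarrow \ul{\GL_n(K)}$ is a closed immersion of v-sheaves), and the second is the base change of the diagonal $\Delta_{\Pro/\Spd C}$, which is a closed immersion because $\Pro$ is separated over $\Spd C$. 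Since being a closed immersion is v-local on the target, it follows that $\Delta$ itself is a closed immersion.

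For properness, I would invoke the cancellation principle: because $\Pro \to \Spd C$ is proper (hence separated), the structure map $\mc{M}_{\mbf{1}}/\ul{H} \to \Spd C$ is proper if and only if $q$ is proper. By v-descent for properness applied to the v-cover $\pi$, together with the identification at the end of the first paragraph, this is equivalent to the projection $\mc{M}_{\mbf{1}} \times \ul{\GL_n(K)/H} \to \mc{M}_{\mbf{1}}$ being proper, which for a product projection happens precisely when $\ul{\GL_n(K)/H}$ is proper over $\Spd C$. The latter, finally, is equivalent to $\GL_n(K)/H$ being compact as a topological space (equivalently, profinite, since $\GL_n(K)/H$ is automatically locally profinite as a quotient of a locally profinite group by a closed subgroup). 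The ``in particular'' assertion then follows because $\GL_n(K)/P_{n-d,d}(K)$ is the set of $K$-points of the Grassmannian $\mathrm{Gr}(d,n)$, which is compact.

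The main points requiring care are the diamond-theoretic inputs: the cancellation principle (``$g\circ q$ proper plus $g$ separated implies $q$ proper'') and v-descent for the class of proper maps, both of which are available from~\cite{dia}; the fact that the formation of the sheaf quotient $-/\ul{H}$ commutes with base change, supplying the identification of fibers of $q$ used above; and the basic observation that for a locally profinite topological space $S$, the v-sheaf $\ul{S}$ is proper over $\Spd C$ if and only if $S$ is compact. The latter is what converts the geometric properness condition into the group-theoretic compactness of $\GL_n(K)/H$.
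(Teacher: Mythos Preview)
Your proof is correct and follows essentially the same route as the paper: factor the structure map through $\Pro$ via the Gross--Hopkins period map, identify the pullback of $q$ along the $\GL_n(K)$-torsor $\mc{M}_{\mbf{1}} \to \Pro$ with the projection $\mc{M}_{\mbf{1}} \times \ul{\GL_n(K)/H} \to \mc{M}_{\mbf{1}}$, and then use v-descent (the paper cites \cite[Proposition~10.11(ii)]{dia}). Your treatment is slightly more detailed---you argue directly with the diagonal over $\Spd C$ rather than first showing $q$ is separated and composing, and you spell out the cancellation step for properness that the paper leaves implicit---but the content is the same.
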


\begin{proof}
For any such quotient, the structure map to $\Spd C$ factors
over a (surjective!) map $q : \mc{M}_{\mbf{1}}/ \ul{H} \to \mb{P}_{C}^{n-1,\lozenge}$ induced by the Gross-Hopkins period map. The pullback of $q$ along the v-cover $\mc{M}_{\mbf{1}} \to \mb{P}_{C}^{n-1,\lozenge}$ is then canonically identified with the projection map $\tilde{q} : \ul{\GL_n(K)/H} \times \mc{M}_{\mbf{1}} \to\mc{M}_{\mbf{1}}$. The latter map is always separated, so $q$ is separated by \cite[Proposition 10.11(ii)]{dia}; since the target of $q$ is separated over $\Spd C$, this shows that the source is too. Likewise, $q$ is quasicompact if and only if $\tilde{q}$ is quasicompact, and the latter clearly holds if and only if $\mathrm{GL}_{n}(K)/H$ is compact.
\end{proof}

We begin by analyzing the general quotients $\mc{M}_{\mbf{1}}/\ul{P_{n-d,d}(K)}$.

\begin{proposition}\label{0.10}
Fix any $1\leq d<n$. Then the diamond quotient $\mc{M}_{\mbf{1}}/\ul{P_{n-d,d}(K)}$ is canonically identified with the functor $X_{n,d}$ on perfectoid spaces over $C$ sending any $S$ to the set of (isomorphism classes of) diagrams 
\[
\mc{O}(1/n) \twoheadrightarrow \mc{E} \hookleftarrow \mc{F}
\]
of vector bundles over $\mc{X}_{S^{\flat}}$ such that $\mc{E} \simeq\mc{O}(1/d)$ and $\mc{F} \simeq \mc{O}^{d}$ at all geometric points and such that $\mathrm{coker}(\mc{F} \to \mc{E}) \simeq \iota_{\ast}W$ for some projective rank one $\mathcal{O}_{S}$-module $W$.
\end{proposition}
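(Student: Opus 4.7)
The plan is to construct an explicit morphism $\Psi \colon \mc{M}_{\mbf{1}} \to X_{n,d}$ of v-sheaves, verify it is invariant under $\ul{P_{n-d,d}(K)}$, and then show the induced map $\Phi \colon \mc{M}_{\mbf{1}}/\ul{P_{n-d,d}(K)} \to X_{n,d}$ is an isomorphism by proving $\Psi$ is a $\ul{P_{n-d,d}(K)}$-torsor. To define $\Psi$, I start with the universal stably injective $\alpha \colon \mc{O}^n \to \mc{O}(1/n)$ provided by Proposition \ref{0.8} and restrict along the standard subbundle $\mc{O}^{n-d} \hookrightarrow \mc{O}^n$ coming from the subspace $K^{n-d} \subset K^n$ that $P_{n-d,d}$ preserves, obtaining a stably injective $\beta \colon \mc{O}^{n-d} \to \mc{O}(1/n)$. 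By Lemma \ref{0.7}, $\mc{E} := \coker \beta$ is a vector bundle with $\mc{E} \simeq \mc{O}(1/d)$ at every geometric point, equipped with the natural surjection $\mc{O}(1/n) \twoheadrightarrow \mc{E}$. The composite $\mc{O}^n \xrightarrow{\alpha} \mc{O}(1/n) \twoheadrightarrow \mc{E}$ kills $\mc{O}^{n-d}$, so it descends to a map $\mc{F} := \mc{O}^n/\mc{O}^{n-d} = \mc{O}^d \to \mc{E}$; the snake lemma applied to the evident morphism from $0 \to \mc{O}^{n-d} \to \mc{O}^n \to \mc{O}^d \to 0$ to $0 \to \mc{O}(1/n) = \mc{O}(1/n) \to 0 \to 0$, combined with $\ker \alpha = 0$, yields a short exact sequence $0 \to \mc{F} \to \mc{E} \to \iota_{\ast} W \to 0$, producing the required diagram. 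Invariance under $P_{n-d,d}(K)$ is then clear: every $g$ preserves $\mc{O}^{n-d}$, leaves $\mc{E}$ and the surjection unchanged, and acts on $\mc{F} = \mc{O}^d$ only through its Levi image in $\GL_d(K)$ by a change of trivialization, which is absorbed by the isomorphism equivalence defining $X_{n,d}$.

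To show $\Psi$ is a $\ul{P_{n-d,d}(K)}$-torsor, the key step is v-local surjectivity. Given $(\mc{O}(1/n) \twoheadrightarrow \mc{E} \hookleftarrow \mc{F}) \in X_{n,d}(S)$, let $\mc{K}$ denote the kernel of the surjection and form the pullback $\mc{G} := \mc{O}(1/n) \times_{\mc{E}} \mc{F}$. A direct diagram chase yields a short exact sequence $0 \to \mc{K} \to \mc{G} \to \mc{F} \to 0$ together with a stable inclusion $\mc{G} \hookrightarrow \mc{O}(1/n)$ with cokernel $\iota_{\ast}W$. At each geometric point $\bar{x}$, the Fargues--Fontaine classification forces $\mc{K}_{\bar{x}} \simeq \mc{O}^{n-d}$ (its rank is $n-d$, its degree is $0$, and the possible Harder--Narasimhan types are constrained by its appearance as the kernel of a surjection from the stable bundle $\mc{O}(1/n)$ onto the stable bundle $\mc{O}(1/d)$, leaving only the trivial type), and the vanishing $\mathrm{Ext}^1(\mc{O}^d, \mc{O}^{n-d}) = 0$ on $\mc{X}_{\bar{x}}$ causes the sequence to split, so $\mc{G}_{\bar{x}} \simeq \mc{O}^n$. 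To promote the pair $(\mc{K} \subset \mc{G})$ to the trivialized data $(\mc{O}^{n-d} \subset \mc{O}^n)$ required for an $\mc{M}_{\mbf{1}}$-point, I invoke the fact that pointwise-trivial vector bundles on the relative Fargues--Fontaine curve correspond (via Fargues--Scholze) to pro-\'etale $K$-local systems, and hence trivialize pro-\'etale locally; the same applies to the filtered pair, producing the required $\mc{M}_{\mbf{1}}$-lift over a pro-\'etale cover $S' \to S$.

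For the fiber-torsor condition, two $S$-points $\alpha_1, \alpha_2$ of $\mc{M}_{\mbf{1}}$ inducing the same diagram in $X_{n,d}(S)$ are related by a unique automorphism of the trivialized pair $(\mc{O}^n, \mc{O}^{n-d} \subset \mc{O}^n)$ intertwining the two maps to $\mc{O}(1/n)$, and such automorphisms are precisely the elements of $\ul{P_{n-d,d}(K)}$ stabilizing the standard flag. This completes the identification of $\Psi$ as a $\ul{P_{n-d,d}(K)}$-torsor and yields the proposition. The main technical obstacle is the pro-\'etale-local trivialization invoked in the surjectivity step, without which one cannot produce v-local lifts; everything else reduces to snake lemma manipulations, Lemmas \ref{0.6} and \ref{0.7}, and the Fargues--Fontaine classification at geometric points.
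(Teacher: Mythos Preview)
Your proof is correct and follows essentially the same route as the paper: both construct the map $\mc{M}_{\mbf{1}} \to X_{n,d}$ by quotienting $\mc{O}(1/n)$ by $\alpha(\mc{O}^{n-d})$, both verify $P_{n-d,d}(K)$-invariance, and both recover the preimage of a given diagram as the pullback bundle $\mc{G} = \mc{O}(1/n) \times_{\mc{E}} \mc{F}$ (the paper's $\mc{V}$), checking it is pointwise $\mc{O}^n$ via degree and slope constraints. The only substantive difference is that the paper compresses the torsor verification into the single sentence ``the space of such trivializations is clearly a $\ul{P_{n-d,d}(K)}$-torsor,'' whereas you unpack this by invoking the equivalence between pointwise-trivial bundles and pro-\'etale $K$-local systems to produce local trivializations of the filtered pair $(\mc{G},\mc{K})$; this is a legitimate and helpful elaboration of what the paper leaves implicit, not a different argument.
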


\begin{proof}
First, observe that there is a natural map $\mc{M}_{\mbf{1}} \to X_{n,d}$, given by sending any $\{ \alpha : \mc{O}^{n} \to \mc{O}(1/n) \} \in \mc{M}_{\mbf{1}}(S)$ to the diagram 
\[
\mc{O}(1/n) \twoheadrightarrow \mc{O}(1/n) / \alpha(\mc{O}^{n-d}\oplus 0) \hookleftarrow \alpha(\mc{O}^{n}) / \alpha(\mc{O}^{n-d} \oplus 0)
\]
of vector bundles over $\mc{X}_{S^{\flat}}$. For this, observe
that the quotient $\mc{O}(1/n) / \alpha(\mc{O}^{n-d} \oplus 0)$
is isomorphic to $\mc{O}(1/d)$ at all geometric points by Lemma
\ref{0.7}, and the remaining conditions are clearly satisfied. The datum of this diagram only depends on the $\ul{P_{n-d,d}(K)}(S)$-orbit of $\alpha$, so this map factors over a natural transformation $\mc{M}_{\mbf{1}} / \ul{P_{n-d,d}(K)} \to X_{n,d}$, and we claim this transformation is actually an isomorphism. 

\medskip

It clearly suffices to check that $\mc{M}_{\mbf{1}} \to X_{n,d}$
is a $\ul{P_{n-d,d}(K)}$-torsor. For this, let $\mc{O}(1/n) \twoheadrightarrow \mc{E} \hookleftarrow \mc{F}$ be any $S$-point of $X_{n,d}$. Let $\mc{V} \subset \mc{O}(1/n)$ be the rank $n$ sub-vector bundle defined by the cartesian diagram
\[
\xymatrix{\mc{V} \ar[r]^{\gamma} \ar[d] & \mc{F} \ar[d] \\
\mc{O}(1/n) \ar[r] & \mc{E}
}
\]
so $\mc{V} \to \mc{O}(1/n)$ is stably injective and $\mc{O}(1/n) / \mc{V} = \mc{E}/\mc{F} \simeq \iota_{\ast}W$.
Moreover, at any geometric point of $S$, $\mc{V}$ has degree zero and all HN slopes $\leq1/n$, so in fact $\mc{V} \simeq \mc{O}^{n}$
at all geometric points. Now, the ambiguity in lifting our given $S$-point of $X_{n,d}$ to an $S$-point of $\mc{M}_{\mbf{1}}$ is exactly the ambiguity of choosing a trivialization $\mc{O}^{n} \overset{\sim}{\to} \mc{V}$ which maps $\mc{O}^{n-d} \oplus 0$ isomorphically onto $\ker \gamma$, and the space of such trivializations is clearly a $\ul{P_{n-d,d}(K)}$-torsor over $X_{n,d}$, as desired.
\end{proof}

\begin{proposition}\label{0.11}
Fix any $1\leq d<n$. Then the diamond $\mc{M}_{\mbf{1}}/\ul{P_{n-d,d}(K)} \cong X_{n,d}$ is isomorphic to the quotient
\[
\left(\mc{S}\mathrm{urj}(\mc{O}(1/n),\mc{O}(1/d)) \times_{\Spd C} \mb{P}_{C}^{d-1,\lozenge} \right)/\ul{D_{1/d}^{\times}}.
\]
Here $\mc{S}\mathrm{urj}(\mc{O}(1/n),\mc{O}(1/d))$ is the functor on perfectoid spaces over $C$ parametrizing surjective maps $\mc{O}(1/n) \to \mc{O}(1/d)$, and $D_{1/d}$ is the division algebra over $K$ of invariant $1/d$, with $\ul{D_{1/d}^{\times}}$ acting diagonally on the two factors.
\end{proposition}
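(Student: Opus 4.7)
The plan is to realize the claimed quotient as isomorphic to $X_{n,d}$, which by Proposition \ref{0.10} is $\mc{M}_{\mbf{1}}/\ul{P_{n-d,d}(K)}$. Specifically, I will construct a natural transformation
\[ \Psi: \mc{S}\mathrm{urj}(\mc{O}(1/n), \mc{O}(1/d)) \times_{\Spd C} \mb{P}^{d-1,\lozenge}_C \longrightarrow X_{n,d} \]
and show that it exhibits the source as a $\ul{D_{1/d}^\times}$-torsor over the target in the v-topology, so that the induced map $\bar\Psi$ on the quotient is an isomorphism.

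For this, I first identify $\mb{P}^{d-1,\lozenge}_C$ with the v-sheaf of subbundles $\mc{F} \subset \mc{O}(1/d)$ on $\mc{X}_{S^\flat}$ with $\mc{F} \simeq \mc{O}^d$ at every geometric point and $\mc{O}(1/d)/\mc{F} \simeq \iota_\ast W$ for some line bundle $W$. This arises by combining the moduli description of $\mc{M}_{\mbf{1},d}$ from Proposition \ref{0.8} (applied with $n$ replaced by $d$) with the fact that the local Gross--Hopkins period morphism $\pi_{GH}: \mc{M}_{\mbf{1},d} \to \mb{P}^{d-1,\lozenge}_C$ induces an isomorphism $\mc{M}_{\mbf{1},d}/\ul{\GL_d(K)} \cong \mb{P}^{d-1,\lozenge}_C$; explicitly, $[\alpha: \mc{O}^d \hookrightarrow \mc{O}(1/d)] \mapsto \alpha(\mc{O}^d)$. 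With this identification, $\Psi(\pi, \mc{F})$ is defined to be the diagram $\mc{O}(1/n) \xrightarrow{\pi} \mc{O}(1/d) \hookleftarrow \mc{F}$, whose middle entry is $\mc{O}(1/d)$; this is manifestly an $S$-point of $X_{n,d}$.

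The group $\ul{D_{1/d}^\times} \cong \ul{\Aut(\mc{O}(1/d))}$ acts diagonally on the source, by postcomposition $g \cdot \pi = g \circ \pi$ on the first factor and by $g \cdot \mc{F} = g(\mc{F})$ on the second; these match the usual $D_{1/d}^\times$-actions via the identifications above. Since $\Psi(g \pi, g \mc{F})$ is isomorphic to $\Psi(\pi, \mc{F})$ as a diagram via $g$ on the middle term, $\Psi$ descends to $\bar\Psi$ on the quotient. For any $T$-point $(\pi_0: \mc{O}(1/n) \twoheadrightarrow \mc{E}, \, j_0: \mc{F}_0 \hookrightarrow \mc{E})$ of $X_{n,d}$, the fiber of $\Psi$ is canonically in bijection with the v-sheaf $\underline{\Iso}(\mc{O}(1/d), \mc{E})$: given an isomorphism $\phi: \mc{O}(1/d) \overset{\sim}{\to} \mc{E}$, take $\pi := \phi^{-1} \circ \pi_0$ and $\mc{F} := \phi^{-1}(\mc{F}_0)$; conversely, $\phi$ is uniquely determined by $\pi$ and the surjectivity of $\pi_0$.

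The main obstacle is showing that $\underline{\Iso}(\mc{O}(1/d), \mc{E})$ is a $\ul{D_{1/d}^\times}$-torsor for the v-topology, or equivalently that any vector bundle on $\mc{X}_{S^\flat}$ that is isomorphic to $\mc{O}(1/d)$ at every geometric point is v-locally on $S$ isomorphic to $\mc{O}(1/d)$. This is a standard consequence of v-descent for $G$-bundles on the relative Fargues--Fontaine curve, part of the foundational theory developed by Scholze--Weinstein, Kedlaya--Liu and Fargues (see e.g.\ \cite{berkeley}). Granting this, $\Psi$ is a v-torsor under $\ul{D_{1/d}^\times}$, so $\bar\Psi$ is the desired isomorphism, completing the identification.
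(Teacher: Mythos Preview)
Your proof is correct and follows essentially the same strategy as the paper: both construct the $\ul{D_{1/d}^\times}$-torsor over $X_{n,d}$ given by trivializations $\mc{O}(1/d)\overset{\sim}{\to}\mc{E}$, and identify it with the product $\mc{S}\mathrm{urj}(\mc{O}(1/n),\mc{O}(1/d))\times_{\Spd C}\mb{P}^{d-1,\lozenge}_C$. The only notable difference is in how you identify $\mb{P}^{d-1,\lozenge}_C$ with the moduli of subbundles $\mc{F}\subset\mc{O}(1/d)$: you invoke the Gross--Hopkins period map for the height-$d$ Lubin--Tate tower, whereas the paper uses the more direct observation that such $\mc{F}$ correspond bijectively to rank one quotients of $\iota^{\ast}\mc{O}(1/d)$, together with the canonical identification $\iota^{\ast}\mc{O}(1/d)\cong\mc{O}_S^d$. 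The paper's route is slightly more elementary and self-contained (it avoids importing the $\GL_d(K)$-torsor structure of Gross--Hopkins), but your argument is valid and the two identifications agree.
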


\begin{proof}
Let $\tilde{X}_{n,d}$ be the $\ul{D_{1/d}^{\times}}$-torsor over $X_{n,d}$ which (in the notation of Proposition \ref{0.10}) parametrizes trivializations $\mc{O}(1/d) \overset{\sim}{\to}\mc{E}$. Then $\tilde{X}_{n,d}$ clearly decomposes as 
\[
\mc{S}\mathrm{urj}(\mc{O}(1/n),\mc{O}(1/d)) \times_{\Spd C} Y
\]
where $Y$ is the functor whose $S$-points parametrize subbundles
$\mc{F} \subset \mc{O}(1/d)$ such that $\mc{O}(1/d)/\mc{F} \simeq\iota_{\ast}W$ for some projective rank one $\mc{O}_{S}$-module $W$. The data of such an $\mc{F}$ is obviously equivalent to the data of a rank one projective $\mc{O}_{S}$-module quotient $\iota^{\ast}\mc{O}(1/d)\to W$: the functor in one direction is obvious, and the functor in the other direction sends $\iota^{\ast} \mc{O}(1/d)\to W$ to 
\[
\ker(\mc{O}(1/d )\to \iota_{\ast}\iota^{\ast}\mc{O}(1/d) \to\iota_{\ast}W).
\]
Finally, $\iota^{\ast}\mc{O}(1/d)$ is \emph{canonically }identified with $\mc{O}_{S}^{d}$. Putting these observations together, $Y$ identifies with the functor sending $S$ to the set of rank one locally free $\mc{O}_{S}$-module quotients $\mc{O}_{S}^{d}=\iota^{\ast}\mc{O}(1/d)\to W$. The latter functor is obviously represented by $\mb{P}_{C}^{d-1,\lozenge}$, as desired.
\end{proof}

\medskip

\begin{proof}[Proof of Theorem \ref{0.5}] Properness follows from Proposition \ref{0.9}. For cohomological smoothness, combining the Proposition \ref{0.11} with \cite[Proposition 24.2]{dia} reduces us to showing that 
\[
\mc{S}\mathrm{urj}(\mc{O}(1/n),\mc{O}(1/d)) \times_{\Spd C}\mb{P}_{C}^{d-1,\lozenge} \to \Spd C
\]
is cohomologically smooth. This reduces to the smoothness of each
factor over $\Spd C$. The projective space factor is immediately handled by \cite[Proposition 24.4]{dia}. For the first factor, we note that $\mc{S}\mathrm{urj}(\mc{O}(1/n),\mc{O}(1/d))$ is an open subfunctor of $\mc{H}^{0}(\mc{O}(1/d) \otimes \mc{O}(-1/n))$, cf. \cite[Proposition 3.3.6]{bfh}. Since $\mc{O}(1/d) \otimes \mc{O}(-1/n)$ has slopes strictly between $0$ and $1$, the latter functor is representable by an open perfectoid ball in $n-d$ variables over $C$, so now smoothness follows from \cite[Proposition 24.1]{dia}.

\medskip

Finally, suppose that $\mc{M}_{\mbf{1}}/\ul{P_{n-d,d}(K)}$ is a perfectoid space. By Proposition \ref{0.11}, we have a $\ul{D_{1/d}^{\times}}$-torsor
\[
\mc{S}\mathrm{urj}(\mc{O}(1/n),\mc{O}(1/d)) \times_{\Spd C}\mb{P}_{C}^{d-1,\lozenge} \to \mc{M}_{\mbf{1}}/\ul{P_{n-d,d}(K)}.
\]
By assumption, the target is perfectoid, so then the source is perfectoid as well by \cite[Proposition 10.11]{dia}. Intuitively, we now expect a contradiction if $d>1$, because the projective space factor should contribute ``non-perfectoid directions'' to the source. To make this precise, choose some perfectoid field $C'/C$ and a map $\Spd C' \to \mc{S}\mathrm{urj}(\mc{O}(1/n),\mc{O}(1/d))$. We've already observed that $\mc{S}\mathrm{urj}(\mc{O}(1/n),\mc{O}(1/d))$ is perfectoid, so
\[
\Spd C' \times_{\mc{S}\mathrm{urj}(\mc{O}(1/n),\mc{O}(1/d))}\left( \mc{S}\mathrm{urj}(\mc{O}(1/n),\mc{O}(1/d)) \times_{\Spd C}\mb{P}_{C}^{d-1,\lozenge} \right)
\]
is a fiber product of perfectoid spaces, and thus is perfectoid. On
the other hand, this fiber product is just $\mb{P}_{C'}^{d-1,\lozenge}$. Putting things together, we've shown that if $\mc{M}_{\mbf{1}}/\ul{P_{n-d,d}(K)}$ is perfectoid, then $\mb{P}_{C'}^{d-1,\lozenge}$ is necessarily perfectoid, which forces $d=1$, as desired.
\end{proof}

\begin{proof}[Proof of Theorem \ref{0.1}] Specializing Proposition \ref{0.11} to the situation where $d=1$, we get a canonical identification
\[
\mc{M}_{\mbf{1}}/\ul{P(K)} \cong \mc{S}\mathrm{urj}(\mc{O}(1/n),\mc{O}(1))/\ul{K^{\times}}.
\]
This is nothing more than the functor parametrizing quotients $\mc{O}(1/n) \twoheadrightarrow \mc{L}$ where $\mc{L}$ is a line bundle of degree one. It remains to identify this functor with the functor parametrizing subbundles $\mc{E} \subset \mc{O}(1/n)$ as specified in Theorem \ref{0.1}. 

\medskip

For this, note that sending any such $\mc{E} \subset \mc{O}(1/n)$ to the quotient $\mc{O}(1/n) \twoheadrightarrow \mc{O}(1/n)/\mc{E}$ defines a natural transformation in one direction, since $\mc{O}(1/n)/\mc{E}$ is a line bundle of degree one by Lemma \ref{0.7}. We also have a transformation in the other direction, sending any $q : \mc{O}(1/n) \twoheadrightarrow \mc{L}$ to the inclusion $\ker q \subset \mc{O}(1/n)$: one easily checks that, at any geometric point, $\ker q$ has rank $n-1$, degree zero, and all Harder--Narasimhan slopes $\leq 1/n$, so $\ker q \simeq \mc{O}^{n-1}$ at any geometric point. These two natural transformations are mutually inverse to each other, as desired.
\end{proof}

\subsection{The case $n=2$}

In this section we prove Theorem \ref{0.3} and Corollary \ref{0.4}. In particular, we assume $n=2$ throughout. As in the body of the paper, fix a uniformizer $\vp \in \mc{O}_{K}$. Let $E$ be the unramified quadratic extension of $K$, and let $\mc{G}=\mc{G}_{E}$ be the unique Lubin--Tate formal $\mc{O}_{E}$-module for which multiplication by $\vp$ is given by the polynomial $f(T)=T^{q^{2}}+\vp T$. Let $\tilde{E}/K$ be the completion of the extension obtained by adjoining all $\vp$-division points of $\mc{G}$ to $E$. By Lubin--Tate theory, $\tilde{E}$ is (the completion of an extension which is) Galois over $E$ with Galois group $\mc{O}_{E}^{\times}$, and $\tilde{E}$ is a perfectoid field.

\begin{lemma}\label{0.12}
The fixed field $L=\tilde{E}^{\mc{O}_{K}^{\times}}$ is a perfectoid
field.
\end{lemma}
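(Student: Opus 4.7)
The plan is to realize $L$ explicitly as the completion of a tower of finite extensions of $E$ and verify the perfectoid axioms by hand, using the Lubin--Tate structure of $\tilde{E}$.

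First, I would set up the tower. Fix a coherent system $\pi_m \in \mc{G}_E[\varpi^m]$ ($m \geq 1$) with $[\varpi]\pi_{m+1} = \pi_m$, and let $\tilde{E}_m := E(\pi_m)$. Lubin--Tate theory identifies $\Gal(\tilde{E}_m/E) \cong (\mc{O}_E/\varpi^m)^\times$, and the image of $\mc{O}_K^\times \subset \mc{O}_E^\times$ in this quotient is $(\mc{O}_K/\varpi^m)^\times$. Setting $L_m := \tilde{E}_m^{\mc{O}_K^\times}$, one computes
\[
 [L_m : E] \;=\; \frac{(q^2-1)\, q^{2(m-1)}}{(q-1)\, q^{m-1}} \;=\; (q+1)\, q^{m-1},
\]
and $L_m/E$ is totally ramified (as a subextension of the totally ramified $\tilde{E}_m/E$); hence $L = \widehat{\bigcup_m L_m}$ is complete with residue field $\mb{F}_{q^2}$.

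Next, I would produce explicit uniformizers of $L_m$ via the Galois norm. Define
\[
 \xi_m := \Nm_{\tilde{E}_m/L_m}(\pi_m) \;=\; \prod_{u \in (\mc{O}_K/\varpi^m)^\times} [u](\pi_m) \;\in\; L_m.
\]
From $[\varpi](T) = T^{q^2} + \varpi T$ one gets $v(\pi_m) = 1/((q^2-1)q^{2(m-1)})$, and since each $[u]$ is an automorphism of $\mc{G}_E$, the product satisfies $v(\xi_m) = 1/((q+1)q^{m-1}) = 1/[L_m:E]$. Thus each $\xi_m$ is a uniformizer of $L_m$, and the value group of $L$ contains the dense subgroup $\bigcup_m \tfrac{1}{(q+1)q^{m-1}}\mb{Z}$.

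The heart of the argument is showing that Frobenius $x \mapsto x^p$ is surjective on $\mc{O}_L/\varpi$, which I would approach via an approximate Frobenius relation between consecutive $\xi_m$. Applying each $[u]$ (for $u \in (\mc{O}_K/\varpi^{m+1})^\times$) to the identity $\pi_{m+1}^{q^2} = \pi_m - \varpi\pi_{m+1}$, multiplying, and grouping factors by the image $\bar{u} \in (\mc{O}_K/\varpi^m)^\times$, one obtains the exact identity
\[
 \xi_{m+1}^{q^2} \;=\; \prod_{\bar u}\prod_{u \mapsto \bar u} \bigl( [\bar u](\pi_m) - \varpi\, [u](\pi_{m+1}) \bigr) \;=\; \xi_m^{q} \cdot w_m
\]
in $L_{m+1}$, where $w_m := \prod_u\bigl(1 - \varpi\, [u](\pi_{m+1})/[\bar u](\pi_m)\bigr)$ is a product of $1$-units, each of whose deviation from $1$ has valuation $1 - 1/q^{2m}$. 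Iterating this relation up the tower and extracting the appropriate $q^k$-th roots should give, for any $x \in \mc{O}_L$, an element $y \in \mc{O}_L$ with $y^{p^N} \equiv x \pmod{\varpi}$ for $N$ sufficiently large; since Frobenius is $x \mapsto x^p$, this yields the desired surjectivity.

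The main obstacle is this last step. The naive estimate that $\xi_{m+1}^q/\xi_m$ is a $q$-th root of the $1$-unit $w_m$ does not by itself give $\xi_{m+1}^q \equiv \xi_m \pmod{\varpi}$, because extracting a $q$-th root in mixed characteristic loses a factor of $v(q) = e f\cdot v(p)$, whereas $v(w_m - 1) < 1$. One therefore has to iterate the relation across several levels of the tower simultaneously, so that the compounded $1$-units become small enough to produce genuine Frobenius-compatible roots; making this analysis precise (and confirming that it really yields actual, as opposed to merely almost, surjectivity of Frobenius) is the technical crux of the proof.
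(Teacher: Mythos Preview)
Your approach is genuinely different from the paper's. The paper gives a one-line proof: $L$ is the completion of a Galois extension of $E$ with Galois group $\mc{O}_E^\times/\mc{O}_K^\times$, which is an abelian $p$-adic Lie group of dimension $[K:\Qp]>0$, so $L$ is perfectoid by Sen's theorem on ramification in $p$-adic Lie extensions. The paper also remarks that, alternatively, up to a finite extension $L$ is the completion of a compositum of totally ramified $\Zp$-extensions of $E$, so one could appeal directly to Tate. Either way, no explicit computation with uniformizers is needed.

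Your explicit computation is correct as far as it goes: the degree count for $L_m/E$, the fact that $\xi_m=\Nm_{\tilde E_m/L_m}(\pi_m)$ is a uniformizer of $L_m$, and the identity $\xi_{m+1}^{q^2}=\xi_m^{\,q}\,w_m$ with $v(w_m-1)\geq 1-1/q^{2m}$ are all valid. But the gap you flag at the end is real and is not closed by the sketch you give. Iterating yields $(\xi_{m+k}^{q^k}/\xi_m)^q=\prod_{j=0}^{k-1}w_{m+j}^{q^j}$, and turning this into surjectivity of $x\mapsto x^p$ on $\mc{O}_L/p$ requires controlling the valuation of that product minus $1$ against $v(p)=e$ while simultaneously losing factors from $q$-th root extraction; the bookkeeping genuinely involves the interaction between $e$, $f$, and the error terms $1-1/q^{2(m+j)}$, and does not drop out for free. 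In effect you would be reproving, by hand in this special case, exactly the deep-ramification estimates underlying Sen's and Tate's theorems. If you want a more self-contained argument than a bare citation of Sen, the cleanest route is the paper's alternative: exhibit inside $\mc{O}_E^\times/\mc{O}_K^\times$ a closed subgroup isomorphic to $\Zp$, so that $L$ contains the completion of a totally ramified $\Zp$-extension of a finite extension of $E$, and then invoke Tate.
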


\begin{proof}
By the basic definitions, $L$ is the completion of a Galois extension of $E$ with Galois group $\mc{O}_{E}^{\times}/\mc{O}_{K}^{\times}$. This is an abelian $p$-adic Lie group of dimension $[K:\Qp]>0$, so $L$ is perfectoid by a theorem of Sen \cite{sen}. (Alternately, up to a finite extension, $L$ is the completion of a compositum of totally ramified $\Zp$-extensions of $E$, so we could appeal to Tate's original results \cite{tate}.)
\end{proof}

Let $\tilde{\mc{G}} = \varprojlim_{[\vp]} \mc{G}$ be the universal cover of $\mc{G}$ (as in \cite[\S 3.1]{sw}), and let $\tilde{\mc{G}}_{0}$ be its reduction modulo $\vp$. As in \cite[\S 3.5]{wein}, there is an identification $\tilde{\mc{G}}_{0}=\Spf \F_{q^{2}}[[T^{1/p^{\infty}}]]$. This is a formal $E$-vector space in the category of formal schemes over $\F_{q^{2}}$.

\medskip

To relate this object to vector bundles, let $Y$ be the functor on perfectoid spaces over $\F_{q^{2}}$ sending any $S$ to $H^{0}(\mc{X}_{S,K},\mc{O}(1/2)) = H^{0}(\mc{X}_{S,E},\mc{O}(1))$, and let $Y^{\times}\subset Y$ be the open subfunctor of nowhere-vanishing
sections. By Theorem \ref{0.1}, there is a natural identification
\[
\mc{M}_{\mbf{1}}/\ul{P(K)} \cong Y^{\times}/\ul{K^{\times}} \times_{\Spd \F_{q^{2}}} \Spd C.
\]

\begin{proposition}\label{0.13}
There are compatible $K^{\times}$-equivariant isomorphisms $Y \cong\tilde{\mc{G}}_{0}$ and $Y^{\times} \cong \tilde{\mc{G}}_{0} \smallsetminus \{0\}$.
\end{proposition}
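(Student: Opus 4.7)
The plan is to use the Fargues--Fontaine/Scholze--Weinstein comparison between Lubin--Tate formal $\mc{O}_E$-modules of height $1$ and line bundles of slope $1$ on the Fargues--Fontaine curve over $E$. Concretely, for a perfectoid $\F_{q^2}$-algebra $R$ with ring of power-bounded elements $R^+$, I will first unwind $\tilde{\mc{G}}_0(\Spa(R,R^+))$. Since $\tilde{\mc{G}}_0 = \Spf \F_{q^2}[[T^{1/p^{\infty}}]]$ and $R^+$ is perfect, this set equals the ideal $R^{+\circ\circ}$ of topologically nilpotent elements, equipped with the $\mc{O}_E$-module structure from the Lubin--Tate formal group law on $\mc{G}_0$; since $\vp$ acts as the bijection $t \mapsto t^{q^2}$, this structure canonically extends to an $E$-vector space structure, and so in particular gives a $K^{\times}$-action.

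Next, the main input is the fundamental identification $H^0(\mc{X}_{\Spa(R, R^+), E}, \mc{O}(1)) \cong \tilde{\mc{G}}(R^+)$, whose basic form appears in \cite[\S 3]{sw} and in \cite{wein}. Since $R$ is of characteristic $p$, the universal cover only sees the mod-$\vp$ reduction, so $\tilde{\mc{G}}(R^+) = \tilde{\mc{G}}_0(R^+)$. Combined with the equality $H^0(\mc{X}_{R, K}, \mc{O}(1/2)) = H^0(\mc{X}_{R, E}, \mc{O}(1))$ coming from the construction of the curve over $E$, this yields the desired natural bijection $Y(R) \cong \tilde{\mc{G}}_0(\Spa(R, R^+))$; the $K^{\times}$-equivariance is immediate since $K^{\times}$ acts by scalar multiplication on both sides.

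For the second isomorphism, I will verify that under this bijection a section $s \in Y(R)$ is nowhere-vanishing (that is, nonzero at every geometric point of the base $\Spa(R, R^+)$, not of the curve $\mc{X}_{R,K}$) if and only if the corresponding element $t \in R^{+\circ\circ}$ is nonzero at every geometric point of $\Spa(R,R^+)$; a short valuative check shows this is equivalent to $t$ being a pseudouniformizer of $R$, which is exactly the condition defining a morphism into $\tilde{\mc{G}}_0 \smallsetminus \{0\}$. Compatibility with $Y \cong \tilde{\mc{G}}_0$ is then automatic, since $Y^{\times}$ and $\tilde{\mc{G}}_0 \smallsetminus \{0\}$ are open subfunctors. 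The main technical obstacle will be to extract a clean relative form of the identification $H^0(\mc{X}_{R, E}, \mc{O}(1)) \cong \tilde{\mc{G}}_0(R^+)$: while essentially well-known, making it functorial in $R$ and tracking the compatibility with the $K^{\times}$-action requires some careful bookkeeping within the Fargues--Fontaine framework.
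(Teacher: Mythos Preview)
Your proposal is correct and follows essentially the same route as the paper: both arguments reduce to the Scholze--Weinstein identification of $\tilde{\mc{G}}_0$ with sections of $\mc{O}(1)$ on the Fargues--Fontaine curve over $E$, together with the tautological identity $H^0(\mc{X}_{S,K},\mc{O}(1/2))=H^0(\mc{X}_{S,E},\mc{O}(1))$. The only cosmetic difference is that the paper names the common intermediate functor explicitly as $\mbf{B}_{\mathrm{crys},E}^{+,\varphi_{q^2}=\vp}$ and writes down the explicit map $x \mapsto \log_{\mc{G}}(\{x\})$ (citing \cite[Theorem 4.1.4, Proposition 6.3.9]{sw}), whereas you package this as a direct comparison; your more careful treatment of the open locus $Y^{\times} \cong \tilde{\mc{G}}_0 \smallsetminus \{0\}$ via the pseudouniformizer condition is a point the paper simply declares ``immediate''.
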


\begin{proof}
The first isomorphism follows from the fact that $Y$ and $\tilde{\mc{G}}_{0}$ are both naturally identified with the functor $\mbf{B}_{\mathrm{crys},E}^{+,\varphi_{q^{2}}=\vp}$. For $Y$ this identification is immediate from the definition of $\mc{O}(1)$ over $\mc{X}_{S,E}$ as the descent of a $\varphi_{q^{2}}$-equivariant bundle on the usual cover $\mc{Y}_{S,E} \to \mc{X}_{S,E}$. For $\tilde{\mc{G}}_{0}$ this identification follows from \cite{sw}, cf. Theorem 4.1.4 and the first line in the proof of Proposition 6.3.9. In fact, this identification can be given by an explicit formula: an $(R,R^+)$-point of $\tilde{\mc{G}}_{0}$ is the same as an element $x\in R^{\circ\circ}$, and we map $x$ to the element $\mathrm{log}_{\mc{G}}(\{x\})$, where $\{x\}=\underset{n\to\infty}{\lim}[\vp^n](\varphi_{q^{2}}^{-n}(\tilde{x}))$ with $\tilde{x}\in W_{\mc{O}_E}(R^+) = W(R^+) \otimes_{W(\F_{q^{2}})} \mc{O}_{E}$ any lift of $x$. The second isomorphism is then immediate.
\end{proof}

On the other hand, we have

\begin{proposition}\label{0.14}
There are compatible $E^{\times}$-equivariant isomorphisms $\tilde{\mc{G}}_{0} \cong \Spa \mc{O}_{\tilde{E}}^{\flat}$ and $\tilde{\mc{G}}_{0} \smallsetminus \{0\} \cong \Spa \tilde{E}^{\flat}$, where on the right-hand sides $\mc{O}_{E}^{\times} \subset E^{\times}$ acts through its natural identification with $\Gal(\tilde{E}/E)$ and $\vp$ acts as the $q^2$-power Frobenius.
\end{proposition}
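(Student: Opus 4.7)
The plan is to construct an explicit $\F_{q^2}$-algebra isomorphism
\[
\Phi \colon \F_{q^2}[[T^{1/p^\infty}]] \longrightarrow \mc{O}_{\tilde{E}}^\flat,
\]
using a distinguished element of the tilt coming from the Lubin--Tate torsion tower. First I would recall, as in the proof of Proposition \ref{0.13}, that modulo $\vp$ the map $[\vp]$ on $\mc{G}$ becomes $T \mapsto T^{q^2}$ (since $[\vp](T) = T^{q^2} + \vp T$), so $\tilde{\mc{G}}_0 = \varprojlim_{[\vp]} \mc{G}_0$ identifies with the $p$-perfection $\Spf \F_{q^2}[[T^{1/p^\infty}]]$ of $\mc{G}_0 = \Spf \F_{q^2}[[T]]$, giving the explicit ring of functions on $\tilde{\mc{G}}_0$.

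Next I would exhibit a uniformizer of $\mc{O}_{\tilde{E}}^\flat$. Fix a compatible sequence $(t_n)_{n \geq 1} \subset \mc{G}(\mc{O}_{\tilde{E}})$ of $\vp^n$-division points with $[\vp](t_1) = 0$ and $[\vp](t_{n+1}) = t_n$; then $t_1$ satisfies $t_1^{q^2 - 1} = -\vp$, so it is a uniformizer of $\mc{O}_{\tilde{E}}$ of valuation $1/(q^2-1)$. Since $\tilde{E}$ is perfectoid, I may fix compatible $p^k$-th roots $t_1^{(k)} \in \mc{O}_{\tilde{E}}$ of $t_1$, giving an element $\tilde{t} := (t_1^{(k)})_{k \geq 0} \in \mc{O}_{\tilde{E}}^\flat$ with $\tilde{t}^{\sharp} = t_1$. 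Define $\Phi$ by $T^{1/p^k} \mapsto \tilde{t}^{1/p^k}$.

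Injectivity of $\Phi$ is immediate since both sides are integral domains and $\tilde{t} \neq 0$. For surjectivity, observe that the relation $[\vp](t_{n+1}) = t_n$ reduces mod $\vp$ to $\overline{t}_{n+1}^{q^2} = \overline{t}_n$; since $q^2$-power roots are unique in characteristic $p$ domains, this identifies $\overline{t}_n$ with the image of $\tilde{t}^{1/q^{2(n-1)}}$ under the zeroth projection $\mc{O}_{\tilde{E}}^\flat \to \mc{O}_{\tilde{E}}/\vp$. In particular, every $\overline{t}_n$ lies in the image of $\Phi$ modulo $\vp$. Since $\tilde{E}$ is topologically generated over $E$ by the $t_n$, the image of $\Phi$ reduces surjectively onto $\mc{O}_{\tilde{E}}/\vp$; combined with completeness, this gives surjectivity.

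Finally, for the $E^\times$-equivariance: $\vp \in E^\times$ acts on $\tilde{\mc{G}}$ as the formal $\mc{O}_E$-multiplication by $\vp$, which reduces mod $\vp$ to $T \mapsto T^{q^2}$, i.e.\ the $q^2$-power Frobenius on $\mc{O}_{\tilde{E}}^\flat$ under $\Phi$; while $\mc{O}_E^\times \subset E^\times$ acts through the Lubin--Tate $\mc{O}_E$-module structure, which by Lubin--Tate theory matches the Galois action of $\mc{O}_E^\times \cong \Gal(\tilde{E}/E)$ on $\mc{O}_{\tilde{E}}$, and hence on its tilt. Removing the closed point $V(T) = \{0\}$ from $\Spa \mc{O}_{\tilde{E}}^\flat$ yields $\Spa(\tilde{E}^\flat, \mc{O}_{\tilde{E}}^\flat) = \Spa \tilde{E}^\flat$, giving the second isomorphism. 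The main obstacle will be carrying out the surjectivity step carefully: one must confirm that the closed $\F_{q^2}$-subalgebra of $\mc{O}_{\tilde{E}}^\flat$ topologically generated by $\tilde{t}$ and its $p^\infty$-th roots recovers all of $\mc{O}_{\tilde{E}}^\flat$, which reduces as above to the (standard but non-tautological) topological generation of $\mc{O}_{\tilde{E}}$ by the Lubin--Tate torsion.
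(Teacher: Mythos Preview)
The paper disposes of this statement in one line, by citing \cite[Proposition~3.5.3]{wein}.  Your approach is essentially a direct reconstruction of that proof, so there is no genuinely different route here; what you have written is the standard argument.  However, there are two related gaps in your write-up that need to be fixed.

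First, the sentence ``Since $\tilde{E}$ is perfectoid, I may fix compatible $p^k$-th roots $t_1^{(k)} \in \mc{O}_{\tilde{E}}$ of $t_1$'' is not justified: perfectoidness only guarantees $p$-th roots \emph{modulo} $\vp$, not exact $p$-th roots in $\mc{O}_{\tilde{E}}$.  The correct construction of $\tilde{t}$ is the one you implicitly use a few lines later: observe that $[\vp](t_{n+1})=t_n$ reduces to $\bar t_{n+1}^{\,q^2}=\bar t_n$ in $\mc{O}_{\tilde{E}}/\vp$, so the sequence $(\bar t_1,\bar t_2,\dots)$ defines an element of $\varprojlim_{x\mapsto x^{q^2}}\mc{O}_{\tilde{E}}/\vp \cong \mc{O}_{\tilde{E}}^{\flat}$.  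Take this as the definition of $\tilde t$; then $\tilde t^{\,1/q^{2(n-1)}}$ projects to $\bar t_n$ \emph{by construction}.

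Second, the phrase ``since $q^2$-power roots are unique in characteristic $p$ domains'' is being applied to $\mc{O}_{\tilde{E}}/\vp$, which is \emph{not} a domain (indeed $\bar t_1\neq 0$ but $\bar t_1^{\,q^2-1}=0$, since $t_1^{q^2-1}=-\vp$).  With the corrected definition of $\tilde t$ above, this uniqueness step is simply unnecessary.  Once these points are addressed, your surjectivity and equivariance arguments go through: the image of $\Phi$ surjects onto $\mc{O}_{\tilde{E}}^{\flat}/(\tilde t^{\,q^2-1})\cong \mc{O}_{\tilde{E}}/\vp$ because $\mc{O}_{E_n}/\vp=\F_{q^2}[\bar t_n]$, and $\tilde t$-adic completeness of $\mc{O}_{\tilde{E}}^{\flat}$ then finishes the job.
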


\begin{proof}
This follows from \cite[Proposition 3.5.3]{wein}.
\end{proof}

Putting these two propositions together, we get a $K^{\times}$-equivariant isomorphism
\[
Y^{\times} \times_{\Spd \F_{q^2}} \Spd C \cong \Spd \tilde{E}^{\flat} \times_{\Spd \F_{q^2}} \Spd C.
\]
Passing to the quotient by the action of $\mc{O}_{K}^{\times}$
gives
\begin{align*}
Y^{\times}/\ul{\mc{O}_{K}^{\times}} \times_{\Spd \F_{q^2}} \Spd C & \cong (\Spd \tilde{E}^{\flat})/\ul{\mc{O}_{K}^{\times}} \times_{\Spd \F_{q^2}} \Spd C \\
 & \cong \Spd (\tilde{E}^{\flat})^{\mc{O}_{K}^{\times}} \times_{\Spd \F_{q^2}} \Spd C \\
 & \cong \Spd L^{\flat} \times_{\Spd \F_{q^2}} \Spd C
\end{align*}
where in the second line we've used Lemma \ref{0.12}. Note that this diamond is the fiber product of two characteristic $p$ perfectoid spaces over a \emph{discrete} field. Nevertheless we have the following result.

\begin{proposition}\label{0.15}
Let $k$ be a discrete field, and let $X$ and $Y$ be perfectoid spaces over $k$. Then the product $X\times Y$ is representable by a perfectoid space over $k$, where the product is taken in the category of sheaves of sets on $\mathrm{Perf}_{k}$.
\end{proposition}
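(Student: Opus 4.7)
My plan is to reduce to the affinoid case and construct the product explicitly as a completed tensor product. Since perfectoid spaces over a discrete field $k$ force $k$ to have characteristic $p$ (as any perfectoid ring contains a pseudo-uniformizer), and since any perfect $k$-algebra factors through the perfection $k^{\mathrm{perf}}$, I may assume $k$ is a perfect field of characteristic $p$. In this setting a perfectoid $k$-algebra is just a perfect uniform Tate $k$-algebra. I cover $X$ by affinoid perfectoid opens $U_{i}=\Spa(R_{i},R_{i}^{+})$ and $Y$ by $V_{j}=\Spa(S_{j},S_{j}^{+})$; since sheaf-theoretic products commute with open immersions and perfectoid spaces glue along such, it suffices to produce each $U_{i}\times V_{j}$ as an affinoid perfectoid space over $k$.

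For the affinoid case, I fix pseudo-uniformizers $\varpi_{R}\in R^{+}$ and $\varpi_{S}\in S^{+}$, set $A_{0}:=R^{+}\otimes_{k}S^{+}$ and $\varpi:=\varpi_{R}\otimes\varpi_{S}\in A_{0}$, let $B$ be the $\varpi$-adic completion of $A_{0}$, and put $A:=B[\varpi^{-1}]$ with $A^{+}$ the integral closure of $B$ in $A$. The essential point is that $A$ is perfect: the rings $R^{+}$ and $S^{+}$ are themselves perfect (given $b\in R$ with $b^{p}\in R^{+}$, integral closedness of $R^{+}$ in $R$ forces $b\in R^{+}$, and symmetrically for $S^{+}$), and a tensor product of perfect $k$-algebras over the perfect field $k$ is automatically perfect; both $\varpi$-adic completion and the subsequent inversion of $\varpi$ preserve perfectness. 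Together with the uniformity guaranteed by the integral-closure recipe, this shows that $(A,A^{+})$ is an affinoid perfectoid pair over $k$.

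To verify representability, I test against an arbitrary affinoid perfectoid $\Spa(C,C^{+})$ over $k$ with compatible maps to $U_{i}$ and $V_{j}$. Such data amount to a pair of Huber pair maps $(R,R^{+})\to(C,C^{+})$ and $(S,S^{+})\to(C,C^{+})$ over $k$, which assemble into a $k$-linear map $A_{0}\to C^{+}$ sending $\varpi$ to a topologically nilpotent element. By $\varpi$-adic completeness of $C^{+}$ this extends uniquely to $B\to C^{+}$, then to $A\to C$ after inverting $\varpi$, and restricts to $A^{+}\to C^{+}$ by integral closedness of $C^{+}$ in $C$. Thus $\Spa(A,A^{+})$ represents $U_{i}\times V_{j}$ in v-sheaves, and gluing the affinoid products yields the perfectoid space representing $X\times Y$.

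The principal obstacle is precisely the verification that $A$ is perfectoid. Here discreteness of $k$ is crucial: because $k$ carries no topology, the tensor product $R^{+}\otimes_{k}S^{+}$ needs no base-level completion, so the only completion involved is the clean $\varpi$-adic one. Combined with the characteristic-$p$ miracle that tensor products of perfect rings over a perfect field remain perfect, the whole argument proceeds essentially by bookkeeping. The analogous statement in mixed characteristic would be much more delicate; however, since the present proposition is only ever applied in characteristic $p$ (in particular in the proof of Corollary~\ref{0.4}), this direct approach suffices.
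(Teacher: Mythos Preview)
The paper states this proposition without proof, so I will simply assess your argument on its own. The choice of pseudo-uniformizer $\varpi=\varpi_{R}\otimes\varpi_{S}$ is a genuine error: in the $\varpi$-adic topology on $B$, the element $\varpi_{R}\otimes 1$ is \emph{not} topologically nilpotent. Indeed, $\varpi_{R}^{n}\otimes 1\in\varpi^{m}A_{0}$ would in particular force $\varpi_{R}^{n}\otimes 1\in R^{+}\otimes_{k}\varpi_{S}^{m}S^{+}$; but the image of $\varpi_{R}^{n}\otimes 1$ in $R^{+}\otimes_{k}(S^{+}/\varpi_{S}^{m}S^{+})$ is $\varpi_{R}^{n}\otimes\bar{1}\neq 0$ (a tensor of nonzero elements over a field). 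Hence the natural map $R\to A$ is not continuous, and your verification of the universal property breaks in the unwritten direction: given a continuous $(A,A^{+})\to(C,C^{+})$ you cannot conclude that the induced $R\to C$ is continuous, and already the identity on $\Spa(A,A^{+})$ fails to arise from a point of the product.

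More to the point, the product $\Spa(R,R^{+})\times_{\Spa k}\Spa(S,S^{+})$ is typically \emph{not} affinoid. One should instead complete $A_{0}$ at the ideal $I=(\varpi_{R}\otimes 1,\,1\otimes\varpi_{S})$, obtaining an $I$-adic ring $B'$, and then pass to the open analytic locus $\{\varpi_{R}\varpi_{S}\neq 0\}$ inside $\Spa(B',B')$. That locus is covered by rational subsets on which $|\varpi_{R}|$ and $|\varpi_{S}|$ are constrained to lie within a bounded power of one another (so that either one serves as a single Tate pseudo-uniformizer there), and each such subset is affinoid perfectoid by essentially the perfectness argument you gave. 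The union, however, is not affinoid, because the ratio of $|\varpi_{R}|$ to $|\varpi_{S}|$ is unbounded in both directions --- this is precisely the phenomenon underlying the spaces $\mc{Y}_{S,E}$ in the construction of the relative Fargues--Fontaine curve, and the case actually used in the appendix (the product of two perfectoid points over $\F_{q^{2}}$) is of exactly that shape.
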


By this result, $\Spd L^{\flat} \times_{\Spd \F_{q^2}} \Spd C$ is representable by a perfectoid space over $\F_{q^2}$, which moreover comes equipped with a canonical map to $\Spd C = \Spa C^{\flat}$. Moreover, writing $\varphi : \Spd L^{\flat} \to \Spd L^{\flat}$ for the $q^2$-power Frobenius, we easily see that $\varphi \times\mathrm{id}$ acts properly discontinuously on this product, so the quotient
\[
(\Spd L^{\flat} \times_{\Spd \F_{q^2}} \Spd C) / (\varphi\times\mathrm{id})^{\Z}
\]
is representable by a perfectoid space over $\F_{q^2}$ with a map to $\Spd C$. This has a unique untilt to a perfectoid space over $C$. On the other hand, summarizing the analysis above, we have canonical
isomorphisms
\[
(\Spd L^{\flat} \times_{\Spd \F_{q^2}} \Spd C)/ (\varphi \times\mathrm{id})^{\Z} \cong Y^{\times}/\ul{K^{\times}} \times_{\Spd \F_{q^2}} \Spd C \cong \mc{M}_{\mbf{1}}/\ul{P(K)},
\]
so $\mc{M}_{\mbf{1}}/\ul{P(K)}$ is a perfectoid space, as desired. This finishes the proof of Theorem \ref{0.3} and Corollary \ref{0.4}.

\bibliography{LTII}
\bibliographystyle{alpha}

\end{document}